\newtheorem{thm}{Theorem}[section]
\newtheorem{lem}[thm]{Lemma}
\newtheorem{prop}[thm]{Proposition}
\theoremstyle{definition}
\newtheorem{rem}[thm]{Remark}
\def\Xint#1{\mathchoice
{\XXint\displaystyle\textstyle{#1}}
{\XXint\textstyle\scriptstyle{#1}}
{\XXint\scriptstyle\scriptscriptstyle{#1}}
{\XXint\scriptscriptstyle\scriptscriptstyle{#1}}
\!\int}
\def\XXint#1#2#3{{\setbox0=\hbox{$#1{#2#3}{\int}$ }
\vcenter{\hbox{$#2#3$ }}\kern-.57\wd0}}
\def\dashint{\Xint-}
\def\YYint#1#2#3{{\setbox0=\hbox{$#1{#2#3}{\iint}$}
    \vcenter{\hbox{$#2#3$}}\kern-.51\wd0}}
\newcommand*\dd{\mathop{}\!\mathrm{d}}
\newcommand{\Rn}{\mathbb{R}^{n}}
\newcommand{\R}{\mathbb{R}}
\newcommand{\Z}{\mathbb{Z}}
\newcommand{\N}{\mathbb{N}}
\newcommand{\abs}[1]{\left| #1\right|}
\newcommand{\smallabs}[1]{| #1 |}
\newcommand{\norm}[1]{\left\lVert #1\right\rVert}
\newcommand{\smallnorm}[1]{\lVert #1 \rVert}
\newcommand{\meas}[1]{\left\lvert #1\right\rvert}
\newcommand{\ind}[1]{\mathbbm{1}_{#1}}
\newcommand{\RNum}[1]{\textup{\uppercase\expandafter{\romannumeral#1}}}
\newcommand{\Hn}{\mathbb{R}^{1+n}_{+}}
\newcommand{\diam}[1]{\textup{diam}(#1)}
\newcommand{\dist}{\textup{dist}}
\newcommand{\clos}[1]{\overline{#1}}
\newcommand{\supp}[1]{\textup{supp}\left(#1\right)}
\newcommand{\eps}{\varepsilon}
\newcommand{\Ell}[1]{\mathrm{L}^{#1}(\Rn)}
\newcommand{\El}[1]{\mathrm{L}^{#1}}
\newcommand{\bb}[1]{\mathbb{#1}}
\newcommand{\set}[1]{\left\{#1\right\}}
\newcommand{\loc}{\textup{loc}}
\newcommand{\dvg}{\textup{div}}
\numberwithin{equation}{section}
\title[Tangential approach]{Tangential approach in the Dirichlet problem for elliptic equations}
\author{Jonathan Bennett}
\author{Arnaud Dumont}
\author{Andrew J. Morris}
\address{J. Bennett, School of Mathematics, University of Birmingham, Edgbaston, B15 2TT, UK}
\email{j.bennett@bham.ac.uk}
\address{A. Dumont, School of Mathematics, University of Birmingham, Edgbaston, B15 2TT, UK}
\email{axd461@bham.ac.uk}
\address{A. J. Morris, School of Mathematics, University of Birmingham, Edgbaston, B15 2TT, UK}
\email{a.morris.2@bham.ac.uk}
\date{\today}
\thanks{The first author was supported by EPSRC Grant EP/W032880/1.}
\thanks{No data were created or analysed in this study.}
\subjclass[2020]{35J25 (Primary) 35J25, 35B30, 42B37 (Secondary)}
\keywords{Dirichlet boundary value problem; divergence form elliptic equation; Lipschitz domain; tangential convergence; Sobolev space; Hausdorff dimension}
\begin{document}
\begin{abstract}

It is well-known that solvability of the $\mathrm{L}^{p}$-Dirichlet problem for elliptic equations $Lu:=-\mathrm{div}(A\nabla u)=0$ with real-valued, bounded and measurable coefficients $A$ on Lipschitz domains $\Omega\subset\mathbb{R}^{1+n}$ is characterised by a quantitative absolute continuity of the associated $L$-harmonic measure. We prove that this local $A_{\infty}$ property is sufficient to guarantee that the nontangential convergence afforded to $\mathrm{L}^{p}$ boundary data actually improves to a certain \emph{tangential} convergence when the data has additional (Sobolev) regularity. Moreover, we obtain sharp estimates on the Hausdorff dimension of the set on which such convergence can fail. This extends results obtained by Dorronsoro, Nagel, Rudin, Shapiro and Stein for classical harmonic functions in the upper half-space. 
\end{abstract}
\maketitle
\setcounter{tocdepth}{1}
\section{Introduction}
\subsection{Background: Laplace's equation in the upper half-space}
A classical theorem of Fatou states that for any $p\in[1,\infty]$ and $f$ belonging to the Lebesgue space $\Ell{p}$, the harmonic extension $$u_{f}(t,x):=(P_{t}\ast f)(x)$$ of $f$ to the upper half-space $\Hn=\set{(t,x)\in\R\times \Rn : t>0}$ admits nontangential limits almost everywhere on its boundary $\Rn$. This means that
\begin{equation*}
    \lim_{\Gamma_{a}(x_0)\ni(t,x)\to (0,x_0)}u_{f}(t,x)=f(x_0) 
\end{equation*}
for almost every $x_0\in\mathbb{R}^n$,
where $$\Gamma_{a}(x_0):=\set{(t,x)\in\Hn : \abs{x-x_0}<at}.$$ In this context the set $\Gamma_a(x_0)$ is referred to as a \textit{nontangential approach region}, being a cone with vertex $x_0$ and aperture parameter $a>0$. We recall that $P_t:\mathbb{R}^n\rightarrow\mathbb{R}_+$ is the classical Poisson kernel, and that $u_f(t,x)$, being a harmonic function on $\Hn$, solves Laplace's equation $$\Delta u:=\frac{\partial^2 u}{\partial t^2}+\sum_{j=1}^n\frac{\partial^2 u}{\partial x_j^2}=0$$ on $\Hn$. We note that the original version of this convergence result is stated in the context of harmonic extensions to the unit disc rather than the half space; see \cite{Fatou_1906}. 

It is well known that taking a \textit{tangential} approach to the boundary can lead to the failure of such convergence. A curve $\gamma\subset\Hn$ for which $(0,0)\in\clos{\gamma}$ (the closure of $\gamma$) is said to be \emph{tangential} if for all $a>0$ there exists $\eps>0$ such that 
\begin{equation}\label{tangentiality condition on curve}
    \set{(t,x)\in\gamma : 0<t\leq \eps}\subseteq \Hn\setminus\Gamma_{a}(0).
\end{equation}
For any $x_0\in\Rn$ let $\gamma+x_0=\set{(t,x+x_0)\in\Hn : (t,x)\in\gamma}$ be the corresponding translate of $\gamma$. 
The following result shows that Fatou's theorem is in some sense best possible.
\begin{thm}\label{Aikawa Littlewood theorem}
    Let $\gamma\subset\Hn$ be a tangential curve such that $(0,0)\in\clos{\gamma}$. There exists $f\in\Ell{\infty}$ with Poisson extension $u_{f}(t,x)=(P_{t}\ast f)(x)$ such that the limit
    \begin{equation*}
        \lim_{\gamma+x_0\ni (t,x)\to (0,x_0)} u_{f}(t,x)
    \end{equation*}
    does not exist for any $x_0\in\Rn$.
\end{thm}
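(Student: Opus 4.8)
\emph{The plan} is to reduce to a one–dimensional statement and then build $f$ by hand on the half–plane.

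\emph{Step 1 (extraction and reduction to $n=1$).} Since $(0,0)\in\clos\gamma$ I would fix a sequence $(t_k,x_k)\in\gamma$ with $(t_k,x_k)\to(0,0)$ and $t_k>0$; by the tangentiality condition \eqref{tangentiality condition on curve} every such sequence has $x_k\neq0$ and $\abs{x_k}/t_k\to\infty$ (for $k$ large). Passing to a subsequence, $x_k/\abs{x_k}$ converges to a unit vector, and after an orthogonal rotation of $\Rn$ — which preserves all hypotheses and transforms the conclusion correspondingly — I may assume $x_k/\abs{x_k}\to e_1=(1,0,\dots,0)$. It then suffices to produce $f\in\Ell\infty$ for which $\big(u_f(t_k,x_k+x_0)\big)_k$ diverges for every $x_0$, since $(t_k,x_k+x_0)\to(0,x_0)$ along $\gamma+x_0$. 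I would look for $f$ of the special form $f(x)=g(x_{1})$ with $g\in\mathrm{L}^\infty(\R)$: then $u_f(t,x)=v(t,x_{1})$, where $v$ is the one–dimensional Poisson extension of $g$ to $\set{(t,s):t>0}$ (both sides are bounded harmonic functions on $\Hn$ with boundary data $g(x_{1})$, hence coincide by uniqueness). Writing $a_k:=(x_k)_1$ one has $a_k>0$ for large $k$, $a_k\to0$, $a_k/t_k\to\infty$, and $u_f(t_k,x_k+x_0)=v\big(t_k,\,a_k+(x_0)_1\big)$; since $(x_0)_1$ ranges over $\R$, the theorem reduces to finding $g\in\mathrm{L}^\infty(\R)$ such that $\big(v(t_k,a_k+z)\big)_k$ diverges for every $z\in\R$.

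\emph{Step 2 (the one–dimensional construction).} After passing to a further subsequence I would assume $t_k\downarrow0$ and $a_k/t_k\uparrow\infty$ as fast as needed, and fix a countable dense set $\set{q_m}\subset\R$. I would take $g$ to be the boundary values of a Blaschke product $B$ on the upper half–plane whose zeros accumulate at the boundary along the translated curves $\gamma+q_m$, sitting near the points of height $t_k$ and first coordinate $a_k+q_m$, with the heights used on each curve chosen so that $\sum_n\mathrm{Im}(a_n)/(1+\abs{a_n}^2)<\infty$. Then $B$ is holomorphic with $\abs B\le1$ and $\abs B=1$ a.e.\ on $\R$, so $g:=B|_{\R}\in\mathrm{L}^\infty(\R)$ and $v=B$. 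Evaluating $\abs B$ at $w_k:=$ (the point of height $t_k$ and first coordinate $a_k+z$), the two points to check are: $\liminf_k\abs{B(w_k)}=0$ — because for each $\eps>0$ one can find $k_j\to\infty$ and $q_{m_j}$ with a zero of height $t_{k_j}$ on $\gamma+q_{m_j}$ and $\abs{z-q_{m_j}}<\eps t_{k_j}$, so that already a single Blaschke factor forces $\abs{B(w_{k_j})}\le\abs{z-q_{m_j}}/(2t_{k_j})<\eps/2$; and $\limsup_k\abs{B(w_k)}>0$ — because, the zeros being sparse enough in the Blaschke/Carleson sense and $\gamma$ being tangential (so $\gamma+x_0$ meets only a negligible part of the zero set), at infinitely many heights $t_k$ the total hyperbolic influence of all zeros at $w_k$ is bounded, giving $\abs{B(w_k)}\ge c>0$. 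Since $0=\liminf_k\abs{B(w_k)}<\limsup_k\abs{B(w_k)}$, the bounded harmonic function $v=B$ has no limit along $\gamma+x_0$ at $(0,x_0)$, for every $x_0$.

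\emph{Main obstacle.} Step 1 is routine. The whole difficulty is in Step 2 and, within it, in arranging the zero set so that \emph{simultaneously} (a) every translate $\gamma+x_0$ passes within $o(t_k)$ of a zero at infinitely many heights $t_k$ — forcing $\liminf\abs B=0$ along it, and using the density of the $q_m$ together with tangentiality — and (b) every translate $\gamma+x_0$ stays hyperbolically far from \emph{all} zeros at infinitely many heights $t_k$ — forcing $\limsup\abs B>0$, and needing the zeros sparse in the Blaschke/Carleson sense. These requirements pull in opposite directions, and it is precisely the tangentiality of $\gamma$ — the ``thinness'' of a tangential curve — that makes the balance possible; getting the quantitative bookkeeping right is the heart of the proof. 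If one prefers to remain real–valued, an alternative is to take $g=\ind F$ for a set $F\subset\R$ produced by a multi–scale recursion in which the scale and phase of the refinement at stage $k$ are tied to $t_k$ and $a_k$, so that the Poisson average $v(t_k,a_k+z)$ oscillates between values near $\tfrac34$ and $\tfrac14$ for every $z$; the recursion must then be designed so that no single $z$ can dodge the oscillation, which is again the crux.
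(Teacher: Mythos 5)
The paper does not prove Theorem~\ref{Aikawa Littlewood theorem}; it states it and cites Aikawa \cite{Aikawa_PAMS_Harmonic_Functions, Aikawa_JLMS_Harmonic_Functions_Green} (with Littlewood \cite{Littlewood_JLMS_Fatou_Thm} for the weaker a.e.\ conclusion), so there is no in-paper argument to compare against. Judged on its own terms, Step~1 of your sketch is sound, but Step~2 has a genuine gap that is more than bookkeeping; the same-height scheme you set up cannot work at all.

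Your mechanism for~(a) places Blaschke zeros at heights $t_k$ with first coordinate $a_k+q_m$, and evaluates $B$ at the \emph{same} heights $t_k$ at first coordinate $a_k+z$. Let $Q_k$ be the (finite) set of $q_m$'s carrying a zero at height $t_k$ with bounded real part. Claim~(a) needs that for every $z$ and every $\eps>0$ one has $\dist(z,Q_k)<\eps t_k$ for infinitely many $k$. But the Blaschke condition forces $\sum_k\abs{Q_k}\,t_k<\infty$, and then for each fixed $\eps$ the sets $E_k^\eps=\set{z:\dist(z,Q_k)<\eps t_k}$ satisfy $\sum_k\abs{E_k^\eps}\le 2\eps\sum_k\abs{Q_k}t_k<\infty$. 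By the elementary half of Borel--Cantelli, almost every $z$ lies in only finitely many $E_k^\eps$; taking $t_k$ lacunary (as your ``as fast as needed'' thinning makes them), for almost every $z$ the points $w_k(z)$ are eventually hyperbolically bounded away from \emph{all} the zeros, and the Carleson-type separation you assume in~(b) then yields $\liminf_k\abs{B(w_k(z))}>0$. Claim~(a) therefore fails for almost every $z$ --- nowhere near the all-$z$ conclusion the theorem requires --- and no rearrangement of heights, zeros or dense set $\set{q_m}$ can fix this inside your framework.

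The underlying mistake is that thinning the approach to a sequence of heights $t_k$ in Step~1 discards exactly the feature of tangentiality on which the Littlewood--Aikawa constructions rely. As the height decreases from $2s$ to $s/2$, the translated tangential curve $\gamma+z$ sweeps a horizontal interval of length $\gg s$, because tangentiality is precisely the statement that the slope of $\gamma$ blows up at the boundary. Consequently the set of $z$ for which $\gamma+z$ passes within hyperbolic distance $\eps$ of a single zero at height $s$ has measure comparable to $\eps s$ times an unbounded factor (the slope near height $s$), not merely $\eps s$, and it is this extra factor that reconciles the covering/density requirement with the finiteness of the Blaschke sum. By evaluating at just the one point of $\gamma+z$ at each height $t_k$, you collapse the sweep to a single point per scale and the gain disappears; your real-valued alternative with $g=\ind{F}$ suffers the same loss for the same reason. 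A correct argument must evaluate along the whole of $\gamma+z$ rather than a pre-chosen sparse sequence of heights, and must quantify the horizontal sweep of $\gamma+z$ between consecutive dyadic scales; that is where the substance of the proof lies.
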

This theorem follows from results due to Aikawa \cite{Aikawa_PAMS_Harmonic_Functions, Aikawa_JLMS_Harmonic_Functions_Green}. Much earlier, Littlewood \cite{Littlewood_JLMS_Fatou_Thm} obtained the weaker conclusion of almost everywhere non-existence of the limit (in the context of bounded holomorphic functions on the unit disk). See \cite{Lohwater_Piranian_boundary_behavior_1957} and \cite[Theorem~2.22]{Lohwater_Collingwood_Cluster_sets_1966} for similar results that predate the aforementioned work of Aikawa. Nagel and Stein \cite{Nagel_Stein_1984}, however, constructed approach regions which are not contained in any nontangential region $\Gamma_{a}(0)$ (which is weaker than the tangentiality condition \eqref{tangentiality condition on curve}) for which the conclusion of Fatou's theorem still holds. 

Theorem~\ref{Aikawa Littlewood theorem} essentially states that no degree of integrability of $f$ can help with the convergence properties of $u$ at the boundary along tangential approach paths. It is therefore natural to ask if any additional \emph{regularity} assumptions on the function $f$ can allow for the convergence of the Poisson extension $u_{f}(t,x)=(P_{t}\ast f)(x)$ through tangential approach regions. For any $\beta\in(0,1]$, $a>0$ and $x_{0}\in\Rn$ we follow \cite[Section 4]{Nagel_Stein_1984} and consider the \emph {tangential} approach region
 \begin{equation}\label{first mention of the tangential approach regions in upper half space}
     \Gamma^{\beta}_{a}(x_0)=\set{(t,x)\in\Hn : \abs{x-x_{0}}<at^{\beta} \textup{ if }0<t\leq 1, \textup{ and } \abs{x-x_0}<at \textup{ if } t\geq 1},
 \end{equation}
and for any measurable $u:\Hn\to\R$ the corresponding maximal function is given by
 \begin{equation}\label{general tangential maximal function on the tangential approach regions defined by Gamma beta}
     N_{*,\beta}^{a}(u)(x_0)=\sup_{(t,x)\in\Gamma^{\beta}_{a}(x_0)}\abs{u(t,x)},\quad  x_0\in\Rn.
 \end{equation} 
Naturally, only the behaviour of $u_f$ close to the boundary $\partial\Hn$ is of relevance to boundary convergence questions, and we point out that $\Gamma^{\beta}_{a}(x_0)$ coincides with the standard (conical) approach region $\Gamma_{a}(x_0)$ in the range where $t\geq 1$.
As may be expected, the aperture parameter $a>0$ is of little significance, and all of the results stated below hold regardless of its value; when $a=1$ we shall denote $\Gamma^{\beta}_{a}(x_0)=\Gamma^{\beta}(x_0)$, $\Gamma_{a}(x_0)=\Gamma(x_0)$ and $N_{*,\beta}^{a}=N_{*,\beta}$. 
In this context it turns out to be natural to impose regularity on the initial datum $f$ in the scale of the classical Bessel potential spaces $\mathscr{L}^{p}_{\alpha}(\Rn)$, which informally, consist of functions $f\in\Ell{p}$ that possess $\alpha$ derivatives in $\Ell{p}$; see Section~\ref{subsection with definitions of spaces} below for clarification. The following result was obtained by Nagel--Rudin--Shapiro \cite[Theorem~5.5]{Nagel_Rudin_Shapiro_1982}; see also Nagel--Stein \cite[Theorem~5]{Nagel_Stein_1984}.
\begin{thm}\label{theorem Nagel--Stein thm 5}
     Let $n\geq 1$, $p\in(1,\infty)$ and $\alpha\geq 0$ be such that $\alpha p <n$. If $\beta\geq 1-\frac{\alpha p}{n}$, $f\in\mathscr{L}^{p}_{\alpha}(\Rn)$ and $u_{f}(t,x)=(P_{t}\ast f)(x)$, then $\norm{N_{*,\beta}(u_{f})}_{\Ell{p}}\lesssim \norm{f}_{\mathscr{L}^{p}_{\alpha}(\Rn)}$. As a consequence,
     \begin{equation*}
         \lim_{\Gamma^{\beta}(x_0)\ni(t,x)\to (0,x_0)} u_{f}(t,x)=f(x_0) \quad \textup{for a.e. }x_{0}\in\Rn.
     \end{equation*}
 \end{thm}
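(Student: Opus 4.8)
The plan is to strip the Bessel regularity off the datum, reducing $u_{f}$ to a convolution against an explicit kernel, and then to analyse the tangential maximal function through a near/far splitting of that convolution.

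\emph{Reduction.} Since $f\in\mathscr{L}^{p}_{\alpha}(\Rn)$ we may write $f=G_{\alpha}*g$ with $g\in\Ell{p}$ and $\|g\|_{\Ell{p}}=\|f\|_{\mathscr{L}^{p}_{\alpha}(\Rn)}$, where $G_{\alpha}$ is the Bessel kernel. If $\alpha=0$ then $\beta\geq1$, so $\Gamma^{\beta}(x_{0})\subseteq\Gamma(x_{0})$ and the claim reduces to Fatou's theorem; hence assume $0<\alpha<n$. Then $u_{f}(t,x)=(\Phi_{t}*g)(x)$ with $\Phi_{t}:=P_{t}*G_{\alpha}$, and from the pointwise asymptotics of $G_{\alpha}$ (comparable to $|w|^{\alpha-n}$ near the origin, exponentially decaying at infinity) together with the scaling of $P_{t}$ one obtains, uniformly for $t\in(0,1]$,
\begin{equation*}
0\leq\Phi_{t}(z)\lesssim(t+|z|)^{\alpha-n}\ \ (|z|\leq1),\qquad\Phi_{t}(z)\lesssim|z|^{-n-1}\ \ (|z|\geq1),\qquad\|\Phi_{t}\|_{\Ell{1}}=1.
\end{equation*}
Split the supremum defining $N_{*,\beta}(u_{f})(x_{0})$ into $t\geq1$ and $0<t\leq1$. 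On $t\geq1$ the region $\Gamma^{\beta}(x_{0})$ is the cone $\Gamma(x_{0})$ and $u_{f}=P_{t}*f$, so that part is controlled by the nontangential maximal function of $P_{t}*f$, hence by $\mathcal{M}f(x_{0})$, where $\mathcal{M}$ is the Hardy--Littlewood maximal operator; and $\|\mathcal{M}f\|_{\Ell{p}}\lesssim\|f\|_{\Ell{p}}\leq\|G_{\alpha}\|_{\Ell{1}}\|g\|_{\Ell{p}}=\|g\|_{\Ell{p}}$.

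\emph{Near/far splitting.} For $0<t\leq1$ and $(t,x)\in\Gamma^{\beta}(x_{0})$ we have $|x-x_{0}|<t^{\beta}\leq1$; fix a large constant $C$ and write $\Phi_{t}*g(x)=\int_{|x-y|\leq Ct^{\beta}}+\int_{|x-y|>Ct^{\beta}}$. In the far integral $|x-y|\approx|x_{0}-y|\gtrsim t^{\beta}$, so the kernel bounds above together with a dyadic decomposition of $\{|x_{0}-y|\gtrsim t^{\beta}\}$ into annuli produce (using $\alpha>0$ for the small annuli and the power $-n-1$ for the large ones) a convergent geometric sum bounded by $\mathcal{M}g(x_{0})$. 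For the near integral, Hölder's inequality with an exponent $r\in(1,p]$ gives
\begin{equation*}
\int_{|x-y|\leq Ct^{\beta}}(t+|x-y|)^{\alpha-n}|g(y)|\,dy\leq\Big(\int_{|z|\leq Ct^{\beta}}(t+|z|)^{(\alpha-n)r'}\,dz\Big)^{1/r'}\Big(\int_{B(x_{0},(C+1)t^{\beta})}|g|^{r}\Big)^{1/r};
\end{equation*}
since $\alpha r\leq\alpha p<n$ the first integral is comparable to $t^{(\alpha-n)r'+n}$ (its contribution from $|z|\lesssim t$ dominating), so the first factor is $\approx t^{\alpha-n/r}$, while the second is $\lesssim t^{\beta n/r}\bigl(\mathcal{M}(|g|^{r})(x_{0})\bigr)^{1/r}$; altogether the near integral is $\lesssim t^{\,\alpha+(\beta-1)n/r}\bigl(\mathcal{M}(|g|^{r})(x_{0})\bigr)^{1/r}$.

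\emph{Conclusion and the main obstacle.} When $\beta>1-\frac{\alpha p}{n}$ one can pick $r\in(1,p)$ with $\beta\geq1-\frac{\alpha r}{n}$, so the exponent $\alpha+(\beta-1)n/r$ is nonnegative and, since $t\leq1$, the near integral is $\lesssim\bigl(\mathcal{M}(|g|^{r})(x_{0})\bigr)^{1/r}$; combining the three contributions gives $N_{*,\beta}(u_{f})(x_{0})\lesssim\bigl(\mathcal{M}(|g|^{r})(x_{0})\bigr)^{1/r}+\mathcal{M}g(x_{0})+\mathcal{M}f(x_{0})$, and the Hardy--Littlewood maximal theorem on $\Ell{p/r}$ and on $\Ell{p}$ yields $\|N_{*,\beta}(u_{f})\|_{\Ell{p}}\lesssim\|g\|_{\Ell{p}}=\|f\|_{\mathscr{L}^{p}_{\alpha}(\Rn)}$. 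The hard part is the endpoint $\beta=1-\frac{\alpha p}{n}$: one is then forced to take $r=p$, the exponent is exactly $0$, and the near integral is only $\lesssim\bigl(\mathcal{M}(|g|^{p})(x_{0})\bigr)^{1/p}$, which gives the weak-type estimate $\|N_{*,\beta}(u_{f})\|_{\Ell{p,\infty}}\lesssim\|f\|_{\mathscr{L}^{p}_{\alpha}(\Rn)}$ but not the strong one, because $\mathcal{M}$ is unbounded on $\Ell{1}$. This cannot be repaired by interpolating from below, since the maximal estimate genuinely fails for data in $\mathscr{L}^{p_{0}}_{\alpha}(\Rn)$ with $p_{0}<p$ (there $1-\frac{\alpha p_{0}}{n}>\beta$). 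Instead one must sharpen the near-integral bound directly — for instance by a Lorentz-space refinement of the Hölder step that registers the scale on which $g$ concentrates (the truncated Riesz kernel lies in $\Ell{n/(n-\alpha),\infty}$), or by a stopping-time/good-$\lambda$ argument built on the level sets of $\mathcal{M}g$ — to recover $\|N_{*,\beta}(u_{f})\|_{\Ell{p}}\lesssim\|f\|_{\mathscr{L}^{p}_{\alpha}(\Rn)}$ at the endpoint as well.

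\emph{Pointwise convergence.} Granted the maximal inequality (the weak-type form suffices), given $\eps>0$ choose a Schwartz function $h$ with $\|f-h\|_{\mathscr{L}^{p}_{\alpha}(\Rn)}<\eps$; then $P_{t}*h$ extends continuously to $\overline{\Hn}$ with boundary value $h$, so the tangential limit of $u_{h}$ along $\Gamma^{\beta}(x_{0})$ exists and equals $h(x_{0})$ for every $x_{0}$. Writing $u_{f}-f(x_{0})=(u_{f-h}-(f-h)(x_{0}))+(u_{h}-h(x_{0}))$, the set where $\limsup_{\Gamma^{\beta}(x_{0})\ni(t,x)\to(0,x_{0})}|u_{f}(t,x)-f(x_{0})|>\lambda$ is contained in $\{N_{*,\beta}(u_{f-h})>\lambda/2\}\cup\{|f-h|>\lambda/2\}$, which has measure $\lesssim(\eps/\lambda)^{p}$ by the maximal inequality and Chebyshev's inequality; letting $\eps\to0$ and then $\lambda\to0$ gives the convergence for a.e.\ $x_{0}$.
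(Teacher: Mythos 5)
Your near/far splitting and kernel bounds for $\Phi_t=P_t*G_\alpha$ are sound, and the argument you give does prove the strict-inequality case $\beta>1-\alpha p/n$ correctly. But the theorem is stated for $\beta\geq 1-\alpha p/n$, and at the endpoint your proof has a genuine gap that you diagnose honestly but do not close. Proposing ``a Lorentz-space refinement of the H\"older step'' or ``a good-$\lambda$ argument'' is not a proof; and in fact the first suggestion is unlikely to work as stated: the near integral at scale $t$ really is as large as $t^{\alpha}\bigl(\fint_{\Delta(x_0,2t^\beta)}|g|^p\bigr)^{1/p}$ for suitably concentrated $g$, so any bound obtained by estimating it pointwise in $x_0$ by a single maximal function of $g$ will degenerate to weak type at the endpoint exactly as you observed. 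The weak-type degeneration is not an artifact of a lossy H\"older step; it is intrinsic to any pointwise-in-$x_0$ reduction.

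The mechanism that rescues the endpoint (this is the approach of Nagel--Stein, which the paper cites as Theorem~\ref{theorem Nagel--Stein thm 5} and uses heavily in Section~\ref{Sect:tang}) is structural rather than pointwise: one does not try to bound $\sup_{\Gamma^\beta(x_0)}|u_f|$ by a single maximal function value at $x_0$, but instead integrates the tangential maximal function directly in $L^p(\dd x_0)$ and exploits the thinness of the approach region via a covering argument. Concretely, the mitigated maximal function
\[
\mathcal{M}_{p,\beta}(v)(x_0)=\sup_{\substack{(t,x)\in\Gamma^\beta(x_0)\\ 0<t\leq 1}} t^{\frac{n(1-\beta)}{p}}\,|v(t,x)|
\]
satisfies the strong estimate $\|\mathcal{M}_{p,\beta}(v)\|_{L^p}\lesssim\|N_*(v)\|_{L^p}$ (the paper's display \eqref{Lp domination of mitigating maximal function by non tangential max function nagel stein}, which is Nagel--Stein's Theorem~4$'$). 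When $\beta=1-\alpha p/n$ the weight is $t^{n(1-\beta)/p}=t^\alpha$, so the near part of your estimate $t^\alpha\bigl(\fint_{\Delta(x,2t^\beta)}|g|^r\bigr)^{1/r}$ is exactly $t^\alpha v_g(t,x)$ with $v_g(t,x)=\bigl(\fint_{\Delta(x,2t^\beta)}|g|^r\bigr)^{1/r}$, and $\|\mathcal{M}_{p,\beta}(v_g)\|_{L^p}\lesssim\|N_*v_g\|_{L^p}\lesssim\|M_r g\|_{L^p}\lesssim\|g\|_{L^p}$ since $r<p$. The near contribution to $N_{*,\beta}(u_f)$ is then controlled in $L^p$ \emph{before} any pointwise maximal domination. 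This is a strong $L^p$ bound that cannot be seen from a local pointwise estimate; the proof of the covering lemma behind \eqref{Lp domination of mitigating maximal function by non tangential max function nagel stein} counts the $x_0$-shadows of points in $\Gamma^\beta(x_0)$ and uses that each $(t,x)$ is visible only from a ball of radius $t^\beta$. To complete your proof you should appeal to (or reprove) this covering estimate rather than sharpen the H\"older step.

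Your pointwise-convergence paragraph is fine and standard once the maximal inequality is in hand; and you are right that a weak-type bound would suffice for that conclusion — the difficulty is solely in the $L^p$ bound that the theorem actually asserts.
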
 We remark that Theorem~\ref{theorem Nagel--Stein thm 5} can be extended to treat the case $\alpha p=n$, with the conclusions holding for all $\beta>0$ (see the proof of Lemma~\ref{lemma dimensional measures and Bessel potential functions} below). If $\alpha>\frac{n}{p}$, then any element of $\mathscr{L}^{p}_{\alpha}(\Rn)$ has a unique (Hölder) continuous representative vanishing at infinity (see, e.g., \cite[Theorem 1.2.4]{Adams_Hedberg_Potential_Theory}), and therefore $u_{f}\in C(\clos{\Hn})$ (see, e.g., \cite[Chapter \RNum{3}, Section 2]{Stein_Singular_Integrals}).

 Fatou's theorem asserts that the divergence set 
 \begin{equation*}
 \set{x_0\in\Rn : \lim_{\Gamma(x_0)\ni(t,x)\to (0,x_0)}u_{f}(t,x)\neq f(x_0)}
 \end{equation*}
 has zero Lebesgue measure. A natural question is whether additional regularity assumptions on $f$ can make this set smaller in some appropriate sense, such as that measured by the Hausdorff dimension $\dim_{H}$. The following result can be obtained by combining \cite[Theorem~3.13, Proposition~5.3]{Nagel_Rudin_Shapiro_1982} with a classical result relating Hausdorff measure and Bessel capacity (see, e.g., \cite[Theorem~2.6.16]{Ziemer_book_1989}). 
\begin{thm}\label{theorem hausdorff dimension of divergence set for classical poisson extension}
    Let $n\geq 1$, $p\in(1,\infty)$, $\alpha\in (0,\frac{n}{p}]$ and $\beta=1-\frac{\alpha p}{n}$. If $f\in\mathscr{L}^{p}_{\alpha}(\Rn)$ and $u_{f}(t,x)=(P_{t}\ast f)(x)$, then for any $\beta'\in(\beta,1]$, it holds that
    \begin{equation}\label{estimate on dimension of divergence set of classical Poisson extension of f}
        \dim_{H}\left(\set{x_0\in\Rn : \lim_{\Gamma^{\beta'}(x_0)\ni (t,x)\to(0,x_0)} u_{f}(t,x)\neq f(x_0)}\right)\leq n-n(\beta' -\beta).
    \end{equation}
\end{thm}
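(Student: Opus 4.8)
The plan is to upgrade the almost-everywhere convergence afforded by Theorem~\ref{theorem Nagel--Stein thm 5} into a statement about Bessel capacity, and then pass from capacity to Hausdorff dimension via the classical comparison between the two. Fix $\beta'\in(\beta,1]$ and set $\alpha':=\tfrac{n}{p}(1-\beta')$, so that $0\le\alpha'<\alpha\le\tfrac np$ and $\beta'=1-\tfrac{\alpha'p}{n}$; put $\gamma:=\alpha-\alpha'$, noting that $\gamma\in(0,\tfrac np]$ and $n-\gamma p=n-n(\beta'-\beta)$. We use the precise (quasicontinuous) representative of $f$, which is determined up to sets of $B_{\gamma,p}$-capacity zero, $B_{\gamma,p}$ denoting the Bessel capacity; below, ``$B_{\gamma,p}$-q.e.'' means ``outside a set of $B_{\gamma,p}$-capacity zero''. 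It suffices to show that the divergence set
\[
E:=\set{x_0\in\Rn : \lim_{\Gamma^{\beta'}(x_0)\ni(t,x)\to(0,x_0)}u_f(t,x)\neq f(x_0)},
\]
with non-existence of the limit included, satisfies $B_{\gamma,p}(E)=0$: by the classical relation between Bessel capacity and Hausdorff measure \cite[Theorem~2.6.16]{Ziemer_book_1989}, this forces $\mathcal{H}^{s}(E)=0$ for all $s>n-\gamma p$, hence $\dim_{H}(E)\le n-\gamma p=n-n(\beta'-\beta)$, which is \eqref{estimate on dimension of divergence set of classical Poisson extension of f}.

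The key analytic input is a capacitary weak-type bound for the tangential maximal function: there is $C=C(n,p,\alpha,\alpha')$ such that, for every $h\in\mathscr{L}^{p}_{\alpha}(\Rn)$, every $\lambda>0$, and $u_h:=P_t\ast h$,
\[
B_{\gamma,p}\big(\set{x_0\in\Rn : N_{*,\beta'}(u_h)(x_0)>\lambda}\big)\le C\lambda^{-p}\norm{h}_{\mathscr{L}^{p}_{\alpha}(\Rn)}^{p}.
\]
This is the content of \cite[Theorem~3.13, Proposition~5.3]{Nagel_Rudin_Shapiro_1982}: writing $h=G_{\alpha}\ast\varphi$ with $\varphi\in\Ell{p}$ and $\norm{\varphi}_{\Ell{p}}\lesssim\norm{h}_{\mathscr{L}^{p}_{\alpha}(\Rn)}$, one dominates $N_{*,\beta'}(u_h)$ pointwise --- up to a bounded term from the range $t\ge1$ --- by a Bessel potential $G_{\gamma}$ of a Hardy--Littlewood-type maximal function of $\varphi$, the tangential aperture $t^{\beta'}=t^{1-\alpha'p/n}$ being precisely calibrated so that exactly $\alpha'$ of the available $\alpha$ derivatives are spent controlling the tangential spreading; the claim then follows from the $\Ell{p}$-boundedness of the maximal operator and the elementary fact that superlevel sets of $G_{\gamma}\ast\psi$ have $B_{\gamma,p}$-capacity at most $C\lambda^{-p}\norm{\psi}_{\Ell{p}}^{p}$. (The corresponding $\Ell{p}$ estimate $\norm{N_{*,\beta'}(u_h)}_{\Ell{p}}\lesssim\norm{h}_{\mathscr{L}^{p}_{\alpha}(\Rn)}$ is already Theorem~\ref{theorem Nagel--Stein thm 5}; the capacitary refinement is the point.)

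To conclude, pick Schwartz functions $g_k$ with $\norm{f-g_k}_{\mathscr{L}^{p}_{\alpha}(\Rn)}\to0$ and set $h_k:=f-g_k$, $u_{g_k}:=P_t\ast g_k$, $u_{h_k}:=P_t\ast h_k$. Since $g_k$ is bounded and continuous, $u_{g_k}$ extends continuously to $\clos{\Hn}$ with boundary value $g_k$, so $u_{g_k}(t,x)\to g_k(x_0)$ as $\Gamma^{\beta'}(x_0)\ni(t,x)\to(0,x_0)$ for \emph{every} $x_0$, and $N_{*,\beta'}(u_{g_k})\le\norm{g_k}_{\infty}$; since moreover the vertical ray $\set{(t,x_0):t>0}$ is contained in $\Gamma^{\beta'}(x_0)$ and $u_{h_k}(t,x_0)\to h_k(x_0)$ for $B_{\gamma,p}$-q.e.\ $x_0$, we have $\abs{h_k(x_0)}\le N_{*,\beta'}(u_{h_k})(x_0)$ and (precise representatives adding up) $f=g_k+h_k$ $B_{\gamma,p}$-q.e. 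Consequently, for $B_{\gamma,p}$-q.e.\ $x_0\in E$ with $N_{*,\beta'}(u_f)(x_0)<\infty$,
\[
0<\limsup_{\Gamma^{\beta'}(x_0)\ni(t,x)\to(0,x_0)}\abs{u_f(t,x)-f(x_0)}\le\abs{h_k(x_0)}+N_{*,\beta'}(u_{h_k})(x_0)\le 2N_{*,\beta'}(u_{h_k})(x_0),
\]
while the remaining points of $E$ lie in $\set{N_{*,\beta'}(u_f)=\infty}\subseteq\set{N_{*,\beta'}(u_{h_k})=\infty}$. Hence, for each $m\in\N$, the sets $\set{x_0 : \limsup_{\Gamma^{\beta'}(x_0)\ni(t,x)\to(0,x_0)}\abs{u_f(t,x)-f(x_0)}>1/m}$ and $\set{N_{*,\beta'}(u_f)=\infty}$ are each contained, up to a $B_{\gamma,p}$-null set, in $\set{N_{*,\beta'}(u_{h_k})>1/(2m)}$ for \emph{every} $k$; by the capacitary bound their $B_{\gamma,p}$-capacity is at most $C(2m)^{p}\norm{h_k}_{\mathscr{L}^{p}_{\alpha}(\Rn)}^{p}$, which tends to $0$ as $k\to\infty$, so each is $B_{\gamma,p}$-null. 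As $E$ is covered by the countable union over $m$ of these sets together with a $B_{\gamma,p}$-null set, and sets of $B_{\gamma,p}$-capacity zero form a $\sigma$-ideal, we obtain $B_{\gamma,p}(E)=0$.

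The only step of genuine substance is the capacitary weak-type estimate imported from \cite{Nagel_Rudin_Shapiro_1982}; the rest is the routine density-and-$\sigma$-ideal bookkeeping that converts such an estimate into control of the divergence set, followed by the standard capacity--Hausdorff-dimension dictionary \cite[Theorem~2.6.16]{Ziemer_book_1989}. The only point requiring care is that every exceptional set occurring above be shown $B_{\gamma,p}$-null rather than merely Lebesgue-null; this is where the quasicontinuity of Bessel potentials, and the $B_{\gamma,p}$-quasi-everywhere convergence $u_{h_k}(t,x_0)\to h_k(x_0)$ along the vertical direction, enter.
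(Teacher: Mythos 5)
Your proposal is correct, and it is precisely the route the paper itself attributes for this result: the capacitary weak-type estimate from Nagel--Rudin--Shapiro, combined with the classical comparison between Bessel capacity and Hausdorff measure from Ziemer's book. You have fleshed out the bookkeeping (reducing $|h_k(x_0)|\le N_{*,\beta'}(u_{h_k})(x_0)$ via vertical-ray quasi-everywhere convergence, the $\sigma$-ideal argument) carefully and correctly; in particular, the factorisation $h = G_{\gamma}\ast\mathscr{J}_{\alpha'}\varphi$ with $\gamma=\alpha-\alpha'$ calibrated against $\beta'=1-\alpha'p/n$, and the observation that $N_{*,\beta'}(u_h)\lesssim G_\gamma\ast\psi$ with $\smallnorm{\psi}_{\El{p}}\lesssim\norm{h}_{\mathscr{L}^p_\alpha}$ (via Remark~\ref{remark 2 - convolution commute  with general maximal functions approach} and Theorem~\ref{theorem Nagel--Stein thm 5}), is the analytic core.

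What the paper actually does, when it proves the generalisation (Theorem~\ref{main theorem upper half space} and Proposition~\ref{proposition Hausdorff dimension div set for cones and Bessel}), is different in one respect worth noting: it bypasses Bessel capacity entirely and works instead with $s$-dimensional measures and Frostman's lemma. Concretely, the paper establishes $\norm{N_{*,\beta'}u_f}_{\El{1}(\mu)}\lesssim\norm{f}_{\mathscr{L}^p_\alpha}$ for all $\mu\in\mathcal{M}^s(\Rn)$ with $s>n-n(\beta'-\beta)$ (Lemma~\ref{lemma dimensional measures and Bessel potential functions} plays the role your capacitary bound plays), runs the same Schwartz-approximation argument against these measures rather than against $B_{\gamma,p}$-capacity, and finishes with Frostman's lemma (Lemma~\ref{frostman's lemma}) instead of \cite[Theorem~2.6.16]{Ziemer_book_1989}. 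The two approaches are morally parallel --- both exploit the same factorisation of $G_\alpha$ and the same $\El{p}$ tangential maximal estimate, and both convert a weak-type estimate into a divergence-set bound --- but the paper's route via Frostman's lemma is preferred because it transfers without change to the bounded Lipschitz domain setting of Theorem~\ref{second main theorem bounded Lipschitz domain} (there, the paper uses Hausdorff content directly, see Section~\ref{section second part of the proof of second main thm}), where a developed Bessel capacity theory is less readily available. Your proof is a valid alternative for the half-space case, and it is the one the paper points to when citing the result; the paper's own proof technique is simply more portable.
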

Note that the limiting case $\beta'=\beta$ is covered by Theorem~\ref{theorem Nagel--Stein thm 5} above. The right-hand side of~\eqref{estimate on dimension of divergence set of classical Poisson extension of f} is smallest when $\beta'=1$ and equals $n-\alpha p$. 
It is necessary to choose a preferred representative of the equivalence class $f\in\mathscr{L}^{p}_{\alpha}(\Rn)$ for the statement \eqref{estimate on dimension of divergence set of classical Poisson extension of f} to make sense, as different representatives can differ on sets of any Hausdorff dimension. We let this representative be the function $\tilde{f}:\Rn\to(-\infty, \infty]$ defined in~\eqref{preferred representative for bessel potential space function} in Section~\ref{subsection with definitions of spaces} below. 

\subsection{The Dirichlet problem for elliptic equations}\label{section on the dirichlet problem for elliptic equations}
The aim of this paper is to generalise Theorems~\ref{theorem Nagel--Stein thm 5} and \ref{theorem hausdorff dimension of divergence set for classical poisson extension} to solutions to the Dirichlet problem for second order divergence form elliptic equations with variable coefficients, under minimal regularity assumptions, in the upper half-space $\Hn$, and in bounded Lipschitz domains in $\R^{1+n}$. 
In general, for an open subset $\Omega\subseteq \R^{1+n}$ we let $A:\Omega\to \R^{(1+n)\times (1+n)}$ be a matrix of measurable, real-valued functions on $\Omega$. We assume that there are constants $0<\lambda\leq \Lambda<\infty$ such that the bound and uniform ellipticity 
\begin{equation}\label{boundedness aanndd ellipticity conditions}
    \abs{\langle A(X)\xi , \eta\rangle}\leq \Lambda \abs{\xi}\abs{\eta} \quad\textup{and}\quad \langle A(X)\xi , \xi\rangle \geq \lambda \abs{\xi}^{2}
\end{equation}
hold for all $\xi,\eta\in\R^{1+n}$ and all $X\in\Omega$.
Consider the elliptic equation 
\begin{equation}\label{formal elliptic equation in divergence form}
Lu=-\dvg(A\nabla u)= -\sum_{i=1}^{n+1}\sum_{j=1}^{n+1}\partial_{i}\left(A_{ij}\partial_{j}u\right)=0
\end{equation}
 where, as usual, a measurable function $u:\Omega\to\R$ is said to be a (weak) \emph{solution} to this equation in $\Omega$ if $u\in\mathrm{W}^{1,2}_{\loc}(\Omega)$ and $\int_{\Omega}\langle A\nabla u , \nabla \varphi\rangle \dd X =0$ for all smooth compactly supported functions $\varphi\in C^{\infty}_{c}(\Omega)$. 

It follows from the classical De Giorgi--Nash--Moser estimates that solutions of the equation $Lu=0$ are locally bounded and Hölder continuous in the sense that
\begin{equation}\label{local boundedness solutions DGNM}
    \norm{u}_{\El{\infty}(B)}\lesssim \left(\dashint_{2B}\abs{u(X)}^{2}\dd X\right)^{1/2},
\end{equation}
and there exists $\alpha_{L}>0$ such that 
\begin{equation}\label{holder continuity solutions DGNM}
    \abs{u(X)-u(Y)}\lesssim \left(\frac{\abs{X-Y}}{r}\right)^{\alpha_{L}}\left(\dashint_{2B}\abs{u(X)}^{2}\dd X\right)^{1/2}\quad \textup{ for all }X,Y\in B,
\end{equation}
for all balls $B$ of radius $r>0$ such that the dilate $2B\subseteq \Omega$, where $\alpha_{L}$ and the implicit constants only depend on $n$, $\lambda $ and $\Lambda$. 

Let us now consider the special case when $\Omega=\Hn=\set{(t,x) : t>0,  x\in\Rn}$, whose boundary is identified with $\Rn$. The nontangential maximal function $N_{*}u$ of a measurable function $u:\Hn\to\R$ is given by
\begin{equation}\label{definition of nontangential maximal function}
    (N_{*}u)(x_{0})=\sup_{(t,x)\in\Gamma(x_0)} \abs{u(t,x)},\quad  x_0\in\Rn.
\end{equation}
Given $p\in(1,\infty)$ and $f\in\Ell{p}$, the Dirichlet problem $\left(\mathcal{D}\right)^{L}_{p}$ is to find a function $u\in\mathrm{W}^{1,2}_{\loc}(\Hn)$ such that
\begin{equation*}
    \left(\mathcal{D}\right)^{L}_{p}\hspace{0.5cm}\left\{\begin{array}{l}
        Lu=0  \text{ in } \Hn;\\
        N_{*}(u)\in\Ell{p};\\
        \lim_{\Gamma(x_0)\ni(t,x)\to (0,x_{0})} u(t,x)=f(x_0) \text{ for a.e. } x_{0}\in\Rn.
    \end{array}\right.
\end{equation*}

The Dirichlet problem $\left(\mathcal{D}\right)^{L}_{p}$ is said to be \emph{well-posed} if for every boundary datum $f\in\Ell{p}$ there exists a unique solution $u$ with the three properties listed above. 

There exists a family of Borel probability measures $\set{\omega^{(t,x)}_{L}}_{(t,x)\in\Hn}$ on $\Rn$, known as the \emph{$L$-harmonic measures}, such that for any compactly supported continuous function $f\in C_{c}(\Rn)$, the function 
\begin{equation}\label{representation of classical solutions by harmonic measure 1st}
u_{f}(t,x)=\int_{\Rn}f(y)\dd \omega^{(t,x)}_{L}(y), \quad (t,x)\in\Hn
\end{equation}
is a solution (the unique bounded solution) to the classical Dirichlet problem with boundary datum $f$, in the sense that it is a weak solution to $Lu_{f}=0$ in $\Hn$ and that $u_{f}\in C(\clos{\Hn})$ with $u_{f}|_{\partial \Hn}= f$ (see, e.g., \cite[Section 5B]{HLM_Degenerate_Dirichlet_2019} in the more general context of degenerate elliptic equations).

We say that $(\mathcal{D})_{p}^{L}$ \emph{holds} if for all $f\in C_{c}(\Rn)$, the classical solution $u_f$ in \eqref{representation of classical solutions by harmonic measure 1st} satisfies 
    \begin{equation}\label{condition Dirichlet Lp holds}
        \norm{N_{*}u_{f}}_{\Ell{p}}\lesssim \norm{f}_{\Ell{p}}.
    \end{equation}
It is well-known that this implies the well-posedness of $(\mathcal{D})^{L}_{p}$ (see Lemma~\ref{lemma Dirichlet holds implies well posedness} below). 

Our first main result is the following generalisation of Theorems~\ref{theorem Nagel--Stein thm 5} and \ref{theorem hausdorff dimension of divergence set for classical poisson extension}.
\begin{thm}\label{main theorem upper half space}
    Let $n\geq 1$ and $p\in (1,\infty)$ be such that $\left(\mathcal{D}\right)^{L}_{p}$ holds. Let $\alpha > 0$ be such that $\alpha p\leq n$, and $\beta=1-\frac{\alpha p}{n}$. Let $f\in\mathscr{L}^{p}_{\alpha}(\Rn)$ and let $u_{f}$ be the unique solution to $\left(\mathcal{D}\right)^{L}_{p}$ with boundary datum $f$. For any $\beta'\in(\beta,1]$ it holds that
    \begin{equation}\label{upper bound hausdorff dimension of divergence set}
        \dim_{H}\left(\set{x_0\in\Rn : \lim_{\Gamma^{\beta'}(x_0)\ni (t,x)\to(0,x_0)} u_{f}(t,x)\neq f(x_0)}\right)\leq n-n(\beta' -\beta).
    \end{equation}
    In addition, if $\alpha p<n$, then 
    \begin{equation}\label{tangential maximal function estimate in statement of main theorem}
    \norm{N_{*,\beta}(u_{f})}_{\Ell{p}}\lesssim \norm{f}_{\mathscr{L}^{p}_{\alpha}(\Rn)}
    \end{equation}
    and 
    \begin{equation}\label{ae tangential convergence for sol of div form with bessel bdry datum}
        \lim_{\Gamma^{\beta}(x_0)\ni (t,x)\to(0,x_0)} u_{f}(t,x)=f(x_0) \quad \textup{ for a.e. }x_{0}\in\Rn.
    \end{equation}
    If $\alpha p=n$, then \eqref{tangential maximal function estimate in statement of main theorem} and \eqref{ae tangential convergence for sol of div form with bessel bdry datum} hold for all $\beta>0$.
\end{thm}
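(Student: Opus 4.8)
The plan is to reduce Theorem~\ref{main theorem upper half space} to a single pointwise bound for $u_{f}$ on the tangential approach regions, assembled from three inputs: the nontangential maximal function estimate \eqref{condition Dirichlet Lp holds} furnished by $(\mathcal{D})^{L}_{p}$, together with its self-improvement; the De Giorgi--Nash--Moser interior and boundary regularity estimates \eqref{local boundedness solutions DGNM}--\eqref{holder continuity solutions DGNM} and the reverse-Hölder ($A_{\infty}$) structure of $L$-harmonic measure that is equivalent to $(\mathcal{D})^{L}_{p}$; and the classical theory of the Bessel potential spaces $\mathscr{L}^{p}_{\alpha}$ (Sobolev embedding, quasicontinuous representatives, fractional sharp maximal functions, and the comparison of Bessel capacity with Hausdorff content). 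A few reductions come first. Since the reverse-Hölder class $B_{p'}$ of the $L$-harmonic measure density is contained in $B_{q'}$ whenever $q'\le p'$, the hypothesis $(\mathcal{D})^{L}_{p}$ implies that $(\mathcal{D})^{L}_{q}$ holds for every $q\ge p$ (and, by Gehring's lemma, also for $q$ in some interval $(p-\delta,p]$). The representative $\tilde f$ is the one fixed in the introduction. And the case $\alpha p = n$ follows from the case $\alpha p<n$: indeed $\mathscr{L}^{p}_{n/p}\hookrightarrow\mathscr{L}^{p}_{\alpha_{0}}$ for every $\alpha_{0}<n/p$, while $\Gamma^{\beta}(x_{0})\subseteq\Gamma^{\beta_{0}}(x_{0})$ whenever $\beta\ge\beta_{0}:=1-\alpha_{0}p/n$, so letting $\alpha_{0}\uparrow n/p$ and using monotonicity of the approach regions transfers all of the conclusions (including \eqref{upper bound hausdorff dimension}).

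The core estimate is the following. Fix $x_{0}\in\Rn$, take $(t,x)\in\Gamma^{\beta'}(x_{0})$ with $0<t\le1$, and put $R:=2t^{\beta'}$, so that $B(x,at)\subseteq B(x_{0},R)$ for small $a$. Subtracting the constant $\tilde f(x_{0})$ — and, when $\alpha>1$, the degree-$\lfloor\alpha\rfloor$ Taylor polynomial $P_{x_{0}}$ of $f$ at $x_{0}$, whose solution is well approximated near $\partial\Hn$ by $P_{x_{0}}$ itself via \eqref{holder continuity solutions DGNM} — and splitting $f-\tilde f(x_{0})=(f-\tilde f(x_{0}))\ind{B(x_{0},R)}+(f-\tilde f(x_{0}))\ind{B(x_{0},R)^{c}}$, linearity of the solution operator gives $u_{f}(t,x)-\tilde f(x_{0})=u_{\mathrm{near}}(t,x)+u_{\mathrm{far}}(t,x)$. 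For $u_{\mathrm{far}}$ I would use the $L$-harmonic measure representation, decompose $B(x_{0},R)^{c}$ into dyadic annuli, bound the $L$-harmonic measure of the complement of the $j$-th dyadic ball by $(t^{\beta'}/2^{j}R)^{\alpha_{L}}$ through the boundary Hölder estimate, and — the key point — pass from $L$-harmonic-measure averages of $|f-\tilde f(x_{0})|$ to Lebesgue averages using a change-of-pole estimate for $L$-harmonic measure together with the reverse-Hölder inequality $B_{(p-\delta)'}$ with $(p-\delta)'<p$; this yields $|u_{\mathrm{far}}(t,x)|\lesssim\min(1,(t/R)^{\eta})\,(M_{p-\delta}f(x_{0})+|\tilde f(x_{0})|)$ for some $\eta>0$, where $M_{p-\delta}h:=(M(|h|^{p-\delta}))^{1/(p-\delta)}$ and $M$ is the Hardy--Littlewood maximal operator; since $t/R=ct^{1-\beta'}\to0$ as $t\to0^{+}$ when $\beta'<1$, this is bounded pointwise and tends to $0$ away from $\{M_{p-\delta}f=\infty\}$ and the non-Lebesgue-point set of $f$. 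For $u_{\mathrm{near}}$, every $y\in B(x,at)$ satisfies $(t,x)\in\Gamma_{a}(y)$, so $|u_{\mathrm{near}}(t,x)|\le\inf_{y\in B(x,at)}(N_{*}^{a}u_{\mathrm{near}})(y)\le\big(\dashint_{B(x,at)}(N_{*}^{a}u_{\mathrm{near}})^{p^{*}}\big)^{1/p^{*}}$, and applying $(\mathcal{D})^{L}_{p^{*}}$ with $p^{*}:=\frac{np}{n-\alpha p}>p$ gives $|u_{\mathrm{near}}(t,x)|\lesssim t^{-n/p^{*}}\norm{(f-\tilde f(x_{0}))\ind{B(x_{0},R)}}_{\Ell{p^{*}}}$; substituting $R=2t^{\beta'}$, this is comparable to $R^{-\alpha'}\big(\dashint_{B(x_{0},R)}|f-\tilde f(x_{0})|^{p^{*}}\big)^{1/p^{*}}$ with $\alpha'=\frac{n(1-\beta')}{\beta'p^{*}}$, and $\alpha'=\alpha$ exactly when $\beta'=\beta$ (using $\beta p^{*}=p$).

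Taking the supremum over $\Gamma^{\beta}(x_{0})$ (the range $t\ge1$ being harmless, since there $\Gamma^{\beta}=\Gamma$ and $N_{*,\beta}=N_{*}$) gives the pointwise domination $N_{*,\beta}(u_{f})\lesssim\mathcal{M}^{\#}_{\alpha}f+M_{p-\delta}f+|\tilde f|+N_{*}(u_{f})$, where $\mathcal{M}^{\#}_{\alpha}f(x_{0}):=\sup_{0<R\le1}R^{-\alpha}\big(\dashint_{B(x_{0},R)}|f-\tilde f(x_{0})|^{p^{*}}\big)^{1/p^{*}}$ is a higher-integrability fractional sharp maximal function. Estimate \eqref{tangential maximal function estimate in statement of main theorem} then follows from $\El{p}$-boundedness of $M_{p-\delta}$ (as $p-\delta<p$), of $N_{*}$ on $u_{f}$ via $(\mathcal{D})^{L}_{p}$, and the Sobolev-type bound $\norm{\mathcal{M}^{\#}_{\alpha}f}_{\Ell{p}}\lesssim\norm{f}_{\mathscr{L}^{p}_{\alpha}(\Rn)}$ (after dominating $|f_{B(x_{0},R)}-\tilde f(x_{0})|$ by a lacunary sum, this reduces to a Hedberg-type argument on the potential representation $f=G_{\alpha}\ast g$, $g\in\Ell{p}$). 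The a.e. convergence \eqref{ae tangential convergence for sol of div form with bessel bdry datum} then follows by the usual density argument: for $\phi\in C^{\infty}_{c}(\Rn)$ one has $u_{\phi}\in C(\clos{\Hn})$, hence convergence along any approach region, and for general $f$ one writes $f=\phi+(f-\phi)$ with $\norm{f-\phi}_{\mathscr{L}^{p}_{\alpha}(\Rn)}$ small and invokes the weak-type consequence of \eqref{tangential maximal function estimate in statement of main theorem}. For \eqref{upper bound hausdorff dimension} with $\beta'\in(\beta,1)$, the near-term estimate now requires only $R^{-\alpha'}\big(\dashint_{B(x_{0},R)}|f-\tilde f(x_{0})|^{p^{*}}\big)^{1/p^{*}}\to0$ with $\alpha'<\alpha$; hence the divergence set for $\Gamma^{\beta'}$ is contained in the union of the set on which $f$ has no $\El{p^{*}}$-Lebesgue point equal to $\tilde f(x_{0})$ — which has zero Bessel capacity $\mathrm{Cap}_{\alpha,p}$, hence Hausdorff dimension at most $n-\alpha p\le n-n(\beta'-\beta)$ — and the set on which a suitable fractional maximal function of $g$ (with $f=G_{\alpha}\ast g$) fails to vanish at the required rate, which has Hausdorff dimension at most $n-n(\beta'-\beta)$ by the standard comparison between Bessel capacity and Hausdorff content (as in \cite[Theorem~2.6.16]{Ziemer_book_1989}), exactly as in Theorem~\ref{theorem hausdorff dimension of divergence set for classical poisson extension}.

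The step I expect to be the main obstacle is the $u_{\mathrm{far}}$ estimate: transferring from $L$-harmonic-measure averages of the boundary data to Lebesgue averages with a quantitatively correct, scale-uniform decay. This is exactly where $(\mathcal{D})^{L}_{p}$ is used in full strength: one needs not merely doubling of $L$-harmonic measure, but the reverse-Hölder ($A_{\infty}$) inequality with a uniform constant, together with change-of-pole and Harnack-chaining estimates up to the boundary, to control $\int_{2^{j}R\lesssim|y-x_{0}|\lesssim2^{j+1}R}|f-\tilde f(x_{0})|\,d\omega^{(t,x)}_{L}(y)$ by a sub-$p$ Hardy--Littlewood average of $|f|$ near $x_{0}$, uniformly in $j$ and with summable decay in $j$. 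Secondary difficulties are the careful bookkeeping of the precise representative $\tilde f$ and of the zero-capacity exceptional sets needed for \eqref{upper bound hausdorff dimension}, the Sobolev-type bound for $\mathcal{M}^{\#}_{\alpha}$, and — if $\alpha>1$ is admitted — verifying that the solution with polynomial boundary datum is approximated near $\partial\Hn$ by that polynomial, which legitimises the Taylor reduction.
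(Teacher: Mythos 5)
Your reduction to a fractional sharp maximal function is an appealing idea, and the near/far split at scale $R\sim t^{\beta'}$ is natural, but the core estimate $\norm{\mathcal{M}^{\#}_{\alpha}f}_{\Ell{p}}\lesssim\norm{f}_{\mathscr{L}^{p}_{\alpha}(\Rn)}$ fails at the critical Sobolev exponent $p^{*}=\frac{np}{n-\alpha p}$, and that exponent is \emph{forced} in your argument. Indeed your near-part bound is $R^{-\alpha'}\bigl(\dashint_{B(x_{0},R)}|f-\tilde f(x_{0})|^{u}\bigr)^{1/u}$ with $\alpha'=\tfrac{n(1-\beta')}{\beta'u}$, so at $\beta'=\beta$ the requirement $\alpha'=\alpha$ pins $u=p^{*}$ exactly (any $u<p^{*}$ gives $\alpha'>\alpha$, which $\mathscr{L}^{p}_{\alpha}$-regularity does not control). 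And at $u=p^{*}$ the bound is false: take $\alpha=1$, $1<p<n$, $p^{*}=\tfrac{np}{n-p}$, and $f(x)=|x|^{1-n/p}\bigl(\log\tfrac{1}{|x|}\bigr)^{-(1+\eps)/p}\phi(x)$ with $\phi$ a smooth cutoff near the origin and $0<\eps<p/p^{*}$. Then $|\nabla f|\sim|x|^{-n/p}(\log\tfrac{1}{|x|})^{-(1+\eps)/p}\in\Ell{p}$, so $f\in\mathrm{W}^{1,p}(\Rn)=\mathscr{L}^{p}_{1}(\Rn)$; but for $|x_{0}|=\rho$ small, choosing $R=2\rho$ and noting that the singularity at the origin dominates the $L^{p^{*}}$-oscillation, one finds $\mathcal{M}^{\#}_{1}f(x_{0})\gtrsim\rho^{-n/p}(\log\tfrac{1}{\rho})^{1/p^{*}-(1+\eps)/p}$, whence $\norm{\mathcal{M}^{\#}_{1}f}_{\Ell{p}}^{p}\gtrsim\int_{0}^{1/4}\rho^{-1}(\log\tfrac{1}{\rho})^{p/p^{*}-1-\eps}\dd\rho=\infty$. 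This is exactly where the ``Hedberg-type argument'' you invoke does not close: it only yields $\mathcal{M}^{\#}_{\alpha}f\lesssim(M|g|^{p})^{1/p}$ with $f=G_{\alpha}\ast g$, and the operator $g\mapsto(M|g|^{p})^{1/p}$ is of weak type but not strong type $(p,p)$. Consequently \eqref{tangential maximal function estimate in statement of main theorem} does not follow, and neither does \eqref{ae tangential convergence for sol of div form with bessel bdry datum}, for which you rely on its weak-type consequence.

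The paper circumvents this obstruction by working exclusively with \emph{sub}-$p$ averages and by delegating the genuinely tangential analysis to Theorem~\ref{theorem Nagel--Stein thm 5} as a black box. The dyadic annular decomposition $u_{f}=\sum_{j\ge0}u_{j}$, together with boundary H\"older continuity and the self-improved reverse H\"older inequality for the $L$-harmonic measure, gives $u_{j}(t,x)\lesssim 2^{-\alpha_{L}j}\bigl(\dashint_{\Delta(x,2^{j+1}t)}f^{r}\bigr)^{1/r}$ with $1<r<p$ (Remark~\ref{remark about proof of lem 5.32 in HLM}); iterated Poincar\'e inequalities and Calder\'on's theorem~\ref{calderon's theorem identification bessel with sobolev} then peel off the powers of $2^{j+1}t$, converting these $L^{r}$-averages into Poisson averages $(P_{2^{j+1}t}\ast\mathscr{J}_{\alpha}\smallabs{g^{k}})(x)$, which the Nagel--Stein tangential maximal theorem controls, plus a residual term whose $t$-growth is absorbed by the mitigating factor $t^{n(1-\beta)/p}$ and the $j$-rescaled maximal function of Lemma~\ref{lemma boundedness mitigating maximal function on j dependent intervals}; summability in $j$ comes from $2^{-\alpha_{L}j}$. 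Your far-part and harmonic-measure estimates are sound and echo the paper's Lemma~\ref{Lemma domination series dyadic averages at corkscrew points}; it is the near part, at endpoint Sobolev integrability, that cannot be repaired as set up. The moral is that one cannot recapture the Nagel--Stein theorem from a single critical-exponent sharp maximal function bound --- the paper's sub-$p$ exponent $r$ is precisely what keeps every piece $\Ell{p}$-compatible.
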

Just like Theorem~\ref{theorem Nagel--Stein thm 5}, this result holds more generally for the tangential approach regions $\Gamma^{\beta}_{a}(x_0)$ for arbitrary $a>0$.
Again, it is necessary to choose a preferred representative of the equivalence class $f\in\mathscr{L}^{p}_{\alpha}(\Rn)$ for the statement \eqref{upper bound hausdorff dimension of divergence set} to make sense. We choose this representative to be the function $\tilde{f}:\Rn\to(-\infty, \infty]$ defined in~\eqref{preferred representative for bessel potential space function} in Section~\ref{subsection with definitions of spaces} below. 

In order to extend Theorem~\ref{main theorem upper half space} to general bounded Lipschitz domains (see Theorem~\ref{second main theorem bounded Lipschitz domain} below), we shall require an analogue of this result for boundary data in the 
Sobolev--Slobodeckij spaces $\mathrm{W}^{s,p}(\Rn)$ (see Section~\ref{subsection with definitions of spaces} for the definition of these spaces). This is achieved by building on the results of Dorronsoro \cite{Dorronsoro_1986}, who extended the results of Nagel--Rudin--Shapiro \cite{Nagel_Rudin_Shapiro_1982} and Nagel--Stein \cite{Nagel_Stein_1984} (specifically Theorems~\ref{theorem Nagel--Stein thm 5} and \ref{theorem hausdorff dimension of divergence set for classical poisson extension} above) beyond the class $\mathscr{L}^{p}_{\alpha}(\Rn)$. More specifically, Dorronsoro considered certain fractional mean oscillation spaces $\mathrm{C}^{p}_{\alpha}(\Rn)\subseteq \Ell{p}$ (defined in Section~\ref{subsection with definitions of spaces}; see also \cite{Dorronsoro_1985}), and proved the following analogue of Theorem~\ref{theorem Nagel--Stein thm 5} (see \cite[Theorem 3]{Dorronsoro_1986}). 
\begin{thm}\label{Dorronsoros theorem}
    Let $n\geq 1$, $p\in(1,\infty)$ and $\alpha>0$ be such that $\alpha p<n$. If  $f\in\mathrm{C}^{p}_{\alpha}(\Rn)$ and $u_{f}(t,x)=(P_{t}\ast f)(x)$, then $\norm{N_{*,\beta}u_{f}}_{\Ell{p}}\lesssim \norm{f}_{\mathrm{C}^{p}_{\alpha}(\Rn)}$ for all $\beta\geq 1-\frac{\alpha p}{n}$. If $\alpha p=n$, then this holds for all $\beta>0$.
\end{thm}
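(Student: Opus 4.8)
As stated, this is \cite[Theorem~3]{Dorronsoro_1986}, so the quickest route is to invoke that reference; since the same strategy underlies the variable-coefficient results of this paper, I outline the argument. The plan is first to discard the part of $\Gamma^{\beta}(x_{0})$ with $t\geq 1$, where it coincides with the cone $\Gamma(x_{0})$: the restriction of $N_{*,\beta}u_{f}$ to that region is dominated pointwise by the ordinary nontangential maximal function $N_{*}u_{f}$, whose $\Ell{p}$-norm is $\lesssim\norm{f}_{\Ell{p}}\lesssim\norm{f}_{\mathrm{C}^{p}_{\alpha}(\Rn)}$ by the classical theory of the Poisson semigroup ($p>1$) together with the inclusion $\mathrm{C}^{p}_{\alpha}(\Rn)\subseteq\Ell{p}$. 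For the remaining part, standard interior estimates for the Poisson extension (cf.~\eqref{holder continuity solutions DGNM}) reduce matters to the dyadic heights $t=2^{-k}$, $k\in\N$, with a harmless loss in the aperture and the constants, so that it suffices to bound $\sup_{k\in\N}\sup\set{\abs{u_{f}(2^{-k},x)} : \smallabs{x-x_{0}}<2^{-k\beta}}$.

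Next I would use the mean-oscillation --- equivalently, Littlewood--Paley --- description of $\mathrm{C}^{p}_{\alpha}(\Rn)$ recalled in Section~\ref{subsection with definitions of spaces} (see also \cite{Dorronsoro_1985}): with $f=S_{0}f+\sum_{j\geq 1}\Delta_{j}f$ the inhomogeneous Littlewood--Paley decomposition, one has $\norm{f}_{\mathrm{C}^{p}_{\alpha}(\Rn)}\sim\norm{S_{0}f}_{\Ell{p}}+\norm{\sup_{j\geq 1}2^{j\alpha}\abs{\Delta_{j}f}}_{\Ell{p}}$. Since $P_{2^{-k}}$ smooths away the frequencies above $2^{k}$, the function $u_{f}(2^{-k},\cdot)$ agrees with $S_{0}f+\sum_{1\leq j\leq k}\Delta_{j}f$ up to a rapidly decreasing error. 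Combining the Plancherel--Pólya sampling inequality for band-limited functions --- which, for any $q>0$, yields $\sup_{\smallabs{x-x_{0}}<R}\abs{\Delta_{j}f(x)}\lesssim(\max\{1,2^{j}R\})^{n/q}\,(M(\abs{\Delta_{j}f}^{q})(x_{0}))^{1/q}$, with $M$ the Hardy--Littlewood maximal operator --- with the elementary inequality $\sup_{j}M h_{j}\leq M(\sup_{j}h_{j})$ for nonnegative $h_{j}$, and summing in $j$ against the geometric weight $2^{-j\alpha}$, I expect a pointwise bound of the shape
\begin{equation*}
    N_{*,\beta}u_{f}(x_{0})\ \lesssim\ \Big(\sup_{k\in\N}2^{\,k\max\{0,\,\frac{n}{q}(1-\beta)-\alpha\}}\Big)\Big[\big(M\big((\,\sup_{j\geq 1}2^{j\alpha}\abs{\Delta_{j}f}\,)^{q}\big)(x_{0})\big)^{1/q}+\big(M(\abs{S_{0}f}^{q})(x_{0})\big)^{1/q}\Big],
\end{equation*}
valid for every $q>0$. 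When $\beta>1-\tfrac{\alpha p}{n}$ one may choose $q\in(0,p)$ with $1-\tfrac{\alpha q}{n}\leq\beta$, so the first factor equals $1$; since $h\mapsto(M(\abs{h}^{q}))^{1/q}$ is bounded on $\Ell{p}$ whenever $q<p$, taking $\Ell{p}$-norms gives $\norm{N_{*,\beta}u_{f}}_{\Ell{p}}\lesssim\norm{f}_{\mathrm{C}^{p}_{\alpha}(\Rn)}$.

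The hard part is the endpoint $\beta=1-\tfrac{\alpha p}{n}$, and with it the entire range $\beta>0$ when $\alpha p=n$: there one is forced to take $q=p$ above, and since $M$ is only of weak type $(1,1)$ this route delivers merely $\norm{N_{*,\beta}u_{f}}_{\Ell{p,\infty}}\lesssim\norm{f}_{\mathrm{C}^{p}_{\alpha}(\Rn)}$. To upgrade it to the asserted strong-type estimate I would interpolate within the scale of spaces $\mathrm{C}^{p}_{\alpha}(\Rn)$: by the previous paragraph the sublinear map $f\mapsto N_{*,\beta}u_{f}$ is bounded $\mathrm{C}^{p_{0}}_{\alpha_{0}}(\Rn)\to\Ell{p_{0}}$ and $\mathrm{C}^{p_{1}}_{\alpha_{1}}(\Rn)\to\Ell{p_{1}}$ for suitable parameters lying on either side of $(p,\alpha)$ and chosen so that $\beta$ is \emph{strictly} supercritical for each; one then reaches $(p,\alpha)$ by complex interpolation of these Triebel--Lizorkin-type spaces, after the routine linearisation of the supremum over $(t,x)$ that makes such interpolation applicable (equivalently, one interpolates the corresponding weak-type endpoint bounds with the Marcinkiewicz theorem). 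For boundary data in $\mathscr{L}^{p}_{\alpha}(\Rn)$ this last step is essentially \cite[Theorem~3.13 and Proposition~5.3]{Nagel_Rudin_Shapiro_1982}. Thus the genuine obstacle is not the pointwise estimate but this endpoint interpolation; everything else is bookkeeping, and --- in contrast to the classical polynomial-approximation argument --- the Littlewood--Paley route handles $\alpha\geq 1$ with no additional difficulty.
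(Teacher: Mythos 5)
The paper does not prove this theorem; it is cited as \cite[Theorem~3]{Dorronsoro_1986}, and the tools later assembled for the variable-coefficient analogue (Theorem~\ref{main theorem upper half space for Cpalpha}) show how that proof goes. Your supercritical discussion is essentially sound, but your treatment of the endpoint $\beta=1-\frac{\alpha p}{n}$ (and hence the entire range for $\alpha p=n$) contains a genuine gap, and you misattribute the fix.

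The interpolation scheme you sketch is not available off the shelf. First, you invoke the Marcinkiewicz theorem, but that interpolates weak-type bounds between \emph{Lebesgue} endpoints $\El{p_0},\El{p_1}$; here the domain spaces are $\mathrm{C}^{p_0}_{\alpha_0},\mathrm{C}^{p_1}_{\alpha_1}$, which are smoothness spaces, so Marcinkiewicz does not apply without first identifying $(\mathrm{C}^{p_0}_{\alpha_0},\mathrm{C}^{p_1}_{\alpha_1})_{\theta,p}$. That is a non-diagonal interpolation problem along the critical line $\alpha p=\text{const.}$, and there is no routine identification of this interpolation space with $\mathrm{C}^{p}_{\alpha}$. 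Second, your fallback to complex interpolation presupposes the identity $\mathrm{C}^{p}_{\alpha}(\Rn)=\mathrm{F}^{\alpha}_{p,\infty}(\Rn)$ (which neither you nor the paper establish; the paper only records the one-sided embedding $\mathrm{F}^{\alpha}_{p,q}\subseteq\mathrm{C}^{p}_{\alpha}$ for $q<\infty$) and then uses complex interpolation of the $\ell^{\infty}$-scale $\mathrm{F}^{\alpha}_{p,\infty}$, which is precisely the delicate case where the usual Triebel--Lizorkin interpolation formula is not available. Third, \cite[Theorem~3.13, Proposition~5.3]{Nagel_Rudin_Shapiro_1982} do \emph{not} perform this interpolation: those results give Bessel-capacity estimates for exceptional sets and feed into the Hausdorff-dimension bound of Theorem~\ref{theorem hausdorff dimension of divergence set for classical poisson extension}; the strong $\Ell{p}$ endpoint bound is their Theorem~5.5 (cited here as Theorem~\ref{theorem Nagel--Stein thm 5}) and is proved directly.

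The established route handles the endpoint without any interpolation. The decisive tool is the mitigated maximal function $\mathcal{M}_{p,\beta}(u)(x_0)=\sup_{(t,x)\in\Gamma^{\beta}(x_0),\,t\leq1}t^{\frac{n(1-\beta)}{p}}\abs{u(t,x)}$ and the \emph{strong}-type bound $\norm{\mathcal{M}_{p,\beta}(u)}_{\Ell{p}}\lesssim\norm{N_{*}(u)}_{\Ell{p}}$ of \cite[Theorem~4$'$]{Nagel_Stein_1984}, recalled in \eqref{Lp domination of mitigating maximal function by non tangential max function nagel stein} and in the dilated form of Lemma~\ref{lemma boundedness mitigating maximal function on j dependent intervals}. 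That estimate is obtained by a geometric covering argument, not interpolation, and it is tailored exactly so that the factor $t^{n(1-\beta)/p}=t^{\alpha}$ absorbs the gain produced by the sharp-maximal Poincar\'e inequality of Lemma~\ref{poincaré type lemma for C p a}, namely $\smallabs{f(y)-P^{k}_{\Delta}f(y)}\lesssim\meas{\Delta}^{\alpha/n}f^{\sharp}_{\alpha}(y)$. Splitting $u_f(t,x)=\sum_j u_j(t,x)$ into dyadic annuli (as in the proof of Theorem~\ref{main theorem upper half space for Cpalpha}), applying the Poincar\'e inequality on each ball, and then invoking $\mathcal{M}_{p,\beta,j}$ yields the strong bound at $\beta=1-\frac{\alpha p}{n}$ in one step, with nothing left to boost. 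You should replace your interpolation paragraph with this argument; the rest of your sketch is acceptable modulo supplying a reference (or proof) for the $\mathrm{F}^{\alpha}_{p,\infty}$-type characterisation of $\mathrm{C}^{p}_{\alpha}(\Rn)$ that you use.
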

The spaces $\mathrm{C}^{p}_{\alpha}(\Rn)$ were originally introduced by Calder\'{o}n and Scott in \cite{Calderon_Scott_1978} and were thoroughly studied by DeVore and Sharpley in \cite{Devore-Sharpley_Maximal_functions_smoothness_1984}. If $p\in(1,\infty)$ and $\alpha>0$, the Bessel potential space $\mathscr{L}^{p}_{\alpha}(\Rn)$ is continuously embedded in $\mathrm{C}^{p}_{\alpha}(\Rn)$ (see \cite[Theorem 7.5]{Devore-Sharpley_Maximal_functions_smoothness_1984}). More generally \cite[Proposition 3]{Dorronsoro_1986} shows that the Triebel--Lizorkin spaces $\mathrm{F}^{\alpha}_{p,q}(\Rn)$ are continuously embedded in $\mathrm{C}_{\alpha}^{p}(\Rn)$ for all $1\leq p,q<\infty $ and $\alpha>0$.
The spaces $\mathrm{C}^{p}_{\alpha}(\Rn)$ provide a natural setting to extend the conclusions of Theorem~\ref{main theorem upper half space}. In fact, we obtain the analogous results for boundary data $f\in\mathrm{C}^{p}_{\alpha}(\Rn)$ instead of $\mathscr{L}^{p}_{\alpha}(\Rn)$ in Theorem~\ref{main theorem upper half space for Cpalpha} in Section~\ref{section proof of main theorem for Wsp in upper half space}. Theorem~\ref{main theorem upper half space} can therefore be obtained as a corollary of this more general result. A more self-contained proof of Theorem~\ref{main theorem upper half space} that does not rely on the embeddings described above, however, is presented earlier in Section~\ref{Sect:tang}.

We now consider the case of a bounded Lipschitz domain $\Omega\subset \R^{1+n}$ with surface measure $\sigma$ on the boundary $\partial\Omega$.
For any $X_0\in\partial\Omega$ and $a>0$, we consider the standard nontangential approach region
\begin{equation*}
    \Gamma_{\Omega,a}(X_0)=\set{X\in\Omega : \abs{X-X_0}<(1+a)\dist(X,\partial\Omega)}.
\end{equation*}
For $u:\Omega\to \R$, the corresponding nontangential maximal function is given by 
\begin{equation*}
(N_{*}^{a}u)(X_0)=\sup_{X\in\Gamma_{\Omega,a}(X_0)}\abs{u(X)}, \quad X_0\in\partial\Omega.
\end{equation*}
Let us denote $\Gamma_{\Omega}(X_0)=\Gamma_{\Omega,1}(X_0)$ and $N_{*}u=N_{*}^{1}u$. 

As above, given $p\in(1,\infty)$ and $f\in\El{p}(\partial\Omega)$, the Dirichlet problem $\left(\mathcal{D}\right)^{L}_{p,\Omega}$ is to find a function ${u\in\mathrm{W}^{1,2}_{\loc}(\Omega)}$ such that
\begin{equation*}
    \left(\mathcal{D}\right)^{L}_{p,\Omega}\hspace{0.5cm}\left\{\begin{array}{l}
        Lu=0  \text{ in } \Omega;\\
        N_{*}u\in\El{p}(\partial\Omega);\\
        \lim_{\Gamma_{\Omega}(P)\ni X\to P} u(X)=f(P) \text{ for } \sigma\text{-a.e. } P\in\partial\Omega.
    \end{array}\right.
\end{equation*}
The Dirichlet problem $\left(\mathcal{D}\right)^{L}_{p,\Omega}$ is said to be \emph{well-posed} if for every boundary datum $f\in\El{p}(\partial\Omega)$ there exists a unique solution $u$ with the three properties listed above. 

There exists a family of Borel probability measures $\set{\omega^{X}_{L}}_{X\in\Omega}$ on $\partial\Omega$, known as the $L$-\emph{harmonic measures}, such that for any continuous function $f\in C(\partial\Omega)$, the function 
\begin{equation}\label{representation of classical solutions by harmonic measure bounded Lipschitz dom}
u_{f}(X)=\int_{\partial\Omega}f\dd \omega^{X}_{L}, \quad X\in\Omega,
\end{equation}
is a solution to the classical Dirichlet problem with boundary datum $f$, in the sense that it is a weak solution to $Lu_{f}=0$ in $\Omega$ and that $u_{f}\in C(\clos{\Omega})$ with $u_{f}|_{\partial\Omega}= f$ (see, e.g. \cite[Chapter 1, Section 2]{Kenig_cbms_1994}).

In this context, we say that $(\mathcal{D})_{p,\Omega}^{L}$ \emph{holds} if for all $f\in C(\partial\Omega)$, the classical solution $u_f$ in \eqref{representation of classical solutions by harmonic measure bounded Lipschitz dom} satisfies 
    \begin{equation*}
        \norm{N_{*}u_{f}}_{\El{p}(\partial\Omega)}\lesssim \norm{f}_{\El{p}(\partial\Omega)}.
    \end{equation*}
Again, it is well-known that this implies the well-posedness of $(\mathcal{D})^{L}_{p,\Omega}$ (see, e.g., \cite[Theorem 1.7.7]{Kenig_cbms_1994}).

The analogue of the tangential approach regions $\Gamma^{\beta}(x_0)$ in $\Omega$ are now defined as follows. For $X_{0}\in\partial \Omega$, $\beta\in(0,1]$ and $c>0$, we let
\begin{equation*}
    \Gamma^{\beta,c}_{\Omega}(X_0)=\set{X\in \Omega : \abs{X-X_{0}}< \left\{ 
    \begin{array}{ll}
         (1+c)\dist(X,\partial \Omega)^{\beta}, & \textup{if }\dist(X,\partial \Omega)\leq 1;  \\
         (1+c)\dist(X,\partial \Omega), & \textup{if }\dist(X,\partial \Omega)\geq 1  
    \end{array}\right.\;\;\;},
\end{equation*}
and for a measurable $u:\Omega\to \R$, the corresponding maximal function is defined by
\begin{equation*}
    (N_{*,\beta,c}u)(X_0)=\sup_{X\in\Gamma^{\beta,c}_{\Omega}(X_0)}\abs{u(X)},
    \quad X_{0}\in\partial \Omega.
\end{equation*}
Some of these approach regions are sketched in Figure~\ref{tangential_approach_regions_label} below, where only the local part (close to the boundary) is included for clarity. 

\begin{figure}[H]
    \centering
    \includegraphics[scale=2.5]{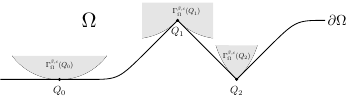}
    \caption{A part of the boundary $\partial\Omega$ of a Lipschitz domain $\Omega$ and some tangential approach regions ($n=1$, $\beta=\tfrac{1}{2}$, $c=1$).}
    \label{tangential_approach_regions_label}
\end{figure}

\begin{rem}
    The definition of the approach regions $\Gamma^{\beta,c}_{\Omega}(X_0)$ is, of course, not limited to the case of bounded Lipschitz domains. If $\Omega=\Hn$, then a comparison with the regions defined in \eqref{first mention of the tangential approach regions in upper half space} shows that $\Gamma^{\beta}_{c}(x_0)\subseteq \Gamma_{\Omega}^{\beta,c}(X_0)\subseteq \Gamma^{\beta}_{1+c}(x_0)$ when $X_0=(0,x_0)\in\partial\Hn$ and $x_0\in\Rn$. Besides, observe that a certain order of tangential approach is permitted at a cusp singularity, whereas nontangential approach is prohibited.
\end{rem}

In the context of the bounded Lipschitz domains $\Omega$ that we consider here, we shall quantify the regularity of the boundary data in terms of the Sobolev spaces $\mathrm{W}^{s,p}(\partial\Omega)$. These may be defined for smoothness exponents $0\leq s\leq k+1$ when the domain $\Omega$ is of class $C^{k,1}$ for an integer $k\geq 0$ (the case $k=0$ corresponding to Lipschitz domains). This is discussed in Section~\ref{subsection on sobolev spaces on boundary of Lipschitz domains}.

Our second main result is the following extension of Theorem~\ref{main theorem upper half space}.
\begin{thm}\label{second main theorem bounded Lipschitz domain}
    Let $n\geq 1$ and let $\Omega\subset\R^{1+n}$ be a bounded domain of class $C^{k,1}$ for some integer $k\geq 0$. Let $p\in(1,\infty)$ be such that $\left(\mathcal{D}\right)^{L}_{p,\Omega}$ holds. Let $0\leq s\leq k+1$ be such that $sp\leq n$, let $\beta=1-\frac{sp}{n}$ and $c>0$. If $f\in\mathrm{W}^{s,p}(\partial\Omega)$ and $u_{f}$ is the unique solution to $\left(\mathcal{D}\right)^{L}_{p,\Omega}$ with boundary datum $f$, then for any $\beta'\in(\beta,1]$ it holds that
    \begin{equation}\label{upper bound hausdorff dimension of divergence set boundary lipschitz domain}
        \dim_{H}\left(\set{Q_{0}\in\partial\Omega : \lim_{\Gamma^{\beta',c}_{\Omega}(Q_0)\ni X\to Q_0} u_{f}(X)\neq f(Q_0)}\right)\leq n-n(\beta'-\beta).
    \end{equation}
    In addition, if $sp<n$, then
    \begin{equation}\label{strong Lp bound tangential max at the boundary of Lipschitz domain}
        \norm{N_{*,\beta,c}(u_{f})}_{\El{p}(\partial\Omega)}\lesssim \norm{f}_{\mathrm{W}^{s,p}(\partial\Omega)}
    \end{equation}
and
\begin{equation}\label{convergence sigma a.e. at the boundary of Lipschitz domain}
         \lim_{\Gamma^{\beta,c}_{\Omega}(Q_0)\ni X\to Q_0} u_{f}(X)=f(Q_0) \quad \textup{ for }\sigma\textup{-a.e. }Q_0\in\partial\Omega.
     \end{equation}
If $sp=n$, then \eqref{strong Lp bound tangential max at the boundary of Lipschitz domain} and \eqref{convergence sigma a.e. at the boundary of Lipschitz domain} hold for all $\beta>0$. 
\end{thm}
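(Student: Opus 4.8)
The plan is to deduce Theorem~\ref{second main theorem bounded Lipschitz domain} from the half-space version for data in the fractional mean oscillation spaces, namely Theorem~\ref{main theorem upper half space for Cpalpha} (the variable-coefficient analogue of Dorronsoro's Theorem~\ref{Dorronsoros theorem}), by localisation and boundary flattening. Two structural facts make this possible. First, $\mathrm{W}^{s,p}(\Rn)$ embeds continuously into $\mathrm{C}^{p}_{s}(\Rn)$: for non-integer $s$ this is $\mathrm{W}^{s,p}(\Rn)=\mathrm{F}^{s}_{p,p}(\Rn)\hookrightarrow\mathrm{C}^{p}_{s}(\Rn)$ via \cite[Proposition~3]{Dorronsoro_1986}, while for integer $s$ it follows from $\mathrm{W}^{s,p}(\Rn)=\mathscr{L}^{p}_{s}(\Rn)$ (as $1<p<\infty$) together with $\mathscr{L}^{p}_{s}(\Rn)\hookrightarrow\mathrm{C}^{p}_{s}(\Rn)$ \cite[Theorem~7.5]{Devore-Sharpley_Maximal_functions_smoothness_1984}. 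Second, both the uniform ellipticity \eqref{boundedness aanndd ellipticity conditions} and the hypothesis that the $\mathrm{L}^{p}$-Dirichlet problem holds are preserved, with quantitative control of the constants, under the bi-Lipschitz (indeed $C^{k,1}$) changes of variables that flatten $\partial\Omega$ --- the latter because the local $A_{\infty}$ property of $L$-harmonic measure that characterises solvability transforms correctly and localises. The case $s=0$ is just the well-posedness of $(\mathcal{D})^{L}_{p,\Omega}$, so we assume $0<s$ with $sp\leq n$ and set $\beta=1-\tfrac{sp}{n}$.

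Since $\partial\Omega$ is compact we fix a finite atlas of charts $U_{1},\dots,U_{m}$ with $C^{k,1}$ boundary-flattening maps $\Phi_{i}$, a subordinate partition of unity $\{\psi_{i}\}$, and smaller open sets $U_{i}'\Subset U_{i}$ with $\{\psi_{i}>0\}\cap\partial\Omega\subseteq U_{i}'$, and we write $f=\sum_{i}f_{i}$ with $f_{i}:=\psi_{i}f$, noting that $f_{i}\in\mathrm{W}^{s,p}(\partial\Omega)$ with $\smallnorm{f_{i}}_{\mathrm{W}^{s,p}(\partial\Omega)}\lesssim\smallnorm{f}_{\mathrm{W}^{s,p}(\partial\Omega)}$ (multiplication by the $C^{k,1}$ cutoff $\psi_{i}$ is bounded on $\mathrm{W}^{s,p}(\partial\Omega)$ for $0\leq s\leq k+1$). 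By linearity of \eqref{representation of classical solutions by harmonic measure bounded Lipschitz dom} we have $u_{f}=\sum_{i}u_{f_{i}}$, so it suffices to prove the three conclusions for each $u_{f_{i}}$, and only the behaviour over $\partial\Omega\cap U_{i}'$ matters: for $Q_{0}\in\partial\Omega$ away from $U_{i}'$ and $X\in\Gamma^{\beta',c}_{\Omega}(Q_{0})$, either $X$ lies at a height $\dist(X,\partial\Omega)$ bounded below by a constant, in which case the interior estimate \eqref{local boundedness solutions DGNM} together with a tent-space bound gives $\abs{u_{f_{i}}(X)}\lesssim\smallnorm{N_{*}u_{f_{i}}}_{\El{2}(\partial\Omega)}\lesssim\smallnorm{f_{i}}_{\El{p}(\partial\Omega)}$ (using $(\mathcal{D})^{L}_{p,\Omega}$ and compactness), or $X$ lies at distance bounded below from $\supp{f_{i}}$, in which case standard estimates for the $L$-harmonic measure $\omega^{X}_{L}$ (available from the local $A_{\infty}$ property) give $\abs{u_{f_{i}}(X)}=\abs{\int f_{i}\dd\omega^{X}_{L}}\lesssim\smallnorm{f_{i}}_{\El{p}(\partial\Omega)}$; in particular $u_{f_{i}}(X)\to 0=f_{i}(Q_{0})$ there, so such $Q_{0}$ contribute nothing to the divergence set and bounded contributions to $N_{*,\beta,c}(u_{f_{i}})$, which are fine in $\El{p}(\partial\Omega)$.

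For the local piece, composing with $\Phi_{i}$ sends $\Omega\cap U_{i}$ to an open set $\mathcal{O}_{i}\subseteq\Hn$ and $\partial\Omega\cap U_{i}$ to an open set $\Delta_{i}\subseteq\Rn=\partial\Hn$, and by the chain rule $v_{i}:=u_{f_{i}}\circ\Phi_{i}^{-1}$ is a weak solution of a divergence-form equation $\tilde{L}_{i}v_{i}=0$ in $\mathcal{O}_{i}$ with coefficients obeying \eqref{boundedness aanndd ellipticity conditions} for constants depending only on $n$, $\lambda$, $\Lambda$ and the Lipschitz character of $\Omega$; its nontangential trace on $\Delta_{i}$ is $\tilde{f}_{i}:=f_{i}\circ\Phi_{i}^{-1}\in\mathrm{W}^{s,p}(\Rn)$, supported in a compact subset of $\Delta_{i}':=\Phi_{i}(\partial\Omega\cap U_{i}')$. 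Extend $\tilde{L}_{i}$ to a uniformly elliptic operator on all of $\Hn$, equal to the Laplacian outside a neighbourhood of $\Delta_{i}$, so that $(\mathcal{D})^{\tilde{L}_{i}}_{p}$ holds on $\Hn$, and let $w_{i}$ be its unique solution with boundary datum $\tilde{f}_{i}$ (extended by $0$). Since $\mathrm{W}^{s,p}(\Rn)\hookrightarrow\mathrm{C}^{p}_{s}(\Rn)$ and $\smallnorm{\tilde{f}_{i}}_{\mathrm{C}^{p}_{s}(\Rn)}\lesssim\smallnorm{f_{i}}_{\mathrm{W}^{s,p}(\partial\Omega)}$, Theorem~\ref{main theorem upper half space for Cpalpha} applied to $w_{i}$ with smoothness exponent $s$ yields the tangential maximal function bound $\smallnorm{N_{*,\beta}w_{i}}_{\El{p}(\Rn)}\lesssim\smallnorm{f_{i}}_{\mathrm{W}^{s,p}(\partial\Omega)}$ (for all $\beta>0$ when $sp=n$), the a.e.\ tangential limit along $\Gamma^{\beta}$, and the dimension bound $\dim_{H}\leq n-n(\beta'-\beta)$ along $\Gamma^{\beta'}$ for each $\beta'\in(\beta,1]$ --- but for the model $w_{i}$, not $v_{i}$.

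It remains to compare $v_{i}$ and $w_{i}$ near $\Delta_{i}'$. Their difference $e_{i}:=v_{i}-w_{i}$ solves $\tilde{L}_{i}e_{i}=0$ and has vanishing nontangential trace (a.e.) on $\Delta_{i}$, where $v_{i}$ and $w_{i}$ both have trace $\tilde{f}_{i}$; by the boundary De Giorgi--Nash--Moser estimate (valid since the complement of $\Hn$ satisfies the capacity density condition), in its averaged form, for $x_{0}\in\Delta_{i}'$ and $X$ over $x_{0}$ at small height one has $\abs{e_{i}(X)}\lesssim\dist(X,\Rn)^{\gamma}\,\bigl(\dashint_{B}\abs{e_{i}}^{2}\bigr)^{1/2}$ for some $\gamma>0$ and a fixed reference ball $B$, the average being bounded via a tent-space estimate by $\smallnorm{N_{*}e_{i}}_{\El{p}}\lesssim\smallnorm{N_{*}v_{i}}_{\El{p}}+\smallnorm{N_{*}w_{i}}_{\El{p}}\lesssim\smallnorm{f_{i}}_{\mathrm{W}^{s,p}(\partial\Omega)}$; since $\beta<1$, this $\dist(\cdot,\Rn)^{\gamma}$ decay dominates the widening of the approach regions, giving $\smallnorm{N_{*,\beta}e_{i}}_{\El{p}(\Delta_{i}')}\lesssim\smallnorm{f_{i}}_{\mathrm{W}^{s,p}(\partial\Omega)}$ and $e_{i}(X)\to 0$ as $X\to x_{0}$ unrestrictedly for every $x_{0}\in\Delta_{i}'$, so that the divergence set of $v_{i}$ inside $\Delta_{i}'$ coincides with that of $w_{i}$. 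Transferring back through $\Phi_{i}$, which is bi-Lipschitz and hence preserves Hausdorff dimension, pushes $\sigma$ to a measure comparable with Lebesgue measure on $\Rn$, and under which $\Gamma^{\beta',c}_{\Omega}(Q_{0})$ is comparable to the half-space regions $\Gamma^{\beta'}_{c'}(\Phi_{i}(Q_{0}))$ of \eqref{first mention of the tangential approach regions in upper half space} (cf.\ the Remark after Figure~\ref{tangential_approach_regions_label}), converts the statements for $w_{i}$, $v_{i}$ and $e_{i}$ into \eqref{upper bound hausdorff dimension of divergence set boundary lipschitz domain}, \eqref{strong Lp bound tangential max at the boundary of Lipschitz domain} and \eqref{convergence sigma a.e. at the boundary of Lipschitz domain} for $u_{f_{i}}$ over $\partial\Omega\cap U_{i}'$ (the preferred representative of $f\in\mathrm{W}^{s,p}(\partial\Omega)$ being defined through the charts precisely so as to be compatible with $\Phi_{i}$); summing over the finitely many $i$ completes the proof. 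The main obstacle is exactly this localisation bookkeeping: making the transfer of the solvability hypothesis $(\mathcal{D})^{L}_{p,\Omega}$ and of the preferred representatives rigorous, and controlling the error terms $e_{i}$ and the far-field contributions \emph{quantitatively}, so as not to degrade the sharp exponent $n-n(\beta'-\beta)$.
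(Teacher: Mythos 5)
Your proposal takes a genuinely different route from the paper, and it contains a gap that would need serious work to close. The paper does not transform the PDE at all: Lemma~\ref{Lemma domination series dyadic averages at corkscrew points} gives, directly from the reverse H\"older property of the Poisson kernel in Theorem~\ref{thm equivalent properties for solvability in terms of  L harmonic measure} combined with the boundary H\"older estimate for solutions vanishing on a portion of $\partial\Omega$, the pointwise bound
\[
\abs{u_{f}(A_t(Q_0))} \lesssim \sum_{j\geq 0} 2^{-\alpha_L j}\left(\frac{1}{\sigma(\Delta_{\partial\Omega}(Q_0, 2^{j+1}t))}\int_{\Delta_{\partial\Omega}(Q_0, 2^{j+1}t)} \abs{f}^{p_0}\dd\sigma\right)^{1/p_0}
\]
at corkscrew points $A_t(Q_0)$, after which the Lipschitz parametrisations $\Phi_i$ serve only to convert surface-ball averages to Euclidean-ball averages of $f\circ\Phi_i\in \mathrm{W}^{s,p}(\Rn)\subseteq\mathrm{C}^{p}_{s}(\Rn)$, and the maximal function bounds recorded in Remark~\ref{remark main implication of main thm within proof} take over. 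There is no auxiliary operator on $\Hn$, no extension, and no error term.

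You instead push the equation forward under $\Phi_i^{-1}$, obtain a transformed operator $\tilde L_i$ on $\mathcal{O}_i=\Phi_i(\Omega\cap U_i)\subset\Hn$, extend its coefficients to all of $\Hn$ (equal to the Laplacian far away), and assert that $(\mathcal D)^{\tilde L_i}_p$ holds on $\Hn$ so that Theorem~\ref{main theorem upper half space for Cpalpha} applies to the model solution $w_i$. That assertion is the main gap. The hypothesis $(\mathcal D)^{L}_{p,\Omega}$ is equivalent to an $A_\infty(\sigma)$ property of $\omega_{L,\Omega}$ via Theorem~\ref{thm equivalent properties for solvability in terms of  L harmonic measure}, and bilipschitz invariance of $A_\infty$ does carry this to $\omega_{\tilde L_i,\mathcal{O}_i}\in A_\infty$ on $\Delta_i$. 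But $\omega_{\tilde L_i,\mathcal{O}_i}$ and $\omega_{\tilde L_i,\Hn}$ are harmonic measures for \emph{different domains}; passing from one to the other requires a comparison or pole-pushing argument establishing that $A_\infty$ of harmonic measure is a local property near $\Delta_i$, and you must then glue this with the trivial $A_\infty$ of the Laplacian elsewhere into a uniform $A_\infty$ estimate over \emph{all} surface cubes of $\Hn$ in the sense of Theorem~\ref{theorem equivalent properties to well posedness of Lp Dirichlet}, including those straddling the transition region of the extended coefficients. None of this is done or referenced, and it is not immediate.

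There are secondary gaps. The boundary De Giorgi--Nash--Moser estimate you invoke for $e_i=v_i-w_i$ requires $e_i$ to vanish on a boundary portion in a robust sense (continuously, or with trivial $\mathrm{W}^{1,2}$ trace), not merely a.e.\ zero nontangential trace; promoting the latter to the former, while standard in outline, needs an argument. The compatibility of the preferred representative $\tilde f$ of \eqref{preferred representative for sobolev on boundary lipschitz} with the change of charts is asserted but not checked; the paper devotes Lemma~\ref{lemma important properties of the preferred representative on boundary lipschitz} to establishing its properties directly on $\partial\Omega$. Your route, if completed, would yield a self-contained reduction to the half-space theorem with a black-box flattened operator, but the paper's avoidance of the PDE transformation is precisely what makes its argument lighter: only the corkscrew-point estimate and the surface-ball-to-Euclidean-ball comparison are needed, and no auxiliary Dirichlet problem has to be shown solvable.
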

The estimate~\eqref{upper bound hausdorff dimension of divergence set boundary lipschitz domain} will be shown to hold for a specific representative $\tilde{f}:\partial\Omega\to(-\infty,\infty]$ of the function $f\in\mathrm{W}^{s,p}(\partial\Omega)$ (see \eqref{preferred representative for sobolev on boundary lipschitz} at the start of Section~\ref{section second part of the proof of second main thm}).

It is well-known that the bound and ellipticity of the coefficients $A$ in \eqref{boundedness aanndd ellipticity conditions} are not sufficient for solvability of $(\mathcal{D})_{p,\Omega}^{L}$, even on smooth domains (see \cite{CFK_Singular_harmonic_measure_1981}).
We conclude this introduction by mentioning some conditions which ensure the well-posedness of the Dirichlet problem. We start with the upper half-space $\Hn$. 
If the coefficients $A:\Hn\to \R^{(1+n)\times (1+n)}$ are $t$-independent and satisfy~\eqref{boundedness aanndd ellipticity conditions}, it has been shown by Hofmann--Kenig--Mayboroda--Pipher that there exists $p_{0}\in(1,\infty)$ such that $(\mathcal{D})^{L}_{p}$ holds for all $p\geq p_0$ (see \cite[Theorem 1.23]{HKMP_2015}). If $A_{0},A_{1}:\Hn\to\R^{(1+n)\times (1+n)}$ are coefficients satisfying \eqref{boundedness aanndd ellipticity conditions}, let their difference function be given by
\begin{equation}
    a(X)=\sup_{\abs{X-Y}<\delta(X)/2}\abs{A_{0}(Y)-A_{1}(Y)}
\end{equation}
for all $X\in\Hn$, where $\delta(X)=\dist(X,\partial\Hn)$. Let $L_{0}=-\dvg(A_{0}\nabla)$ and $L_{1}=-\dvg(A_{1}\nabla)$ denote the corresponding elliptic operators. 
If there is $C>0$ such that 
\begin{equation}\label{small perturbation condition on the coefficients}
    \frac{1}{\meas{Q}}\iint_{T(Q)}\frac{a(X)^{2}}{\delta(X)}\dd X \leq C
\end{equation}
for all surface cubes $Q\subset \Rn$ (where $T(Q)=(0,\ell(Q))\times Q$ is the corresponding Carleson box), then R. Fefferman, Kenig and Pipher have shown that there is $p_0\in(1,\infty)$ such that $(\mathcal{D})^{L_{0}}_{p}$ holds for all $p\geq p_{0}$ if and only if there is $p_1\in(1,\infty)$ such that $(\mathcal{D})^{L_{1}}_{p}$ holds for all $p\geq p_1$ (see \cite[Theorem 2.3]{FKP_1991} and note, as is well-known, that the symmetry assumption on the coefficients made therein is not required).

For the Laplacian $\Delta$ in a bounded Lipschitz domain $\Omega\subset\R^{1+n}$, we mention the classical theorem of Dahlberg \cite{Dahlberg_harmonic_measure_1977, Dahlberg_Poisson_integral_1979} stating that there exists $\eps>0$ such that the Dirichlet problem $(\mathcal{D})^{\Delta}_{p}$ holds for all $2-\eps <p\leq\infty$. As above, the well-posedness of the $\El{p}$ Dirichlet problem is preserved under small perturbations of the coefficients, in the way quantified by \eqref{small perturbation condition on the coefficients}, see \cite[Theorem~2.3]{FKP_1991} and references therein.

\subsubsection*{Structure of the paper} 
The remaining sections are organised as follows.
\begin{itemize}
\item
Section 2 consists of some preliminaries. We introduce the notation that will be used throughout in Section~\ref{section on notations}, the relevant function spaces in Section~\ref{subsection with definitions of spaces}, and describe the pertinent properties of fractional dimensional measures in Section~\ref{subsubsection on fractiona dimensional measures}. 
\item Section 3 is devoted to the upper half space and the proof of our first main theorem -- Theorem ~\ref{main theorem upper half space}. Several well-known properties of the $\El{p}$-Dirichlet problem in $\Hn$ are collected in Section~\ref{section on nontangential approach dirichlet upper half space + estimate div set}, which also includes a discussion on the dimension of the divergence set for the classical nontangential approach. Some further pertinent properties of the Bessel potential spaces $\mathscr{L}^p_{\alpha}(\Rn)$ are then collected in Section~\ref{section further properties bessel}, before introducing some relevant auxiliary maximal functions and proving Theorem~\ref{main theorem upper half space} in Section~\ref{Sect:tang}. 
Section~\ref{section proof of main theorem for Wsp in upper half space} extends Theorem~\ref{main theorem upper half space} to boundary data in the fractional mean oscillation spaces $\mathrm{C}^{p}_{\alpha}(\Rn)$.
\item
Section~\ref{subsection proof second main theorem} is concerned with the more general (Lipschitz) domains $\Omega$ and the proof of our second main theorem -- Theorem~\ref{second main theorem bounded Lipschitz domain}. We recall the relevant elementary properties of Lipschitz domains in Section~\ref{subsection with definition of Lipschitz domains}, followed by some well-known properties of the $\El{p}$-Dirichlet problem in bounded Lipschitz domains in Section~\ref{subsection on L harmonic measure in Lipschitz domains}, before introducing the Sobolev spaces $\textup{W}^{s,p}(\partial\Omega)$ on the boundary in Section~\ref{subsection on sobolev spaces on boundary of Lipschitz domains}. Some elementary but important lemmas are collected in Sections~\ref{section on corkscrew points}, \ref{section on pointwise estimates on solutions} and \ref{section on tangential approach regions in lipschitz domains}, before turning to the proof of Theorem~\ref{second main theorem bounded Lipschitz domain}, which is split into two parts. The tangential maximal function bound \eqref{strong Lp bound tangential max at the boundary of Lipschitz domain} and the corresponding almost everywhere convergence~\eqref{convergence sigma a.e. at the boundary of Lipschitz domain} are proved in Section~\ref{section first part of the proof of second main thm}, and the estimate~\eqref{upper bound hausdorff dimension of divergence set boundary lipschitz domain} on the dimension of the divergence set is proved in Section~\ref{section second part of the proof of second main thm}. \item Section~\ref{Sect:context} provides some contextual remarks on tangential convergence problems in the alternative settings of parabolic, dispersive and wave equations.
\end{itemize}

\section{Preliminaries}
\subsection{Notation}\label{section on notations}
Throughout $n\geq 1$ is an integer, and we use the convention that $\N=\set{0,1,\ldots}$.
We let $\abs{x}$ denote the Euclidean norm of an element $x\in\Rn$, 
$\Delta(x,r)$ denote the Euclidean ball with centre $x$ and radius $r$,
and $\langle x,y\rangle$ denote the standard scalar product of vectors $x,y\in\Rn$. We use the notation $x\vee y=\max\set{x,y}$ and $x\wedge y=\min\set{x,y}$.
As usual, the distance between two subsets $E,F\subseteq \Rn$ is $\dist(E,F)=\inf\set{\abs{x-y} : x\in E, y\in F}$, and for $x\in\mathbb{R}$, we write $\dist(x,F)=\dist(\{x\},F)$. The closure of a set $A\subseteq \Rn$ is denoted by $\clos{A}$, and the indicator (or characteristic) function of such a set is denoted by $\ind{A}$. 
The upper half-space $\Hn$ is defined as 
$\Hn=\set{(t,x)\in\R\times \R^{n}: t>0}$.
The $\ell^{\infty}$ norm of a vector $x=(x_1,\ldots, x_n)\in\Rn$ is denoted by $\abs{x}_{\infty}=\sup_{1\leq i\leq n}\abs{x_i}$, and the cube $Q$ of centre $x_Q\in\Rn$ and sidelength $\ell(Q)=\ell>0$ is the set $Q=\set{x\in\Rn : \abs{x-x_{Q}}_{\infty}<\ell/2}$. The point $X_{Q}=(\ell(Q),x_{Q})\in\Hn$ denotes the corresponding corkscrew point in $\Hn$.

If the Lebesgue measure of $A\subset\Rn$, denoted $\meas{A}$, is finite, we let $$\left(f\right)_{A}=\dashint_{A}f=\frac{1}{\meas{A}}\int_{A}f(x)\dd x$$ denote the average of an integrable function $f: A\to \R$ over the set $A$. For a locally integrable function $f\in\El{1}_{\loc}(\Rn)$, its centred Hardy--Littlewood maximal function is defined for all $x\in\Rn$ by
\begin{equation*}
    Mf(x)=\sup_{r>0}\dashint_{\Delta(x,r)}\abs{f(y)}\dd y.
\end{equation*}
For $p\in[1,\infty]$, we let $\El{p}_{+}(\Rn)$ denote the space of functions $f\in\Ell{p}$ such that $f(x)\geq 0$ for almost every $x\in\Rn$.
If $O\subseteq \Rn$ is open, $f\in\El{1}_{\loc}(O)$ and $\alpha\in\N^{n}$ is a multi-index, then the $\alpha$th partial (distributional) derivative of $f$ is denoted $\partial^{\alpha}f$. In the case when $\alpha=(0,\ldots,1,\ldots,0)$ (where $1$ is in the $k$th component), we let $\partial_{k}f=\partial^{\alpha}f$.

For a normed space $X$, we let $\mathcal{L}(X)$ denote the space of all bounded linear operators ${T:X\to X}$ equipped with the usual operator norm $\norm{T}_{\mathcal{L}(X)}$. The class of Schwartz functions is denoted by $\mathcal{S}(\Rn)$, and the space of tempered distributions is denoted by $\mathcal{S}'(\Rn)$.

If $x,y\geq 0$, we say that $x\lesssim y$ if there exists $C>0$, independent of both $x$ and $y$, such that $x\leq Cy$. We define $x\gtrsim y$ in a similar way, and use the notation $x\eqsim y $ to express that both $x\lesssim y$ and $x\gtrsim y$ hold. We shall occasionally use the symbol $\lesssim_{a}$ to indicate that the implicit constant depends on some parameter $a$, or directly denote the constant as $C_{a}$.

\subsection{Sobolev spaces}\label{subsection with definitions of spaces} In this section we introduce the various Sobolev (or smoothness) spaces referred to in the introduction in the order that they appear -- the Bessel potential spaces $\mathscr{L}^{p}_{\alpha}(\Rn)$, the fractional mean oscillation spaces $\mathrm{C}^{p}_{\alpha}(\Rn)$, and the Sobolev--Slobodeckij spaces $\mathrm{W}^{s,p}$. For each we collect some basic facts that we shall appeal to in subsequent sections.
\subsubsection*{The Bessel potential spaces} A natural criterion that captures the smoothness of a function in $\Ell{p}$ is expressed in terms of the smoothing operators $\mathscr{J}_{\alpha}=(I-\Delta)^{-\alpha/2}$ for $\alpha\geq 0$: a function $f$ belongs to the Bessel potential space $\mathscr{L}^{p}_{\alpha}(\Rn)$ if there exists $g\in\Ell{p}$ such that $f=\mathscr{J}_{\alpha}g$. The operator $\mathscr{J}_{\alpha}$ is given by convolution with the \textit{Bessel kernel} $G_{\alpha}:\Rn\to[0,\infty]$, defined via the Fourier transform by $\widehat{G}_{\alpha}(\xi)=(1+4\pi^{2}\abs{\xi}^{2})^{-\alpha/2}$, or spatially by
\begin{equation}\label{bessel kernel}
    G_{\alpha}(x)=c_{\alpha}\int_{0}^{\infty}\exp\left({-\pi\frac{\abs{x}^{2}}{t}}{-\frac{t}{4\pi}}\right)t^{-\frac{(n-\alpha)}{2}}\frac{\dd t}{t};
\end{equation}
here $c_{\alpha}>0$ ensures that $\norm{G_{\alpha}}_{\El{1}}=1$. 
It follows that $G_{\alpha}\in C^{\infty}(\Rn\setminus\set{0})\cap \Ell{1}$ and is nonnegative for each $\alpha>0$, and 
 $\norm{\mathscr{J}_{\alpha}(g)}_{\Ell{p}}\leq \norm{g}_{\Ell{p}}$ for all $p\in [1,\infty]$ by Minkowski's inequality. It also follows from the representation \eqref{bessel kernel} that the Bessel kernel exhibits exponential decay at infinity, and in particular
\begin{equation}\label{estimate on (gradient) of bessel potential}
    \abs{G_{\alpha}(x)}\lesssim \abs{x}^{-(n-\alpha)}e^{-\abs{x}/2}, \quad \abs{\nabla G_{\alpha}(x)}\lesssim \abs{x}^{-(n-\alpha+1)}e^{-\abs{x}/2},\quad x\in\Rn\setminus \set{0}.
\end{equation}
Of course $\mathscr{L}^{p}_{\alpha}(\Rn)$, being a subspace of $\Ell{p}$, consists of equivalence classes of functions $f\in\Ell{p}$, and the $\mathscr{L}^{p}_{\alpha}$-norm of $f\in\mathscr{L}^{p}_{\alpha}(\Rn)$ is naturally defined by
$\norm{f}_{\mathscr{L}^{p}_{\alpha}(\Rn)}=\norm{g}_{\Ell{p}}$ if $f=\mathscr{J}_{\alpha}(g)$.

As outlined in the introduction, it is necessary to choose a preferred representative of the equivalence class $f\in\mathscr{L}^{p}_{\alpha}(\Rn)$ for the statements of Theorems~\ref{theorem hausdorff dimension of divergence set for classical poisson extension} and \ref{main theorem upper half space} to make sense (specifically for \eqref{estimate on dimension of divergence set of classical Poisson extension of f} and \eqref{upper bound hausdorff dimension of divergence set}). If $\alpha>0$ and $p\in(1,\infty)$, a reasonable representative of $f=G_{\alpha}\ast g\in\mathscr{L}^{p}_{\alpha}(\Rn)$ is the function $\tilde{f} :\Rn \to (-\infty,\infty]$ defined by 
\begin{equation}\label{preferred representative for bessel potential space function}
    \tilde{f}(x)=\left\{\begin{array}{ll}
    \lim_{r\to 0^{+}}\dashint_{\Delta(x,r)} f(y)\dd y, & \textup{if the limit exists in }\R;\\
    \infty, & \textup{otherwise}.
    \end{array}\right.
\end{equation}
Indeed, \cite[Proposition 6.1.3]{Adams_Hedberg_Potential_Theory} shows that $\lim_{r\to 0^{+}}\dashint_{\Delta(x,r)} f(y)\dd y = (G_{\alpha}\ast g)(x)$ for all $x\in\Rn$ such that $(G_{\alpha}\ast \abs{g})(x)<\infty$, that is, away from a set of Hausdorff dimension no greater than $n-\alpha p$; see~\eqref{hausdorff dimension singularity set of non negative bessel potential function} in Section~\ref{subsubsection on fractiona dimensional measures} below. We note that $\smallabs{\tilde{f}(x)}\leq (G_{\alpha}\ast \abs{g})(x)$ for all $x\in\Rn$.

In this setting we also define the \textit{Riesz kernel}, or fractional integral kernel, $I_{\alpha}:\Rn\to[0,\infty]$ by $\widehat{I}_{\alpha}(\xi)=(2\pi\abs{\xi})^{-\alpha}$, or equivalently by
\begin{equation}\label{riesz kernel}
    I_{\alpha}(x)=c_{\alpha}\int_{0}^{\infty}e^{{-\pi\frac{\abs{x}^{2}}{t}}}t^{-\frac{(n-\alpha)}{2}}\frac{\dd t}{t}=\gamma_{\alpha,n}\abs{x}^{-(n-\alpha)}
\end{equation}
for a suitable constant $\gamma_{\alpha,n}>0$. It follows from \eqref{bessel kernel} and \eqref{riesz kernel} that $0<G_{\alpha}(x)\leq I_{\alpha}(x)$ for all $x\in\Rn$, and that $\lim_{x\to 0}\frac{G_{\alpha}(x)}{I_{\alpha}(x)}=1$ whenever $0<\alpha<n$. The reader may refer to \cite[Chapter \RNum{5}, Section 3]{Stein_Singular_Integrals} for more on Riesz and Bessel potentials.

\subsubsection*{The fractional mean oscillation spaces $\mathrm{C}^{p}_{\alpha}(\Rn)$} Another, somewhat less common, criterion to measure the smoothness of functions in $\Ell{p}$ is by quantifying their local approximability by polynomials. 
For integers $k\geq 0$ we let $\bb{P}_{k}$ denote the space of real polynomials on $\Rn$ with degree at most $k$. For $\alpha>0$ and $f\in\El{1}_{\loc}(\Rn)$ consider the maximal function
\begin{equation*}
    S_{\alpha}f(x)=\sup_{\Delta\ni x} \inf_{P\in\bb{P}_{k}}\frac{1}{\meas{\Delta}^{\alpha/n}}\dashint_{\Delta}\abs{f(y)-P(y)}\dd y, 
\end{equation*}
where the supremum is taken over all balls $\Delta\subset\Rn$ containing $x\in\Rn$, and where here and in what follows $k=\lfloor \alpha\rfloor$ is the greatest integer less than or equal to $\alpha$. 
The maximal functions $S_{\alpha}f$ measure the smoothness of $f$. 
For example, when $\alpha\in(0,1)$, we have the elementary inequality 
\begin{equation*}
    \abs{f(x)-f(y)}\lesssim \abs{x-y}^{\alpha}(S_{\alpha}f(x) + S_{\alpha}f(y))
\end{equation*}
for almost every $x,y\in\Rn$ (see, e.g., \cite[Theorem 2.5]{Devore-Sharpley_Maximal_functions_smoothness_1984}).

The maximal function $S_{\alpha}f$ can be defined using a minimising polynomial. 
The space $\bb{P}_{k}$ is a Hilbert space with the $\El{2}$ inner product $\langle f ,g\rangle=\int_{\Delta(0,1)}f(x)g(x)\dd x$. Starting from the set $\set{x^{\gamma}}_{\abs{\gamma}\leq k}$ of all monomials in $\bb{P}_{k}$, arranged in lexicographic order, the Gram--Schmidt process produces an orthonormal basis $\set{\varphi_{\gamma}}_{\abs{\gamma}\leq k}$ of $\bb{P}_{k}$. The operator $P^{k}_{\Delta(0,1)}:\El{1}_{\loc}(\Rn)\to \bb{P}_{k}$ defined by 
\begin{equation*}
    P_{\Delta(0,1)}^{k}f(x)=\sum_{\abs{\gamma}\leq k}\langle f, \varphi_{\gamma}\rangle\varphi_{\gamma}(x), \quad x\in\Rn,
\end{equation*}
is a projection (by construction) and it clearly satisfies $\smallnorm{P_{\Delta(0,1)}^{k}f}_{\El{\infty}(\Delta(0,1))}\lesssim \int_{\Delta(0,1)}\abs{f}$. For a general ball $\Delta=\Delta(x_0,r)\subset\Rn$, we define the corresponding projection $P_{\Delta}^{k}:\El{1}_{\loc}(\Rn)\to \bb{P}_{k}$ by 
\begin{equation*}
P_{\Delta}^{k}f=P_{\Delta(0,1)}(f\circ A)\circ A^{-1},
\end{equation*}
where $A:\Rn\to\Rn:x\to x_0+rx$. It follows by a change of variables that 
\begin{equation}\label{local L infty estimate on minimising polynomials}
    \smallnorm{P_{\Delta}^{k}f}_{\El{\infty}(\Delta)}\lesssim \dashint_{\Delta}\abs{f}.
\end{equation}
The definition shows that $P_{\Delta}^{k}f$ is the unique polynomial in $\bb{P}_{k}$ such that 
\begin{equation*}
\dashint_{\Delta}(f(y)-P_{\Delta}^{k}f(y))y^{\gamma}\dd y=0
\end{equation*}
for all multi-indices $\gamma\in\N^{n}$ with $\abs{\gamma}\leq k$. In particular, $P^{0}_{\Delta}f=\dashint_{\Delta}f$.

It can be seen using \eqref{local L infty estimate on minimising polynomials} that the maximal function
\begin{equation*}
    f_{\alpha}^{\sharp}(x)=\sup_{\Delta\ni x}\frac{1}{\meas{\Delta}^{\alpha/n}}\dashint_{\Delta}\smallabs{f(y)-P_{\Delta}^{k}f(y)}\dd y
\end{equation*}
is such that $S_{\alpha}f(x)\eqsim f_{\alpha}^{\sharp}(x)$, see \cite[Lemma 2.1]{Devore-Sharpley_Maximal_functions_smoothness_1984}. 
For $p\in[1,\infty]$ and $\alpha>0$, the space $\mathrm{C}^{p}_{\alpha}(\Rn)$ is defined as the subspace of $\Ell{p}$ consisting of the (equivalence classes of) functions $f\in\Ell{p}$ such that $f_{\alpha}^{\sharp}\in\Ell{p}$. It is equipped with the norm 
\begin{equation*}
    \norm{f}_{\mathrm{C}^{p}_{\alpha}(\Rn)}=\smallnorm{f}_{\Ell{p}} + \smallnorm{f^{\sharp}_{\alpha}}_{\Ell{p}},
\end{equation*}
which makes it into a Banach space (see \cite[Lemma 6.1]{Devore-Sharpley_Maximal_functions_smoothness_1984}); we clarify that $f_0^\sharp$ is the classical sharp maximal function of C. Fefferman and Stein, and $\norm{f}_{\mathrm{C}^{p}_{0}(\Rn)}\eqsim\smallnorm{f}_{\Ell{p}}$.  Just as with the Bessel potential spaces $\mathscr{L}^{p}_{\alpha}(\Rn)$, if $\alpha>\frac{n}{p}$ and $f\in\mathrm{C}^{p}_{\alpha}(\Rn)$, then $f$ has a continuous representative (see \cite[Theorem 9.1]{Devore-Sharpley_Maximal_functions_smoothness_1984}).

We also consider the following closed subspace of $\mathrm{C}^{p}_{\alpha}(\Rn)$: 
\begin{equation*}
\mathrm{F}^{p}_{\alpha}(\Rn)=\set{f\in\mathrm{C}^{p}_{\alpha}(\Rn) : \lim_{t\to 0^{+}}\smallnorm{f_{\alpha}^{\sharp}(t,\cdot)}_{\Ell{p}}=0},
\end{equation*}
where
\begin{equation*}
    f_{\alpha}^{\sharp}(t,x)=\sup_{\substack{\Delta\ni x\\ \diam{\Delta}\leq t}}\frac{1}{\meas{\Delta}^{\alpha/n}}\dashint_{\Delta}\smallabs{f(y)-P_{\Delta}^{k}f(y)}\dd y\quad \textup{ for all }(t,x)\in\Hn.
\end{equation*}
The space $\mathrm{F}^{p}_{\alpha}(\Rn)$ coincides with the closure of the smooth compactly supported functions $C^{\infty}_{c}(\Rn)$ in $\mathrm{C}^{p}_{\alpha}(\Rn)$ (see \cite[Proposition 1]{Dorronsoro_1986}).
The reader can refer to the monograph of DeVore and Sharpley \cite{Devore-Sharpley_Maximal_functions_smoothness_1984} for an extensive study of the spaces $\mathrm{C}^{p}_{\alpha}(\Rn)$. We note that in \cite{Devore-Sharpley_Maximal_functions_smoothness_1984, Dorronsoro_1986} the maximal functions $S_{\alpha}f$ and $f^{\sharp}_{\alpha}$ are defined as suprema over cubes $Q\ni x $ rather than Euclidean balls $\Delta\ni x$. This obviously leads to pointwise equivalent maximal functions, and so the resulting spaces $\mathrm{F}^{p}_{\alpha}(\Rn)$ and $\mathrm{C}^{p}_{\alpha}(\Rn)$ are identical as sets with equivalent norms.

\subsubsection*{The Sobolev--Slobodeckij spaces $\mathrm{W}^{s,p}$} As we shall see, the spaces $\mathscr{L}^{p}_{\alpha}(\Rn)$ and $\mathrm{C}^{p}_{\alpha}(\Rn)$ are very natural from the point of view of our pointwise convergence problems in the setting of the upper half space. However, their definitions do not appear to extend in a convenient way to the setting of general open subsets $U\subseteq\Rn$. A more suitable approach (for our purposes at least) is to define fractional Sobolev spaces in domains as follows.  If $p\in[1,\infty)$ and $m\geq 0$ is an integer, the Sobolev space $\mathrm{W}^{m,p}(U)$ is defined as usual by
\begin{equation*}
    \mathrm{W}^{m,p}(U)=\set{f\in \El{p}(U) : \partial^{\alpha}f \in\El{p}(U) \textup{ for all multi-indices}\abs{\alpha}\leq m},
\end{equation*}
equipped with the norm $\norm{f}_{\mathrm{W}^{m,p}(U)}=\left(\sum_{\abs{\alpha}\leq m}\norm{\partial_{\alpha}f}_{\El{p}(U)}^{p}\right)^{1/p}$. The Sobolev spaces of fractional order are defined using the Slobodeckij seminorms given by
\begin{equation*}
    \left[f\right]_{\sigma,p,U}=\left(\int_{U}\int_{U}\frac{\abs{f(x)-f(y)}^{p}}{\abs{x-y}^{n+\sigma p}}\dd x\dd y\right)^{1/p}\quad \textup{ for }\sigma\in(0,1).
\end{equation*}
If $s=m+\sigma$, where $m\geq 0$ is an integer and $\sigma\in (0,1)$, then we define 
\begin{equation*}
    \mathrm{W}^{s,p}(U)=\set{f \in\mathrm{W}^{m,p}(U) : \left[ \partial^{\alpha}f\right]_{\sigma, p ,U}<\infty \textup{ whenever }\abs{\alpha}=m},
\end{equation*}
and equip this space with the norm $\norm{f}_{\mathrm{W}^{s,p}(U)}=\left(\norm{f}_{\mathrm{W}^{m,p}(U)}^{p} + \sum_{\abs{\alpha}=m}\left[ \partial_{\alpha}f\right]_{\sigma, p,U}^{p}\right)^{1/p}$.

If $k\geq 1$ is a positive integer and $p\in(1,\infty)$, then $\mathscr{L}^{p}_{k}(\Rn)=\mathrm{W}^{k,p}(\Rn)$ with equivalent norms (see, e.g., \cite[Chapter \RNum{5}, Theorem 3]{Stein_Singular_Integrals}).
Furthermore, the continuous inclusion $\mathrm{W}^{s,p}(\Rn)\subseteq \mathscr{L}^{p}_{s}(\Rn)$ holds for all $s > 0$ if $p\in(1,2]$, and the converse inclusion holds in the range $p\in[2,\infty)$; see, for example, \cite[Chapter \RNum{5}, Theorem~5]{Stein_Singular_Integrals}.

\subsection{Fractional dimensional measures and Frostman's lemma}\label{subsubsection on fractiona dimensional measures}
For $s\in [0,\infty)$, a positive Borel measure $\mu$ on $\Rn$ is called \textit{$s$-dimensional} if 
\begin{equation*}
    c_{s}(\mu)=\sup_{x\in\Rn, r>0} \frac{\mu(\Delta(x,r))}{r^{s}}<\infty.
\end{equation*}
We let $\mathcal{M}^{s}(\Rn)$ denote the collection of all $s$-dimensional probability measures on $\Rn$.

The following lemma shows the relevance of $\mathcal{M}^{s}(\Rn)$ in the study of the Bessel potential spaces $\mathscr{L}^{p}_{\alpha}(\Rn)$.  This result follows from \cite[Theorem 7.2.2]{Adams_Hedberg_Potential_Theory} and a classical estimate on the Bessel capacity of balls. For completeness we shall prove this in the context of general fractional dimensional measures. See also \cite[Theorem~4.7.2]{Ziemer_book_1989} for some related estimates. 
\begin{lem}\label{lemma dimensional measures and Bessel potential functions}
    Let $p\in(1,\infty)$, $\alpha\in (0,\frac{n}{p}]$ and $s>n-\alpha p$. If $g\in\El{p}_{+}(\Rn)$ and $\mu\in\mathcal{M}^{s}(\Rn)$, then $\norm{G_{\alpha}\ast g}_{\El{1}(\mu)}\lesssim (c_{s}(\mu)^{1/p}\vee 1)\norm{g}_{\El{p}}$.
\end{lem}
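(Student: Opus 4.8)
The plan is to dualise the inequality and reduce it to an $\El{p'}$-bound on the potential $G_{\alpha}\ast\mu$. Since $G_{\alpha}$, $g$ and $\mu$ are nonnegative and $G_{\alpha}$ is radial, Tonelli's theorem followed by Hölder's inequality give, with $p'$ the conjugate exponent of $p$,
\begin{equation*}
\norm{G_{\alpha}\ast g}_{\El{1}(\mu)}=\int_{\Rn}g(y)\,(G_{\alpha}\ast\mu)(y)\dd y\leq \norm{g}_{\Ell{p}}\,\norm{G_{\alpha}\ast\mu}_{\El{p'}(\Rn)},
\end{equation*}
where $(G_{\alpha}\ast\mu)(y):=\int_{\Rn}G_{\alpha}(y-x)\dd\mu(x)$. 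It therefore suffices to show that $\norm{G_{\alpha}\ast\mu}_{\El{p'}(\Rn)}\lesssim c_{s}(\mu)^{1/p}\vee 1$.

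Using \eqref{estimate on (gradient) of bessel potential} I would split the kernel at the unit scale, $G_{\alpha}(z)\lesssim \abs{z}^{\alpha-n}\ind{\set{\abs{z}\leq 1}}+e^{-\abs{z}/4}\ind{\set{\abs{z}>1}}$. The far contribution is immediately harmless: $e^{-\abs{\cdot}/4}\ind{\set{\abs{\cdot}>1}}\in\El{p'}(\Rn)$, so by Young's convolution inequality (using $\mu(\Rn)=1$) its convolution with $\mu$ has $\El{p'}$-norm $\lesssim 1$. For the near contribution $V(x):=\int_{\abs{x-y}\leq 1}\abs{x-y}^{\alpha-n}\dd\mu(y)$ I would decompose dyadically in $\abs{x-y}$: writing $V=\sum_{j\geq 0}V_{j}$ with $V_{j}(x)=\int_{2^{-j-1}<\abs{x-y}\leq 2^{-j}}\abs{x-y}^{\alpha-n}\dd\mu(y)$, the crude bound $V_{j}(x)\lesssim 2^{j(n-\alpha)}\mu(\Delta(x,2^{-j}))$ holds. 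The heart of the matter is the sub-estimate that, for $0<r\leq 1$,
\begin{equation*}
\norm{\mu(\Delta(\cdot,r))}_{\El{p'}(\Rn)}^{p'}=\int_{\Rn}\mu(\Delta(x,r))^{p'}\dd x=\int_{\Rn}\Big(\int_{\Delta(y,r)}\mu(\Delta(x,r))^{p'-1}\dd x\Big)\dd\mu(y)\lesssim c_{s}(\mu)^{p'-1}r^{s(p'-1)+n},
\end{equation*}
where the second equality is Tonelli and the inequality uses $\Delta(x,r)\subseteq\Delta(y,2r)$ together with $\mu(\Delta(y,2r))\leq c_{s}(\mu)(2r)^{s}$ for $x\in\Delta(y,r)$ and $\mu(\Rn)=1$; since $(p'-1)/p'=1/p$ this reads $\norm{\mu(\Delta(\cdot,r))}_{\El{p'}(\Rn)}\lesssim c_{s}(\mu)^{1/p}r^{s/p+n/p'}$.

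Assembling these pieces, the triangle inequality in $\El{p'}(\Rn)$ would yield
\begin{equation*}
\norm{V}_{\El{p'}(\Rn)}\leq\sum_{j\geq 0}\norm{V_{j}}_{\El{p'}(\Rn)}\lesssim c_{s}(\mu)^{1/p}\sum_{j\geq 0}2^{j(n-\alpha)}2^{-j(s/p+n/p')}=c_{s}(\mu)^{1/p}\sum_{j\geq 0}2^{-j(\alpha-(n-s)/p)},
\end{equation*}
and the geometric series converges exactly because the hypothesis $s>n-\alpha p$ is equivalent to $\alpha>(n-s)/p$ — an equivalence that also covers the endpoint $\alpha p=n$ (where it reduces to $s>0$) without any modification. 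Hence $\norm{V}_{\El{p'}(\Rn)}\lesssim c_{s}(\mu)^{1/p}$, with an implicit constant depending on $n,p,\alpha,s$ and degenerating as $s\downarrow n-\alpha p$, which is immaterial here. Combining the near and far parts gives $\norm{G_{\alpha}\ast\mu}_{\El{p'}(\Rn)}\lesssim c_{s}(\mu)^{1/p}+1\lesssim c_{s}(\mu)^{1/p}\vee 1$, which would complete the argument. The one genuinely delicate point is this near part: under the stated hypothesis $s>n-\alpha p$ the truncated Riesz potential $V$ need not be finite pointwise (finiteness everywhere would require the stronger $s>n-\alpha$), so one must extract the gain from $\El{p'}$-averaging via the Tonelli computation above rather than from a pointwise supremum bound; everything else is routine.
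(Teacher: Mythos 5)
Your proof is correct, and while it starts from the same dualisation, the technical engine is genuinely different from the paper's. The paper dominates $G_{\alpha}$ by the Riesz kernel $I_{\alpha}$, rewrites $I_{\alpha}\ast\mu(x)\eqsim\int_{0}^{\infty}\mu(\Delta(x,r))r^{-(n-\alpha)}\frac{\dd r}{r}$, and applies Minkowski's integral inequality using two bounds on $\norm{\mu(\Delta(\cdot,r))}_{\El{p'}}$ — one ($\lesssim r^{n/p'}$, from $\mu$ being a probability) for large $r$ and one ($\lesssim c_s(\mu)^{1/p}r^{s/p+n/p'}$, from $s$-dimensionality) for small $r$ — optimising the crossover at $R=c_s(\mu)^{-1/s}$. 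That $I_{\alpha}$-route forces $\alpha<n/p$ (the far integral $\int_{R}^{\infty}r^{\alpha-n/p}\frac{\dd r}{r}$ diverges at the endpoint), so the paper handles $\alpha p=n$ separately via the semigroup factorisation $G_{\alpha}=G_{\gamma}\ast G_{\epsilon}$. You instead split the Bessel kernel itself at unit scale, dispose of the exponential tail by Young's inequality (so no far-field power counting is needed at all), and discretise the near part dyadically using exactly the dimensional bound $\norm{\mu(\Delta(\cdot,r))}_{\El{p'}}\lesssim c_s(\mu)^{1/p}r^{s/p+n/p'}$, which you derive by the same Tonelli argument as the paper's second bound. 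Because the exponential decay of $G_{\alpha}$ (rather than the algebraic decay of $I_{\alpha}$) carries the far part, your geometric series only needs $\alpha>(n-s)/p$, which is exactly $s>n-\alpha p$; this absorbs the endpoint $\alpha p=n$ with no extra argument, a mild simplification over the paper. Your closing remark — that the gain must come from $\El{p'}$-averaging via Tonelli rather than from a pointwise bound on the truncated potential — is the right diagnosis of where the hypothesis is genuinely used; both proofs exploit this in the same way (squaring up $\mu(\Delta(x,r))^{p'}=\mu(\Delta(x,r))^{p'-1}\mu(\Delta(x,r))$ and unfolding one factor by Tonelli).
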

\begin{proof}
    We first treat the case when $\alpha\in(0,\frac{n}{p})$. Let $g\in\El{p}_{+}(\Rn)$ and $\mu\in\mathcal{M}^{s}(\Rn)$. We shall prove the stronger estimate $\norm{I_{\alpha}\ast g}_{\El{1}(\mu)}\lesssim (c_{s}(\mu)^{1/p}\vee 1)\norm{g}_{\El{p}}$. Note that Fubini's theorem followed by Hölder's inequality show that $\norm{I_{\alpha}\ast g}_{\El{1}(\mu)}\leq \norm{I_{\alpha}\ast \mu}_{\El{p'}(\Rn)}\norm{g}_{\El{p}(\Rn)}$, hence it suffices to prove that $\norm{I_{\alpha}\ast \mu}_{\El{p'}(\Rn)}\lesssim (c_{s}(\mu)^{1/p}\vee 1)$. Using Fubini's theorem we can write for all $x\in\Rn$ that
    \begin{align}
    \begin{split}\label{identity riesz potential convolution measure using fubini}
        (I_{\alpha}\ast \mu)(x)&=\gamma_{\alpha,n}\int_{\Rn} \abs{x-y}^{-(n-\alpha)}\dd \mu (y)\eqsim \int_{\Rn}\left(\int_{\abs{x-y}}^{\infty} r^{-(n-\alpha)}\frac{\dd r}{r}\right)\dd \mu (y)\\
        &=\int_{\Rn}\left(\int_{0}^{\infty} r^{-(n-\alpha)} \ind{\Delta(x,r)}(y)\frac{\dd r}{r}\right)\dd \mu (y)\\
        &=\int_{0}^{\infty}\mu(\Delta(x,r))r^{-(n-\alpha)}\frac{\dd r}{r}.
    \end{split}
    \end{align}
    For $r>0$, we consider the function $F_{r}:\Rn\to [0,1]$ defined by $F_{r}(x)= \mu(\Delta(x,r))$ for all $x\in \Rn$. Since $\mu$ is a probability measure, we obtain that 
    \begin{align*}
        \norm{F_{r}}_{\Ell{p'}}^{p'} &= \int_{\Rn} \mu(\Delta(x,r))^{p'}\dd x \\
        &\leq \int_{\Rn}\mu(\Delta(x,r))\dd x
        =\int_{\Rn}\left(\int_{\Rn} \ind{\Delta(x,r)}(y)\dd \mu(y)\right)\dd x\\
        &=\int_{\Rn}\meas{\Delta(y,r)}\dd \mu(y)\lesssim r^{n},
    \end{align*}
    where Fubini's theorem was used to obtain the last equality. In addition, since $\mu$ is $s$-dimensional, we obtain that
    \begin{align*}
        \norm{F_{r}}_{\Ell{p'}}^{p'} &= \int_{\Rn} \mu(\Delta(x,r))^{p'}\dd x = \int_{\Rn}\mu(\Delta(x,r))^{p'-1}\mu(\Delta(x,r))\dd  x\\
        &\leq (c_{s}(\mu)r^{s})^{p'-1}\int_{\Rn}\mu(\Delta(x,r))\dd  x\lesssim (c_{s}(\mu)r^{s})^{p'-1}r^{n}.
    \end{align*}
    Let us fix $R>0$ to be determined later. Minkowski's inequality and \eqref{identity riesz potential convolution measure using fubini} imply that
    \begin{align*}
        \norm{I_{\alpha}\ast \mu}_{\Ell{p'}}&\lesssim \int_{0}^{\infty}\norm{F_{r}}_{\Ell{p'}}r^{-(n-\alpha)}\frac{\dd r}{r}\\
        &\lesssim c_{s}(\mu)^{1/p}\int_{0}^{R}r^{\frac{s}{p}+\frac{n}{p'}}r^{-(n-\alpha)}\frac{\dd r}{r} + \int_{R}^{\infty}r^{\frac{n}{p'}}r^{-(n-\alpha)}\frac{\dd r}{r}\\
        &\eqsim c_{s}(\mu)^{1/p}R^{\alpha-\frac{n}{p}+\frac{s}{p}} + R^{\alpha-\frac{n}{p}},
    \end{align*}
    where we have used that $s>n-\alpha p$ and that $\alpha<\frac{n}{p}$ to show that both integrals are finite.  Since $\frac{n-\alpha p}{s}\in(0,1)$, choosing $R=c_{s}(\mu)^{-1/s}$ yields
    \begin{equation*}
    \norm{I_{\alpha}\ast \mu}_{\Ell{p'}}\lesssim c_{s}(\mu)^{\frac{1}{p}\left(\frac{n-\alpha p}{s}\right)}\leq c_{s}(\mu)^{\frac{1}{p}}\vee 1.
    \end{equation*}

    Finally, if $\alpha=\frac{n}{p}$, then by assumption $s>0$ and we may choose $\gamma\in(0,\frac{n}{p})$ and $\eps>0$ such that $\alpha=\gamma +\eps$ and $s>n-\gamma p$. Using the estimate proved in the first part of the proof we obtain that 
    \begin{align*}
        \norm{G_{\alpha}\ast g}_{\El{1}(\mu)}&=\norm{G_{\gamma}\ast G_{\eps}\ast g}_{\El{1}(\mu)}\leq \norm{I_{\gamma}\ast G_{\eps}\ast g}_{\El{1}(\mu)}\\
        &\lesssim (c_{s}(\mu)^{\frac{1}{p}}\vee 1)\norm{G_{\eps}\ast g}_{\Ell{p}}\\
        &\leq (c_{s}(\mu)^{\frac{1}{p}}\vee 1)\norm{g}_{\Ell{p}}.\qedhere
    \end{align*}
\end{proof}

For any Borel subset $E\subseteq\Rn$, we let $\mathcal{H}^{s}(E)\in[0,\infty]$ denote the $s$-dimensional Hausdorff measure of $E$. The Hausdorff dimension of $E$ is denoted by $\dim_{H}(E)$. The relevant definitions can be found in \cite[Chapter 4]{Mattila_1995}. To obtain the estimates on the Hausdorff dimension of the divergence set in Theorem~\ref{main theorem upper half space}, we will rely on both Lemma \ref{lemma dimensional measures and Bessel potential functions} and the following result, a proof of which can be found in \cite[Theorem~8.8]{Mattila_1995}.
\begin{lem}[Frostman's lemma]\label{frostman's lemma}
    Let $E$ be a Borel subset of $\Rn$, and let $s>0$. The following are equivalent:
    \begin{enumerate}[label=\emph{(\roman*)}]
        \item $\mathcal{H}^{s}(E)>0$;
        \item There exists a finite $s$-dimensional Radon measure $\mu$ on $\Rn$ such that $\supp{\mu}\subseteq E$ and $\mu(E)>0$.
    \end{enumerate}
\end{lem}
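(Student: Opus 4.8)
The plan is to prove the two implications separately, the first being routine and the second relying on a dyadic construction followed by a weak-$\ast$ compactness argument.

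For \emph{(ii)}$\Rightarrow$\emph{(i)}: given $\mu$ as in (ii) and any countable cover $\set{U_{i}}$ of $E$ by sets of diameter at most $\delta$, I would enclose each $U_{i}$ in a ball of radius $\diam{U_{i}}$ and invoke the $s$-dimensionality of $\mu$ to obtain $\mu(E)\leq\sum_{i}\mu(U_{i})\lesssim c_{s}(\mu)\sum_{i}\diam{U_{i}}^{s}$; taking the infimum over such covers shows $\mathcal{H}^{s}_{\delta}(E)\gtrsim\mu(E)/c_{s}(\mu)>0$ for every $\delta>0$, hence $\mathcal{H}^{s}(E)>0$.

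For \emph{(i)}$\Rightarrow$\emph{(ii)}: first I would reduce to $E$ \emph{compact}. Indeed a Borel set with $\mathcal{H}^{s}(E)>0$ contains a compact subset with positive (and finite) $\mathcal{H}^{s}$-measure — by inner regularity of the restriction of $\mathcal{H}^{s}$ to $E$ when $\mathcal{H}^{s}(E)<\infty$, and by the Besicovitch--Davies compact subset theorem when $\mathcal{H}^{s}(E)=\infty$ (see \cite[Chapter 8]{Mattila_1995}) — and a measure supported in that subset serves $E$ as well. After a translation and dilation one may assume $E\subseteq Q_{0}:=[0,1)^{n}$, and since the statement is vacuous on both sides when $s>n$ one may take $0<s\leq n$. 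For $k\geq0$ let $\mathcal{D}_{k}$ be the dyadic subcubes of $Q_{0}$ of sidelength $2^{-k}$. Fixing $m\geq1$, I would build a measure $\nu_{m}$ in two stages: (a) assign to each $Q\in\mathcal{D}_{m}$ with $Q\cap E\neq\emptyset$ the multiple of Lebesgue measure on $Q$ of total mass $\ell(Q)^{s}=2^{-ms}$, and mass $0$ to the remaining cubes of $\mathcal{D}_{m}$; (b) for $k=m-1,m-2,\dots,0$ in turn, multiply the measure currently carried by each $Q\in\mathcal{D}_{k}$ by $\min\set{1,\,\ell(Q)^{s}/(\text{current mass of }Q)}$. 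The resulting $\nu_{m}$ is supported on the union of those $Q\in\mathcal{D}_{m}$ meeting $E$, hence inside the $\sqrt{n}\,2^{-m}$-neighbourhood of $E$. Two uniform estimates would then be checked. \emph{Upper bound:} the cap in stage (b) forces $\nu_{m}(Q)\leq\ell(Q)^{s}$ for every dyadic $Q$ of level $\leq m$, and inside a level-$m$ cube $\nu_{m}$ has density at most $\ell(Q)^{s}/\meas{Q}$ with respect to Lebesgue measure; covering an arbitrary ball $\Delta(x,r)$ by boundedly many (in $n$) dyadic cubes of comparable sidelength when $r<1$, and using $\nu_{m}(\Rn)=\nu_{m}(Q_{0})\leq1$ when $r\geq1$, yields $c_{s}(\nu_{m})\lesssim_{n}1$ uniformly in $m$. \emph{Lower bound:} for $x\in E$ follow the chain $Q_{0}=Q^{(0)}\supset Q^{(1)}\supset\cdots\supset Q^{(m)}$ of dyadic cubes containing $x$; the level-$m$ cube starts saturated (mass exactly $\ell(Q^{(m)})^{s}$), so there is a coarsest level $j(x)$ at which $Q^{(j(x))}$ is saturated, and at every strictly coarser level no rescaling occurs, so the mass of $Q^{(j(x))}$ is never subsequently reduced and $\nu_{m}(Q^{(j(x))})=\ell(Q^{(j(x))})^{s}$. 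Since these cubes are dyadic and none is strictly contained in another (that would force an ancestor of $Q^{(j(x))}$ to be saturated, contradicting the choice of $j(x)$), the distinct ones among $\set{Q^{(j(x))}:x\in E}$ form a finite, pairwise disjoint family $\set{R_{i}}$ covering $E$, whence $\nu_{m}(\Rn)\geq\sum_{i}\ell(R_{i})^{s}=n^{-s/2}\sum_{i}\diam{R_{i}}^{s}\geq n^{-s/2}\,\mathcal{H}^{s}_{\infty}(E)$, and $\mathcal{H}^{s}_{\infty}(E)>0$ because $\mathcal{H}^{s}(E)>0$.

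Finally, the $\nu_{m}$ all live on the fixed compact set $\clos{Q_{0}}$ with total masses in a bounded interval, so along a subsequence $\nu_{m_{k}}\rightharpoonup\mu$ weak-$\ast$ for a finite Radon measure $\mu$, and I would transfer the two estimates via the portmanteau theorem: $\mu(\Delta(x,r))\leq\liminf_{k}\nu_{m_{k}}(\Delta(x,r))\lesssim_{n}r^{s}$, so $\mu$ is finite and $s$-dimensional; testing against a cutoff equal to $1$ on $\clos{Q_{0}}$ gives $\mu(\Rn)\gtrsim_{n,s}\mathcal{H}^{s}_{\infty}(E)>0$; and since for each $\delta>0$ all but finitely many $\nu_{m_{k}}$ sit in the open $\delta$-neighbourhood of $E$, the measure $\mu$ is supported in its closure, so $\supp{\mu}\subseteq\bigcap_{\delta>0}\clos{\set{y\in\Rn:\dist(y,E)<\delta}}=E$ (here $E$ compact, hence closed, is used). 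Then $\mu(E)=\mu(\Rn)>0$, as required; dividing by $\mu(E)$ produces a member of $\mathcal{M}^{s}(\Rn)$ supported in $E$ if a probability measure is wanted. The step I expect to demand the most care is the uniform lower bound $\nu_{m}(\Rn)\gtrsim_{n,s}\mathcal{H}^{s}_{\infty}(E)$ — the stopping-time argument isolating the saturated dyadic cubes and verifying that they tile a cover of $E$ — since the upper bound and the weak-$\ast$ passage are routine, and the reduction to compact $E$ is quoted.
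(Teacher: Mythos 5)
The paper does not prove this lemma; it states it and cites \cite[Theorem~8.8]{Mattila_1995}, since it serves only as an external tool (used in Proposition~\ref{proposition Hausdorff dimension div set for cones and Bessel} and again in Section~\ref{section proof of main theorem for Wsp in upper half space}). Your argument is therefore not comparable with a proof in the paper, but it is a correct and complete rendering of the standard dyadic Frostman construction, essentially the one in the cited reference. The easy implication (ii)$\Rightarrow$(i) is handled cleanly. For (i)$\Rightarrow$(ii), the reduction to compact $E$ (inner regularity when $\mathcal{H}^{s}(E)<\infty$, Besicovitch--Davies when $\mathcal{H}^{s}(E)=\infty$) is correctly quoted; the two-stage construction of $\nu_{m}$ and the uniform bound $c_{s}(\nu_{m})\lesssim_{n}1$ (handled in the three regimes $r<2^{-m}$, $2^{-m}\leq r<1$, $r\geq 1$) are right; and the stopping-time argument for the lower bound $\nu_{m}(\Rn)\geq n^{-s/2}\mathcal{H}^{s}_{\infty}(E)$ is sound: the crucial observation, which you isolate correctly, is that minimality of $j(x)$ prevents any strictly coarser ancestor from being rescaled, so the final mass of $Q^{(j(x))}$ is exactly $\ell(Q^{(j(x))})^{s}$, and the dyadic nesting of the saturated cubes combined with that same minimality forces pairwise disjointness. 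The weak-$\ast$ passage uses the correct direction of the portmanteau inequality (open balls give $\mu(\Delta(x,r))\leq\liminf_{k}\nu_{m_{k}}(\Delta(x,r))$), and the support argument via shrinking neighbourhoods uses compactness of $E$ exactly where needed. It would be marginally cleaner to say explicitly that ``$Q^{(k)}$ is saturated'' means its mass equals $\ell(Q^{(k)})^{s}$ at the moment step $k$ of stage (b) completes (treating $k=m$ as the initialisation), since that phrasing both guarantees a coarsest saturated level exists and makes immediate that strictly coarser steps leave $Q^{(j(x))}$ untouched; but your reasoning already carries this content.
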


In order to briefly describe the general principle that we shall use to obtain the Hausdorff dimension estimates in Theorem \ref{main theorem upper half space}, let us see how Lemma \ref{lemma dimensional measures and Bessel potential functions} and Frostman's lemma can be combined to study the Bessel potential spaces $\mathscr{L}^{p}_{\alpha}(\Rn)$. If $\alpha>\frac{n}{p}$, then it is well-known that all elements of $\mathscr{L}^{p}_{\alpha}(\Rn)$ can be represented by (Hölder) continuous functions vanishing at infinity (see, e.g., \cite[Theorem 1.2.4]{Adams_Hedberg_Potential_Theory}). 
If $\alpha\in (0,\frac{n}{p}]$, $g\in\El{p}_{+}(\Rn)$ and $f=G_{\alpha}\ast g$, then this convolution is defined everywhere on $\Rn$ (though it may take the value $\infty$ on a set of Lebesgue measure zero). For any $s>n-\alpha p$ and any measure $\mu\in\mathcal{M}^{s}(\Rn)$, it follows from Lemma \ref{lemma dimensional measures and Bessel potential functions} that 
\begin{equation*}
    \mu\left(\set{x\in\Rn : f(x)=\infty}\right)=0.
\end{equation*}
It follows from Frostman's lemma that $\mathcal{H}^{s}\left(\set{x\in\Rn : f(x)=\infty}\right)=0$. Since $s>n-\alpha p$ is arbitrary, this implies that 
\begin{equation}\label{hausdorff dimension singularity set of non negative bessel potential function}
\dim_{H}(\set{x\in\bb{R}^{n} : f(x)=\infty})\leq n-\alpha p. 
\end{equation}
This is optimal, as \v{Z}ubrini\'{c} \cite[Theorem~2]{Zubrinic_2002} has shown that for any $0<\lambda<n-\alpha p$, there exists a compact set $A\subset \Rn$ and a function $g\in\El{p}_{+}(\Rn)$ such that $\dim_{H}(A)=\lambda$ and $A=\set{x\in\Rn : (G_{\alpha}\ast g)(x)=\infty}$. 
    
\section{The Dirichlet problem in \texorpdfstring{$\Hn$}{Hn} and the proof of Theorem \ref{main theorem upper half space}}\label{subsection proof of main thm upper half space} 
The main purpose of this section is to prove Theorem ~\ref{main theorem upper half space}, the first of our two main theorems. We begin
Section~\ref{section on nontangential approach dirichlet upper half space + estimate div set} with some additional details on the elliptic operators and the corresponding Dirichlet problems described in the introduction. We then show how Frostman's lemma (Lemma~\ref{frostman's lemma}) can be used to estimate the divergence set corresponding to the classical nontangential approach (see Proposition~\ref{proposition Hausdorff dimension div set for cones and Bessel}). Additional properties of the Bessel potentials required for the proof of Theorem~\ref{main theorem upper half space} are then presented in Section~\ref{section further properties bessel}. Some relevant auxiliary maximal functions are introduced at the start of Section~\ref{Sect:tang}, before we turn to the proof of Theorem~\ref{main theorem upper half space}.  
Finally, Section~\ref{section proof of main theorem for Wsp in upper half space} is dedicated to the proof of an extension of Theorem~\ref{main theorem upper half space} to boundary data $f\in\mathrm{C}^{p}_{\alpha}(\Rn)$ (see Theorem~\ref{main theorem upper half space for Cpalpha}).
Some estimates obtained in the course of the proof of Theorem~\ref{main theorem upper half space for Cpalpha} (specifically those highlighted in Remark~\ref{remark main implication of main thm within proof}) will be used in the proof of Theorem~\ref{second main theorem bounded Lipschitz domain} in Section~\ref{subsection proof second main theorem}.
\subsection{Nontangential approach in the \texorpdfstring{$\El{p}$}{Lp}-Dirichlet problem in \texorpdfstring{$\Hn$}{Hn}}\label{section on nontangential approach dirichlet upper half space + estimate div set}
We let $A:\Hn\to \R^{(1+n)\times (1+n)}$ be a matrix of measurable, real-valued functions on $\Hn$ satisfying the boundedness and ellipticity assumptions \eqref{boundedness aanndd ellipticity conditions}. We consider the corresponding elliptic operator $L=-\dvg{(A\nabla)}$ defined in \eqref{formal elliptic equation in divergence form}. 
As described in the introduction, there exists a family of Borel probability measures $\set{\omega^{(t,x)}_{L}}_{(t,x)\in\Hn}$ on $\Rn$, known as the $L$-harmonic measures, such that for any compactly supported continuous function $f\in C_{c}(\Rn)$, the function 
\begin{equation}\label{representation of classical solutions by harmonic measure}
u_{f}(t,x)=\int_{\Rn}f(y)\dd \omega^{(t,x)}_{L}(y), \quad (t,x)\in\Hn,
\end{equation}
is a solution (the unique bounded solution) to the classical Dirichlet problem with boundary datum $f$, in the sense that it is a weak solution to $L u_{f}=-\dvg(A\nabla u_{f})=0$ in $\Hn$ and that $u_{f}\in C(\clos{\Hn})$ with $u_{f}|_{\partial \Hn}= f$ (see, e.g., \cite[Section 5B]{HLM_Degenerate_Dirichlet_2019}).

As stated in the introduction, for $p\in(1,\infty)$, we say that $(\mathcal{D})^{L}_{p}$ holds if for all $f\in C_{c}(\Rn)$, the classical solution $u_f$ in \eqref{representation of classical solutions by harmonic measure} satisfies 
    \begin{equation*}
        \norm{N_{*}u_{f}}_{\Ell{p}}\lesssim \norm{f}_{\Ell{p}}.
    \end{equation*}

The $L$-harmonic measures $\set{\omega^{X}_{L}}_{X\in\Hn}$ are all mutually absolutely continuous by the Harnack inequality (see, e.g., \cite[Lemma 5.6]{HLM_Degenerate_Dirichlet_2019}). If $\omega_{L}^{X}$ is absolutely continuous with respect to the Lebesgue measure on $\Rn$ (i.e. the surface measure on $\partial\Hn\equiv \Rn)$, then we let ${k(X,\cdot )=\frac{\dd \omega^{X}_{L}}{\dd x}:\Rn\to [0,\infty]}$ denote the corresponding Radon--Nikodym derivative. This function is referred to as the Poisson kernel for $L$ in $\Hn$.

The following well-known result describes how the well-posedness of $\left(\mathcal{D}\right)^{L}_{p}$ relates to the quantitative absolute continuity of the $L$-harmonic measure with respect to the Lebesgue measure on $\Rn$. See, for example, \cite[Theorem 3.1]{Hoffman_Martell_Ainfty_estimates_extrapolation_2012}, and note that $p$ should be replaced by $q$ in items (b) and (c) of this reference. Recall that $X_{Q}$ is the corkscrew point corresponding to the cube $Q\subset \Rn$, see Section~\ref{section on notations}.
\begin{thm}\label{theorem equivalent properties to well posedness of Lp Dirichlet}
The following properties are equivalent:
    \begin{enumerate}[label=\emph{(\roman*)}]
    \item $\omega_{L}^{X_{Q_0}}\in A_{\infty}(Q_0)$, uniformly for all cubes $Q_0\subset\Rn$. In other words, for every cube $Q_0\subset\Rn$ and every $\eps>0$, there exists $\delta>0$ depending only on $n$, $\lambda$, $\Lambda$ and $\eps$, such that if $Q\subset Q_0$ is a cube and $E\subset Q$ is such that $\omega_{L}^{X_{Q_0}}(E)\leq \delta \omega_{L}^{X_{Q_0}}(Q)$, then $\meas{E}\leq \eps \meas{Q}$.
    \item There exists $p\in(1,\infty)$ such that $(\mathcal{D})_{p}^{L}$ holds.
    \item The measures $\set{\omega^{X}_{L}}_{X\in\Hn}$ are all absolutely continuous with respect to the Lebesgue measure, and there exists $q\in(1,\infty)$ and $C_0>0$ such that for all cubes $Q_{0}\subset\Rn$, the Poisson kernel $k(X_{Q_0},\cdot )$ satisfies 
\begin{equation}\label{reverse Holder property for rn derivative in local cube}
    \left(\dashint_{Q}k(X_{Q_0},y)^{q}\dd y\right)^{1/q}\leq C_{0} \dashint_{Q}k(X_{Q_0},y)\dd y\leq C_{0}\meas{Q}^{-1}
\end{equation}
for all cubes $Q\subseteq Q_{0}$.
    \end{enumerate}
Moreover, in the equivalence between \emph{(ii)} and \emph{(iii)}, it holds that $\frac{1}{p}+\frac{1}{q}=1$.
\end{thm}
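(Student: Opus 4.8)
The plan is to obtain this as the localised version of the classical correspondence between solvability of the $\El{p}$-Dirichlet problem and the $A_{\infty}$-property of $L$-harmonic measure, following \cite[Theorem~3.1]{Hoffman_Martell_Ainfty_estimates_extrapolation_2012} (and, for the underlying weight theory, the Coifman--Fefferman characterisation of $A_{\infty}$; see also \cite[Chapter~1]{Kenig_cbms_1994}). I would establish the cycle (iii)$\Rightarrow$(ii)$\Rightarrow$(iii) and then the equivalence (iii)$\Leftrightarrow$(i). Throughout I would rely on three standard structural facts about $L$-harmonic measure on $\Hn$, valid for merely bounded and measurable coefficients and with constants depending only on $n,\lambda,\Lambda$: the non-degeneracy $\omega_{L}^{X_{Q}}(Q)\eqsim 1$ for every cube $Q\subset\Rn$; the doubling of $\omega_{L}^{X_{Q_{0}}}$ on the dyadic descendants of any cube $Q_{0}$; and the change-of-pole estimate $\omega_{L}^{X_{Q_{0}}}(F)/\omega_{L}^{X_{Q_{0}}}(Q)\eqsim\omega_{L}^{X_{Q}}(F)/\omega_{L}^{X_{Q}}(Q)$ for Borel sets $F\subseteq Q\subseteq Q_{0}$. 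These follow from Harnack chains, the maximum principle and the Carleson/boundary comparison estimates (see \cite[Section~5B]{HLM_Degenerate_Dirichlet_2019} and the references therein), together with the self-improvement (open-endedness) of the reverse Hölder class, which I would also use.

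For (iii)$\Rightarrow$(ii), I would set $p=q'$. Given $f\in C_{c}(\Rn)$, the classical solution is $u_{f}(X)=\int_{\Rn}f(y)k(X,y)\dd y$; fixing $x_{0}$ and $(t,x)\in\Gamma(x_{0})$, I would pick a dyadic cube $Q\ni x$ with $\ell(Q)\eqsim t$ so that $(t,x)$ lies in the Whitney region of $Q$, and use the local boundedness \eqref{local boundedness solutions DGNM} and Harnack's inequality to reduce the estimate of $\smallabs{u_{f}(t,x)}$ to that of $\smallabs{u_{f}(X_{Q})}$. Splitting $f$ into its restriction to $3Q$ and the complement, the local piece would be controlled by Hölder's inequality, the reverse Hölder bound \eqref{reverse Holder property for rn derivative in local cube} transported to the pole $X_{Q}$ via the change-of-pole estimate, and $\omega_{L}^{X_{Q}}(3Q)\lesssim 1$; the far piece by summing over dyadic annuli $2^{j}Q\setminus 2^{j-1}Q$ and exploiting the geometric decay of $\omega_{L}^{X_{Q}}$ there. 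After replacing $q$ by a slightly larger exponent using its self-improvement, this would give $N_{*}u_{f}(x_{0})\lesssim (M(\abs{f}^{r})(x_{0}))^{1/r}$ for some $r<p$, and since $(M(\abs{\cdot}^{r}))^{1/r}$ is bounded on $\Ell{p}$, the estimate \eqref{condition Dirichlet Lp holds} follows; thus $(\mathcal{D})^{L}_{p}$ holds with $\tfrac1p+\tfrac1q=1$.

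For (ii)$\Rightarrow$(iii), I would argue by testing. Fix a cube $Q_{0}$ and Borel sets $E\subseteq Q\subseteq Q_{0}$; approximate $\ind{E}$ from outside by $f\in C_{c}(\Rn)$ with $0\le f\le1$ and $\norm{f}_{\Ell{p}}^{p}\lesssim\meas{E}$. Since $X_{Q}\in\Gamma(x)$ for $x\in Q$, the change-of-pole estimate and $\omega_{L}^{X_{Q}}(Q)\eqsim1$ give $N_{*}u_{f}(x)\ge u_{f}(X_{Q})\gtrsim\omega_{L}^{X_{Q_{0}}}(E)/\omega_{L}^{X_{Q_{0}}}(Q)$ there, so integrating over $Q$ and using \eqref{condition Dirichlet Lp holds} yields $\omega_{L}^{X_{Q_{0}}}(E)\lesssim(\meas{E}/\meas{Q})^{1/p}\,\omega_{L}^{X_{Q_{0}}}(Q)$, uniformly in $Q_{0}$. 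A standard distribution-function argument then promotes this to absolute continuity of $\omega_{L}^{X_{Q_{0}}}$ on $Q_{0}$ together with the reverse Hölder inequality \eqref{reverse Holder property for rn derivative in local cube} for its density $k(X_{Q_{0}},\cdot)$, with $q=p'$ (using that solvability of $(\mathcal{D})^{L}_{p}$ self-improves to a slightly smaller exponent, equivalently that the class defined by the polynomial bound is open-ended); the normalisation $\dashint_{Q}k(X_{Q_{0}},\cdot)\lesssim\meas{Q}^{-1}$ is immediate from $\omega_{L}^{X_{Q_{0}}}(Q)\le\omega_{L}^{X_{Q_{0}}}(Q_{0})\lesssim1$. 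Finally, (iii)$\Leftrightarrow$(i) is the Coifman--Fefferman equivalence between reverse Hölder inequalities and the Muckenhoupt $A_{\infty}$ condition, applied to $k(X_{Q_{0}},\cdot)$ and localised with the help of the doubling and non-degeneracy of harmonic measure; one direction is just Hölder's inequality applied to $\omega_{L}^{X_{Q_{0}}}(E)=\int_{E}k(X_{Q_{0}},\cdot)\dd x$, and in each equivalence the exponents occur as conjugates, which gives the relation $\tfrac1p+\tfrac1q=1$ recorded in the statement.

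I expect the genuine difficulty to lie not in these Muckenhoupt-theoretic manipulations but in the three structural estimates for $L$-harmonic measure quoted at the outset — in particular the doubling property and the change-of-pole estimate for operators with merely bounded and measurable coefficients — which rest on the Carleson/boundary comparison estimates and are the reason we quote this theorem from the literature rather than prove it in detail here.
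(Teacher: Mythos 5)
The paper does not prove Theorem~\ref{theorem equivalent properties to well posedness of Lp Dirichlet}; it simply quotes it as a well-known result, with the pointer to \cite[Theorem~3.1]{Hoffman_Martell_Ainfty_estimates_extrapolation_2012} (and \cite[Chapter~1]{Kenig_cbms_1994}). So there is no internal proof to compare against, and what you have written is a sketch of the argument those sources take. Your overall structure is sound: the three structural estimates for $L$-harmonic measure (non-degeneracy $\omega_L^{X_Q}(Q)\eqsim 1$, local doubling, change of pole) are indeed the substantive inputs and are correctly attributed; (iii)$\Rightarrow$(ii) via Whitney localisation, dyadic annular decomposition, the reverse H\"older estimate and the boundary H\"older decay yielding $N_*u_f\lesssim M_rf$ for some $r<p$ is exactly the argument of \cite[Lemma~5.32]{HLM_Degenerate_Dirichlet_2019}; and (i)$\Leftrightarrow$(iii) is Coifman--Fefferman plus change of pole.

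One genuine imprecision is in (ii)$\Rightarrow$(iii). Testing against approximations of $\ind{E}$ only yields the polynomial bound
\[
\frac{\omega_L^{X_{Q_0}}(E)}{\omega_L^{X_{Q_0}}(Q)}\lesssim \left(\frac{\meas{E}}{\meas{Q}}\right)^{1/p},
\]
which by the Coifman--Fefferman/Gehring machinery gives $k(X_{Q_0},\cdot)\in B^q$ for \emph{some} $q>1$ but not for the exact conjugate $q=p'$; the distribution-function argument you invoke is open-ended and loses the exponent, and the self-improvement of $(\mathcal D)_p^L$ (to a slightly smaller $p$) does not repair this, since it pushes the resulting $q$ \emph{below} $p'$, not up to it. The cleaner route, and the one the cited references actually take, is the duality argument: for $f\geq 0$ supported in $Q$ one has $\int_Q f\,k(X_Q,\cdot)\,\mathrm{d}y = u_f(X_Q)\leq N_*u_f(x)$ for every $x\in Q$, so integrating the $p$th power over $Q$ and applying \eqref{condition Dirichlet Lp holds} gives $\smallnorm{k(X_Q,\cdot)}_{\El{p'}(Q)}\lesssim \meas{Q}^{-1/p}$ directly, i.e. the reverse H\"older inequality with $q=p'$ on the nose, and change of pole then transfers this to $k(X_{Q_0},\cdot)$ on $Q\subseteq Q_0$. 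Your characteristic-function testing is still useful: it gives the passage (ii)$\Rightarrow$(i) cleanly, which is how one usually organises the cycle. With that one substitution your sketch matches the standard proof.
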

As stated in the introduction, the well-posedness of $(\mathcal{D})^{L}_{p}$ is guaranteed by the condition that $(\mathcal{D})^{L}_{p}$ holds. The result is classical for bounded domains (see \cite[Theorem 1.7.7]{Kenig_cbms_1994}) but a different argument is needed to treat uniqueness in unbounded domains. A simple proof is given in \cite[Section 6]{HLMP_Dirichlet_BMO_antisym_2022}. The following lemma and its proof are stated for completeness.  
\begin{lem}\label{lemma Dirichlet holds implies well posedness}
    If any of the equivalent properties of Theorem~\ref{theorem equivalent properties to well posedness of Lp Dirichlet} hold, then there is $p\in(1,\infty)$ (that given by item \emph{(ii)} of Theorem~\ref{theorem equivalent properties to well posedness of Lp Dirichlet}) such that $(\mathcal{D})_{p}^{L}$ is well-posed, and for any $f\in\Ell{p}$ the unique solution $u_{f}$ with boundary data $f$ is given by 
    \begin{equation}\label{definition of u_f for f in Lp}
u_{f}(t,x)=\int_{\Rn}f(y)\dd \omega^{(t,x)}_{L}(y)=\int_{\Rn} k((t,x),  y)f(y)dy
\end{equation}
for all $(t,x)\in \Hn$.
\end{lem}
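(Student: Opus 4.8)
The plan is to work throughout with the exponent $p$ furnished by property~(ii) of Theorem~\ref{theorem equivalent properties to well posedness of Lp Dirichlet}, which we may assume (the three properties listed there being equivalent) together with its reformulation~(iii): so the $L$-harmonic measures are absolutely continuous, the Poisson kernel $k(X,\cdot)=\tfrac{\dd\omega^{X}_{L}}{\dd x}$ exists, and $\tfrac1p+\tfrac1q=1$. The first step is to promote the a priori estimate $\norm{N_{*}u_{g}}_{\Ell{p}}\lesssim\norm{g}_{\Ell{p}}$ (valid for $g\in C_{c}(\Rn)$) to the pointwise bound $k(X,\cdot)\in\El{p'}(\Rn)$ with $\norm{k(X,\cdot)}_{\El{p'}}\lesssim\delta(X)^{-n/p}$ for each $X=(t,x)\in\Hn$. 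Indeed, for $g\in C_{c}(\Rn)$ and any $x_{0}\in\Delta(x,t)$ one has $X\in\Gamma(x_{0})$, hence $\abs{u_{g}(X)}\leq(N_{*}u_{g})(x_{0})$; integrating the $p$-th power over $\Delta(x,t)$ and invoking the a priori estimate gives $\abs{u_{g}(X)}\lesssim t^{-n/p}\norm{g}_{\Ell{p}}$, and since $u_{g}(X)=\int_{\Rn}k(X,y)g(y)\dd y$ (the Poisson kernel being the density of $\omega^{X}_{L}$) and $C_{c}(\Rn)$ is dense in $\El{p}$, duality yields the claim. Consequently, for every $f\in\Ell{p}$ and $X\in\Hn$, Hölder's inequality gives $\int_{\Rn}\abs{f}\dd\omega^{X}_{L}=\int_{\Rn}k(X,y)\abs{f(y)}\dd y<\infty$, so the two expressions in \eqref{definition of u_f for f in Lp} agree and define a function $u_{f}$ on $\Hn$, linear in $f$ and coinciding with the classical solution when $f\in C_{c}(\Rn)$.

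For \emph{existence} I would then argue by approximation: choose $f_{j}\in C_{c}(\Rn)$ with $f_{j}\to f$ in $\El{p}$. The $\El{p'}$-bound on $k(X,\cdot)$ gives $\abs{u_{f_{j}}(X)-u_{f}(X)}\lesssim\delta(X)^{-n/p}\norm{f_{j}-f}_{\Ell{p}}$, so $u_{f_{j}}\to u_{f}$ locally uniformly on $\Hn$ and $u_{f}$ is continuous there. Since each $u_{f_{j}}$ is a weak solution, local boundedness~\eqref{local boundedness solutions DGNM} and Caccioppoli's inequality bound $(u_{f_{j}})$ in $\mathrm{W}^{1,2}(K)$ for every compact $K\subset\Hn$; passing to a subsequence, $\nabla u_{f_{j}}\rightharpoonup\nabla u_{f}$ weakly in $\El{2}_{\loc}(\Hn)$, and letting $j\to\infty$ in $\int_{\Hn}\langle A\nabla u_{f_{j}},\nabla\varphi\rangle\,\dd X=0$ shows $u_{f}\in\mathrm{W}^{1,2}_{\loc}(\Hn)$ solves $Lu_{f}=0$. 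The locally uniform convergence gives $(N_{*}u_{f})(x_{0})\leq\liminf_{j}(N_{*}u_{f_{j}})(x_{0})$, so Fatou's lemma yields $\norm{N_{*}u_{f}}_{\Ell{p}}\leq\liminf_{j}\norm{N_{*}u_{f_{j}}}_{\Ell{p}}\lesssim\norm{f}_{\Ell{p}}$. For the nontangential limit, given $\eta>0$ I would split $f=g+h$ with $g\in C_{c}(\Rn)$ and $\norm{h}_{\Ell{p}}<\eta$, so $u_{f}=u_{g}+u_{h}$ with $u_{g}\in C(\clos{\Hn})$ and $u_{g}|_{\partial\Hn}=g$; then
\begin{equation*}
\limsup_{\Gamma(x_{0})\ni X\to(0,x_{0})}\abs{u_{f}(X)-f(x_{0})}\leq\abs{h(x_{0})}+(N_{*}u_{h})(x_{0})\qquad\text{for every }x_{0},
\end{equation*}
and Chebyshev's inequality together with $\norm{\smallabs{h}+N_{*}u_{h}}_{\Ell{p}}\lesssim\eta$ bounds the measure of the set where the left-hand side exceeds $\lambda$ by $\lesssim\lambda^{-p}\eta^{p}$; sending $\eta\to0$ and then $\lambda\to0$ gives $\lim_{\Gamma(x_{0})\ni X\to(0,x_{0})}u_{f}(X)=f(x_{0})$ for a.e.\ $x_{0}$. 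Thus $u_{f}$ solves $(\mathcal{D})^{L}_{p}$ with datum $f$.

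For \emph{uniqueness}, linearity reduces matters to showing that $u\in\mathrm{W}^{1,2}_{\loc}(\Hn)$ with $Lu=0$, $N_{*}u\in\Ell{p}$ and vanishing nontangential trace a.e.\ must be identically zero. On bounded domains this is classical (\cite[Theorem~1.7.7]{Kenig_cbms_1994}); on the unbounded domain $\Hn$ one must additionally exclude solutions such as $(t,x)\mapsto t$ (for $L=\Delta$), whose trace vanishes but whose nontangential maximal function is not in $\Ell{p}$ --- precisely the behaviour ruled out by the hypothesis $N_{*}u\in\Ell{p}$. I would invoke the short argument of \cite[Section~6]{HLMP_Dirichlet_BMO_antisym_2022}, the idea of which is to represent $u(X)$ as the integral against $\omega^{X}_{L}$ of the (vanishing) nontangential boundary trace of $u$, the passage to the limit being dominated by $N_{*}u\in\Ell{p}$ paired against $k(X,\cdot)\in\El{p'}$, so that $u(X)=0$ for all $X\in\Hn$. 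This uniqueness statement in the unbounded setting is the one genuinely delicate point; once it and the $\El{p'}$-membership of the Poisson kernel are in hand, the rest is the routine approximation scheme above.
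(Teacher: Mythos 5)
Your proof is correct and genuinely more self-contained than the paper's, which at this point is essentially a summary of citations. The logical skeleton is the same in both: an $\El{q}$ bound on the Poisson kernel, an approximation scheme for existence and a.e.\ nontangential convergence, and a reference to \cite[Section 6]{HLMP_Dirichlet_BMO_antisym_2022} for uniqueness (where, as you correctly emphasise, the unboundedness of $\Hn$ is the genuine obstruction not covered by \cite[Theorem 1.7.7]{Kenig_cbms_1994}). The one substantive difference is in how the kernel estimate $\smallnorm{k(X,\cdot)}_{\El{q}}\lesssim \delta(X)^{-n/p}$ is obtained: the paper invokes the proof of \cite[Lemma 5.32]{HLM_Degenerate_Dirichlet_2019}, which proceeds from the reverse H\"older inequality in item (iii) of Theorem~\ref{theorem equivalent properties to well posedness of Lp Dirichlet}, whereas you derive it directly from item (ii) by noting that $X=(t,x)\in\Gamma(x_0)$ for every $x_0\in\Delta(x,t)$, averaging $\abs{u_g(X)}^p\leq (N_*u_g)(x_0)^p$ over $\Delta(x,t)$ to get $\abs{u_g(X)}\lesssim t^{-n/p}\norm{g}_{\El{p}}$ for $g\in C_c(\Rn)$, and then dualising (using $k(X,\cdot)\geq 0$). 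This is a clean, elementary route and arguably a nicer way to present the estimate. The remaining existence steps (local uniform convergence of $u_{f_j}$, Caccioppoli and weak $\mathrm{W}^{1,2}_{\loc}$ compactness to verify $Lu_f=0$, Fatou for the maximal function bound, and the standard Chebyshev splitting for the a.e.\ nontangential limit) are exactly the argument that \cite[Theorem 5.34]{HLM_Degenerate_Dirichlet_2019} carries out, so there the two proofs coincide in substance.
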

\begin{proof}
We note that condition (ii) in Theorem \ref{theorem equivalent properties to well posedness of Lp Dirichlet} implies that the Poisson kernel $k((t,x),\cdot )$ belongs to $\Ell{q}$ for all $(t,x)\in\Hn$, with the estimate 
\begin{equation}\label{L^q estimate on Radon Nikodym derivative k}
    \int_{\Rn}k((t,x),y)^{q}\dd y \lesssim \meas{\Delta(x,t)}^{1-q},
\end{equation}
where the implicit constant only depends on $n,\lambda$ and $\Lambda$ (see, e.g.,  \cite[Lemma 5.32]{HLM_Degenerate_Dirichlet_2019} and Remark~\ref{remark about proof of lem 5.32 in HLM} below). This implies that for $f\in\Ell{p}$, the integral in \eqref{definition of u_f for f in Lp} converges by Hölder's inequality. It is standard that condition (ii) of Theorem \ref{theorem equivalent properties to well posedness of Lp Dirichlet} and \eqref{L^q estimate on Radon Nikodym derivative k} imply that $u_{f}$ is a weak solution satisfying the nontangential maximal function estimate $\norm{N_{*}u_{f}}_{\Ell{p}}\lesssim \norm{f}_{\Ell{p}}$ and the almost everywhere nontangential convergence to $f$ at the boundary (see, e.g., \cite[Theorem 5.34]{HLM_Degenerate_Dirichlet_2019}). The fact that $u_f$ is the unique solution to $\left(\mathcal{D}\right)^{L}_{p}$ (under the assumption that (ii) of Theorem \ref{theorem equivalent properties to well posedness of Lp Dirichlet} holds) is proved in \cite[Section~6]{HLMP_Dirichlet_BMO_antisym_2022}. 
\end{proof}

\begin{rem}\label{remark about proof of lem 5.32 in HLM}
    We take this opportunity to rectify a slight imprecision that appears in the proof of the nontangential maximal function estimate in the proof of \cite[Lemma 5.32]{HLM_Degenerate_Dirichlet_2019} (which concerns weights $\mu$ that here we only consider when $\mu\equiv 1$). This is pertinent to the current paper as the estimate obtained there is essential to the proof of Theorem~\ref{main theorem upper half space}. The reader may wish to return to this as the need arises later in that proof.
    Given a non-negative $f\in\El{p}(\Rn)$, one should not only fix $x_0\in\Rn$ but also some $t_0>0$ before decomposing $f$ into
    \begin{equation}\label{decomposition of f into dyadic annuli in the proof of lem 5.32 HLM}
        f=f\ind{\Delta(x_0,2t_0)} + \sum_{j =1}^{\infty}f\ind{\Delta(x_0, 2^{j+1}t_0)\setminus \Delta(x_0,2^{j}t_0)}=:\sum_{j=0}^{\infty}f_{j},
    \end{equation}
    and decomposing $u(X)=\int_{\Rn}f\dd \omega^{X}$ correspondingly into 
    \begin{equation*}
        u(X)=\sum_{j=0}^{\infty}u_{j}(X),
    \end{equation*}
    with $u_{j}(X)=\int_{\Rn}f_{j}\dd\omega^{X}$ for all $X=(t,x)\in\Hn$. This ensures that the functions $u_j$ are non-negative solutions to the equation $-\dvg(A\nabla u_j)=0$ (which vanish on various parts of the boundary $\partial\Hn\equiv\Rn$) and we are therefore justified in applying the Harnack inequality \cite[equation (2.18)]{HLM_Degenerate_Dirichlet_2019} and the boundary Hölder continuity estimate \cite[equations (5.1) and (5.2)]{HLM_Degenerate_Dirichlet_2019} (in place of \cite[equation (5.7)]{HLM_Degenerate_Dirichlet_2019}) to obtain
    \[
    u_0(t_0,x_0)\lesssim u_0(4t_0,x_0) \quad \text{and} \quad u_j(t,x)\lesssim \left(\frac{t}{2^jt_0}\right)^{\alpha_{L}}u_j(2^{j+2}t_0,x_0)\]
    when $t\in(0,2t_0)$, $x\in\Delta(x_0,t_0)$ and $j\geq 1$, where $\alpha_{L}>0$ is from \eqref{holder continuity solutions DGNM}, so in particular
    \begin{equation}\label{eq:u_jdecompHLM}
    u(t_0,x_0)\lesssim \sum_{j=0}^\infty 2^{-\alpha_{L}j} u_{j}(2^{j+2}t_0,x_0).
   \end{equation}
   In the proof of \cite[Lemma 5.32]{HLM_Degenerate_Dirichlet_2019}, the function $f$ is decomposed as in \eqref{decomposition of f into dyadic annuli in the proof of lem 5.32 HLM} but with $t_0=t$, where $X=(t,x)\in\Hn$. Consequently, the functions $f_j$ also depend on $X$, which does not guarantee that the functions $u_j$ are solutions. However, starting from \eqref{decomposition of f into dyadic annuli in the proof of lem 5.32 HLM} and using \eqref{eq:u_jdecompHLM} for arbitrary $(t_0,x_0)\in\Hn$, the desired estimates are  obtained from the proof of \cite[Lemma 5.32]{HLM_Degenerate_Dirichlet_2019}.
\end{rem}

For $s\geq 1$, we now introduce the operator $M_{s}$, which is a variant of the standard Hardy--Littlewood maximal operator $M$, defined for all $x\in\Rn$ and $f\in\El{s}_{\loc}(\Rn)$ by 
\begin{equation*}
    M_{s}(f)(x)=M({\abs{f}^{s}})(x)^{1/s}=\sup_{t>0}\left(\dashint_{\Delta(x,t)}\abs{f}^{s}\right)^{1/s}.
\end{equation*}
In order to illustrate how the estimate \eqref{upper bound hausdorff dimension of divergence set} on the Hausdorff dimension of the divergence set in Theorem~\ref{main theorem upper half space} can be obtained using Frostman's lemma (Lemma~\ref{frostman's lemma}), we shall first prove (in Proposition~\ref{proposition Hausdorff dimension div set for cones and Bessel} below) the simpler (nontangential) case corresponding to $\beta'=1$ in Theorem~\ref{main theorem upper half space}. It relies on the following elementary observation about the maximal operators $M_{s}$ and convolutions.
\begin{lem}\label{lemma maximal function commutes with convolution}
    Let $q\in[1,\infty)$. For all non-negative $G\in\El{1}_{\loc}(\Rn)$ and non-negative $g\in\El{q}_{\loc}(\Rn)$ it holds that 
    \begin{equation*}
        M_{q}(G\ast g)(x)\leq (G\ast M_{q}(g))(x)
    \end{equation*}
    for all $x\in\Rn$.
\end{lem}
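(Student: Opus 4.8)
The plan is to prove the pointwise inequality directly from the definitions of $M_q$ and convolution, exchanging the order of the supremum-over-$t$ with the convolution integral via Minkowski's integral inequality in the form $\norm{\int F(\cdot,z)\dd z}_{\El{q}}\leq \int\norm{F(\cdot,z)}_{\El{q}}\dd z$. First I would fix $x\in\Rn$ and $t>0$ and write, using the substitution that moves the averaging ball through the convolution,
\begin{equation*}
\left(\dashint_{\Delta(x,t)}(G\ast g)(y)^{q}\dd y\right)^{1/q}
=\left(\dashint_{\Delta(x,t)}\left(\int_{\Rn}G(z)\,g(y-z)\dd z\right)^{q}\dd y\right)^{1/q}.
\end{equation*}
Since $G,g\geq 0$, the inner integral is a genuine (non-negative) superposition, and Minkowski's integral inequality applied to the normalised measure $\dashint_{\Delta(x,t)}\dd y$ gives
\begin{equation*}
\left(\dashint_{\Delta(x,t)}\left(\int_{\Rn}G(z)g(y-z)\dd z\right)^{q}\dd y\right)^{1/q}
\leq \int_{\Rn}G(z)\left(\dashint_{\Delta(x,t)}g(y-z)^{q}\dd y\right)^{1/q}\dd z.
\end{equation*}

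Next I would observe that for each fixed $z$, the translation invariance of Lebesgue measure gives $\dashint_{\Delta(x,t)}g(y-z)^{q}\dd y=\dashint_{\Delta(x-z,t)}g(w)^{q}\dd w$, and this is bounded above by $M_q(g)(x-z)^{q}$ by the very definition of $M_q$ (as a supremum over all radii). Hence the right-hand side above is at most $\int_{\Rn}G(z)\,M_q(g)(x-z)\dd z=(G\ast M_q(g))(x)$, which is independent of $t$. Taking the supremum over $t>0$ on the left then yields $M_q(G\ast g)(x)^{1/q}\cdot$-style bound, i.e. exactly $M_q(G\ast g)(x)\leq (G\ast M_q(g))(x)$, as claimed.

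There is essentially no obstacle here: the only points requiring a word of care are that $G\ast g$ and $G\ast M_q(g)$ are well-defined (both sides may be $+\infty$, but the inequality still holds pointwise in $[0,\infty]$, using that $g\in\El{q}_{\loc}$ and $G\in\El{1}_{\loc}$ only guarantee finiteness on compacta — this is harmless since we are proving an inequality valued in $[0,\infty]$), and that Tonelli/Minkowski is legitimate for non-negative integrands, which is why the non-negativity hypotheses on $G$ and $g$ are imposed. The step I would flag as the one to state carefully is the application of Minkowski's integral inequality with respect to the probability measure $\meas{\Delta(x,t)}^{-1}\ind{\Delta(x,t)}\dd y$; everything else is a change of variables.
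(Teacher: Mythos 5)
Your proof is correct and takes essentially the same route as the paper: fix $x$ and $t$, apply Minkowski's integral inequality to move the convolution past the $\mathrm{L}^{q}$ average, use translation invariance of Lebesgue measure to shift the ball to $\Delta(x-z,t)$, bound by $M_q(g)(x-z)$, and take the supremum over $t$. The only cosmetic difference is that you work with the normalised (probability) measure throughout, while the paper carries the factor $\meas{\Delta(x,t)}^{-1/q}$ explicitly; these are the same computation.
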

\begin{proof}
    Let $x\in\Rn$ and $t>0$ be fixed. By Minkowski's inequality and a change of variables we obtain that 
    \begin{align*}
        \left(\dashint_{\Delta(x,t)} (G\ast g)^{q} \right)^{1/q} &= \meas{\Delta(x,t)}^{-1/q}\norm{\int_{\Rn}g(\cdot - z)G(z)\dd z}_{\El{q}(\Delta(x,t))}\\
        &\leq \meas{\Delta(x,t)}^{-1/q}\int_{\Rn}\norm{g(\cdot - z)}_{\El{q}(\Delta(x,t))}G(z)\dd z\\
        &=\meas{\Delta(x-z,t)}^{-1/q}\int_{\Rn}\norm{g}_{\El{q}(\Delta(x-z,t))}G(z)\dd z\\
        &\leq \int_{\Rn}(M_{q}(g))(x-z)G(z)\dd z = (G\ast M_{q}(g))(x).\qedhere
    \end{align*}
\end{proof}

We can now use this lemma to estimate the Hausdorff dimension of the divergence set below. Again, it is implicit in Proposition~\ref{proposition Hausdorff dimension div set for cones and Bessel} below that the function $f$ in \eqref{estimate on hausdorff dimension of divergence set for conical regions and bessel potential functions} is the preferred representative $\tilde{f}:\Rn\to (-\infty,\infty]$ from~\eqref{preferred representative for bessel potential space function}. 
\begin{prop}\label{proposition Hausdorff dimension div set for cones and Bessel}
    Let $p\in(1,\infty)$ be such that $\left(\mathcal{D}\right)^{L}_{p}$ holds. If $\alpha\in (0,\frac{n}{p}]$, $f\in\mathscr{L}^{p}_{\alpha}(\Rn)$ and $u_{f}$ is the unique solution to $\left(\mathcal{D}\right)^{L}_{p}$ with boundary datum $f$, then
\begin{equation}\label{estimate on hausdorff dimension of divergence set for conical regions and bessel potential functions}
    \dim_{H}\left(\set{x_{0}\in\Rn : \lim_{\Gamma(x_0)\ni(t,x)\to (0,x_{0})} u_{f}(t,x)\neq f(x_0)}\right)\leq n-\alpha p.
\end{equation}
\end{prop}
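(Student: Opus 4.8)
The plan is to reduce the convergence statement to a maximal function estimate and then apply Frostman's lemma. Recall that $f = \tilde f$ is the preferred representative with $|\tilde f(x)| \leq (G_\alpha * |g|)(x)$ for $f = G_\alpha * g$, and that $f(x) = \lim_{r\to 0^+} \dashint_{\Delta(x,r)} f$ away from a set of Hausdorff dimension at most $n - \alpha p$ by \eqref{hausdorff dimension singularity set of non negative bessel potential function}. Since well-posedness of $(\mathcal{D})^L_p$ grants nontangential convergence of $u_f$ to $f$ $\sigma$-a.e., the point is to upgrade ``a.e.'' to ``outside a set of dimension $\leq n - \alpha p$''. As is standard, it suffices to control the difference $u_f - f$ by a maximal function whose superlevel sets have small dimension, and for this I would split $f = f_1 + f_2$ where $f_1$ is smooth (or at least continuous, e.g. a mollification) so that $u_{f_1}$ converges everywhere nontangentially to $f_1$, leaving $f_2$ small in $\mathscr{L}^p_\alpha$.

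First I would establish the pointwise bound
\begin{equation*}
    N_*(u_h)(x_0) \lesssim M_q(u_h)(x_0) \lesssim M_q(G_\alpha * M_q(g_h))(x_0)
\end{equation*}
is \emph{not} quite the right track; instead the cleaner route uses the representation $u_h(t,x) = \int k((t,x),y) h(y)\, dy$ together with the reverse Hölder bound \eqref{reverse Holder property for rn derivative in local cube} / \eqref{L^q estimate on Radon Nikodym derivative k} on the Poisson kernel. Decomposing $h$ into dyadic annuli around $x_0$ exactly as in Remark~\ref{remark about proof of lem 5.32 in HLM} (equations \eqref{decomposition of f into dyadic annuli in the proof of lem 5.32 HLM}--\eqref{eq:u_jdecompHLM}) and using Hölder's inequality with exponent $q = p'$ on each annular piece yields, for all $(t,x) \in \Gamma(x_0)$,
\begin{equation*}
    |u_h(t,x)| \lesssim \sum_{j\geq 0} 2^{-\alpha_L j}\left(\dashint_{\Delta(x_0, 2^{j+2} t)} |h|^{p'}\right)^{1/p'} \lesssim M_{p'}(h)(x_0),
\end{equation*}
so that $N_*(u_h)(x_0) \lesssim M_{p'}(h)(x_0)$ pointwise. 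Applying this to $h = f - f_1$ with $f_1$ continuous, and using that $u_{f_1}$ converges to $f_1$ everywhere nontangentially, I get that the divergence set for $u_f$ is contained (up to the null set where $\tilde f \neq \lim_r \dashint_{\Delta(\cdot,r)} f$, which has dimension $\leq n - \alpha p$) in
\begin{equation*}
    E_\varepsilon := \set{x_0 : M_{p'}(f - f_1)(x_0) > \varepsilon} \cup \set{x_0 : |\widetilde{(f-f_1)}(x_0)| > \varepsilon}
\end{equation*}
for each $\varepsilon > 0$, since the oscillation of $u_f$ near $x_0$ is controlled by $2 N_*(u_{f-f_1})(x_0)$ plus the oscillation of $u_{f_1}$ (which vanishes) plus $|(f-f_1)(x_0)|$.

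Next I would bound the $s$-dimensional Hausdorff content of $E_\varepsilon$ for any $s > n - \alpha p$. By Frostman's lemma (Lemma~\ref{frostman's lemma}) it suffices to show that for every $\mu \in \mathcal{M}^s(\Rn)$, $\mu(E_\varepsilon)$ is small. Writing $f - f_1 = G_\alpha * (g - g_1)$ where $g - g_1$ is small in $\El p$, Lemma~\ref{lemma maximal function commutes with convolution} gives $M_{p'}(f - f_1) \leq M_{p'}(G_\alpha * |g - g_1|)$; to dominate this by $G_\alpha * (\text{something})$ I would instead use that $M_{p'}(|h|) \leq \ldots$ — here the cleanest argument is: since $M_{p'}$ is bounded on $\El{p}$ (as $p > p'$? no, that fails) one must be slightly careful. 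The correct move, following Nagel--Rudin--Shapiro, is to bound $\mu(\set{M_{p'}(f-f_1) > \varepsilon})$ via a weak-type estimate against $s$-dimensional measures, or more simply to use the standard inequality $N_*(u_h)(x_0) \lesssim M(G_\alpha * M_{p'}(g-g_1))(x_0) \lesssim (G_\alpha^* * M_{p'}(g-g_1))(x_0)$ combined with Lemma~\ref{lemma dimensional measures and Bessel potential functions} applied to the $\El p$ function $M_{p'}(g - g_1)$ (using that $p' < p$ makes $M_{p'}$ bounded on $\El p$). Then $\int E_\varepsilon$-type set satisfies
\begin{equation*}
    \varepsilon\, \mu(E_\varepsilon) \lesssim \int_{\Rn} G_\alpha * M_{p'}(g - g_1)\, d\mu + \int_{\Rn} G_\alpha * |g - g_1|\, d\mu \lesssim (c_s(\mu)^{1/p} \vee 1)\, \norm{g - g_1}_{\El p},
\end{equation*}
by Lemma~\ref{lemma dimensional measures and Bessel potential functions}. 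Letting $g_1 \to g$ in $\El p$ forces $\mu(E_\varepsilon) = 0$, hence $\mathcal{H}^s(E_\varepsilon) = 0$ by Frostman, hence the divergence set has $\mathcal{H}^s$-measure zero for all $s > n - \alpha p$, giving \eqref{estimate on hausdorff dimension of divergence set for conical regions and bessel potential functions}.

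The main obstacle I anticipate is the bookkeeping around the maximal operator $M_{p'}$: one needs $p' < p$ so that $M_{p'}$ is bounded on $\El p$, which holds automatically since $p > 1$, but then one must carry the convolution structure through Lemma~\ref{lemma maximal function commutes with convolution} correctly and verify that the resulting function is still of the form $G_\alpha * (\El p \text{ function})$ so that Lemma~\ref{lemma dimensional measures and Bessel potential functions} applies. A secondary technical point is handling the endpoint $\alpha p = n$, where one borrows an $\varepsilon$ of smoothness as in the last paragraph of the proof of Lemma~\ref{lemma dimensional measures and Bessel potential functions}. Everything else — the dyadic-annulus pointwise bound on $N_* u_h$, the reduction via a continuous approximant $f_1$, and the Frostman argument — is routine once these maximal-function estimates are in place.
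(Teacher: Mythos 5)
Your overall strategy matches the paper's: bound $N_*u_f$ pointwise by a maximal function of the form $M_q f$, push the convolution through via Lemma~\ref{lemma maximal function commutes with convolution}, integrate against an $s$-dimensional measure via Lemma~\ref{lemma dimensional measures and Bessel potential functions}, approximate by a nice datum, and conclude with Frostman. However, there is a genuine gap at the heart of the argument that you wave away: you assert that ``$p'<p$ \ldots holds automatically since $p>1$.'' This is false. The conjugate exponent satisfies $p'<p$ if and only if $p>2$; for $1<p\le 2$ one has $p'\ge p$, and $M_{p'}$ is then \emph{not} bounded on $\El{p}$ (indeed $M_{p'}f$ need not even be finite a.e.\ for $f\in\Ell{p}$). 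Since Lemma~\ref{lemma dimensional measures and Bessel potential functions} requires the input $M_{p'}(g-g_1)$ to lie in $\El{p}_{+}(\Rn)$, the whole chain breaks in the range $1<p\le 2$ — which is not an exotic range, since Dahlberg's theorem and the perturbation results mentioned in the introduction all live near $p=2$.

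The paper's fix, which your argument is missing, is the self-improvement (Gehring-type) of the reverse H\"older inequality in Theorem~\ref{theorem equivalent properties to well posedness of Lp Dirichlet}: if the Poisson kernel lies in $B^{q}$ with $q=p'$, then it lies in $B^{q+\delta}$ for some $\delta>0$, and applying H\"older with the improved exponent in the dyadic annulus estimate yields $(N_*u_f)(x_0)\lesssim (M_{r'}f)(x_0)$ with $r'=(q+\delta)'<q'=p$. This is exactly what \eqref{pointwise domination of nontangential max by power of HL max} provides (citing \cite[Lemma 5.32]{HLM_Degenerate_Dirichlet_2019} and Remark~\ref{remark about proof of lem 5.32 in HLM}), and it is what makes $M_{r'}$ bounded on $\El{p}$ for \emph{all} $p\in(1,\infty)$ for which $(\mathcal{D})^{L}_{p}$ holds. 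You should either invoke this self-improvement explicitly or simply cite \eqref{pointwise domination of nontangential max by power of HL max} rather than deriving a version with exponent $p'$. A secondary, more minor point: your choice of a mollification $f_1=\varphi_\delta*f$ is not clearly compatible with the assertion that $u_{f_1}\in C^0(\clos{\Hn})$ — a mollified $\El{p}$ function need not be bounded or vanish at infinity, which is what the classical continuity-up-to-the-boundary statement for $u_{f_1}$ uses. The paper sidesteps this by taking $h\in\mathcal{S}(\Rn)$ (which is dense in $\mathscr{L}^{p}_{\alpha}(\Rn)$ since $h=\mathscr{J}_\alpha(\tilde h)$ is Schwartz whenever $\tilde h$ is) and noting a short density argument to conclude $u_h\in C^0(\clos{\Hn})$.
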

\begin{proof}
Let $g\in\El{p}(\Rn)$ such that $f=G_{\alpha}\ast g\in\mathscr{L}^{p}_{\alpha}(\Rn)$. The unique solution to $\left(\mathcal{D}\right)^{L}_{p}$ with boundary datum $f$ is given by \eqref{definition of u_f for f in Lp}.

By relying on the self-improvement property of the reverse Hölder estimate in item (ii) of Theorem~\ref{theorem equivalent properties to well posedness of Lp Dirichlet}, it is shown in the proof of \cite[Lemma 5.32]{HLM_Degenerate_Dirichlet_2019} (see also Remark~\ref{remark about proof of lem 5.32 in HLM} above) that there exists $1<r'<p$ such that 
\begin{equation}\label{pointwise domination of nontangential max by power of HL max}
    (N_{*}u_{f})(x_0)\lesssim (M_{r'}f)(x_0)
\end{equation}
for all $x_0\in\Rn$. Note that the maximal function $M_{r'}$ is bounded on $\El{p}$ since $p>r'$, and let us remark that this recovers the usual $\El{p}$ nontangential maximal function bounds.
It follows from \eqref{pointwise domination of nontangential max by power of HL max} and Lemma~\ref{lemma maximal function commutes with convolution} that
    \begin{align*}
        (N_{*}u_{f})(x_0)\lesssim (M_{r'}(G_{\alpha}\ast \abs{g}))(x_0)\leq (G_{\alpha}\ast (M_{r'}g))(x_0)
    \end{align*}
    for all $x_0\in\Rn$. Consequently, if $s>n-\alpha p$ and $\mu\in\mathcal{M}^{s}(\Rn)$, then it follows from Lemma~\ref{lemma dimensional measures and Bessel potential functions} that
    \begin{align}\label{maximal function estimate wrt dimensional measure to Bessel}
        \norm{N_{*}u_{f}}_{\El{1}(\mu)}\lesssim \norm{G_{\alpha}\ast (M_{r'}g)}_{\El{1}(\mu)}\lesssim \norm{M_{r'}g}_{\Ell{p}}\lesssim \norm{g}_{\Ell{p}}=\norm{f}_{\mathscr{L}^{p}_{\alpha}(\Rn)},
    \end{align}
    where the implicit constants also depend on $c_{s}(\mu)$, as described by Lemma~\ref{lemma dimensional measures and Bessel potential functions}, but this is unimportant here.
    The bound~\eqref{estimate on hausdorff dimension of divergence set for conical regions and bessel potential functions} now follows by a standard approximation argument and application of Frostman's lemma, which we detail here for completeness.
    
    Let us fix $\eps>0$, and let $\tilde{f}$ be the representative of $f$ given by \eqref{preferred representative for bessel potential space function}. As explained in Section~\ref{subsection with definitions of spaces}, it holds that $\smallabs{\tilde{f}(x)}\lesssim (G_{\alpha}\ast \abs{g})(x)$ for \emph{all} $x\in\Rn$. Consequently, Lemma~\ref{lemma dimensional measures and Bessel potential functions} shows that $\smallnorm{\tilde{f}}_{\El{1}(\mu)}\lesssim \norm{g}_{\Ell{p}}=\norm{f}_{\mathscr{L}^{p}_{\alpha}(\Rn)}$. Since there exists a Schwartz function $h\in \mathcal{S}(\Rn)$ such that $\norm{f-h}_{\mathscr{L}^{p}_{\alpha}(\Rn)}<\eps$, it follows that
    \begin{equation*}
    \smallnorm{\tilde{f}-h}_{\El{1}(\mu)}\lesssim \norm{f-h}_{\mathscr{L}^{p}_{\alpha}(\Rn)}< \eps.
    \end{equation*}
Even if $h\not\in C_{c}(\Rn)$ by lack of a compact support, a simple density argument shows that $u_h\in C^{0}(\clos{\Hn})$ with $u_{h}|_{\partial\Hn}=h$. Observe that for any $x_0\in\Rn$ and $(t,x)\in\Hn$ it holds that
    \begin{equation*}
        \abs{u_{f}(t,x)-\tilde{f}(x_0)}\leq \abs{u_{f-h}(t,x)} + \abs{u_{h}(t,x)-h(x_0)} + \abs{h(x_0)-\tilde{f}(x_0)},
    \end{equation*}
    and since $u_{h}\in C^{0}(\clos{\Hn})$ it follows that
    \begin{equation*}
        \limsup_{\Gamma(x_0)\ni(t,x)\to (0,x_{0})}\abs{u_{f}(t,x)-\tilde{f}(x_0)} \leq (N_{*}u_{f-h})(x_0) + \abs{\tilde{f}(x_0)-h(x_0)}.
    \end{equation*}
    As a consequence, we can use Markov's inequality, \eqref{maximal function estimate wrt dimensional measure to Bessel} and Lemma~\ref{lemma dimensional measures and Bessel potential functions} to obtain for any $\lambda>0$ that
    \begin{align*}
        \mu\left(\!\set{x_{0}\in\Rn : \limsup_{\Gamma(x_0)\ni(t,x)\to (0,x_{0})}\abs{u_{f}(t,x)-\tilde{f}(x_0)} >\lambda }\!\right) &\lesssim \lambda^{-1}\norm{N_{*}u_{f-h}}_{\El{1}(\mu)} \! + \! \lambda^{-1}\smallnorm{\tilde{f}-h}_{\El{1}(\mu)}\\
        &\lesssim \lambda^{-1}\norm{f-h}_{\mathscr{L}^{p}_{\alpha}(\Rn)}<\eps\lambda^{-1}.
    \end{align*}
    Since $\eps$ and $\lambda$ are arbitrary, this implies that 
    \begin{equation*}
        \mu\left(\set{x_{0}\in\Rn : \limsup_{\Gamma(x_0)\ni(t,x)\to (0,x_{0})}\abs{u_{f}(t,x)-\tilde{f}(x_0)} \neq 0}\right)=0. 
    \end{equation*} 
    Since $\mu\in\mathcal{M}^{s}(\Rn)$ was arbitrary, Frostman's lemma (Lemma~\ref{frostman's lemma}) implies that 
    \begin{equation}\label{beta dimensional measure of divergence set on the unit ball is zero}
        \mathcal{H}^{s}\left(\set{x_{0}\in\Rn : \limsup_{\Gamma(x_0)\ni(t,x)\to (0,x_{0})}\abs{u_{f}(t,x)-\tilde{f}(x_0)} \neq 0}\right)=0.
    \end{equation}
    Since $s>n-\alpha p$ was arbitrary, this concludes the proof.
\end{proof}
\subsection{Further properties of Bessel potentials}\label{section further properties bessel}
For $t>0$ and $x\in\Rn$, the classical Poisson kernel $P_{t}(x)$ is defined by
\begin{equation*}
    P_{t}(x)=t^{-n}P_{1}(x/t) = \frac{c_n t}{(t^{2}+\abs{x}^{2})^{\frac{n+1}{2}}}
\end{equation*}
where $c_n>0$ guarantees that $\int_{\Rn}P_{t}(x)\dd x=1$ for all $t>0$.
\begin{lem}\label{lemma domination of multiples of Poisson kernel by convolution with bessel}
    If $\alpha\in[0,n)$ and $T>0$, then $t^{\alpha}P_{t}(x)\lesssim \mathscr{J}_{\alpha}(P_{t})(x)$ for all $x\in\Rn$ and $0<t\leq T$. In particular, for all $p\in[1,\infty]$ and $g\in\El{p}_{+}(\Rn)$ it holds that $t^{\alpha}(P_{t}\ast g)(x)\lesssim (P_{t}\ast \mathscr{J}_{\alpha}(g))(x)$ for all $x\in\Rn$ and $0<t\leq T$.
\end{lem}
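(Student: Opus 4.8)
The plan is to reduce everything to the pointwise inequality $t^{\alpha}P_{t}(x)\lesssim(G_{\alpha}\ast P_{t})(x)$, valid for $x\in\Rn$ and $0<t\leq T$, since $\mathscr{J}_{\alpha}(P_{t})=G_{\alpha}\ast P_{t}$ by definition. The ``in particular'' clause then follows by a routine application of Tonelli's theorem: because $P_{t}$, $G_{\alpha}$ and $g$ are all non-negative, convolving the pointwise bound with $g$ and using associativity of convolution gives
\[
t^{\alpha}(P_{t}\ast g)(x)=\bigl((t^{\alpha}P_{t})\ast g\bigr)(x)\lesssim\bigl((G_{\alpha}\ast P_{t})\ast g\bigr)(x)=\bigl(P_{t}\ast(G_{\alpha}\ast g)\bigr)(x)=\bigl(P_{t}\ast\mathscr{J}_{\alpha}(g)\bigr)(x).
\]
When $\alpha=0$ the claim is trivial since $\mathscr{J}_{0}=I$, so I would assume $\alpha\in(0,n)$ throughout.

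For the pointwise estimate, the key elementary observation is that $P_{t}(x-y)\gtrsim P_{t}(x)$ whenever $|y|\leq t/2$, with implicit constant depending only on $n$. This is immediate from the explicit formula $P_{t}(x)=c_{n}t(t^{2}+|x|^{2})^{-(n+1)/2}$ together with the bound $t^{2}+|x-y|^{2}\lesssim_{n}t^{2}+|x|^{2}$ for such $y$, which in turn follows from $|x|\,|y|\leq\tfrac14(|x|^{2}+t^{2})$. Restricting the integral defining $G_{\alpha}\ast P_{t}$ to the ball $\{|y|\leq t/2\}$, on which $G_{\alpha}$ is non-negative, I would then obtain
\[
(G_{\alpha}\ast P_{t})(x)\geq\int_{\{|y|\leq t/2\}}G_{\alpha}(y)P_{t}(x-y)\dd y\gtrsim P_{t}(x)\int_{\{|y|\leq t/2\}}G_{\alpha}(y)\dd y.
\]

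It then remains to show $\int_{\{|y|\leq t/2\}}G_{\alpha}(y)\dd y\gtrsim t^{\alpha}$ for $0<t\leq T$, with constants allowed to depend on $n$, $\alpha$ and $T$. Here I would use the facts recalled in Section~\ref{subsection with definitions of spaces} that $0<G_{\alpha}(y)\leq I_{\alpha}(y)=\gamma_{\alpha,n}|y|^{-(n-\alpha)}$ for all $y$ and $G_{\alpha}(y)/I_{\alpha}(y)\to1$ as $y\to0$: there is $\rho>0$ with $G_{\alpha}(y)\geq\tfrac12\gamma_{\alpha,n}|y|^{-(n-\alpha)}$ for $|y|\leq\rho$. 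For $0<t\leq2\rho$ this gives $\int_{\{|y|\leq t/2\}}G_{\alpha}\gtrsim\int_{\{|y|\leq t/2\}}|y|^{-(n-\alpha)}\dd y\eqsim t^{\alpha}$ (the integral being finite and comparable to $t^{\alpha}$ precisely because $0<\alpha<n$), while for $2\rho<t\leq T$ one simply bounds $\int_{\{|y|\leq t/2\}}G_{\alpha}\geq\int_{\{|y|\leq\rho\}}G_{\alpha}=:c_{0}>0$ and $t^{\alpha}\leq T^{\alpha}$, so the bound holds with constant $c_{0}T^{-\alpha}$. Combining the two displays proves the pointwise inequality, and hence the lemma.

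Nothing here is deep; the only point requiring a little care is the uniformity of the lower bound $\int_{\{|y|\leq t/2\}}G_{\alpha}\gtrsim t^{\alpha}$ over the whole range $0<t\leq T$ rather than merely for small $t$, which is handled by the splitting at $t\sim\rho$, together with keeping track of which implicit constants are permitted to depend on $T$.
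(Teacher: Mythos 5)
Your proof is correct and uses the same core ingredients as the paper's---restricting the convolution to a small ball around the origin, the local lower bound $G_{\alpha}(y)\gtrsim|y|^{-(n-\alpha)}$, and the approximate translation invariance of the Poisson kernel---just organised in a slightly different order: you restrict to the $t$-dependent ball $\{|y|\leq t/2\}$ and factor out $P_{t}(x)$ first, whereas the paper restricts to a fixed ball $\{|y|<\delta\}$, rescales by $t$, and applies the translation invariance at the level of $P_{1}$. Either organisation is fine, and both handle the uniformity over $0<t\leq T$ by splitting at a fixed scale.
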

\begin{proof}
    This is trivial when $\alpha=0$. Let us assume that $\alpha\in(0,n)$. There is some $\delta>0$ such that $G_{\alpha}(y)\gtrsim \abs{y}^{-(n-\alpha)}$ for all $\abs{y}<\delta$ (see, e.g., (29) in \cite[Chapter \RNum{5}, Section 3]{Stein_Singular_Integrals}). We may assume that $\delta\leq T$. For all $x\in\Rn$ and $0<t\leq T$ it holds that 
    \begin{align*}
        \mathscr{J}_{\alpha}(P_{t})(x)&= \int_{\Rn}P_{t}(x-y)G_{\alpha}(y)\dd y\\
        &\gtrsim t^{-n}\int_{\abs{y}<\delta} P_{1}\left(\frac{x-y}{t}\right)\abs{y}^{-(n-\alpha)}\dd y\\
        &=t^{\alpha-n}\int_{\abs{z}<\delta/t}P_{1}\left(\frac{x}{t}-z\right)\abs{z}^{-(n-\alpha)}\dd z\\
        &\geq t^{\alpha-n}\int_{\abs{z}<\delta T^{-1}}P_{1}\left(\frac{x}{t}-z\right)\dd z,
    \end{align*}
    where we have made the change of variables $z=y/t$ and used that $\abs{z}^{-(n-\alpha)}\geq 1$ for all $\abs{z}<\delta T^{-1}\leq 1$. We conclude by observing that $P_{1}(y-z)\gtrsim P_{1}\left(y\right)$ for all $y\in\Rn$ and $\abs{z}<1$. 
\end{proof}
The following elementary lemma will serve as a substitute for the classical Poincaré inequality in the context of Bessel potential spaces of fractional order.
\begin{lem}[$\mathscr{L}^{p}_{\alpha}$-Poincaré]\label{lemma Poincaré type inequality for fractional order Bessel spaces}
    Let $\alpha\in(0,1)$, $p\in[1,\infty]$ and $g\in\El{p}(\Rn)$. If $f=G_{\alpha}\ast g$, then for almost all $y,z\in\Rn$ it holds that 
    \begin{equation}\label{pointwise estimate on oscillation of bessel potential functions}
        \abs{f(y)-f(z)}\lesssim \abs{y-z}^{\alpha}\left(Mg(y) + Mg(z)\right).
    \end{equation}
    Consequently, for all $q\in[1,\infty)$ and all balls $\Delta\subset\Rn$ of radius $r>0$ it holds that 
    \begin{equation*}
        \left(\dashint_{\Delta}\abs{f-(f)_{\Delta}}^{q}\right)^{1/q}\lesssim r^{\alpha}\left(\dashint_{\Delta}\left(Mg\right)^{q}\right)^{1/q}.
    \end{equation*}
\end{lem}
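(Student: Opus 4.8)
The plan is to read off the pointwise estimate \eqref{pointwise estimate on oscillation of bessel potential functions} directly from the convolution representation
$$f(y)-f(z)=\int_{\Rn}\bigl(G_{\alpha}(y-w)-G_{\alpha}(z-w)\bigr)g(w)\dd w,$$
which is valid for every $y,z$ lying in the full-measure set $E=\set{x\in\Rn : (G_{\alpha}\ast\abs{g})(x)<\infty \textup{ and } Mg(x)<\infty}$ (both conditions hold a.e.\ for $g\in\El{p}$, $p\in[1,\infty]$), and then to deduce the averaged bound by a routine Jensen--Fubini argument. Fix $y,z\in E$, write $h=\abs{y-z}$, and split the integral over $A=\set{w : \abs{y-w}<2h}$ and $B=\set{w : \abs{y-w}\geq 2h}$.

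On $A$ I would bound the two terms separately using only the size estimate $\abs{G_{\alpha}(x)}\lesssim\abs{x}^{-(n-\alpha)}$ from \eqref{estimate on (gradient) of bessel potential}. Decomposing $A\setminus\set{y}$ into the dyadic annuli $\set{2^{-j}h\leq\abs{y-w}<2^{-j+1}h}$, $j\geq 0$, and using $\int_{\abs{y-w}<\rho}\abs{g}\lesssim\rho^{n}Mg(y)$, one gets $\int_{A}\abs{G_{\alpha}(y-w)}\abs{g(w)}\dd w\lesssim\sum_{j\geq 0}(2^{-j}h)^{\alpha}Mg(y)\lesssim h^{\alpha}Mg(y)$, the geometric series converging because $\alpha>0$; and since $\abs{z-w}\leq\abs{z-y}+\abs{y-w}<3h$ on $A$, the same dyadic argument centred at $z$ gives $\int_{A}\abs{G_{\alpha}(z-w)}\abs{g(w)}\dd w\lesssim h^{\alpha}Mg(z)$.

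On $B$ I would invoke the fundamental theorem of calculus, $G_{\alpha}(y-w)-G_{\alpha}(z-w)=(y-z)\cdot\int_{0}^{1}\nabla G_{\alpha}\bigl((y-w)+t(z-y)\bigr)\dd t$, which is legitimate because $G_{\alpha}\in C^{\infty}(\Rn\setminus\set{0})$ and, for $w\in B$, every point $\xi_{t}=(y-w)+t(z-y)$ on the segment satisfies $\abs{\xi_{t}}\geq\abs{y-w}-h\geq\tfrac12\abs{y-w}\geq h>0$. Hence $\abs{\nabla G_{\alpha}(\xi_{t})}\lesssim\abs{\xi_{t}}^{-(n-\alpha+1)}\lesssim\abs{y-w}^{-(n-\alpha+1)}$ by \eqref{estimate on (gradient) of bessel potential}, and decomposing $B$ into the dyadic annuli $\set{2^{j}h\leq\abs{y-w}<2^{j+1}h}$, $j\geq 1$, yields $\int_{B}\abs{G_{\alpha}(y-w)-G_{\alpha}(z-w)}\abs{g(w)}\dd w\lesssim h\sum_{j\geq 1}(2^{j}h)^{\alpha-1}Mg(y)\lesssim h\cdot h^{\alpha-1}Mg(y)=h^{\alpha}Mg(y)$, where this geometric series now converges precisely because $\alpha<1$. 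Adding the $A$ and $B$ contributions gives \eqref{pointwise estimate on oscillation of bessel potential functions} for all $y,z\in E$.

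For the averaged bound, write $f(y)-(f)_{\Delta}=\dashint_{\Delta}(f(y)-f(z))\dd z$, so $\abs{f(y)-(f)_{\Delta}}\leq\dashint_{\Delta}\abs{f(y)-f(z)}\dd z$; two applications of Jensen's inequality (in $z$, then in $y$, using convexity of $t\mapsto t^{q}$) give $\dashint_{\Delta}\abs{f-(f)_{\Delta}}^{q}\leq\dashint_{\Delta}\dashint_{\Delta}\abs{f(y)-f(z)}^{q}\dd z\dd y$, and since $\abs{y-z}\leq 2r$ for $y,z\in\Delta$, inserting \eqref{pointwise estimate on oscillation of bessel potential functions} together with $(a+b)^{q}\lesssim a^{q}+b^{q}$ and the symmetry of the integrand produces $\bigl(\dashint_{\Delta}\abs{f-(f)_{\Delta}}^{q}\bigr)^{1/q}\lesssim r^{\alpha}\bigl(\dashint_{\Delta}(Mg)^{q}\bigr)^{1/q}$. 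The one delicate point is the estimate on the region $B$: one must check that the segment joining $y-w$ to $z-w$ stays uniformly away from the singularity of $G_{\alpha}$ and that the resulting dyadic sum converges, which is exactly where $\alpha<1$ enters; everything else is bookkeeping with the Hardy--Littlewood maximal function.
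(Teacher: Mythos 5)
Your proof is correct and is essentially the paper's proof: you split $\int|G_{\alpha}(y-w)-G_{\alpha}(z-w)||g(w)|\dd w$ into a near region where you bound each kernel separately by $I_{\alpha}$ and a far region where you apply the mean value theorem to $G_{\alpha}$ using the gradient bound from \eqref{estimate on (gradient) of bessel potential}, with dyadic annuli and the Hardy--Littlewood maximal function turning each piece into $h^{\alpha}Mg$, the near sum converging because $\alpha>0$ and the far sum because $\alpha<1$. The only cosmetic difference is that the paper centres the dichotomy on $|z-x|\gtrless 2|z-y|$ whereas you centre it on $|y-w|\gtrless 2|y-z|$; these are symmetric and interchangeable, and your Jensen argument for the averaged consequence is the standard one the paper leaves implicit.
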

\begin{proof}
    The proof is based on the fact that
    \begin{equation}\label{main continuity estimate for Bessel potentials}
        \abs{G_{\alpha}(x-y)-G_{\alpha}(x)}\lesssim\abs{y}\abs{x}^{-n+\alpha-1}
    \end{equation}
    for all $x,y\in\Rn$ such that $\abs{x}\geq 2\abs{y}$, which follows from \eqref{estimate on (gradient) of bessel potential} and the mean value theorem. Since $f\in\El{p}(\Rn)$, there is a null set $E\subset\Rn$ such that $\abs{f(x)}<\infty$ for all $x\in\Rn\setminus E$. If $y,z\in\Rn\setminus E$ and $y\neq z$, then 
    \begin{align}
    \begin{split}\label{main decomposition estimate for difference of Bessel potentials}
        \abs{f(y)-f(z)}&\lesssim \int_{\Rn}\abs{G_{\alpha}(y-x)-G_{\alpha}(z-x)}\abs{g(x)}\dd x\\
        &= \int_{\abs{z-x}\geq 2\abs{z-y}}\abs{G_{\alpha}((z-x) -(z-y))-G_{\alpha}(z-x)}\abs{g(x)}\dd x \\
        &+ \int_{\abs{z-x}<2\abs{z-y}}\abs{G_{\alpha}(y-x)-G_{\alpha}(z-x)}\abs{g(x)}\dd x\\
        &\lesssim  \abs{z-y}\int_{\abs{z-x}\geq 2\abs{z-y}} \abs{z-x}^{-n+\alpha -1}\abs{g(x)}\dd x\\
        &+ \int_{\abs{y-x}<3\abs{z-y}} \abs{y-x}^{-n+\alpha}\abs{g(x)}\dd x +\int_{\abs{z-x}<2\abs{z-y}} \abs{z-x}^{-n+\alpha}\abs{g(x)}\dd x,
    \end{split}
    \end{align}
    where we have used \eqref{main continuity estimate for Bessel potentials}, the estimate $G_{\alpha}(x)\leq I_{\alpha}(x)$, and the fact that $\abs{z-x}<2\abs{y-z}$ implies $\abs{y-x}<3\abs{z-y}$. For any $r>0$, observe that 
    \begin{align*}
        \int_{\abs{z-x}\geq r}\abs{z-x}^{-n+\alpha -1}\abs{g(x)}\dd x &= \sum_{k=0}^{\infty} \int_{2^{k}r\leq \abs{z-x}<2^{k+1}r} \abs{z-x}^{-n+\alpha -1}\abs{g(x)}\dd x\\
        &\lesssim \sum_{k=0}^{\infty} (2^{k}r)^{\alpha-1}\dashint_{B(z,2^{k+1}r)}\abs{g(x)}\dd x \lesssim r^{\alpha-1}Mg(z),
    \end{align*}
    using that the series converges because $\alpha<1$. Similarly, 
    \begin{align*}
        \int_{\abs{y-x}<r}\abs{y-x}^{-n+\alpha}\abs{g(x)}\dd x &= \sum_{k=0}^{\infty} \int_{2^{-(k+1)}r \leq \abs{y-x}<2^{-k}r}\abs{y-x}^{-n+\alpha}\abs{g(x)}\dd x\\
        &\lesssim \sum_{k=0}^{\infty} (2^{-k}r)^{\alpha}\dashint_{B(y,2^{-k}r)}\abs{g(x)}\dd x \lesssim r^{\alpha}Mg(y),
    \end{align*}
    using that the series converges because $\alpha>0$. Combining these two estimates with \eqref{main decomposition estimate for difference of Bessel potentials} yields \eqref{pointwise estimate on oscillation of bessel potential functions}.
\end{proof}
We remark that the previous lemma is reminiscent of one implication of the well-known Haj\l asz--Sobolev characterisation of Sobolev spaces, which asserts that if $f\in\textup{W}^{1,p}(\Rn)$ and $1<p<\infty$ then 
\begin{equation*}
    \abs{f(y)-f(z)}\lesssim \abs{y-z}\left(M(\nabla f)(y) +M(\nabla f)(z)\right)
\end{equation*}
for almost every $y,z\in\Rn$ (see, e.g., \cite[Theorems 2.1 and 2.2]{Hajlasz_Sobolev_2003}).

We shall use Lemma~\ref{lemma Poincaré type inequality for fractional order Bessel spaces} in conjunction with the following well-known result due to Calder\'{o}n.
\begin{thm}\label{calderon's theorem identification bessel with sobolev}
Let $p\in(1,\infty)$ and $\alpha\geq 1$. Then $f\in\mathscr{L}^{p}_{\alpha}(\Rn)$ if and only if $f\in\mathscr{L}^{p}_{\alpha-1}(\Rn)$ and $\partial_{k}f\in \mathscr{L}^{p}_{\alpha-1}(\Rn)$ for each $k\in\set{1,\ldots, n}$. Moreover, $\norm{f}_{\mathscr{L}^{p}_{\alpha}}\eqsim \norm{f}_{\mathscr{L}^{p}_{\alpha-1}} + \sum_{k=1}^{n}\norm{\partial_{k}f}_{\mathscr{L}^{p}_{\alpha-1}}$.
\end{thm}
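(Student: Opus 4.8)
The plan is to transfer the question to the Fourier side, where it reduces to the $\Ell{p}$-boundedness of a short list of explicit, mild Fourier multipliers supplied by the Mikhlin--Hörmander multiplier theorem. Write $w(\xi)=(1+4\pi^{2}\abs{\xi}^{2})^{1/2}$, so that $\mathscr{J}_{\beta}$ is the Fourier multiplier with symbol $w(\xi)^{-\beta}$; since $w$ is smooth, real-valued and bounded below by $1$, each $\mathscr{J}_{\beta}$ is injective on $\mathcal{S}'(\Rn)$ with $\mathscr{J}_{\beta}\mathscr{J}_{\beta'}=\mathscr{J}_{\beta+\beta'}$, so $f\in\mathscr{L}^{p}_{\alpha}(\Rn)$ if and only if $(I-\Delta)^{\alpha/2}f:=\mathscr{J}_{\alpha}^{-1}f$, computed in $\mathcal{S}'(\Rn)$, lies in $\Ell{p}$, and then $\norm{f}_{\mathscr{L}^{p}_{\alpha}(\Rn)}=\norm{(I-\Delta)^{\alpha/2}f}_{\Ell{p}}$; likewise $\norm{f}_{\mathscr{L}^{p}_{\alpha-1}(\Rn)}=\norm{(I-\Delta)^{(\alpha-1)/2}f}_{\Ell{p}}$. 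Since $\alpha\geq 1$ gives $f\in\mathscr{L}^{p}_{\alpha}(\Rn)\subseteq\Ell{p}\subseteq\mathcal{S}'(\Rn)$, the distributional derivatives $\partial_{k}f$ are well-defined tempered distributions, so $\norm{\partial_{k}f}_{\mathscr{L}^{p}_{\alpha-1}(\Rn)}=\norm{(I-\Delta)^{(\alpha-1)/2}\partial_{k}f}_{\Ell{p}}\in[0,\infty]$ is meaningful from the outset.

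The algebraic heart of the matter is the elementary identity
\begin{equation*}
    w(\xi)=\frac{1}{w(\xi)}+\sum_{k=1}^{n}(2\pi i\xi_{k})\cdot\frac{-2\pi i\xi_{k}}{w(\xi)},
\end{equation*}
immediate from $w(\xi)^{2}=1+4\pi^{2}\abs{\xi}^{2}=1+\sum_{k=1}^{n}(2\pi i\xi_{k})(-2\pi i\xi_{k})$. All operators below are Fourier multipliers by smooth symbols of at most polynomial growth, hence act on $\mathcal{S}'(\Rn)$ with composition corresponding to multiplication of symbols; multiplying the identity through by $w(\xi)^{\alpha-1}\widehat{f}(\xi)$ and recalling that $2\pi i\xi_{k}$ is the symbol of $\partial_{k}$, we obtain, in $\mathcal{S}'(\Rn)$,
\begin{equation*}
    (I-\Delta)^{\alpha/2}f=T_{0}\,(I-\Delta)^{(\alpha-1)/2}f+\sum_{k=1}^{n}T_{k}\,(I-\Delta)^{(\alpha-1)/2}\partial_{k}f,
\end{equation*}
where $T_{0}$ and $T_{k}$ are the Fourier multipliers with symbols $m_{0}(\xi)=w(\xi)^{-1}$ and $m_{k}(\xi)=-2\pi i\xi_{k}\,w(\xi)^{-1}$. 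Reading the same symbol relations the other way round gives
\begin{equation*}
    (I-\Delta)^{(\alpha-1)/2}f=T_{0}\,(I-\Delta)^{\alpha/2}f,\qquad (I-\Delta)^{(\alpha-1)/2}\partial_{k}f=\widetilde{T}_{k}\,(I-\Delta)^{\alpha/2}f,
\end{equation*}
with $\widetilde{T}_{k}$ the Fourier multiplier with symbol $2\pi i\xi_{k}\,w(\xi)^{-1}$.

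It remains only to know that $T_{0}$, $T_{k}$ and $\widetilde{T}_{k}$ are bounded on $\Ell{p}$ for every $p\in(1,\infty)$. Each symbol $w^{-1}$ and $\xi_{k}w^{-1}$ is smooth on all of $\Rn$ (there is no singularity at the origin, since $w\geq 1$) and behaves, respectively, like a symbol of order $-1$ and of order $0$: a routine induction on $\abs{\gamma}$ shows $\abs{\partial^{\gamma}(w^{-1})(\xi)}\lesssim_{\gamma}(1+\abs{\xi})^{-1-\abs{\gamma}}$ and $\abs{\partial^{\gamma}(\xi_{k}w^{-1})(\xi)}\lesssim_{\gamma}(1+\abs{\xi})^{-\abs{\gamma}}$, so in particular $\sup_{\xi\in\Rn}\abs{\xi}^{\abs{\gamma}}\abs{\partial^{\gamma}m(\xi)}<\infty$ for $\abs{\gamma}\leq\lfloor n/2\rfloor+1$; hence the Mikhlin--Hörmander multiplier theorem gives $\norm{T_{0}}_{\mathcal{L}(\Ell{p})}+\sum_{k=1}^{n}\big(\norm{T_{k}}_{\mathcal{L}(\Ell{p})}+\norm{\widetilde{T}_{k}}_{\mathcal{L}(\Ell{p})}\big)\lesssim_{n,p}1$. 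Inserting these bounds into the first operator identity yields $\norm{f}_{\mathscr{L}^{p}_{\alpha}(\Rn)}\lesssim\norm{f}_{\mathscr{L}^{p}_{\alpha-1}(\Rn)}+\sum_{k=1}^{n}\norm{\partial_{k}f}_{\mathscr{L}^{p}_{\alpha-1}(\Rn)}$ (so $f\in\mathscr{L}^{p}_{\alpha}(\Rn)$ once the right-hand side is finite), and inserting them into the two reverse identities yields $\norm{f}_{\mathscr{L}^{p}_{\alpha-1}(\Rn)}\lesssim\norm{f}_{\mathscr{L}^{p}_{\alpha}(\Rn)}$ and $\norm{\partial_{k}f}_{\mathscr{L}^{p}_{\alpha-1}(\Rn)}\lesssim\norm{f}_{\mathscr{L}^{p}_{\alpha}(\Rn)}$ for each $k$ (so $f,\partial_{1}f,\dots,\partial_{n}f\in\mathscr{L}^{p}_{\alpha-1}(\Rn)$ whenever $f\in\mathscr{L}^{p}_{\alpha}(\Rn)$); these two chains of inequalities are exactly the asserted equivalence together with the norm comparison. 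The only genuine obstacle is the verification of the Mikhlin--Hörmander hypotheses for $w^{-1}$ and $\xi_{k}w^{-1}$, a slightly tedious but completely elementary differentiation of $(1+4\pi^{2}\abs{\xi}^{2})^{-1/2}$; everything else is bookkeeping with commuting Fourier multipliers.
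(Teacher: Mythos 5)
Your proof is correct and is a genuinely different route from the one the paper invokes. The paper does not prove the theorem directly: it attributes it to Calder\'on and points to the surrounding Lemma~\ref{lemma representation formula derivatives of Bessel potential functions}, whose proof follows Stein \cite[Chapter~V, Lemma~3]{Stein_Singular_Integrals}. That classical argument also starts from the identity $w^2=1+\sum_k(2\pi i\xi_k)(-2\pi i\xi_k)$, but then factors each symbol $-2\pi i\xi_k w^{-1}$ as a Riesz transform $-i\xi_k/\abs{\xi}$ times $2\pi\abs{\xi}/w(\xi)$, shows that the latter (minus a constant) is the Fourier transform of a finite Borel measure, and invokes the $\Ell{p}$-boundedness of the Riesz transforms. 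You instead feed the symbols $w^{-1}$ and $\xi_k w^{-1}$ directly into the Mikhlin--H\"ormander multiplier theorem, which absorbs in one stroke the Riesz-transform part and the finite-measure part. That shortcut buys you a cleaner, more self-contained proof: no singular kernels, no measure estimates on the Bessel kernel, just the elementary derivative bounds $\abs{\partial^\gamma(w^{-1})}\lesssim(1+\abs{\xi})^{-1-\abs{\gamma}}$ and $\abs{\partial^\gamma(\xi_kw^{-1})}\lesssim(1+\abs{\xi})^{-\abs{\gamma}}$, which you correctly note are routine. The price is that Stein's route produces the explicit convolution-and-Riesz representation of $\partial^\gamma f$ recorded in Lemma~\ref{lemma representation formula derivatives of Bessel potential functions}, which the paper needs in its own right in the proof of Theorem~\ref{main theorem upper half space}; your argument establishes the norm equivalence but does not by itself yield that representation formula. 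As a proof of the stated theorem, though, your argument is complete and sound, including the careful remarks about injectivity of $\mathscr{J}_\beta$ on $\mathcal{S}'(\Rn)$ and the well-definedness of $(I-\Delta)^{\alpha/2}f$ needed for the equivalence $f\in\mathscr{L}^p_\alpha\Leftrightarrow (I-\Delta)^{\alpha/2}f\in\Ell{p}$.
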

Theorem \ref{calderon's theorem identification bessel with sobolev} relies on a special case of the following lemma, whose proof follows from the arguments in the proof of \cite[Chapter \RNum{5}, Lemma 3]{Stein_Singular_Integrals}.
\begin{lem}\label{lemma representation formula derivatives of Bessel potential functions}
    Let $\alpha>0$, $p\in(1,\infty)$, $g\in\El{p}(\Rn)$ and $f= \mathscr{J}_{\alpha}g \in\mathscr{L}^{p}_{\alpha}(\Rn)$. For all multi-indices $\gamma \in\N^{n}$ with $0<\abs{\gamma}=k\leq \alpha$, it holds that $\partial^{\gamma}f\in\mathscr{L}^{p}_{\alpha- k}(\Rn)$ with the representation
    \begin{equation*}
        \partial^{\gamma}f = \mathscr{J}_{\alpha-k}\left(\mathcal{R}^{\gamma} (\mu_{k}\ast g)\right),
    \end{equation*}
    where $\mathcal{R}^{\gamma}=\mathcal{R}^{\gamma_{1}}_{1}\ldots\mathcal{R}_{n}^{\gamma_{n}}\in\mathcal{L}(\Ell{p})$ is a composition of Riesz transforms, and $\mu_{k}$ is a finite (signed) Borel measure on $\Rn$. In particular, for any multi-index $\gamma\in\N^{n}$ with $0\leq \abs{\gamma}=k\leq \alpha$, there exists $g^{(\gamma)}\in\El{p}(\Rn)$ such that $\partial^{\gamma}f=\mathscr{J}_{\alpha-k}(g^{(\gamma)})$ and $\smallnorm{g^{(\gamma)}}_{\El{p}}\lesssim \norm{g}_{\El{p}}$.
\end{lem}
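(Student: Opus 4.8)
The plan is to read off the claimed factorisation at the level of Fourier multipliers for Schwartz data, and then extend to an arbitrary $g\in\El{p}(\Rn)$ by a density argument. The case $\abs{\gamma}=0$ is immediate: one takes $g^{(\gamma)}=g$, with $\mathcal{R}^{\gamma}$ the identity and $\mu_{0}=\delta_{0}$. So assume $0<\abs{\gamma}=k\le\alpha$. Working in the Fourier convention of the paper, under which $\widehat{G_{\beta}}(\xi)=(1+4\pi^{2}\abs{\xi}^{2})^{-\beta/2}$ and the $j$-th Riesz transform $\mathcal{R}_{j}$ has symbol $-i\xi_{j}/\abs{\xi}$, for $g\in\mathcal{S}(\Rn)$ one has $\widehat{\partial^{\gamma}f}(\xi)=(2\pi i\xi)^{\gamma}(1+4\pi^{2}\abs{\xi}^{2})^{-\alpha/2}\,\widehat{g}(\xi)$, and I would factor the symbol as
\begin{equation*}
(2\pi i\xi)^{\gamma}(1+4\pi^{2}\abs{\xi}^{2})^{-\alpha/2}
=(1+4\pi^{2}\abs{\xi}^{2})^{-\frac{\alpha-k}{2}}\cdot\bigl((-i)^{k}\xi^{\gamma}\abs{\xi}^{-k}\bigr)\cdot m_{k}(\xi).
\end{equation*}
Here the first factor is the symbol of $\mathscr{J}_{\alpha-k}$, the second is the symbol of $\mathcal{R}^{\gamma}=\mathcal{R}_{1}^{\gamma_{1}}\cdots\mathcal{R}_{n}^{\gamma_{n}}$, and a short computation (tracking the powers of $i$ and of $2\pi$) identifies the remaining factor as $m_{k}(\xi)=(-1)^{k}\bigl(1-(1+4\pi^{2}\abs{\xi}^{2})^{-1}\bigr)^{k/2}$.

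The key step is to realise $m_{k}$ as $\widehat{\mu_{k}}$ for a finite signed Borel measure $\mu_{k}$. Since $0\le(1+4\pi^{2}\abs{\xi}^{2})^{-1}\le 1$ and $k/2>0$ (this is the only place $k\ge 1$ is used), the binomial series
\begin{equation*}
\bigl(1-(1+4\pi^{2}\abs{\xi}^{2})^{-1}\bigr)^{k/2}
=\sum_{j\ge 0}\binom{k/2}{j}(-1)^{j}(1+4\pi^{2}\abs{\xi}^{2})^{-j}
\end{equation*}
converges absolutely and uniformly in $\xi\in\Rn$, and $\sum_{j\ge 0}\bigl\lvert\binom{k/2}{j}\bigr\rvert<\infty$. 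Recognising $(1+4\pi^{2}\abs{\xi}^{2})^{-j}$ as $\widehat{G_{2j}}(\xi)$ for $j\ge 1$ and as $\widehat{\delta_{0}}(\xi)$ for $j=0$, and using $\norm{G_{2j}}_{\El{1}(\Rn)}=1$, the sum
\begin{equation*}
\mu_{k}:=(-1)^{k}\Bigl(\delta_{0}+\sum_{j\ge 1}\binom{k/2}{j}(-1)^{j}\,G_{2j}\Bigr)
\end{equation*}
converges in total variation to a finite signed Borel measure with $\widehat{\mu_{k}}=m_{k}$ and $\norm{\mu_{k}}_{\textup{TV}}\lesssim_{k}1$ (the sum terminating when $k$ is even).

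Combining these, $\partial^{\gamma}f=\mathscr{J}_{\alpha-k}\bigl(\mathcal{R}^{\gamma}(\mu_{k}\ast g)\bigr)$ for $g\in\mathcal{S}(\Rn)$. Setting $g^{(\gamma)}:=\mathcal{R}^{\gamma}(\mu_{k}\ast g)$, Young's convolution inequality gives $\norm{\mu_{k}\ast g}_{\El{p}(\Rn)}\le\norm{\mu_{k}}_{\textup{TV}}\norm{g}_{\El{p}(\Rn)}$, and the boundedness of the Riesz transforms on $\El{p}(\Rn)$ for $1<p<\infty$ gives $\smallnorm{g^{(\gamma)}}_{\El{p}(\Rn)}\lesssim\norm{g}_{\El{p}(\Rn)}$; hence $\partial^{\gamma}f\in\mathscr{L}^{p}_{\alpha-k}(\Rn)$ with $\norm{\partial^{\gamma}f}_{\mathscr{L}^{p}_{\alpha-k}(\Rn)}\lesssim\norm{g}_{\El{p}(\Rn)}$. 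Finally, to treat an arbitrary $g\in\El{p}(\Rn)$, I would approximate $g$ in $\El{p}(\Rn)$ by $g_{m}\in\mathcal{S}(\Rn)$: then $\mathscr{J}_{\alpha}g_{m}\to\mathscr{J}_{\alpha}g$ in $\El{p}(\Rn)$, hence in $\mathcal{S}'(\Rn)$, so $\partial^{\gamma}(\mathscr{J}_{\alpha}g_{m})\to\partial^{\gamma}(\mathscr{J}_{\alpha}g)$ in $\mathcal{S}'(\Rn)$; on the other hand $g^{(\gamma)}_{m}\to\mathcal{R}^{\gamma}(\mu_{k}\ast g)=:g^{(\gamma)}$ in $\El{p}(\Rn)$, so $\mathscr{J}_{\alpha-k}(g^{(\gamma)}_{m})\to\mathscr{J}_{\alpha-k}(g^{(\gamma)})$ in $\El{p}(\Rn)$ and therefore in $\mathcal{S}'(\Rn)$, and uniqueness of limits in $\mathcal{S}'(\Rn)$ yields the stated representation. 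The main obstacle — and the reason for first running the argument on $\mathcal{S}(\Rn)$ — is that the middle symbol $(-i)^{k}\xi^{\gamma}\abs{\xi}^{-k}$ is only bounded and not smooth at the origin, so the multiplier identity must be read off where $\widehat{g}$ is a genuine rapidly decreasing function and only then transported to $\El{p}(\Rn)$ via the $\El{p}$-boundedness of the Riesz transforms; this is in essence the argument of \cite[Chapter~\RNum{5}, Lemmas~3 and~5]{Stein_Singular_Integrals}.
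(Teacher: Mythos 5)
Your proposal is correct and is essentially the argument the paper has in mind: the paper gives no independent proof but simply points to \cite[Chapter~\RNum{5}, Lemma~3]{Stein_Singular_Integrals}, and your multiplier factorisation, the binomial-series realisation of $m_{k}=(-1)^{k}\bigl(1-(1+4\pi^{2}\abs{\xi}^{2})^{-1}\bigr)^{k/2}$ as $\widehat{\mu_{k}}$ with $\mu_{k}$ built from $\delta_{0}$ and the kernels $G_{2j}$, and the density passage from $\mathcal{S}(\Rn)$ to $\El{p}(\Rn)$ are precisely Stein's argument adapted to general $k$.
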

We now introduce some notation. Given a function $f\in\El{1}_{\loc}(\Rn)$ with weak partial derivatives of order $k\geq 1$ in $\El{1}_{\loc}(\Rn)$, we let 
\begin{equation}\label{notation for higher order gradients of integer order k}
\nabla^{k}f=\left(\partial^{\gamma}f\right)_{\gamma \in\N^{n}, \abs{\gamma}=k},
\end{equation}
where the vector is arranged in lexicographic order (for example).
In general, we denote the norm of a vector $x=(x^{\gamma})_{\gamma \in\N^{n}, 
 \abs{\gamma}=k}$, where $x^{\gamma}\in\R$ for all $\gamma\in\N^{n}$ with $\abs{\gamma}=k$, by $\abs{x}=\sum_{\gamma\in\N^{n}, \abs{\gamma}=k}\abs{x^{\gamma}}$. With that in mind, in the context of Lemma \ref{lemma representation formula derivatives of Bessel potential functions} and in the pointwise almost everywhere sense, it holds that
 \begin{align}\label{ponitwise domination of abs kth gradient by Bessel potential of some gk}
     \smallabs{\nabla^{k}f}&=\sum_{\gamma\in\N^{n}, \abs{\gamma}=k}\abs{\partial^{\gamma}f}\leq \sum_{\abs{\gamma}=k}\mathscr{J}_{\alpha-k}(\smallabs{g^{(\gamma)}}) = \mathscr{J}_{\alpha-k}(\smallabs{g^{k}}),
 \end{align}
 where $g^{k}=(g^{(\gamma)})_{\gamma\in\N^{n}, \abs{\gamma}=k}$, and therefore $\smallnorm{\smallabs{g^{k}}}_{\Ell{p}}\lesssim \norm{g}_{\El{p}}$. We extend this notation to the case $k=0$ by setting $\nabla^{0}f=f$ and $g^{0}=g$, so that \eqref{ponitwise domination of abs kth gradient by Bessel potential of some gk} trivially holds in this case.

\subsection{Tangential maximal functions}\label{Sect:tang}
 For any exponent $\beta\in(0,1]$, and $x_{0}\in\Rn$ we recall the tangential approach region
 \begin{equation*}
     \Gamma^{\beta}(x_0)=\set{(t,x)\in\Hn : \abs{x-x_{0}}<t^{\beta} \textup{ if }0<t\leq 1, \textup{ and } \abs{x-x_0}<t \textup{ if } t\geq 1}.
 \end{equation*}
For $\alpha\geq 0$, $\beta\in (0,1]$, $p\in [1,\infty]$ and $g\in\Ell{p}$, consider the maximal function 
 \begin{equation}\label{Nagel Stein maximal function poisson extension of bessel potential bounded on Lp}
     \mathcal{N}_{\alpha , \beta}(g)(x_0)= \sup_{(t,x)\in\Gamma^{\beta}(x_0)} \abs{\left(P_{t}\ast \mathscr{J}_{\alpha}g\right)(x)}, \quad x_0\in\Rn. 
 \end{equation}
 
 Theorem~\ref{theorem Nagel--Stein thm 5} states that $\mathcal{N}_{\alpha,\beta}$ is bounded on $\Ell{p}$ provided that $p\in(1,\infty)$, $\alpha p<n$ and $\beta\geq 1-\frac{\alpha p}{n}$. As mentioned in the introduction, if $\alpha=\frac{n}{p}$ then $\mathcal{N}_{\alpha,\beta}$ is bounded on $\Ell{p}$ for all $\beta>0$.

For any $p\in(0,\infty)$ and $u:\Hn\to\R$, following \cite[Section 4]{Nagel_Stein_1984} we also consider the local maximal function
\begin{equation*}
    \mathcal{M}_{p,\beta}(u)(x_0)=\sup_{\substack{(t,x)\in\Gamma^{\beta}(x_0) \\ 0<t\leq 1}} t^{\frac{n(1-\beta)}{p}}\abs{u(t,x)}, \quad x_0\in\Rn.
\end{equation*}
This corresponds to the maximal function $M_{p}(u)(x_0)$ for the domain $\Omega=\Gamma^{\beta}(0)$ in the notation of \cite[Section 3]{Nagel_Stein_1984}.
The factor $t^{\frac{n(1-\beta)}{p}}$ mitigates the growth of $\abs{u(t,x)}$ as $(t,x)$ approaches $(0,x_0)$ within $\Gamma^{\beta}(x_0)$, allowing the maximal function $\mathcal{M}_{p,\beta}(u)$ to essentially be dominated by the standard nontangential maximal function $N_{*}(u)$. In fact, as a special case of \cite[Theorem~4$'$]{Nagel_Stein_1984} it holds that 
\begin{equation}\label{Lp domination of mitigating maximal function by non tangential max function nagel stein}
    \norm{\mathcal{M}_{p,\beta}(u)}_{\Ell{p}}\lesssim \norm{N_{*}(u)}_{\Ell{p}}.
\end{equation}
Note that the continuity assumption made on $u$ in the aforementioned reference is unnecessary.

The final lemma that we shall need for the proof of Theorem \ref{main theorem upper half space} is the following analogue of \cite[Lemma 11]{Nagel_Stein_1984}. Here we have to consider some $\El{q}$ averages of the function $f$ instead of working directly with the Poisson extension $u(t,x)=(P_{t}\ast f)(x)$ as in the aforementioned reference. 
 \begin{lem}\label{lemma boundedness mitigating maximal function on j dependent intervals}
     Let $1\leq q< p < \infty$ and $f\in\Ell{p}$. Let $v_f:\Hn\to[0,\infty]$ be defined as 
     \begin{equation*}
         v_{f}(t,x)=\left(\dashint_{\Delta(x,2t)}\abs{f}^{q}\right)^{1/q}. 
     \end{equation*}
     For integers $j\geq 0$ and $\beta\in(0,1)$, consider the maximal function 
     \begin{equation}\label{maximal function dilated in the t direction for j integer}
         \mathcal{M}_{p,\beta,j}(v_{f})(x_0)= 2^{\frac{nj}{p}}\sup_{\substack{\abs{x-x_0}< t^{\beta}\\0< t < 2^{-j/(1-\beta)}}} t^{\frac{n(1-\beta)}{p}}\abs{v_{f}(2^{j}t, x)}, \quad x_0\in\Rn.
     \end{equation}
     Then, $\norm{\mathcal{M}_{p,\beta,j}(v_{f})}_{\El{p}}\lesssim \norm{f}_{\El{p}}$, where the implicit constant does not depend on $j$. 
 \end{lem}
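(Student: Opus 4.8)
The plan is to \emph{rescale away the index $j$}, reducing the estimate to the case $j=0$, which is essentially \eqref{Lp domination of mitigating maximal function by non tangential max function nagel stein} applied to the auxiliary function $v_f$. Since $f\in\El{p}(\Rn)\subseteq\El{q}_{\loc}(\Rn)$ (as $q<p$), the function $v_f$ is finite and measurable on $\Hn$. Set $T_j=2^{-j\beta/(1-\beta)}$ and, for a measurable $g$ on $\Rn$, write $g^{(T)}(z)=g(Tz)$ for its $T$-dilate. The first step is to establish the pointwise identity
\begin{equation*}
\mathcal{M}_{p,\beta,j}(v_f)(x_0)=\mathcal{M}_{p,\beta}\bigl(v_{f^{(T_j)}}\bigr)(x_0/T_j),\qquad x_0\in\Rn,
\end{equation*}
where $\mathcal{M}_{p,\beta}$ is the maximal function appearing in \eqref{Lp domination of mitigating maximal function by non tangential max function nagel stein} (note $\mathcal{M}_{p,\beta}=\mathcal{M}_{p,\beta,0}$). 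This comes from the substitution $s=2^{j}t$, which turns the constraint $0<t<2^{-j/(1-\beta)}$ into $0<s<T_j$, followed by $s=T_j\sigma$ and $x=T_j x'$, together with the change of variables $u=T_j v$ inside the average defining $v_f$, which gives $v_f(T_j\sigma,T_j x')=v_{f^{(T_j)}}(\sigma,x')$. The reason for the particular value $T_j=2^{-j\beta/(1-\beta)}$ (the "top" scale of the $j$-th approach region) is that the scaling factors balance exactly: the region $\{|x-x_0|<t^{\beta}\}$ rescales precisely to $\{|x'-x_0'|<\sigma^{\beta}\}$, and the prefactor $2^{nj/p}$ together with the weight $t^{n(1-\beta)/p}$ produces the constant $2^{nj\beta/p}\,T_j^{n(1-\beta)/p}=1$. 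These are routine computations with the exponents.

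The second ingredient is the elementary pointwise bound $N_{*}(v_g)\lesssim M_{q}(g)$, valid for any $g\in\El{q}_{\loc}(\Rn)$: if $|x-x_0|<t$, then $\Delta(x,2t)\subseteq\Delta(x_0,3t)$, so $\dashint_{\Delta(x,2t)}|g|^{q}\leq(3/2)^{n}\dashint_{\Delta(x_0,3t)}|g|^{q}\leq(3/2)^{n}M(|g|^{q})(x_0)$; taking $q$th roots and the supremum over $(t,x)\in\Gamma(x_0)$ yields the claim. Consequently $\norm{N_{*}(v_{f^{(T_j)}})}_{\El{p}}\lesssim\norm{M_{q}(f^{(T_j)})}_{\El{p}}\lesssim\norm{f^{(T_j)}}_{\El{p}}$, the last bound because $M_{q}$ is bounded on $\El{p}$ (here one uses $q<p$, so $M$ is bounded on $\El{p/q}$).

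Combining the two ingredients with \eqref{Lp domination of mitigating maximal function by non tangential max function nagel stein} and the $\El{p}$-scaling of dilates, we obtain
\begin{equation*}
\norm{\mathcal{M}_{p,\beta,j}(v_f)}_{\El{p}}=T_j^{n/p}\norm{\mathcal{M}_{p,\beta}(v_{f^{(T_j)}})}_{\El{p}}\lesssim T_j^{n/p}\norm{N_{*}(v_{f^{(T_j)}})}_{\El{p}}\lesssim T_j^{n/p}\norm{f^{(T_j)}}_{\El{p}}=\norm{f}_{\El{p}},
\end{equation*}
using $\norm{f^{(T_j)}}_{\El{p}}=T_j^{-n/p}\norm{f}_{\El{p}}$ in the final step; every implicit constant depends only on $n$, $p$, $q$ and $\beta$, and in particular not on $j$.

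I expect the only genuinely delicate point to be the bookkeeping in the first step — checking that the two substitutions leave behind no residual power of $2^{j}$ and reproduce $\mathcal{M}_{p,\beta}$ with exactly the correct aperture. Once that identity is secured, the remainder is a soft consequence of \eqref{Lp domination of mitigating maximal function by non tangential max function nagel stein} and standard maximal-function bounds.
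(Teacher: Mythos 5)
Your proof is correct and takes essentially the same approach as the paper: the paper also reduces to the case $j=0$ by exploiting the dilation identity $v_{f_\delta}(t,x)=v_f(\delta t,\delta x)$ and then invokes \eqref{Lp domination of mitigating maximal function by non tangential max function nagel stein} together with the pointwise bound $N_*(v_f)\lesssim M_q f$, citing the scaling computation in Nagel--Stein's Lemma 11. Your proposal simply spells out that rescaling explicitly (correctly identifying $T_j=2^{-j\beta/(1-\beta)}$ as the scale that makes all exponents cancel), which is exactly the content delegated to the reference.
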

 \begin{proof}
     Observe that $N_{*}(v_f)(x_0)\lesssim (M_{q}f)(x_0)$ for all $x_0\in\Rn$ and therefore
     \begin{equation*}
         \norm{N_{*}(v_f)}_{\El{p}}\lesssim \norm{M_{q}f}_{\El{p}}\lesssim \norm{f}_{\El{p}}
     \end{equation*}
     for all $f\in\Ell{p}$, since $p>q$. Moreover, the dilations $f_{\delta}(x)=f(\delta x)$, $x\in\Rn$, have the property that $v_{f_{\delta}}(t,x)=v_{f}(\delta t,\delta x)$ for all $(t,x)\in\Hn$ and $\delta>0$. The result therefore follows from \eqref{Lp domination of mitigating maximal function by non tangential max function nagel stein} by scaling, just as in \cite[Lemma 11]{Nagel_Stein_1984}.
 \end{proof}
Let us remark that $t^{\frac{n(1-\beta)}{p}}=t^{\alpha}$ when $\beta=1-\frac{\alpha p}{n}$ in \eqref{maximal function dilated in the t direction for j integer}. We also make the following remark for later reference.
\begin{rem}\label{remark 2 - convolution commute  with general maximal functions approach}
    Let $\Gamma\subseteq \Hn$ be any subset (typically an approach region with $(0,0)\in\R^{1+n}$ as an accumulation point). For any $x_0\in\Rn$, let us denote 
    \begin{equation*}
    \Gamma +x_0 = \set{(t,x)+(0,x_0) : (t,x)\in\Gamma}\subseteq \Hn.
    \end{equation*}
    For $u:\Hn\to\R$, consider (with a slight abuse of notation) the corresponding maximal function 
    \begin{equation*}
    N_{*,\Gamma}u(x_0)=\sup_{(t,x)\in\Gamma +x_0}\abs{u(t,x)}, \quad x_0\in\Rn.
    \end{equation*}
    If there are $G:\Rn\to [0,\infty]$ and $v:\Hn\to\R$ such that $\abs{u(t,x)}\leq (G\ast \abs{v(t,\cdot)})(x)$ for all $(t,x)\in\Hn$, then 
    \begin{equation*}
        (N_{*,\Gamma}u)(x_0)\leq (G\ast N_{*,\Gamma} v)(x_0) \quad \textup{ for all }x_0\in\Rn.
    \end{equation*}
    Indeed, if $x_0\in\Rn$ and $(t,x)\in\Gamma +x_0$, then $(t,x-z)\in\Gamma +(x_0-z)$, thus 
    \begin{align*}
        \abs{u(t,x)}&\leq \int_{\Rn}G(z)\abs{v(t,x-z)}\dd z
        \leq \int_{\Rn}G(z) (N_{*,\Gamma}v)(x_0-z)\dd z = (G\ast N_{*,\Gamma}v)(x_0).
    \end{align*}
\end{rem}

We now have all the ingredients needed for the proof of Theorem~\ref{main theorem upper half space}.
 \begin{proof}[Proof of Theorem~\ref{main theorem upper half space}]
 We shall first prove the almost everywhere tangential convergence in \eqref{ae tangential convergence for sol of div form with bessel bdry datum} by establishing the corresponding maximal function estimate \eqref{tangential maximal function estimate in statement of main theorem}. Let $p\in(1,\infty)$ be such that $(\mathcal{D})^{L}_{p}$ holds. Let $0<\alpha \leq  \frac{n}{p}$ and $f\in\mathscr{L}^{p}_{\alpha}(\Rn)$. By linearity of the equation $Lu=0$ and sub-additivity of maximal functions, we may assume that there is $g\in\El{p}_{+}(\Rn)$ such that $f=\mathscr{J}_{\alpha}g$ (in particular $f\geq 0$ a.e. on $\Rn$). We let $\beta= 1-\frac{\alpha p}{n}>0$ if $\alpha <\frac{n}{p}$, or we let $\beta\in(0,1]$ be arbitrary if $\alpha=\frac{n}{p}$. Lemma~\ref{lemma Dirichlet holds implies well posedness} shows that the unique solution $u_{f}$ to $\left(\mathcal{D}\right)^{L}_{p}$ with boundary datum $f$ is given by 
 \begin{equation*}
     u_{f}(t,x)=\int_{\Rn} k((t,x),y)f(y)\dd y\quad \textup{ for all }(t,x)\in\Hn.
 \end{equation*}  
We decompose $u_{f}(t,x)=\sum_{j\geq 0}u_{j}(t,x)$, where 
 \begin{equation*}
 u_{j}(t,x)=\int_{\Delta(x,2^{j+1}t)\setminus \Delta(x,2^{j}t)} k((t,x),y)f(y)\dd y
 \end{equation*}
 for $j\geq 1$, and $u_{0}(t,x)=\int_{\Delta(x,2t)} k((t,x),y)f(y)\dd y$. The proof 
 of \cite[Lemma 5.32]{HLM_Degenerate_Dirichlet_2019} (see also Remark~\ref{remark about proof of lem 5.32 in HLM} above) shows that there are $1<r<p$ and $\alpha_{L}\in(0,1)$ such that 
 \begin{equation}\label{estimate u_j starting point}
     u_{j}(t,x)\lesssim 2^{-\alpha_{L}j}\left(\dashint_{\Delta(x,2^{j+1}t)}f^{r}\right)^{1/r}
 \end{equation}
 for all $(t,x)\in\Hn$ and $j\geq 0$, where $\alpha_{L}>0$ is the De Giorgi--Nash--Moser constant from \eqref{holder continuity solutions DGNM}.  
 
Let $x_0\in\Rn$ and $j\geq 0$ be fixed, and let $(t,x)\in\Gamma^{\beta}(x_0)$. 
 Let us first assume that $t\geq 2^{-j/(1-\beta)}$. If $t\leq 1$, then $t^{\beta-1}\leq 2^{j}$, thus $t^{\beta}\leq 2^{j}t$ and therefore $\Delta(x,2^{j+1}t)\subseteq \Delta(x_0, 2^{j+2}t)$. As a consequence, it follows from \eqref{estimate u_j starting point} that 
 \begin{equation}\label{estimate for u_j for large t}
 {u_{j}(t,x)\lesssim 2^{-\alpha_{L}j}\left(M_{r}f\right)(x_0)}. 
 \end{equation}
 If $t\geq 1$, then this estimate trivially holds.
 
 Let us now assume that $0<t<2^{-j/(1-\beta)}$. Let $d=\lfloor \alpha \rfloor$, so there exists $\tilde{\alpha}\in[0,1)$ such that $\alpha=d+\tilde{\alpha}$. We claim that the following estimate holds:
 \begin{equation}\label{important estimate on term j for small t}
     u_{j}(t,x)\lesssim 2^{-\alpha_{L}j}\left[\sum_{k=0}^{d}(P_{2^{j+1}t}\ast \mathcal{J}_{\alpha}\smallabs{g^{k}})(x) + (2^{j+1}t)^{\alpha}\left(\dashint_{\Delta(x,2^{j+1}t)} (M\smallabs{g^{d}})^{r}\right)^{1/r}\right],
 \end{equation}
 where the functions $g^{k}$, $k\in\set{0,\ldots,d}$, are related to $f$ in the way prescribed by Lemma \ref{lemma representation formula derivatives of Bessel potential functions} and \eqref{ponitwise domination of abs kth gradient by Bessel potential of some gk}.  
 Indeed, first note that \eqref{estimate u_j starting point} and Poincaré's inequality imply that
\begin{align*}
    u_{j}(t,x)&\lesssim 2^{-\alpha_{L}j}\left[ (f)_{\Delta(x,2^{j+1}t)} + \left(\dashint_{\Delta(x,2^{j+1}t)}\smallabs{f-(f)_{\Delta(x,2^{j+1}t)}}^{r}\right)^{1/r}\right]\\
    &\lesssim 2^{-\alpha_{L}j}\left[ (f)_{\Delta(x,2^{j+1}t)} +  2^{j+1}t\left(\dashint_{\Delta(x,2^{j+1}t)}\smallabs{\nabla f}^{r}\right)^{1/r}\right].
\end{align*}
Iterating this procedure, keeping in mind the notation \eqref{notation for higher order gradients of integer order k}, we obtain
 \begin{align}
 \begin{split}
 \label{result of iterated poincaré ineq}
     u_{j}(t,x)\lesssim 2^{-\alpha_{L}j}\left[\sum_{k=0}^{d-1}(2^{j+1}t)^{k}\left(\smallabs{\nabla^{k}f}\right)_{\Delta(x,2^{j+1}t)} + (2^{j+1}t)^{d}\left(\dashint_{\Delta(x,2^{j+1}t)}\smallabs{\nabla^{d}f}^{r}\right)^{1/r}\right],
     \end{split}
 \end{align}
 with the convention that the sum over $k$ is zero if $d=0$.
 
If $d\geq 1$, let $k\in\set{0,\ldots, d-1}$. We note that since $t<2^{-j/(1-\beta)}$, it holds that $2^{j+1}t < 2$, so \eqref{ponitwise domination of abs kth gradient by Bessel potential of some gk} and Lemma \ref{lemma domination of multiples of Poisson kernel by convolution with bessel} imply that
 \begin{align}
 \begin{split}\label{step introducing the poisson semigroup from the averages on balls}
     (2^{j+1}t)^{k}\left(\smallabs{\nabla^{k}f}\right)_{\Delta(x,2^{j+1}t)}&\leq (2^{j+1}t)^{k} \left(\mathscr{J}_{\alpha-k}\smallabs{g^{k}}\right)_{\Delta(x,2^{j+1}t)}\\
     &\lesssim (2^{j+1}t)^{k} \left(P_{2^{j+1}t}\ast \mathscr{J}_{\alpha-k}\smallabs{g^{k}}\right)(x)\\
     &\lesssim \left(P_{2^{j+1}t}\ast \mathscr{J}_{k}\left(\mathscr{J}_{\alpha-k}\smallabs{g^{k}}\right)\right)(x)\\
     &=\left(P_{2^{j+1}t}\ast \mathscr{J}_{\alpha}\smallabs{g^{k}}\right)(x),
     \end{split}
 \end{align}
 where in the second line we use that $(F)_{\Delta(y,r)}\lesssim (P_{r}\ast F)(y)$ for all non-negative functions $F\in\El{1}_{\loc}(\Rn)$, all $y\in\Rn$ and $r>0$, which follows from the fact that $\ind{B(0,1)}(y)\lesssim P_{1}(y)$ for all $y\in\Rn$.
Also note that \eqref{ponitwise domination of abs kth gradient by Bessel potential of some gk} implies that $\smallabs{\nabla^{d}f}\leq \mathscr{J}_{\tilde{\alpha}}\smallabs{g^{d}}$ pointwise almost everywhere. These observations imply that
 \begin{equation*}
     u_{j}(t,x)\lesssim 2^{-\alpha_{L}j}\left[\sum_{k=0}^{d-1}\left(P_{2^{j+1}t}\ast \mathscr{J}_{\alpha}\smallabs{g^{k}}\right)(x) + (2^{j+1}t)^{d}\left(\dashint_{\Delta(x,2^{j+1}t)}\left(\mathscr{J}_{\tilde{\alpha}}\smallabs{g^{d}}\right)^{r}\right)^{1/r}\right].
 \end{equation*}
 
If $\tilde{\alpha}=0$, we simply use the bound $\mathscr{J}_{\tilde{\alpha}}\smallabs{g^{d}}=\smallabs{g^{d}}\leq M\smallabs{g^{d}}$, which holds pointwise almost everywhere on $\Rn$ by Lebesgue's differentiation theorem, to prove \eqref{important estimate on term j for small t}. If $\tilde{\alpha}\in(0,1)$, then we use Lemma \ref{lemma Poincaré type inequality for fractional order Bessel spaces} to obtain
\begin{align*}
\left(\dashint_{\Delta(x,2^{j+1}t)}\left(\mathscr{J}_{\tilde{\alpha}}\smallabs{g^{d}}\right)^{r}\right)^{1/r}&\leq  \left(\dashint_{\Delta(x,2^{j+1}t)}\abs{\mathscr{J}_{\tilde{\alpha}}\smallabs{g^{d}}-\left(\mathscr{J}_{\tilde{\alpha}}\smallabs{g^{d}}\right)_{\Delta(x,2^{j+1}t)}}^{r}\right)^{1/r}\\
    &+ \left(\mathscr{J}_{\tilde{\alpha}}\smallabs{g^{d}}\right)_{\Delta(x,2^{j+1}t)}\\
    &\lesssim (2^{j+1}t)^{\tilde{\alpha}}\left(\dashint_{\Delta(x,2^{j+1}t)} (M\smallabs{g^{d}})^{r}\right)^{1/r} + \left(\mathscr{J}_{\tilde{\alpha}}\smallabs{g^{d}}\right)_{\Delta(x,2^{j+1}t)}.
\end{align*}
The argument of \eqref{step introducing the poisson semigroup from the averages on balls} similarly implies that
\begin{align*}
    (2^{j+1}t)^{d}\left(\mathscr{J}_{\tilde{\alpha}}\smallabs{g^{d}}\right)_{\Delta(x,2^{j+1}t)}&\lesssim (2^{j+1}t)^{d}\left(P_{2^{j+1}t}\ast \mathscr{J}_{\tilde{\alpha}}\smallabs{g^{d}}\right)(x)\\
    &\lesssim \left(P_{2^{j+1}t}\ast \mathscr{J}_{\alpha}\smallabs{g^{d}}\right)(x),
\end{align*}
and the estimate \eqref{important estimate on term j for small t} follows. 

We shall now consider the right-hand side of \eqref{important estimate on term j for small t}. We introduce the function $v:\Hn\to [0,\infty]$ defined by
\begin{equation*}
    v(s,y)=\left(\dashint_{\Delta(y,2s)} (M\smallabs{g^{d}})^{r}\right)^{1/r}\quad \textup{ for all }(s,y)\in\Hn.
\end{equation*}
Since $(t,x)\in\Gamma^{\beta}(x_0)$, it follows that $(2^{j+1}t,x)\in\Gamma^{\beta}(x_0)$. In addition, since we are working in the regime where $0<t<2^{-j/(1-\beta)}$, \eqref{Nagel Stein maximal function poisson extension of bessel potential bounded on Lp} and Lemma \ref{lemma boundedness mitigating maximal function on j dependent intervals} imply that the estimate \eqref{important estimate on term j for small t} takes the form
\begin{align}
\begin{split}\label{use of dyadic maximal function in proof of main thm}
    u_{j}(t,x)&\lesssim 2^{-\alpha_{L}j}\left[\sum_{k=0}^{d}\left(P_{2^{j+1}t}\ast \mathcal{J}_{\alpha}\smallabs{g^{k}}\right)(x) + 2^{j(\alpha -\frac{n}{p}) }t^{\alpha - \frac{n}{p}(1-\beta)}2^{\frac{jn}{p}}t^{\frac{n}{p}(1-\beta)}v(2^{j}t,x)\right]\\
    &\leq 2^{-\alpha_{L}j}\left[ \sum_{k=0}^{d} \mathcal{N}_{\alpha,\beta}(\smallabs{g^{k}})(x_0) + \left(\mathcal{M}_{p,\beta , j}v\right)(x_0)\right],
    \end{split}
\end{align}
where in the second line we use that $2^{j(\alpha -\frac{n}{p}) }\leq 1$ (since $0<\alpha\leq \frac{n}{p}$) and that $t^{\alpha - \frac{n}{p}(1-\beta)}=1$ if $\alpha<\frac{n}{p}$ and $\beta =1-\frac{\alpha p}{n}$, or that $t^{\alpha - \frac{n}{p}(1-\beta)}=t^{\beta \alpha}\leq 1$ when $\alpha =\frac{n}{p}$ and $\beta\in(0,1)$ is arbitrary.
Combining this with the estimate \eqref{estimate for u_j for large t} obtained for $t\geq 2^{-j/(1-\beta)}$, this means that 
\begin{equation*}
    \sup_{(t,x)\in\Gamma^{\beta}(x_0)} u_{j}(t,x)\lesssim 2^{-\alpha_{L}j}\left[ \sum_{k=0}^{d} \mathcal{N}_{\alpha,\beta}(\smallabs{g^{k}})(x_0) +\left(\mathcal{M}_{p,\beta , j}v\right)(x_0) + \left(M_{r}f\right)(x_0)\right].
\end{equation*}
Since $u_{f}=\sum_{j=0}^{\infty}u_{j}$ pointwise on $\Hn$, this implies that
\begin{equation*}
    N_{*,\beta}(u_{f})(x_0)\lesssim \sum_{j\geq 0}2^{-\alpha_{L}j}\left[ \sum_{k=0}^{d} \mathcal{N}_{\alpha,\beta}(\smallabs{g^{k}})(x_0) + \left(\mathcal{M}_{p,\beta , j}v\right)(x_0) + \left(M_{r}f\right)(x_0)\right].
\end{equation*}
Since $p>r>1$, we may integrate the $p$th power of both sides of this inequality with respect to $x_0\in\Rn$ and use Lemma~\ref{lemma boundedness mitigating maximal function on j dependent intervals} and Theorem~\ref{theorem Nagel--Stein thm 5} (along with the remark about the case when $\alpha=\frac{n}{p}$) to obtain
\begin{align}
\begin{split}\label{final estimate in the proof of 1st main theorem}
    \norm{ N_{*,\beta}(u_{f})}_{\Ell{p}}&\lesssim \sum_{j\geq 0}2^{-\alpha_{L}j}\left[\sum_{k=0}^{d} \smallnorm{\smallabs{g^{k}}}_{\El{p}} + \smallnorm{M\smallabs{g^{d}}}_{\El{p}} + \norm{M_{r}f}_{\El{p}}\right]\\
    &\lesssim \sum_{j\geq 0}2^{-\alpha_{L}j}\left(\norm{g}_{\El{p}} + \norm{f}_{\El{p}}\right)\lesssim \norm{g}_{\El{p}}=\norm{f}_{\mathscr{L}^{p}_{\alpha}(\Rn)},
    \end{split}
\end{align}
where we have also used Lemma~\ref{lemma representation formula derivatives of Bessel potential functions} and the maximal theorem to obtain the second inequality. 

We now turn to the proof of the estimate \eqref{upper bound hausdorff dimension of divergence set} on the Hausdorff dimension of the divergence set. If $\beta'\in(\beta,1]$, then there is $\alpha'\in [0,\alpha)$ such that $\beta'=1-\frac{\alpha' p}{n}$. We can write $f=\mathscr{J}_{\alpha}(g)=G_{\alpha-\alpha'}\ast \mathscr{J}_{\alpha'}g$ for some $g\in\El{p}(\Rn)$. Starting from \eqref{estimate u_j starting point}, and using Minkowski's inequality as in the proof of Lemma~\ref{lemma maximal function commutes with convolution}, shows that for all $(t,x)\in\Hn$ it holds that 
\begin{align}
\begin{split}\label{estimate for proof est hausdorff dim div set for bessel}
    \abs{u_{f}(t,x)}&\lesssim \sum_{j=0}^{\infty}2^{-\alpha_{L}j}\left(\dashint_{\Delta(x,2^{j+1}t)}\abs{G_{\alpha-\alpha'}\ast \mathscr{J}_{\alpha'}g}^{r}\right)^{1/r}\\
    &\leq \sum_{j=0}^{\infty}2^{-\alpha_{L}j}\left[\int_{\Rn}G_{\alpha-\alpha'}(x-z)\left(\dashint_{\Delta(z,2^{j+1}t)}\abs{\mathscr{J}_{\alpha'}g}^{r}\right)^{1/r}\dd z\right]\\
    &=\int_{\Rn}G_{\alpha-\alpha'}(x-z)\left[\sum_{j=0}^{\infty} 2^{-\alpha_{L}j} \left(\dashint_{\Delta(z,2^{j+1}t)}\abs{\mathscr{J}_{\alpha'}g}^{r}\right)^{1/r}\right]\dd z\\
    &=(G_{\alpha-\alpha'}\ast w(t,\cdot))(x),
    \end{split}
\end{align}
where we have introduced the function $w:\Hn\to [0,\infty]$ defined by
\begin{equation*}
    w(t,x)=\sum_{j=0}^{\infty} 2^{-\alpha_{L}j} \left(\dashint_{\Delta(x,2^{j+1}t)}\abs{\mathscr{J}_{\alpha'}g}^{r}\right)^{1/r} \quad \textup{ for all } (t,x)\in\Hn.
\end{equation*}
The first part of the proof (see \eqref{final estimate in the proof of 1st main theorem}) shows that $\norm{N_{*,\beta'}w}_{\Ell{p}}\lesssim \norm{g}_{\Ell{p}}$. Also note that \eqref{estimate for proof est hausdorff dim div set for bessel} and Remark~\ref{remark 2 - convolution commute  with general maximal functions approach} show that 
\begin{align*}
    N_{*,\beta'}u_{f}(x_0)\lesssim \left(G_{\alpha-\alpha'}\ast N_{*,\beta'}w\right)(x_0)\quad \textup{ for all }x_{0}\in\Rn.
\end{align*}
Consequently, for any $s> n-(\alpha-\alpha')p$ and $\mu\in\mathcal{M}^{s}(\Rn)$, it follows from Lemma~\ref{lemma dimensional measures and Bessel potential functions} that 
\begin{equation}\label{fractional measure integral estimate on the tangential maximal function in the proof of main thm}
    \norm{N_{*,\beta'}(u_f)}_{\El{1}(\mu)}\lesssim \norm{G_{\alpha-\alpha'}\ast N_{*,\beta'}w}_{\El{1}(\mu)}\lesssim \norm{N_{*,\beta'}w}_{\Ell{p}}\lesssim \norm{g}_{\Ell{p}}=\norm{f}_{\mathscr{L}^{p}_{\alpha}(\Rn)},
\end{equation}
where the implicit constant also depends on $c_{s}(\mu)$.
The argument of the proof of Proposition~\ref{proposition Hausdorff dimension div set for cones and Bessel} can now be used with the estimate~\eqref{fractional measure integral estimate on the tangential maximal function in the proof of main thm} to obtain the estimate~\eqref{upper bound hausdorff dimension of divergence set} on the Hausdorff measure of the divergence set (observe that $n-(\alpha-\alpha')p=n-n(\beta' -\beta)$). 
\end{proof}
\subsection{Further properties of the spaces \texorpdfstring{$\mathrm{C}^{p}_{\alpha}(\Rn)$}{Cpalpha} and an extension of Theorem~\ref{main theorem upper half space}}\label{section proof of main theorem for Wsp in upper half space}
The aim of this section is to prove the following generalisation of Theorem~\ref{main theorem upper half space} that permits boundary data in the fractional mean oscillation spaces $\mathrm{F}^{p}_{\alpha}(\Rn)\subseteq\mathrm{C}^{p}_{\alpha}(\Rn)$ instead of $\mathscr{L}^{p}_{\alpha}(\Rn)$. We recall that the spaces $\mathrm{F}^{p}_{\alpha}(\Rn)$ are defined at the end of Section~\ref{subsection with definitions of spaces}, and coincide with the closure of $C^{\infty}_{c}(\Rn)$ in $\mathrm{C}^{p}_{\alpha}(\Rn)$.
\begin{thm}\label{main theorem upper half space for Cpalpha}
    Let $p\in (1,\infty)$ be such that $\left(\mathcal{D}\right)^{L}_{p}$ holds. Let $\alpha > 0$ be such that $\alpha p\leq n$, and let $\beta=1-\frac{\alpha p}{n}$. Let $f\in\mathrm{F}^{p}_{\alpha}(\Rn)$ and let $u_{f}$ be the unique solution to $\left(\mathcal{D}\right)^{L}_{p}$ with boundary datum $f$.
    For any $\beta'\in(\beta,1]$ it holds that
    \begin{equation}\label{upper bound hausdorff dimension of divergence set for Cpalpha}
        \dim_{H}\left(\set{x_0\in\Rn : \lim_{\Gamma^{\beta'}(x_0)\ni (t,x)\to(0,x_0)} u_{f}(t,x)\neq f(x_0)}\right)\leq n-n(\beta' -\beta).
    \end{equation}
    If $\alpha p<n$, then 
    \begin{equation}\label{tangential maximal function estimate in statement of main theorem for Calphap}
    \norm{N_{*,\beta}(u_{f})}_{\Ell{p}}\lesssim \norm{f}_{\mathrm{C}^{p}_{\alpha}(\Rn)}
    \end{equation}
    and
    \begin{equation}\label{ae tangential convergence for sol of div form with frac sobolev bdry datum}
        \lim_{\Gamma^{\beta}(x_0)\ni (t,x)\to(0,x_0)} u_{f}(t,x)=f(x_0) \quad \textup{ for a.e. }x_{0}\in\Rn.
    \end{equation}
    If $\alpha p=n$, then \eqref{tangential maximal function estimate in statement of main theorem for Calphap} and \eqref{ae tangential convergence for sol of div form with frac sobolev bdry datum} hold for all $\beta>0$.
\end{thm}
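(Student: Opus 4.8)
The plan is to run the proof of Theorem~\ref{main theorem upper half space} essentially unchanged, substituting for its Bessel--potential ingredients their $\mathrm{C}^{p}_{\alpha}(\Rn)$ counterparts and replacing the Nagel--Stein Theorem~\ref{theorem Nagel--Stein thm 5} by Dorronsoro's Theorem~\ref{Dorronsoros theorem}. The skeleton that survives verbatim is: the reduction to the maximal function bound \eqref{tangential maximal function estimate in statement of main theorem for Calphap} (from which \eqref{ae tangential convergence for sol of div form with frac sobolev bdry datum} and \eqref{upper bound hausdorff dimension of divergence set for Cpalpha} are deduced by the approximation argument of Proposition~\ref{proposition Hausdorff dimension div set for cones and Bessel}, now using that $\mathrm{F}^{p}_{\alpha}(\Rn)$ is the closure of $C^{\infty}_{c}(\Rn)$ in $\mathrm{C}^{p}_{\alpha}(\Rn)$ and that $u_{h}\in C(\clos{\Hn})$ with boundary trace $h$ for $h\in C^{\infty}_{c}(\Rn)$); the dyadic decomposition $u_{f}=\sum_{j\geq 0}u_{j}$ with $u_{j}(t,x)\lesssim 2^{-\alpha_{L}j}\bigl(\dashint_{\Delta(x,2^{j+1}t)}\abs{f}^{r}\bigr)^{1/r}$ for some $1<r<p$ and $\alpha_{L}>0$ as in \eqref{holder continuity solutions DGNM}, from \cite[Lemma~5.32]{HLM_Degenerate_Dirichlet_2019} and Remark~\ref{remark about proof of lem 5.32 in HLM}; the splitting of the regime $t\geq 2^{-j/(1-\beta)}$ (handled crudely by $(M_{r}f)(x_0)$) from $0<t<2^{-j/(1-\beta)}$; the local maximal functions $\mathcal{M}_{p,\beta,j}$ of Lemma~\ref{lemma boundedness mitigating maximal function on j dependent intervals}; and, for the dimension estimate, Remark~\ref{remark 2 - convolution commute  with general maximal functions approach} together with Lemma~\ref{lemma dimensional measures and Bessel potential functions} and Frostman's Lemma~\ref{frostman's lemma}.

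The new input is the analysis of $\bigl(\dashint_{\Delta}\abs{f}^{r}\bigr)^{1/r}$ on $\Delta=\Delta(x,2^{j+1}t)$ in the regime $0<t<2^{-j/(1-\beta)}$, where $\rho:=2^{j+1}t<2$. Setting $k=\lfloor\alpha\rfloor$ and splitting $f=(f-P^{k}_{\Delta}f)+P^{k}_{\Delta}f$, the polynomial projections $P^{k}_{\Delta}$ play the role of Taylor expansions. For the oscillation term one would invoke the pointwise smoothness estimates of DeVore--Sharpley (\cite[Theorem~2.5]{Devore-Sharpley_Maximal_functions_smoothness_1984} and its higher--order, $\Ell{r}$--averaged counterparts) to bound $\bigl(\dashint_{\Delta}\smallabs{f-P^{k}_{\Delta}f}^{r}\bigr)^{1/r}\lesssim\rho^{\alpha}\,\bigl(\dashint_{\Delta(x,C\rho)}\smallabs{h}^{r}\bigr)^{1/r}$ for a fixed $h\in\Ell{p}$ with $\smallnorm{h}_{\Ell{p}}\lesssim\norm{f}_{\mathrm{C}^{p}_{\alpha}(\Rn)}$ (e.g.\ $h\eqsim S_{\alpha}f$); since $\rho^{\alpha}=(2^{j+1}t)^{\alpha}$, $2^{j(\alpha-n/p)}\leq 1$, and $t^{\alpha}=t^{n(1-\beta)/p}$ when $\beta=1-\tfrac{\alpha p}{n}$ (and $t^{\beta\alpha}\leq 1$ when $\alpha p=n$), this contribution is absorbed into $(\mathcal{M}_{p,\beta,j}v)(x_0)$ for an appropriate $v$ as in Lemma~\ref{lemma boundedness mitigating maximal function on j dependent intervals}, in exact parallel with the term involving $M\smallabs{g^{d}}$ in the proof of Theorem~\ref{main theorem upper half space}. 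For the polynomial term, the bound $\smallnorm{P^{k}_{\Delta}f}_{\El{\infty}(\Delta)}\lesssim\dashint_{\Delta}\abs{f}$ of \eqref{local L infty estimate on minimising polynomials}, together with the elementary domination $\dashint_{\Delta(x,\rho)}(\cdot)\lesssim(P_{\rho}\ast(\cdot))(x)$ afforded by $\ind{\Delta(0,1)}\lesssim P_{1}$, reduces matters to controlling $N_{*,\beta}$ of a classical Poisson extension by $\norm{f}_{\mathrm{C}^{p}_{\alpha}(\Rn)}$ --- precisely the content of Dorronsoro's Theorem~\ref{Dorronsoros theorem}; when $\alpha\geq 1$ this step instead proceeds by expanding $P^{k}_{\Delta}f$ about the centre $x$ and telescoping its Taylor coefficients across the dyadic scales between $\rho$ and a fixed scale $\eqsim 1$, the consecutive differences being near--best polynomial corrections estimated through $f^{\sharp}_{\alpha}$ and Markov's inequality for polynomials on balls, and the outcome again being a combination of Poisson--extension maximal functions (for Theorem~\ref{Dorronsoros theorem}) and averaged quantities (for Lemma~\ref{lemma boundedness mitigating maximal function on j dependent intervals}). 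Summing the geometric series in $j$ and taking $\Ell{p}$ norms, the maximal theorem together with $\smallnorm{f^{\sharp}_{\alpha}}_{\Ell{p}}\leq\norm{f}_{\mathrm{C}^{p}_{\alpha}(\Rn)}$ and $\norm{M_{r}f}_{\Ell{p}}\lesssim\norm{f}_{\Ell{p}}$ yields \eqref{tangential maximal function estimate in statement of main theorem for Calphap}.

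For the Hausdorff dimension it suffices to treat $\beta'\in(\beta,1)$, since the $\Gamma^{1}$--divergence set is contained in the $\Gamma^{\beta'}$--divergence set for every $\beta'<1$ and letting $\beta'\uparrow 1$ in \eqref{upper bound hausdorff dimension of divergence set for Cpalpha} recovers the bound $n-\alpha p$. For $\beta'\in(\beta,1)$, write $\beta'=1-\tfrac{\alpha'p}{n}$ with $\alpha'\in(0,\alpha)$, and use the lifting property of the scale $\mathrm{C}^{p}_{\bullet}(\Rn)$ --- that $\mathscr{J}_{\sigma}$ is an isomorphism of $\mathrm{C}^{p}_{\alpha'}(\Rn)$ onto $\mathrm{C}^{p}_{\alpha'+\sigma}(\Rn)$ for $\alpha',\sigma>0$, cf.\ \cite{Devore-Sharpley_Maximal_functions_smoothness_1984} --- to write $f=G_{\alpha-\alpha'}\ast h$ with $h\in\mathrm{C}^{p}_{\alpha'}(\Rn)$ and $\norm{h}_{\mathrm{C}^{p}_{\alpha'}(\Rn)}\lesssim\norm{f}_{\mathrm{C}^{p}_{\alpha}(\Rn)}$. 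Inserting this into the dyadic decomposition and pulling $G_{\alpha-\alpha'}$ out by Minkowski's inequality (as in \eqref{estimate for proof est hausdorff dim div set for bessel}) gives $\abs{u_{f}(t,x)}\lesssim(G_{\alpha-\alpha'}\ast W(t,\cdot))(x)$ with $W(t,z)=\sum_{j\geq 0}2^{-\alpha_{L}j}\bigl(\dashint_{\Delta(z,2^{j+1}t)}\abs{h}^{r}\bigr)^{1/r}$, and the first part of the proof applied to $h\in\mathrm{C}^{p}_{\alpha'}(\Rn)$ (parameters $\alpha'$, $\beta'$) gives $\norm{N_{*,\beta'}W}_{\Ell{p}}\lesssim\norm{h}_{\mathrm{C}^{p}_{\alpha'}(\Rn)}\lesssim\norm{f}_{\mathrm{C}^{p}_{\alpha}(\Rn)}$. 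Then Remark~\ref{remark 2 - convolution commute  with general maximal functions approach} yields $N_{*,\beta'}u_{f}\lesssim G_{\alpha-\alpha'}\ast N_{*,\beta'}W$, Lemma~\ref{lemma dimensional measures and Bessel potential functions} yields $\norm{N_{*,\beta'}u_{f}}_{\El{1}(\mu)}\lesssim\norm{f}_{\mathrm{C}^{p}_{\alpha}(\Rn)}$ for all $\mu\in\mathcal{M}^{s}(\Rn)$ with $s>n-(\alpha-\alpha')p=n-n(\beta'-\beta)$, and Frostman's Lemma~\ref{frostman's lemma} with the approximation argument of Proposition~\ref{proposition Hausdorff dimension div set for cones and Bessel} concludes.

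I expect the main obstacle to be the polynomial part of the second paragraph: the clean identity $\smallabs{\nabla^{k}f}\leq\mathscr{J}_{\alpha-k}\smallabs{g^{k}}$ available in the Bessel setting has no substitute in $\mathrm{C}^{p}_{\alpha}(\Rn)$, so one must extract from the raw average $\bigl(\dashint_{\Delta}\abs{f}^{r}\bigr)^{1/r}$ a ``smoothed'' trend compatible with the tangential maximal functions --- which forces a comparison of ball averages with Poisson averages and a careful telescoping across scales --- while preserving the exact exponent $\rho^{\alpha}$ and uniformity in $j$ so that the factor $2^{-\alpha_{L}j}$ still sums; the case $\alpha\in\N$ requires extra care, as the telescoping of the top--order Taylor coefficients produces a logarithmic loss that must be absorbed (using the slack in $\beta$, or a more refined estimate). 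A secondary technical point is the lifting property of the $\mathrm{C}^{p}_{\alpha}(\Rn)$--scale used for the dimension estimate, which we would take from \cite{Devore-Sharpley_Maximal_functions_smoothness_1984}.
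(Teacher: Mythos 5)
Your skeleton matches the paper's almost exactly: dyadic decomposition $u_f=\sum_j u_j$ via \eqref{estimate u_j starting point}; the regime split at $t=2^{-j/(1-\beta)}$; the $\mathrm{C}^p_\alpha$-Poincaré inequality of Lemma~\ref{poincaré type lemma for C p a} for the oscillation; Dorronsoro's Theorem~\ref{Dorronsoros theorem} for the Poisson-extension piece; $\mathcal{M}_{p,\beta,j}$ from Lemma~\ref{lemma boundedness mitigating maximal function on j dependent intervals}; and, for the dimension bound, the lifting isomorphism together with Minkowski, Remark~\ref{remark 2 - convolution commute  with general maximal functions approach}, Lemma~\ref{lemma dimensional measures and Bessel potential functions}, and Frostman. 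Nonetheless, there are two genuine gaps.

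First, you omit the reduction to nonnegative $f$ carried out in Remark~\ref{rem signed decomposition of sobolev space Cpa}, and this is precisely what makes the polynomial term uniformly easy. After $\smallabs{P^{k}_{\Delta}f}\lesssim\dashint_{\Delta}\abs{f}$, you dominate the average by $(P_\rho\ast\abs{f})(x)$ and invoke Theorem~\ref{Dorronsoros theorem}; but that theorem applies to the Poisson extension of a function in $\mathrm{C}^p_\alpha(\Rn)$, and $\abs{f}$ need not belong to $\mathrm{C}^p_\alpha(\Rn)$ once $\alpha\geq 1$. The paper's fix is to write, via the lifting isomorphism, $f=\mathscr{J}_\gamma g$ with $g\in\mathrm{C}^p_{\alpha'}(\Rn)$ and $\alpha'\in(0,1)$, split $g=g^+-g^-$ (the triangle inequality controls $(g^\pm)^\sharp_{\alpha'}$ precisely because $\alpha'<1$), and set $f_\pm=\mathscr{J}_\gamma g^\pm\geq 0$ with $\norm{f_\pm}_{\mathrm{C}^p_\alpha(\Rn)}\lesssim\norm{f}_{\mathrm{C}^p_\alpha(\Rn)}$; then for nonnegative $f$ one has $\dashint_\Delta f\lesssim(P_\rho\ast f)(x)$ and Dorronsoro applies directly for all $\alpha$. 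Your proposed telescoping alternative for $\alpha\geq 1$ --- which you yourself flag as producing a logarithmic loss --- does not yield the sharp bound \eqref{tangential maximal function estimate in statement of main theorem for Calphap} at $\beta=1-\tfrac{\alpha p}{n}$, nor does the slack at $\beta'>\beta$ rescue the dimension estimate, since there the maximal function bound is applied to the lifted $g\in\mathrm{C}^p_{\alpha'}$ at its own sharp exponent $\beta'=1-\tfrac{\alpha'p}{n}$. Relatedly, the obstacle you name --- the absence of a substitute for $\smallabs{\nabla^k f}\leq\mathscr{J}_{\alpha-k}\smallabs{g^k}$ --- is a red herring: that whole Bessel chain is replaced by the single Poincaré step of Lemma~\ref{poincaré type lemma for C p a} plus the crude bound on $P^k_\Delta f$, once nonnegativity is in hand, with no iteration or telescoping needed.

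Second, the approximation argument closing the proof of \eqref{upper bound hausdorff dimension of divergence set for Cpalpha} requires the preferred representative $\tilde{f}$ of \eqref{preferred representative for bessel potential space function} to satisfy $\smallnorm{\tilde{f}-h}_{\El{1}(\mu)}\lesssim\norm{f-h}_{\mathrm{C}^p_\alpha(\Rn)}$ for $h\in C^\infty_c(\Rn)$ and $\mu\in\mathcal{M}^s(\Rn)$. In the Bessel case this is immediate from $\smallabs{\tilde{f}}\leq G_\alpha\ast\smallabs{g}$ everywhere; no such representation exists here. The paper supplies the missing pieces: Lemma~\ref{lemma estimate Cpalpha functions against fractional measures} (that $\norm{Mf}_{\El{1}(\mu)}\lesssim\norm{f}_{\mathrm{C}^p_\alpha(\Rn)}$, resting on Dorronsoro's everywhere inequality $Mf\lesssim I_\alpha\ast S_\alpha f$) and Lemma~\ref{lemma existence of limit of averages away from a small set for Cpalpha} (that for $f\in\mathrm{F}^p_\alpha(\Rn)$ the defining limit in \eqref{preferred representative for bessel potential space function} exists off a set that is $\mu$-null for every admissible $\mu$, with $\smallabs{\tilde{f}}\lesssim Mf$ off that set, cf.\ Remark~\ref{remark properties particular representative of f in cpalpha}). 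You do not identify this as a missing ingredient, and without it the approximation chain does not close.
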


The proof of Theorem~\ref{main theorem upper half space for Cpalpha} will show that the estimate \eqref{tangential maximal function estimate in statement of main theorem for Calphap} holds for all $f\in\mathrm{C}^{p}_{\alpha}(\Rn)$.
As in Theorem~\ref{main theorem upper half space}, a preferred representative of $f\in\mathrm{F}^{p}_{\alpha}(\Rn)$ has to be chosen for the estimate \eqref{upper bound hausdorff dimension of divergence set for Cpalpha} to hold. We again choose the representative $\tilde{f}:\Rn\to(-\infty,\infty]$ defined in \eqref{preferred representative for bessel potential space function}, but we need a few lemmas to show that this representative has the properties required for the proof of Proposition~\ref{proposition Hausdorff dimension div set for cones and Bessel} to go through. The most important result in this regard is Lemma~\ref{lemma existence of limit of averages away from a small set for Cpalpha} below (see also Remark~\ref{remark properties particular representative of f in cpalpha}). Once the relevant properties of the representative $\tilde{f}$ have been established, we will state an elementary Poincaré-type inequality for the spaces $\mathrm{C}^{p}_{\alpha}(\Rn)$ (Lemma~\ref{poincaré type lemma for C p a}) which will be needed in the proof of Theorem~\ref{main theorem upper half space for Cpalpha} at the end of the section. 

We start with the following important observation due to Dorronsoro (see \cite[Theorem 4]{Dorronsoro_1986}). 
\begin{lem}\label{lemma lifting property bessel operator}
For $p\in(1,\infty)$, $\alpha>0$ and $\beta\geq 0$, the operator $\mathscr{J}_{\beta} : \Ell{p}\to\Ell{p}$ defined in Section~\emph{\ref{subsection with definitions of spaces}} maps the space $\mathrm{C}^{p}_{\alpha}(\Rn)$ isomorphically onto $\mathrm{C}^{p}_{\alpha +\beta}(\Rn)$. In particular, if $f\in\mathrm{C}^{p}_{\alpha+\beta}(\Rn)$ then there is a unique $g\in\mathrm{C}^{p}_{\alpha}(\Rn)$ such that $f=\mathscr{J}_{\beta}g$ and $\norm{g}_{\mathrm{C}^{p}_{\alpha}(\Rn)}\eqsim \norm{f}_{\mathrm{C}^{p}_{\alpha+\beta}(\Rn)}$.
\end{lem}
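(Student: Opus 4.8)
\emph{The plan.} The operator $\mathscr{J}_{\beta}$ is injective on $\mathcal{S}'(\Rn)$, since its Fourier symbol $(1+4\pi^{2}\abs{\xi}^{2})^{-\beta/2}$ never vanishes, and it satisfies the semigroup law $\mathscr{J}_{\beta_{1}}\mathscr{J}_{\beta_{2}}=\mathscr{J}_{\beta_{1}+\beta_{2}}$ (equivalently $G_{\beta_{1}}\ast G_{\beta_{2}}=G_{\beta_{1}+\beta_{2}}$). Writing a general $\beta\ge 0$ as a finite sum of exponents in $[0,1)$, it therefore suffices to treat $\beta\in(0,1)$. In that case the asserted isomorphism -- including uniqueness of the preimage $g$, which is merely injectivity of $\mathscr{J}_{\beta}$ -- follows once we establish the two bounds
\[
\mathscr{J}_{\beta}\colon\mathrm{C}^{p}_{\alpha}(\Rn)\to\mathrm{C}^{p}_{\alpha+\beta}(\Rn)\qquad\text{and}\qquad \mathscr{J}_{\beta}^{-1}=(I-\Delta)^{\beta/2}\colon\mathrm{C}^{p}_{\alpha+\beta}(\Rn)\to\mathrm{C}^{p}_{\alpha}(\Rn),
\]
since the second supplies a bounded two-sided inverse for the first. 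I would prove each by controlling the sharp maximal functions $f^{\sharp}_{\alpha}$ defining these spaces.

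\emph{The forward bound.} As $\smallnorm{\mathscr{J}_{\beta}f}_{\Ell{p}}\le\smallnorm{f}_{\Ell{p}}$, it is enough to show the pointwise estimate $(\mathscr{J}_{\beta}f)^{\sharp}_{\alpha+\beta}(x)\lesssim f^{\sharp}_{\alpha}(x)$. Fix $x$, a ball $\Delta=\Delta(z,r)\ni x$, and set $k'=\lfloor\alpha+\beta\rfloor$, $P=P^{k'}_{2\Delta}f$. Since $G_{\beta}$ decays exponentially (see \eqref{bessel kernel}, \eqref{estimate on (gradient) of bessel potential}), all its moments are finite, hence $G_{\beta}\ast P\in\bb{P}_{k'}$, and
\[
\mathscr{J}_{\beta}f=G_{\beta}\ast P+G_{\beta}\ast\big[(f-P)\ind{2\Delta}\big]+G_{\beta}\ast\big[(f-P)\ind{(2\Delta)^{c}}\big].
\]
I would test $(\mathscr{J}_{\beta}f)^{\sharp}_{\alpha+\beta}(x)$ against $Q=G_{\beta}\ast P+T$, where $T\in\bb{P}_{k'}$ is the degree-$k'$ Taylor polynomial at $z$ of the last term (which is $C^{\infty}$ near $\Delta$). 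For the middle term, split $G_{\beta}=G_{\beta}\ind{B(0,r)}+G_{\beta}\ind{B(0,r)^{c}}$: the first piece is estimated via $\smallnorm{G_{\beta}\ind{B(0,r)}}_{\Ell{1}}\lesssim r^{\beta}$ and Young's inequality, the second via $\smallnorm{G_{\beta}\ind{B(0,r)^{c}}}_{\El{\infty}}\lesssim r^{-(n-\beta)}$ together with the identity $\int_{2\Delta}(f-P)Q'=0$ for $Q'\in\bb{P}_{k'}$; using $\dashint_{2\Delta}\smallabs{f-P^{k'}_{2\Delta}f}\lesssim r^{\alpha}f^{\sharp}_{\alpha}(x)$ (valid for $x\in2\Delta$, as enlarging the polynomial degree only improves $\Ell{1}$-approximation, cf.\ \cite[Lemma~2.1]{Devore-Sharpley_Maximal_functions_smoothness_1984}), both pieces give $\dashint_{\Delta}\smallabs{G_{\beta}\ast[(f-P)\ind{2\Delta}]}\lesssim r^{\alpha+\beta}f^{\sharp}_{\alpha}(x)$. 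For the last term, the Taylor remainder on $\Delta$ is $\lesssim r^{k'+1}\sup_{\Delta}\smallabs{\nabla^{k'+1}\big(G_{\beta}\ast[(f-P)\ind{(2\Delta)^{c}}]\big)}$; decomposing $(2\Delta)^{c}$ into dyadic annuli, using $\smallabs{\nabla^{k'+1}G_{\beta}(w)}\lesssim\abs{w}^{-(n-\beta+k'+1)}e^{-\abs{w}/2}$ from \eqref{estimate on (gradient) of bessel potential}, and comparing $P^{k'}_{2\Delta}f$ with $P^{k'}_{2^{j}\Delta}f$ across scales by telescoping (via \eqref{local L infty estimate on minimising polynomials}), the resulting series over $j$ converges -- its net exponent equals $\beta-1<0$ -- and produces a remainder $\lesssim r^{\alpha+\beta}f^{\sharp}_{\alpha}(x)$. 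Dividing by $\meas{\Delta}^{(\alpha+\beta)/n}\eqsim r^{\alpha+\beta}$ and taking the supremum over balls $\Delta\ni x$ gives the claimed pointwise bound, hence $\smallnorm{\mathscr{J}_{\beta}f}_{\mathrm{C}^{p}_{\alpha+\beta}(\Rn)}\lesssim\smallnorm{f}_{\mathrm{C}^{p}_{\alpha}(\Rn)}$.

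\emph{The inverse bound.} Here I would write $(I-\Delta)^{\beta/2}=I+R_{\beta}$, where $R_{\beta}$ is convolution with the kernel $\kappa_{\beta}$ of symbol $(1+4\pi^{2}\abs{\xi}^{2})^{\beta/2}-1$; $\kappa_{\beta}$ is smooth on $\Rn\setminus\set{0}$ and exponentially decaying, but $\smallabs{\kappa_{\beta}(x)}\lesssim\abs{x}^{-(n+\beta)}$ near the origin, so $R_{\beta}g$ has to be understood in difference form, $R_{\beta}g(x)=\int\kappa_{\beta}(h)\big(g(x-h)-g(x)\big)\dd h$. Running the decomposition above with the roles of $\alpha$ and $\alpha+\beta$ interchanged: the far part $\kappa_{\beta}\ind{B(0,r)^{c}}\in\Ell{1}$ (of norm $\lesssim r^{-\beta}$) is handled exactly as the middle term was, while for the near part the difference structure, the radial symmetry of $\kappa_{\beta}$, and the cancellation of $g-P^{k'}_{2\Delta}g$ against $\bb{P}_{k'}$ together absorb the $\abs{h}^{-(n+\beta)}$ singularity, yielding $(R_{\beta}g)^{\sharp}_{\alpha}(x)\lesssim g^{\sharp}_{\alpha+\beta}(x)$ up to a Hardy--Littlewood maximal function (harmless on $\Ell{p}$). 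Combined with the forward bound this gives the norm equivalence $\smallnorm{g}_{\mathrm{C}^{p}_{\alpha+\beta}(\Rn)}\eqsim\smallnorm{(I-\Delta)^{\beta/2}g}_{\mathrm{C}^{p}_{\alpha}(\Rn)}$, which is exactly the lemma.

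\emph{Main obstacle.} The delicate half is the inverse bound: unlike $G_{\beta}$, the kernel $\kappa_{\beta}$ is not locally integrable, so the difference representation is forced, and membership $g\in\mathrm{C}^{p}_{\alpha+\beta}(\Rn)$ controls only \emph{averages} of $g-P^{k'}_{2\Delta}g$ rather than its pointwise size; reconciling this with the hypersingular integral is where the full polynomial-approximation machinery of \cite{Devore-Sharpley_Maximal_functions_smoothness_1984} is needed. One can sidestep this altogether by using the identification $\mathrm{C}^{p}_{\alpha}(\Rn)=\mathrm{F}^{\alpha}_{p,\infty}(\Rn)$ for $1<p<\infty$, $\alpha>0$, together with the classical lifting property of the Triebel--Lizorkin scale, $(I-\Delta)^{\beta/2}\colon\mathrm{F}^{s}_{p,q}(\Rn)\to\mathrm{F}^{s-\beta}_{p,q}(\Rn)$, which delivers both bounds at once; this is in essence the route of \cite[Theorem~4]{Dorronsoro_1986}.
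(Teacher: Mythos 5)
The paper's ``proof'' of this lemma consists of the citation to \cite[Theorem~4]{Dorronsoro_1986}, which---as you correctly observe in your closing remark---establishes the lifting property by relating $\mathrm{C}^{p}_{\alpha}(\Rn)$ to the Triebel--Lizorkin scale and invoking the classical lifting of $(I-\Delta)^{\beta/2}$ there. So your ``sidestep'' is precisely the route the paper takes, and in that sense your proposal matches.

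Your direct argument is a genuine alternative, and the forward bound is carried out essentially correctly: the three-term decomposition against $G_{\beta}\ast P$, the near/far split of $G_{\beta}$, and the dyadic annular decomposition with telescoping of $P^{k'}_{2^{j}\Delta}f$ all work, and the power count on the annular series yields a negative exponent (it is $\beta-1$ when $k'\geq\alpha$ and $\alpha+\beta-k'-1$ when $k'<\alpha$, both strictly negative), so it sums. The inverse bound, however, contains the gap you honestly flag: the kernel $\kappa_{\beta}$ is not locally integrable, so making the difference representation rigorous and controlling the hypersingular integral against $L^{1}$-\emph{averages} of $g-P^{k'}_{2\Delta}g$ genuinely needs the DeVore--Sharpley machinery (their proof of the $(I-\Delta)^{\beta/2}$ boundedness proceeds through moduli of smoothness and equivalent characterisations, not a short computation). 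There is also a second subtlety your sketch does not address: to even write $(I-\Delta)^{\beta/2}g\in\Ell{p}$ --- a prerequisite for asking whether it lies in $\mathrm{C}^{p}_{\alpha}(\Rn)$ --- you need the a priori containment $\mathrm{C}^{p}_{\alpha+\beta}(\Rn)\subseteq\mathscr{L}^{p}_{\beta}(\Rn)$, and the cleanest source of that containment is again the Triebel--Lizorkin embedding you cite. So the sidestep is not merely a convenience; some form of it appears to be structurally necessary to close the inverse direction. In short, your proposal is correct via the citation route (which is the paper's), the forward bound of your direct attempt is sound, and the inverse bound remains an outline with two gaps, one of which you identify.
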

The following analogue of Lemma~\ref{lemma dimensional measures and Bessel potential functions} is the main ingredient that will be used in the proof of Lemma~\ref{lemma existence of limit of averages away from a small set for Cpalpha}.  
\begin{lem}\label{lemma estimate Cpalpha functions against fractional measures}
    Let $p\in(1,\infty)$, $\alpha\in(0,\frac{n}{p}]$ and $f\in\mathrm{C}^{p}_{\alpha}(\Rn)$. If $s>n-\alpha p$ and $\mu\in\mathcal{M}^{s}(\Rn)$ then $\norm{Mf}_{\El{1}(\mu)}\lesssim \norm{f}_{\mathrm{C}^{p}_{\alpha}(\Rn)}$.
\end{lem}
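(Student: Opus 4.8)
The plan is to reduce the claim to Lemma~\ref{lemma dimensional measures and Bessel potential functions} by trading a small amount of $\mathrm{C}^{p}_{\alpha}$-smoothness for a convolution with a Bessel kernel, exactly as in the proof of Proposition~\ref{proposition Hausdorff dimension div set for cones and Bessel}. Since $s>n-\alpha p$ we have $\tfrac{n-s}{p}<\alpha$, so we may fix an exponent $\gamma$ with $\max\{0,\tfrac{n-s}{p}\}<\gamma<\alpha$; note that then $\gamma\in(0,\tfrac{n}{p})$ and $s>n-\gamma p$. Applying Lemma~\ref{lemma lifting property bessel operator} (with smoothness exponent $\alpha-\gamma>0$ and lifting parameter $\gamma$) produces a unique $g\in\mathrm{C}^{p}_{\alpha-\gamma}(\Rn)$ with $f=\mathscr{J}_{\gamma}g=G_{\gamma}\ast g$ and $\norm{g}_{\mathrm{C}^{p}_{\alpha-\gamma}(\Rn)}\eqsim\norm{f}_{\mathrm{C}^{p}_{\alpha}(\Rn)}$. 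Since $\mathrm{C}^{p}_{\alpha-\gamma}(\Rn)\subseteq\El{p}(\Rn)$ with $\norm{\cdot}_{\El{p}}\le\norm{\cdot}_{\mathrm{C}^{p}_{\alpha-\gamma}(\Rn)}$, this gives in particular $g\in\El{p}(\Rn)$ with $\norm{g}_{\El{p}}\lesssim\norm{f}_{\mathrm{C}^{p}_{\alpha}(\Rn)}$.

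The second step is a pointwise domination. Because $G_{\gamma}\ge0$, we have $\abs{f}\le G_{\gamma}\ast\abs{g}$ almost everywhere, hence $Mf(x)\le M(G_{\gamma}\ast\abs{g})(x)$ for every $x\in\Rn$, and Lemma~\ref{lemma maximal function commutes with convolution} (with $q=1$) then yields $Mf(x)\le (G_{\gamma}\ast Mg)(x)$ for all $x\in\Rn$. Now Lemma~\ref{lemma dimensional measures and Bessel potential functions}, applied with $\gamma$ in place of $\alpha$ to the nonnegative function $Mg$ (legitimate since $\gamma\in(0,\tfrac{n}{p})$ and $s>n-\gamma p$), followed by the Hardy--Littlewood maximal theorem (using $p>1$), gives
\begin{equation*}
\norm{Mf}_{\El{1}(\mu)}\le\norm{G_{\gamma}\ast Mg}_{\El{1}(\mu)}\lesssim(c_{s}(\mu)^{1/p}\vee1)\,\norm{Mg}_{\El{p}}\lesssim\norm{g}_{\El{p}}\lesssim\norm{f}_{\mathrm{C}^{p}_{\alpha}(\Rn)},
\end{equation*}
which is the asserted estimate; as in Lemma~\ref{lemma dimensional measures and Bessel potential functions}, the dependence of the implicit constant on $c_{s}(\mu)$ is harmless for our purposes.

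There is no real difficulty here; the one point needing care is the choice of the intermediate smoothness $\gamma$, which must be strictly less than $\alpha$ so that Lemma~\ref{lemma lifting property bessel operator} lands in $\mathrm{C}^{p}_{\alpha-\gamma}(\Rn)\subseteq\El{p}(\Rn)$, yet large enough that $s>n-\gamma p$ so that Lemma~\ref{lemma dimensional measures and Bessel potential functions} still applies. The hypotheses $\alpha>0$ and $s>n-\alpha p$ are precisely what guarantee that such a $\gamma$ exists, and the same choice covers the endpoint case $\alpha=\tfrac{n}{p}$ without modification.
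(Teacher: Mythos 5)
Your proof is correct, but it takes a cleaner and genuinely different route from the paper's. The paper's proof relies on the nontrivial estimate $Mf(x)\lesssim (I_{\alpha}\ast S_{\alpha}f)(x)$ for all $x\in\Rn$, which is assembled from \cite[Theorem~5]{Dorronsoro_1986} and \cite[Lemma~3]{Dorronsoro_1986} and recorded as \eqref{domination maximal function by riesz potential of sharph function}; in the generic range $\alpha<\tfrac{n}{p}$ this estimate is applied directly, and the lifting Lemma~\ref{lemma lifting property bessel operator} is invoked only to rescue the endpoint $\alpha=\tfrac{n}{p}$, where it is again combined with the $I_{\gamma}\ast S_{\gamma}$ estimate. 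You bypass the sharp-function machinery entirely: choosing $\gamma\in(\max\{0,\tfrac{n-s}{p}\},\alpha)$, writing $f=G_{\gamma}\ast g$ with $g\in\mathrm{C}^{p}_{\alpha-\gamma}(\Rn)\subseteq\El{p}(\Rn)$ via Lemma~\ref{lemma lifting property bessel operator}, pushing the maximal function through the convolution by Lemma~\ref{lemma maximal function commutes with convolution}, and then applying Lemma~\ref{lemma dimensional measures and Bessel potential functions} to $Mg\in\El{p}_{+}(\Rn)$ gives the result in one stroke. The constraint $s>n-\alpha p$ is exactly what makes the interval of admissible $\gamma$ nonempty, and the argument is uniform in $\alpha\in(0,\tfrac{n}{p}]$, so the endpoint case needs no separate treatment. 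The trade-off is mild: your route discards the remaining $\mathrm{C}^{p}_{\alpha-\gamma}$-smoothness of $g$ and uses only its $\El{p}$-norm, whereas the paper's route keeps the sharp-function information in play; for the purposes of this lemma both are adequate, and yours carries a slightly lighter dependence on Dorronsoro's results.
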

\begin{proof}
    It follows from \cite[Theorem 5]{Dorronsoro_1986} that $\abs{f(y)}\lesssim (I_{\alpha}\ast S_{\alpha}f)(y)$ for a.e. $y\in\Rn$. Combining this with \cite[Lemma 3]{Dorronsoro_1986} shows that 
    \begin{equation}\label{domination maximal function by riesz potential of sharph function}
    Mf(x)\lesssim (I_{\alpha}\ast S_{\alpha}f)(x)
    \end{equation}
    for \emph{all} $x\in\Rn$. If $\alpha\in(0,\frac{n}{p})$, then (the first part of the proof of) Lemma~\ref{lemma dimensional measures and Bessel potential functions} shows that 
    \begin{equation*}
        \norm{Mf}_{\El{1}(\mu)}\lesssim \norm{I_{\alpha}\ast S_{\alpha}f}_{\El{1}(\mu)}\lesssim \norm{S_{\alpha}f}_{\Ell{p}}\lesssim \norm{f}_{\mathrm{C}^{p}_{\alpha}(\Rn)}.
    \end{equation*}
    If $\alpha=\frac{n}{p}$, then $s>0$ and we may choose $\gamma\in(0,\frac{n}{p})$ and $\eps>0$ such that $\alpha=\gamma +\eps$ and $s>n-\gamma p$. Lemma~\ref{lemma lifting property bessel operator} shows that there exists $g\in\mathrm{C}^{p}_{\gamma}(\Rn)$ such that $f=G_{\eps}\ast g$ and $\norm{g}_{\mathrm{C}^{p}_{\gamma}(\Rn)}\eqsim \norm{f}_{\mathrm{C}^{p}_{\alpha}(\Rn)}$. It follows from Lemma~\ref{lemma maximal function commutes with convolution} that 
    \begin{equation*}
        Mf(x)\leq (G_{\eps}\ast Mg)(x)\lesssim (I_{\gamma}\ast G_{\eps}\ast S_{\gamma}g)(x) 
    \end{equation*}
    for all $x\in\Rn$, and therefore the same argument as above yields
    \begin{equation}\label{final estimate in proof of lemma domaination max function fractional measure}
        \norm{Mf}_{\El{1}(\mu)}\lesssim \norm{G_{\eps}\ast S_{\gamma}g}_{\Ell{p}}\leq \norm{S_{\gamma}g}_{\Ell{p}}\lesssim \norm{g}_{\mathrm{C}^{p}_{\gamma}(\Rn)}\eqsim \norm{f}_{\mathrm{C}^{p}_{\alpha}(\Rn)}.\qedhere
    \end{equation}
\end{proof}
The previous lemma can be combined with a well-known approximation argument to obtain the following result. We spell out the details in the interests of clarity.
\begin{lem}\label{lemma existence of limit of averages away from a small set for Cpalpha}
    Let $p\in(1,\infty)$, $\alpha\in(0,\frac{n}{p}]$ and $f\in\mathrm{F}^{p}_{\alpha}(\Rn)$. There exists a set $E\subseteq \Rn$ such that $\lim_{r\to 0^{+}}\dashint_{\Delta(x,r)}f(y)\dd y$ exists for all $x\in\Rn\setminus E$, and $\mu(E)=0$ for all $\mu\in\mathcal{M}^{s}(\Rn)$ with $s>n-\alpha p$.
\end{lem}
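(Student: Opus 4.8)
The plan is to identify the exceptional set $E$ with the set on which the averages $\dashint_{\Delta(x,r)}f$ fail a Cauchy criterion as $r\to 0^{+}$, and to bound this set uniformly over all $s$-dimensional measures by combining the density of $C^{\infty}_{c}(\Rn)$ in $\mathrm{C}^{p}_{\alpha}(\Rn)$ with Lemma~\ref{lemma estimate Cpalpha functions against fractional measures}.

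First I would record that, since $f\in\mathrm{F}^{p}_{\alpha}(\Rn)\subseteq\Ell{p}\subseteq\El{1}_{\loc}(\Rn)$, each average $\dashint_{\Delta(x,r)}f$ is a finite real number, and introduce the oscillation
\[
\Omega f(x)=\limsup_{r,r'\to 0^{+}}\abs{\dashint_{\Delta(x,r)}f-\dashint_{\Delta(x,r')}f},\qquad x\in\Rn.
\]
This is readily seen to be Borel measurable, being a countable $\limsup$ (over rational radii) of functions continuous in $x$. Since the net $\bigl(\dashint_{\Delta(x,r)}f\bigr)_{r>0}$ consists of real numbers, it converges to a finite limit in $\R$ as $r\to 0^{+}$ precisely when $\Omega f(x)=0$; hence the set on which the limit in~\eqref{preferred representative for bessel potential space function} fails to exist is exactly $E:=\set{x\in\Rn : \Omega f(x)>0}$, which depends only on the equivalence class of $f$. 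It then remains to show $\mu(E)=0$ for every $\mu\in\mathcal{M}^{s}(\Rn)$ with $s>n-\alpha p$.

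For the key pointwise bound I would use that any continuous function $h$ satisfies $\dashint_{\Delta(x,r)}h\to h(x)$ for \emph{every} $x\in\Rn$, so $\Omega h\equiv 0$; the triangle inequality then gives $\Omega f(x)\le\Omega(f-h)(x)\le 2M(f-h)(x)$, using $\abs{\dashint_{\Delta(x,r)}(f-h)}\le M(f-h)(x)$ for all $r>0$. Choosing $h_{m}\in C^{\infty}_{c}(\Rn)$ with $\norm{f-h_{m}}_{\mathrm{C}^{p}_{\alpha}(\Rn)}\to 0$, we obtain $\set{\Omega f>\tfrac1k}\subseteq\set{M(f-h_{m})>\tfrac1{2k}}$ for every integer $k\ge 1$ and every $m$. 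For $\mu\in\mathcal{M}^{s}(\Rn)$ with $s>n-\alpha p$, Markov's inequality and Lemma~\ref{lemma estimate Cpalpha functions against fractional measures} give
\[
\mu\bigl(\set{\Omega f>\tfrac1k}\bigr)\le\mu\bigl(\set{M(f-h_{m})>\tfrac1{2k}}\bigr)\le 2k\norm{M(f-h_{m})}_{\El{1}(\mu)}\lesssim k\norm{f-h_{m}}_{\mathrm{C}^{p}_{\alpha}(\Rn)},
\]
and letting $m\to\infty$ forces $\mu(\set{\Omega f>\tfrac1k})=0$ for each $k$; taking the union over $k\ge 1$ yields $\mu(E)=0$, which is the claim.

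The argument is essentially routine once Lemma~\ref{lemma estimate Cpalpha functions against fractional measures} is available; the only point needing a little care is the passage from ``the limit exists'' to ``$\Omega f(x)=0$'', i.e. the observation that a Cauchy net of real numbers converges in $\R$, which guarantees that off $E$ the limit defining $\tilde{f}$ is genuinely finite. No single step presents a serious obstacle.
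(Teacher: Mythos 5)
Your proof is correct and follows essentially the same strategy as the paper's: reduce to a Cauchy-type oscillation, dominate it by $2M(f-h)$ for smooth approximants $h$ (using that continuous $h$ have zero oscillation), apply Lemma~\ref{lemma estimate Cpalpha functions against fractional measures} via Markov's inequality, and take a countable union over thresholds. The paper works with the auxiliary quantity $D_{R}f(x)=\sup_{0<r<R}\smallabs{\dashint_{\Delta(x,R)}f-\dashint_{\Delta(x,r)}f}$ and then sends $R\to 0^{+}$, but this plays precisely the role of your $\Omega f$; the two formulations are interchangeable.
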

\begin{proof}
    For $R>0$ and $f\in\El{1}_{\loc}(\Rn)$, consider the maximal function
    \begin{equation*}
        D_{R}f(x)=\sup_{0<r<R}\abs{\dashint_{\Delta(x,R)}f(y)\dd y-\dashint_{\Delta(x,r)}f(y)\dd y}
    \end{equation*}
    for all $x\in\Rn$. The operator $D_{R}$ has the following elementary properties:
    \begin{enumerate}[label=(\roman*)]
        \item $D_{R}(f+g)(x)\leq D_{R}f(x) + D_{R}g(x)$ for all $f,g\in\El{1}_{\loc}(\Rn)$, $R>0$ and $x\in\Rn$;
        \item $D_{R}f(x)\leq 2Mf(x)$ for all $f\in\El{1}_{\loc}(\Rn)$, $R>0$ and $x\in\Rn$;
        \item If $g$ is uniformly continuous on $\Rn$, then for all $\eps>0$ there exists $\delta>0$ such that $D_{R}g(x)<\eps$ for all $R\in(0,\delta)$ and $x\in\Rn$. 
    \end{enumerate}
    Let $\eps>0$ and $\lambda>0$ be fixed. Since $C^{\infty}_{c}(\Rn)$ is dense in $\mathrm{F}^{p}_{\alpha}(\Rn)$ (see \cite[Proposition 1]{Dorronsoro_1986}), there exists $g\in C^{\infty}_{c}(\Rn)$ such that $\norm{f-g}_{\mathrm{C}^{p}_{\alpha}(\Rn)}<\lambda$. Let $h=f-g\in\mathrm{F}^{p}_{\alpha}(\Rn)$, and note that $D_{R}f(x)\leq 2Mh(x) + D_{R}g(x)$ for all $R>0$ and $x\in\Rn$ by properties (i) and (ii). Since $g$ is uniformly continuous on $\Rn$, property (iii) provides some $\delta=\delta(\eps, g)>0$ such that $\sup_{0<R<\delta} D_{R}g(x)<\eps/2$ for all $x\in\Rn$. It follows that
    \begin{equation}\label{basic inclusion for lemma pointwise extension of Cpalpha functions}
         \set{x\in\Rn :\sup_{0<R<\delta}D_{R}f(x)>\eps}\subseteq \set{x\in\Rn : Mh(x)>\eps/4}.
    \end{equation}
    Observe that the map $\eta\mapsto \sup_{0<R<\eta}D_{R}f(x)$ is non-decreasing for all fixed $x\in\Rn$.
    Consequently, if $s>n-\alpha p$ and $\mu\in\mathcal{M}^{s}(\Rn)$, then \eqref{basic inclusion for lemma pointwise extension of Cpalpha functions} and Lemma~\ref{lemma estimate Cpalpha functions against fractional measures} show that
    \begin{equation*}
        \mu\left(\set{x\in\Rn : \lim_{\eta\to 0^{+}}\sup_{0<R<\eta} D_{R}f(x)>\eps}\right)\lesssim \eps^{-1}\norm{Mh}_{\El{1}(\mu)}\lesssim \eps^{-1}\norm{h}_{\mathrm{C}^{p}_{\alpha}(\Rn)}< \lambda\eps^{-1}.
    \end{equation*} 
    Since $\lambda>0$ was arbitrary, the set $E_{\eps}=\set{x\in\Rn : \lim_{\eta\to 0^{+}}\sup_{0<R<\eta} D_{R}f(x)>\eps}$ satisfies $\mu(E_{\eps})=0$. Let $E=\cup_{k\geq 1}E_{1/k}$. Then $\mu(E)=0$ and $\lim_{\eta\to 0^{+}}\sup_{0<R<\delta}D_{R}f(x)=0$ for all $x\in\Rn\setminus E$, i.e. $\set{\dashint_{\Delta(x,r)}f}_{0<r<1}$ is a Cauchy net as $r\to 0^{+}$.
    \end{proof}
\begin{rem}\label{remark properties particular representative of f in cpalpha}
    For later reference we point out that Lemma~\ref{lemma existence of limit of averages away from a small set for Cpalpha} shows that for a given $f\in\mathrm{F}^{p}_{\alpha}(\Rn)$, there exists a set $E\subset\Rn$ such that  representative $\tilde{f}:\Rn\to (-\infty,\infty]$ defined in~\eqref{preferred representative for bessel potential space function} has the property that $\smallabs{\tilde{f}(x)}\lesssim Mf(x)$ for all $x\in\Rn\setminus E$, and $\mu(E)=0$ for all $\mu\in\mathcal{M}^{s}(\Rn)$ with $s>n-\alpha p$. In particular, it follows from Lemma~\ref{lemma estimate Cpalpha functions against fractional measures} that $\smallnorm{\tilde{f}}_{\El{1}(\mu)}\lesssim \norm{f}_{\mathrm{C}^{p}_{\alpha}(\Rn)}$.
\end{rem}

The main ingredient for the proof of Theorem~\ref{main theorem upper half space for Cpalpha} is the following elementary Poincaré-type inequality which is analogous to that of Lemma~\ref{lemma Poincaré type inequality for fractional order Bessel spaces} for the spaces $\mathrm{C}^{p}_{\alpha}(\Rn)$ (see, e.g.,  \cite[p. 25]{Devore-Sharpley_Maximal_functions_smoothness_1984}).
\begin{lem}\label{poincaré type lemma for C p a}
    Let $\alpha>0$, $k=\lfloor \alpha \rfloor$ and $f\in\El{1}_{\loc}(\Rn)$. For all balls  $\Delta\subset\Rn$ it holds that
\begin{equation*}
    \smallabs{f(y) - P_{\Delta}^{k}f(y)}\lesssim \meas{\Delta}^{\alpha/n}f_{\alpha}^{\sharp}(y)
\end{equation*}
for almost every $y\in\Delta$. Consequently, for all $q\in[1,\infty)$ and all balls $\Delta\subset\Rn$ of radius $r>0$ it holds that  
\begin{equation*}
    \left(\dashint_{\Delta}\smallabs{f(y)-P_{\Delta}^{k}f(y)}^{q}\right)^{1/q}\lesssim r^{\alpha}\left(\dashint_{\Delta}f_{\alpha}^{\sharp}(y)^{q}\dd y\right)^{1/q}.
\end{equation*}
\end{lem}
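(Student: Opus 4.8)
The plan is to establish the pointwise bound by a standard chaining argument over a sequence of concentric balls shrinking to the point $y$, and then to deduce the $\El{q}$ estimate by simply integrating. Fix a ball $\Delta=\Delta(x_0,r)$ and a point $y\in\Delta$. Since $\abs{x_0-y}<r$, the enlarged ball $\Delta_0:=\Delta(y,2r)$ contains $\Delta$; set $\Delta_j:=\Delta(y,2^{1-j}r)$ for $j\geq 0$, so that $\Delta_{j+1}\subseteq\Delta_j$, each $\Delta_j$ contains $y$, and $\meas{\Delta_j}\eqsim 2^{-jn}r^n$.

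The main tool is the reproducing property of the projections: since $P_{\Delta'}^{k}$ fixes every element of $\bb{P}_k$, for nested balls $\Delta'\subseteq\Delta''$ one has $P_{\Delta'}^{k}f-P_{\Delta''}^{k}f=P_{\Delta'}^{k}(f-P_{\Delta''}^{k}f)$, whence by the scale-invariant bound \eqref{local L infty estimate on minimising polynomials},
\[
\smallnorm{P_{\Delta'}^{k}f-P_{\Delta''}^{k}f}_{\El{\infty}(\Delta')}\lesssim \dashint_{\Delta'}\smallabs{f-P_{\Delta''}^{k}f}\leq \frac{\meas{\Delta''}}{\meas{\Delta'}}\dashint_{\Delta''}\smallabs{f-P_{\Delta''}^{k}f}\leq \frac{\meas{\Delta''}}{\meas{\Delta'}}\meas{\Delta''}^{\alpha/n}f_{\alpha}^{\sharp}(y),
\]
using the definition of $f_{\alpha}^{\sharp}$ for the ball $\Delta''\ni y$. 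Applying this with $(\Delta',\Delta'')=(\Delta_{j+1},\Delta_j)$ gives $\smallabs{P_{\Delta_{j+1}}^{k}f(y)-P_{\Delta_j}^{k}f(y)}\lesssim 2^{-j\alpha}r^{\alpha}f_{\alpha}^{\sharp}(y)$, and with $(\Delta',\Delta'')=(\Delta,\Delta_0)$ it gives $\smallabs{P_{\Delta}^{k}f(y)-P_{\Delta_0}^{k}f(y)}\lesssim r^{\alpha}f_{\alpha}^{\sharp}(y)$ (the ratios of measures being bounded by $2^n$ in both cases).

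It then remains to verify that $P_{\Delta_j}^{k}f(y)\to f(y)$ as $j\to\infty$ for almost every $y$, which makes the telescoping identity $f(y)-P_{\Delta_0}^{k}f(y)=\sum_{j\geq 0}\bigl(P_{\Delta_{j+1}}^{k}f(y)-P_{\Delta_j}^{k}f(y)\bigr)$ valid. This holds at every Lebesgue point of $f$: writing $P_{\Delta_j}^{k}f(y)-\dashint_{\Delta_j}f=P_{\Delta_j}^{k}\bigl(f-\dashint_{\Delta_j}f\bigr)(y)$ (again since $P_{\Delta_j}^{k}$ fixes constants) and using \eqref{local L infty estimate on minimising polynomials} gives $\smallabs{P_{\Delta_j}^{k}f(y)-\dashint_{\Delta_j}f}\lesssim \dashint_{\Delta_j}\bigl|f-\dashint_{\Delta_j}f\bigr|\leq 2\dashint_{\Delta_j}\abs{f-f(y)}\to 0$, while $\dashint_{\Delta_j}f\to f(y)$ by the Lebesgue differentiation theorem. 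Summing the geometric series (convergent because $\alpha>0$) and combining with the bound for $\smallabs{P_{\Delta}^{k}f(y)-P_{\Delta_0}^{k}f(y)}$ yields $\smallabs{f(y)-P_{\Delta}^{k}f(y)}\lesssim r^{\alpha}f_{\alpha}^{\sharp}(y)\eqsim\meas{\Delta}^{\alpha/n}f_{\alpha}^{\sharp}(y)$ for a.e. $y\in\Delta$. The second assertion is then immediate: raise the pointwise inequality to the $q$th power and average over $\Delta$.

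The only genuinely delicate point is the almost-everywhere convergence $P_{\Delta_j}^{k}f(y)\to f(y)$ (which is precisely why the estimate is stated almost everywhere); the rest is bookkeeping with a geometric series and with the scale-invariance of the constant in \eqref{local L infty estimate on minimising polynomials}. I therefore anticipate no serious obstacle; this is essentially the argument of \cite{Devore-Sharpley_Maximal_functions_smoothness_1984} referenced after the statement.
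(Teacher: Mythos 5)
Your proof is correct, and it follows the standard chaining argument for Campanato-type spaces — concentric shrinking balls around $y$, the scale-invariant projection bound~\eqref{local L infty estimate on minimising polynomials} applied to telescoping differences $P_{\Delta_{j+1}}^{k}f - P_{\Delta_j}^{k}f$, convergence of $P_{\Delta_j}^{k}f(y)\to f(y)$ at Lebesgue points, and summation of the geometric series using $\alpha>0$. The paper does not include a proof of this lemma (it simply cites DeVore--Sharpley), and your argument is essentially the one given there.
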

This inequality can be improved (see, e.g., \cite[Proposition 2]{Dorronsoro_1986}) but this is sufficient for our purposes. 

Finally, we make the following important remark about the decomposition of a function in the space $\mathrm{C}^{p}_{\alpha}(\Rn)$ as the difference of two non-negative functions. As we shall see momentarily, this allows matters to be reduced to nonnegative $f$, which is convenient for the purposes of applying Theorem \ref{Dorronsoros theorem}.
\begin{rem}\label{rem signed decomposition of sobolev space Cpa}
    Let $\alpha>0$, $p\in(1,\infty)$ and $f\in\mathrm{C}^{p}_{\alpha}(\Rn)$. There are $\alpha'\in (0,1)$ and $\beta>0$ such that $\alpha =\beta +\alpha'$. Lemma~\ref{lemma lifting property bessel operator} shows that there is $g\in\mathrm{C}^{p}_{\alpha'}(\Rn)$ such that $f=\mathscr{J}_{\beta}g$ and $\norm{g}_{\mathrm{C}^{p}_{\alpha'}(\Rn)}\eqsim \norm{f}_{\mathrm{C}^{p}_{\alpha}(\Rn)}$. We may decompose $g=g^{+}-g^{-}$, where $g^{\pm}(x)=\max\set{\pm g(x),0}$ for all $x\in\Rn$. Since $\alpha'\in(0,1)$, the triangle inequality shows that $g^{\pm}\in\mathrm{C}^{p}_{\alpha'}(\Rn)$ with $\smallnorm{g^{\pm}}_{\mathrm{C}^{p}_{\alpha'}(\Rn)}\lesssim \norm{g}_{\mathrm{C}^{p}_{\alpha'}(\Rn)}$ (see, e.g., the proof of \cite[Proposition 3.1.2]{ModernFourier_2014} for the corresponding result in the space $\mathrm{BMO}(\Rn)$). Let $f_{\pm}=\mathscr{J}_{\beta}g^{\pm}$. It follows from the linearity of $\mathscr{J}_{\beta}$, the nonnegativity of the Bessel kernel and Lemma~\ref{lemma lifting property bessel operator} that $f=f_{+}-f_{-}$ where $f_{\pm}(x)\geq 0$ for a.e. $x\in\Rn$ and $\norm{f_{\pm}}_{\mathrm{C}^{p}_{\alpha}(\Rn)}\lesssim \norm{f}_{\mathrm{C}^{p}_{\alpha}(\Rn)}$.
\end{rem}

We now have all the ingredients needed for the proof of Theorem~\ref{main theorem upper half space for Cpalpha}. The argument will follow closely that of the proof of Theorem~\ref{main theorem upper half space} and for this reason we will omit some of the computations which were carried out in that proof. 
\begin{proof}[Proof of Theorem~\ref{main theorem upper half space for Cpalpha}]
    Let $p\in(1,\infty)$ be such that $(\mathcal{D})^{L}_{p}$ holds. Let $0<\alpha \leq  \frac{n}{p}$ and $f\in\mathrm{C}^{p}_{\alpha}(\Rn)$. Without loss of generality we may assume that  $f\geq 0$ (see Remark~\ref{rem signed decomposition of sobolev space Cpa}). We let $\beta= 1-\frac{\alpha p}{n}>0$ if $\alpha <\frac{n}{p}$, or we let $\beta\in(0,1]$ be arbitrary if $\alpha=\frac{n}{p}$. As in the proof of Theorem~\ref{main theorem upper half space}, the unique solution $u_f$ to $(\mathcal{D})^{L}_{p}$ with boundary datum $f$ can be decomposed as $u_{f}(t,x)=\sum_{j\geq 0}u_{j}(t,x)$, where there are $1<r<p$ and $\alpha_{L}\in(0,1)$ such that the functions $u_j$ satisfy the estimate~\eqref{estimate u_j starting point} for all $j\geq 0$. Let us fix $x_0\in\Rn$, $j\geq 0$ and $(t,x)\in\Gamma^{\beta}(x_0)$. As previously, if $t\geq 2^{-j/(1-\beta)}$, then \eqref{estimate for u_j for large t} holds.

    Let us now assume that $0<t <2^{-j/(1-\beta)}$, and let $k=\lfloor\alpha  \rfloor\in\N$. Lemma~\ref{poincaré type lemma for C p a} and the estimate \eqref{local L infty estimate on minimising polynomials} can be used to write
    \begin{align*}
        \abs{u_{j}(t,x)}&\lesssim 2^{-\alpha_{L}j}\left[\left(\dashint_{\Delta(x,2^{j+1}t)}\smallabs{f-P^{k}_{\Delta(x,2^{j+1}t)}f}^{r}\right)^{1/r} + \left(\dashint_{\Delta(x,2^{j+1}t)}\smallabs{P^{k}_{\Delta(x,2^{j+1}t)}f}^{r}\right)^{1/r}\right]\\
        &\lesssim 2^{-\alpha_{L}j}\left[ (2^{j+1}t)^{\alpha}\left(\dashint_{\Delta(x,2^{j+1}t)} f_{\alpha}^{\sharp}(y)^{r}\dd y\right)^{1/r} + \dashint_{\Delta(x,2^{j+1}t)}f\right]\\
        &\lesssim 2^{-\alpha_{L}j}\left[ (2^{j+1}t)^{\alpha}w(2^{j}t,x) + (P_{2^{j+1}t}\ast f)(x)\right],
    \end{align*}
    where we have introduced the function $w:\Hn\to[0,\infty]$ defined by 
    \begin{equation*}
        w(s,y)=\left(\dashint_{\Delta(y,2s)} f_{\alpha}^{\sharp}(z)^{r}\dd z\right)^{1/r} \quad \textup{ for all }(s,y)\in\Hn.
        \end{equation*}
    As in the proof of Theorem~\ref{main theorem upper half space} (see in particular \eqref{use of dyadic maximal function in proof of main thm}), the maximal function introduced in \eqref{maximal function dilated in the t direction for j integer} can be used to obtain
    \begin{align*}
        \abs{u_{j}(t,x)}\lesssim 2^{-\alpha_{L}j}\left[ (\mathcal{M}_{p,\beta,j}w)(x_0) + (N_{*,\beta}\tilde{u}_{f})(x_0)\right],
    \end{align*}
    where $\tilde{u}_{f}$ is the classical Poisson extension of $f$. Combining with the estimate for $t\geq 2^{-j/(1-\beta)}$, this shows that 
    \begin{equation}\label{introduction of the big maximal function for later use}
        N_{*,\beta}u_{f}(x_0)\lesssim \sum_{j\geq 0} 2^{-\alpha_{L}j}\left[(\mathcal{M}_{p,\beta,j}w)(x_0) + (N_{*,\beta}\tilde{u}_{f})(x_0) + M_{r}f(x_0)\right]=:\mathring{\mathcal{M}}_{p,r}^{\beta}(f),
    \end{equation}
    where we introduce the maximal function $\mathring{\mathcal{M}}_{p,r}^{\beta}(f)$ for later reference. Since $p>r$, it follows from Lemma~\ref{lemma boundedness mitigating maximal function on j dependent intervals}, Theorem~\ref{Dorronsoros theorem} and the Hardy--Littlewood maximal theorem that 
    \begin{equation}\label{final estimate proof of main theorem for cpalpha}
        \norm{N_{*,\beta}u_{f}}_{\Ell{p}}\lesssim \smallnorm{f_{\alpha}^{\sharp}}_{\Ell{p}} + \norm{f}_{\mathrm{C}^{p}_{\alpha}(\Rn)} + \norm{f}_{\Ell{p}}\lesssim \norm{f}_{\mathrm{C}^{p}_{\alpha}(\Rn)}.
    \end{equation}
    We recall that $C^{\infty}_{c}(\Rn)$ is dense in $\mathrm{F}^{p}_{\alpha}(\Rn)$. Therefore, if $f\in\mathrm{F}^{p}_{\alpha}(\Rn)$, then the classical approximation argument can be used along with \eqref{final estimate proof of main theorem for cpalpha} to obtain the almost everywhere convergence \eqref{ae tangential convergence for sol of div form with frac sobolev bdry datum}.
    
    We now turn to the proof of the estimate \eqref{upper bound hausdorff dimension of divergence set for Cpalpha} on the Hausdorff dimension of the divergence set. If $\beta'\in(\beta,1]$, then there is $\alpha'\in [0,\alpha)$ such that $\beta'=1-\frac{\alpha' p}{n}$. If $f\in\mathrm{C}^{p}_{\alpha}(\Rn)$, then Lemma~\ref{lemma lifting property bessel operator} shows that there is $g\in\mathrm{C}^{p}_{\alpha'}(\Rn)$ such that $f=G_{\alpha-\alpha'}\ast g$ and $\norm{g}_{\mathrm{C}^{p}_{\alpha'}(\Rn)}\eqsim \norm{f}_{\mathrm{C}^{p}_{\alpha}(\Rn)}$. As in the proof of Theorem~\ref{main theorem upper half space}, note that $\abs{u_{f}(t,x)}\lesssim (G_{\alpha-\alpha'}\ast w(t,\cdot))(x)$ for all $(t,x)\in\Hn$, where
    \begin{equation*}
        w(t,x)=\sum_{j=0}^{\infty} 2^{-\alpha_{L}j} \left(\dashint_{\Delta(x,2^{j+1}t)}\abs{g}^{r}\right)^{1/r} \quad \textup{ for all } (t,x)\in\Hn.
    \end{equation*}
    The first part of the proof shows that $\norm{N_{*,\beta'}w}_{\Ell{p}}\lesssim \norm{g}_{\mathrm{C}^{p}_{\alpha'}(\Rn)}\eqsim \norm{f}_{\mathrm{C}^{p}_{\alpha}(\Rn)}$, and therefore Remark~\ref{remark 2 - convolution commute  with general maximal functions approach} and Lemma~\ref{lemma dimensional measures and Bessel potential functions} can be used to show that $\norm{N_{*,\beta'}u_{f}}_{\El{1}(\mu)}\lesssim \norm{f}_{\mathrm{C}^{p}_{\alpha}(\Rn)}$ for all $s> n -(\alpha-\alpha')n$ and $\mu\in\mathcal{M}^{s}(\Rn)$. If now $f\in\mathrm{F}^{p}_{\alpha}(\Rn)$, then Remark~\ref{remark properties particular representative of f in cpalpha} shows that the argument of the proof of  Proposition~\ref{proposition Hausdorff dimension div set for cones and Bessel} can be used to prove the estimate~\eqref{upper bound hausdorff dimension of divergence set for Cpalpha}. 
\end{proof}
\begin{rem}\label{remark main implication of main thm within proof}
    For later reference we make the following two observations:
    \begin{enumerate}[label=(\roman*), wide]
        \item Let $p\in(1,\infty)$, $s>0$ and $f\in \mathrm{C}^{p}_{s}(\Rn)$. The proof of Theorem~\ref{main theorem upper half space for Cpalpha} above shows that if a function $v_{f}:\Hn\to \R$ is such that there are $1<r<p$ and $\alpha>0$ such that 
    \begin{align*}
        \abs{v_{f}(t,x)}\lesssim \sum_{j\geq 0} 2^{-\alpha j} \left(\dashint_{\Delta(x,2^{j+1}t)}\abs{f}^r\right)^{1/r}
    \end{align*}
    for all $(t,x)\in\Hn$, then if $0<s<\frac{n}{p}$ and $\beta=1-\frac{s p}{n}$, or if $s p=n$ and $\beta>0$ is arbitrary, then 
    \begin{equation*}
        N_{*,\beta}(v_f)(x_0)\lesssim \mathring{\mathcal{M}}_{p,r}^{\beta}(f_{+})(x_0) +\mathring{\mathcal{M}}_{p,r}^{\beta}(f_{-})(x_0)=: \mathcal{M}_{p,r}^{\beta}(f)(x_0)\quad \textup{for all }x_{0}\in\Rn,
    \end{equation*}
    where we have decomposed $f=f_{+}-f_{-}$ as in Remark~\ref{rem signed decomposition of sobolev space Cpa} (in particular, the functions $f_{\pm}$ should not be confused with the positive and negative parts $f^{\pm}$), and the maximal function $\mathring{\mathcal{M}}_{p,r}^{\beta}(f)$ introduced in \eqref{introduction of the big maximal function for later use} satisfies  $\smallnorm{\mathring{\mathcal{M}}_{p,r}^{\beta}(f_{\pm})}_{\Ell{p}}\lesssim \norm{f_{\pm}}_{\mathrm{C}^{p}_{s}(\Rn)}\lesssim\norm{f}_{\mathrm{C}^{p}_{s}(\Rn)}$. 

\item The space $\mathrm{W}^{s,p}(\Rn)$ is continuously embedded in $\mathrm{C}^{p}_{s}(\Rn)$ for all $p\in(1,\infty)$ and $s>0$. Indeed, \cite[Proposition 3]{Dorronsoro_1986} shows that the Triebel--Lizorkin spaces $\mathrm{F}^{s}_{p,q}(\Rn)$ are continuously embedded in $\mathrm{C}^{p}_{s}(\Rn)$ for all $1\leq p,q<\infty$ and $s>0$. In addition, \cite[Section 2.3.2, Proposition 2]{Triebel_Function_spaces_1978} shows that the spaces $\mathrm{F}^{s}_{p,p}(\Rn)$ coincide (up to equivalent norms) with the Besov spaces $\mathrm{B}^{s}_{p,p}(\Rn)$ for all $1\leq p<\infty$, and \cite[Theorem 2.5.7]{Triebel_Function_spaces_1978} shows that the Besov spaces $\mathrm{B}^{s}_{p,q}(\Rn)$ coincide with the classical Besov (Lipschitz) spaces $\Lambda^{s}_{p,q}(\Rn)$ (which are also treated, for example, in \cite[Chapter \RNum{5}, Section 5]{Stein_Singular_Integrals}) for all $1\leq p,q<\infty$ and $s>0$. If $p\in[1,\infty)$ and $s>0$ is not an integer, then \cite[Section 2.2.2, Remark 3]{Triebel_Function_spaces_1978} shows that $\mathrm{W}^{s,p}(\Rn)=\Lambda^{s}_{p,p}(\Rn)$. If $p\in(1,\infty)$ and $s>0$ is an integer, then \cite[Section 2.5.6]{Triebel_Function_spaces_1978} shows that $\mathrm{W}^{s,p}(\Rn)=\mathscr{L}^{p}_{s}(\Rn)=\mathrm{F}^{s}_{p,2}(\Rn)$. In summary,
\begin{equation*}
    \mathrm{W}^{s,p}(\Rn)=\left\{
    \begin{array}{ll}
    \Lambda^{s}_{p,p}(\Rn)=\mathrm{B}^{s}_{p,p}(\Rn)=\mathrm{F}^{s}_{p,p}(\Rn) & \textup{ if } s\not\in\Z \textup{ and }p\in[1,\infty);\\
    \mathscr{L}^{p}_{s}(\Rn)= \mathrm{F}^{s}_{p,2}(\Rn)& \textup{ if } s\in\Z \textup{ and }p\in(1,\infty).
    \end{array}
    \right.
\end{equation*}
This shows that $\mathrm{W}^{s,p}(\Rn)\subseteq\mathrm{C}^{p}_{s}(\Rn)$ for all $p\in(1,\infty)$ and $s>0$. In particular, the estimate obtained in (i) shows that $\smallnorm{\mathcal{M}_{p,r}^{\beta}(f)}_{\Ell{p}}\lesssim\norm{f}_{\mathrm{W}^{s,p}(\Rn)}$.
        \end{enumerate}
\end{rem}
\section{The Dirichlet problem in bounded Lipschitz domains and the proof of Theorem \ref{second main theorem bounded Lipschitz domain}}
\label{subsection proof second main theorem}
This section is concerned with the proof of Theorem~\ref{second main theorem bounded Lipschitz domain} (in Sections~\ref{section first part of the proof of second main thm} and \ref{section second part of the proof of second main thm} below), which shall essentially be reduced to an application of Remark~\ref{remark main implication of main thm within proof} after a suitable localisation procedure. This is achieved by relying on a number of lemmas which are collected in the following subsections. Although the tools developed here are natural and widely understood when the approach is nontangential (see, e.g., \cite{Kenig_cbms_1994}), we take the opportunity to include full technical details to treat tangential approach in Lipschitz domains for Sobolev boundary data. We start by introducing the notation that we shall use. 
\subsection{Lipschitz domains}\label{subsection with definition of Lipschitz domains}
Let $\Omega\subset \R^{1+n}$ be a bounded open set. 
For $X\in\R^{1+n}$ and $r>0$, we let $B(X,r)$ be the Euclidean ball of radius $r$ in $\R^{1+n}$. If $Q\in\partial\Omega$ (not to be confused with our use of $Q$ for a cube in earlier sections) then we let $\Delta_{\partial\Omega}(Q,r)= B(Q,r)\cap\partial\Omega$ be the corresponding surface ball.

We say that $\Omega$ is a Lipschitz domain if every point on the boundary $\partial\Omega$ has a neighbourhood on which, in new coordinates obtained from the canonical ones by a rigid motion, $\Omega$ coincides with the epigraph of a Lipschitz function.  More precisely, we assume that there is $M>0$ (referred to as the Lipschitz constant of $\Omega$) such that for all $P\in\partial\Omega$ there exist
\begin{enumerate}[label=(\roman*)]
    \item a real number $r_{P}>0$;
    \item a Lipschitz function $\varphi_{P}:\Rn\to \R$ with Lipschitz constant (at most) $M$, such that $\varphi_{P}(0)=0$;
    \item an isometry $T_{P}:\R^{1+n}\to\R^{1+n}$ with $T_{P}(0)=P$,
    \end{enumerate}
such that the special Lipschitz domain $\Omega_{P}=\set{(t,x)\in\R^{1+n} : t>\varphi_{P}(x)}$ and the cylinder 
\begin{equation*}
R_{P}=R(r_P)=\set{(t,x)\in\R^{1+n} : \abs{x}<r_P \textup{ and }\abs{t}<(1+M)r_{P}}
\end{equation*}
are such that 
\begin{equation*}
    \Omega\cap T_{P}(R_{P})=T_{P}(\Omega_{P})\cap T_{P}(R_{P}). 
\end{equation*}
It follows that 
\begin{equation}\label{local representation of Omega implies local rep of its boundary}
\partial{\Omega}\cap T_{P}(R_{P})=T_{P}(\partial\Omega_{P})\cap T_{P}(R_{P})
\end{equation}
for all $P\in\partial\Omega$, and a local parametrisation of $\partial\Omega$ around $P$ is given by
\begin{equation}\label{local parametrisation of boundary Omega around P}
    \Phi_{P}:\Delta(0,r_{P})\to \partial{\Omega}\cap T_{P}(R_{P}): x \mapsto T_{P}\left((\varphi_{P}(x),x)\right).
\end{equation}
Its inverse is given by 
\begin{equation*}
    \Phi_{P}^{-1}: \partial\Omega\cap T_{P}(R_{P})\to\Delta(0,r_P) : X\to \Pi(T_{P}^{-1}(X)),
\end{equation*}
where $\Pi:\R^{1+n}\to\Rn:(t,x)\mapsto x$.

More generally, we say that such a domain $\Omega$ is of class $C^{k,1}$ for an integer $k\geq 0$ if the functions $\varphi_{P}$ can additionally be chosen to be of class $C^{k,1}(\Rn)$, i.e. with Lipschitz continuous partial derivatives of order $k$, for all $P\in\partial\Omega$.
\begin{rem}\label{remark on distances on the Lipschitz boundary in cylinders}
    Note that if $Q\in \partial\Omega \cap T_{P}(R_{P})=T_{P}(\partial\Omega_{P})\cap T_{P}(R_P)$, then there exists $x\in\Delta(0,r_P)$ such that $\tilde{Q}=(\varphi_{P}(x) , x)\in\partial\Omega_{P}\cap R_{P}$ and $Q=T_{P}(\tilde{Q})$. Since $\varphi_{P}(0)=0$ and $T_{P}$ is an isometry, it follows that 
    \begin{align*}
        \abs{Q-P}&=\smallabs{T_{P}(\tilde{Q}) - T_{P}(0)}=\smallabs{\tilde{Q}}= \left(\abs{\varphi_{P}(x)}^{2} + \abs{x}^{2}\right)^{1/2}\leq (M^{2}+1)^{1/2}\abs{x}<(1+M)r_{P}.
    \end{align*}
\end{rem}

The boundary $\partial\Omega$ is equipped with the subspace (metric) topology and the surface measure $\sigma=\mathcal{H}^{n}|_{\partial\Omega}$, where $\mathcal{H}^{n}$ denotes the $n$-dimensional Hausdorff measure on $\R^{1+n}$.

Let us henceforth assume that $\Omega\subset\R^{1+n}$ is a bounded (at least) Lipschitz domain. For any Borel subset $A\subseteq \partial\Omega\cap T_{P}(R_P)$, it follows from the area formula (see, e.g., \cite[Sections 3.3.2 and 3.3.4 (c)]{Evans_Gariepy_book_1992}) and the bijectivity of $\Phi_{P}$ that 
\begin{equation}\label{local representation of surface measure from area formula}
    \sigma(A)=\int_{\Delta(0,r_P)}\ind{A}\left(\Phi_{P}(x)\right)\left(1+\abs{\nabla \varphi_{P}(x)}^{2}\right)^{\frac{1}{2}}\dd x.
\end{equation}
Consequently, for any non-negative Borel measurable function $f:\partial\Omega\to [0,\infty]$ with support in $\partial\Omega\cap T_{P}(R_P)$, it follows from classical measure-theoretic arguments that 
\begin{equation}\label{formula for local integral wrt surface measure from area formula}
    \int_{\partial\Omega}f\dd \sigma = \int_{\Delta(0,r_P)}f\left(\Phi_{P}(x)\right)\left(1+\abs{\nabla \varphi_{P}(x)}^{2}\right)^{\frac{1}{2}}\dd x.
\end{equation}
In particular, this shows that $\sigma$ is $n$-Ahlfors--David regular, i.e. that 
\begin{equation}\label{surface measure sigma is ahlfors regular}
\sigma(\Delta_{\partial\Omega}(Q,r))\eqsim r^{n} \quad \textup{ for all } r\in(0,\diam{\Omega}].
\end{equation}
\subsection{The \texorpdfstring{$L$}{L}-harmonic measure in Lipschitz domains}\label{subsection on L harmonic measure in Lipschitz domains}
Recall the definition of the elliptic operator $L\!=\!-\dvg{(A\nabla)}$ on $\Omega$ introduced in Section~\ref{section on the dirichlet problem for elliptic equations}. For all ${f\in C^{0}(\partial\Omega)}$ there is a unique $u_{f}\in\mathrm{W}^{1,2}_{\loc}(\Omega)\cap C^{0}(\clos{\Omega})$ such that $Lu_{f}=0$ and $u_{f}|_{\partial\Omega}=f$ (see \cite[Chapter 1, Section 2]{Kenig_cbms_1994}). By the maximum principle and the Riesz representation theorem there exists a family $\set{\omega^{X}_{L}}_{X\in\Omega}$ of regular probability measures on $\partial\Omega$ such that 
\begin{equation}\label{representation of classical solutions by harmonic measure bounded Lipschitz dom again}
    u_{f}(X)=\int_{\partial\Omega}f\dd \omega^{X}_{L}
\end{equation}
for all $X\in\Omega$. 
In accordance with \cite{Kenig_cbms_1994}, we say that $(\mathcal{D})_{p,\Omega}^{L}$ \emph{holds} if for all $f\in C^{0}(\partial\Omega)$, the classical solution $u_f$ in \eqref{representation of classical solutions by harmonic measure bounded Lipschitz dom again} satisfies the estimate
    \begin{equation*}
        \norm{N_{*}u_{f}}_{\El{p}(\partial\Omega)}\lesssim \norm{f}_{\El{p}(\partial\Omega)}.
    \end{equation*}
As stated in the introduction, this implies the well-posedness of $(\mathcal{D})^{L}_{p,\Omega}$ (see \cite[Theorem 1.7.7]{Kenig_cbms_1994}).

We shall now fix a point $X_0\in\Omega$, and refer to $\omega_{L}=\omega_{L}^{X_0}$ as the \emph{$L$-harmonic measure}.
The following theorem is well-known (see \cite[Theorem 1.7.3]{Kenig_cbms_1994}). It is also well-known that the symmetry assumption on the coefficients made therein is not required (see, e.g., \cite{KKPT_nonsymmetric_2000}).
\begin{thm}\label{thm equivalent properties for solvability in terms of  L harmonic measure}
    The following are equivalent: 
    \begin{enumerate}[label=\emph{(\roman*)}]
    \item $\omega_{L}\in A_{\infty}(\sigma)$;
    \item There exists $p_0\in(1,\infty)$ such that $\left(\mathcal{D}\right)^{L}_{p_0}$ holds;
    \item There exists $q_0\in(1,\infty)$ such that $\omega_{L}\in B^{q_0}(\dd \sigma)$, i.e. $\omega_{L}$ is absolutely continuous with respect to $\sigma$, and the Radon--Nikodym derivative $k=\frac{\dd \omega_{L}}{\dd \sigma}\in\El{q_0}(\dd \sigma)$ satisfies the reverse Hölder inequality
    \begin{equation*}
        \left(\dashint_{\Delta}k^{q_0}\dd \sigma\right)^{1/q_0}\lesssim \dashint_{\Delta} k \dd \sigma,
    \end{equation*}
    for all surface balls $\Delta\subseteq \partial\Omega$. Moreover, it holds that $\frac{1}{q_0}+\frac{1}{p_0}=1$ in \emph{(ii)} and \emph{(iii)}.
    \end{enumerate}
\end{thm}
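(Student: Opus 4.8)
This is the classical Dahlberg--Jerison--Kenig circle of equivalences, so we only indicate the argument, following \cite[Chapter~1]{Kenig_cbms_1994} together with the non-symmetric input of \cite{KKPT_nonsymmetric_2000}. Throughout, two facts about $L$-harmonic measure on a bounded Lipschitz domain are used as known inputs: the \emph{doubling property} $\omega_{L}^{X}(\Delta_{\partial\Omega}(Q,2r))\lesssim\omega_{L}^{X}(\Delta_{\partial\Omega}(Q,r))$ for surface balls with $2r\le\diam{\Omega}$, and the \emph{change-of-pole estimate}: if $\Delta=\Delta_{\partial\Omega}(Q_0,r)$ has corkscrew point $A_{\Delta}\in\Omega$ (so $\abs{A_{\Delta}-Q_0}<r$ and $\dist(A_{\Delta},\partial\Omega)\eqsim r$), then $\omega_{L}^{X_0}(F)\eqsim\omega_{L}^{X_0}(\Delta)\,\omega_{L}^{A_{\Delta}}(F)$ for every Borel set $F\subseteq\Delta$, and in particular $\omega_{L}^{A_{\Delta}}(\Delta)\eqsim 1$. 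Both follow from the maximum principle, the De Giorgi--Nash--Moser bounds \eqref{local boundedness solutions DGNM}--\eqref{holder continuity solutions DGNM}, and the boundary (Carleson--Fabes--Mortola--Salsa) estimates for non-negative solutions vanishing on a boundary portion; recall also that $\sigma$ is $n$-Ahlfors--David regular by \eqref{surface measure sigma is ahlfors regular}, hence doubling on $(\partial\Omega,\sigma)$.

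For the equivalence (i)$\Leftrightarrow$(iii), since both $\sigma$ and $\omega_{L}=\omega_{L}^{X_0}$ are doubling, the statement $\omega_{L}\in A_{\infty}(\sigma)$ is equivalent, by the Coifman--Fefferman characterisation of $A_{\infty}$ weights on spaces of homogeneous type, to the existence of $q_0>1$ with $\omega_{L}\ll\sigma$ such that $k=d\omega_{L}/d\sigma$ satisfies the scale-invariant reverse Hölder inequality $\bigl(\dashint_{\Delta}k^{q_0}\,d\sigma\bigr)^{1/q_0}\lesssim\dashint_{\Delta}k\,d\sigma$ on every surface ball, i.e. $\omega_{L}\in B^{q_0}(d\sigma)$. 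For (iii)$\Rightarrow$(ii), fix $f\in C^{0}(\partial\Omega)$ and $Q_0\in\partial\Omega$, decompose $\partial\Omega$ into a surface ball $\Delta_0=\Delta_{\partial\Omega}(Q_0,Cr)$ and its dyadic annular complements, with a matching splitting $u_f=\sum_{j\ge 0}u_j$ of the solution; using the Harnack inequality and the boundary Hölder estimate to replace the pole $X\in\Gamma_{\Omega}(Q_0)$ by $A_{\Delta_0}$, and then the reverse Hölder inequality of (iii) applied to $k(A_{\Delta_0},\cdot)$, one obtains the pointwise bound $N_*u_f(Q_0)\lesssim(M_{\sigma,r'}f)(Q_0)$ for some $r'=q_0'\in(1,p_0)$, where $M_{\sigma,s}h=(M_{\sigma}(\abs{h}^{s}))^{1/s}$ and $M_{\sigma}$ is the Hardy--Littlewood maximal operator on $(\partial\Omega,\sigma)$. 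As $r'<p_0$, $M_{\sigma,r'}$ is bounded on $\El{p_0}(\partial\Omega)$, whence $\norm{N_*u_f}_{\El{p_0}(\partial\Omega)}\lesssim\norm{f}_{\El{p_0}(\partial\Omega)}$, which is $(\mathcal{D})^{L}_{p_0}$.

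For (ii)$\Rightarrow$(iii) we argue by duality. Fix a surface ball $\Delta=\Delta_{\partial\Omega}(Q_0,r)$ with corkscrew point $A_{\Delta}$. If $f\in C^{0}(\partial\Omega)$ is supported in $\Delta$ then $u_f(A_{\Delta})=\int_{\Delta}f\,d\omega_{L}^{A_{\Delta}}$, and since $A_{\Delta}\in\Gamma_{\Omega}(Q)$ for all $Q\in\Delta_{\partial\Omega}(Q_0,cr)$ we get $\sigma(\Delta)^{1/p_0}\abs{u_f(A_{\Delta})}\lesssim\norm{N_*u_f}_{\El{p_0}(\partial\Omega)}\lesssim\norm{f}_{\El{p_0}(\Delta,d\sigma)}$. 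Thus $f\mapsto\int_{\Delta}f\,d\omega_{L}^{A_{\Delta}}$ is a bounded functional on $\El{p_0}(\Delta,d\sigma)$ of norm $\lesssim\sigma(\Delta)^{-1/p_0}$, so by the Riesz representation theorem $\omega_{L}^{A_{\Delta}}|_{\Delta}\ll\sigma|_{\Delta}$ with $\norm{d\omega_{L}^{A_{\Delta}}/d\sigma}_{\El{q_0}(\Delta,d\sigma)}\lesssim\sigma(\Delta)^{-1/p_0}$; dividing by $\sigma(\Delta)$ and using $\frac1{p_0}+\frac1{q_0}=1$ together with $\omega_{L}^{A_{\Delta}}(\Delta)\eqsim 1$ gives $\bigl(\dashint_{\Delta}(d\omega_{L}^{A_{\Delta}}/d\sigma)^{q_0}\,d\sigma\bigr)^{1/q_0}\lesssim\sigma(\Delta)^{-1}\eqsim\dashint_{\Delta}(d\omega_{L}^{A_{\Delta}}/d\sigma)\,d\sigma$. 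Finally, the change of pole yields $k=d\omega_{L}/d\sigma\eqsim\omega_{L}^{X_0}(\Delta)\,d\omega_{L}^{A_{\Delta}}/d\sigma$ a.e. on $\Delta$, and since the constant $\omega_{L}^{X_0}(\Delta)$ cancels in a reverse Hölder inequality, (iii) follows on every surface ball; combined with (i)$\Leftrightarrow$(iii) and (iii)$\Rightarrow$(ii), all three statements are equivalent, with $\frac1{p_0}+\frac1{q_0}=1$ by construction.

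The genuinely non-trivial ingredient is the change-of-pole/comparison principle, which rests on the boundary Harnack inequality for non-negative $L$-solutions vanishing on a portion of $\partial\Omega$ --- the one place where the Lipschitz geometry (the interior and exterior corkscrew conditions) enters in an essential way. A minor technical point is that indicator functions are not continuous: this is sidestepped by the duality argument above, but if one prefers to test against $\ind{E}$ directly (the route giving (ii)$\Rightarrow$(i) without passing through (iii)), one must first extend the solution operator $f\mapsto u_f$ and the estimate $\norm{N_*u_f}_{\El{p_0}}\lesssim\norm{f}_{\El{p_0}}$ from $C^{0}(\partial\Omega)$ to all of $\El{p_0}(\partial\Omega,\sigma)$ by density.
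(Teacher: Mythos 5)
The paper states this theorem without proof, citing \cite[Theorem 1.7.3]{Kenig_cbms_1994} (together with \cite{KKPT_nonsymmetric_2000} for removing the symmetry assumption on $A$), so there is no argument in the paper against which to compare your sketch directly. What you have written is the standard Dahlberg--Jerison--Kenig circle of equivalences, and it is correct in outline: (i)$\Leftrightarrow$(iii) via the Coifman--Fefferman characterisation of $A_{\infty}$ over a pair of doubling measures on a space of homogeneous type; (iii)$\Rightarrow$(ii) via the dyadic annular decomposition, boundary H\"older continuity and Harnack (to change poles to a corkscrew point), and the reverse H\"older inequality, giving a pointwise bound of $N_{*}u_{f}$ by a localised maximal function; (ii)$\Rightarrow$(iii) via the duality/Riesz-representation argument on each surface ball together with the comparison principle. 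Treating the doubling property and the Carleson--Fabes--Mortola--Salsa/change-of-pole estimates as black-box inputs is appropriate at this level, and you flag correctly that these are where the Lipschitz geometry and the elliptic estimates enter.

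There is one inaccuracy worth flagging. In the step (iii)$\Rightarrow$(ii) you assert $N_{*}u_{f}(Q_0)\lesssim (M_{\sigma,r'}f)(Q_0)$ with ``$r'=q_0'\in(1,p_0)$''. These two claims are mutually inconsistent: if $\tfrac{1}{p_0}+\tfrac{1}{q_0}=1$, then $q_0'=p_0$, which is \emph{not} in the open interval $(1,p_0)$; with $r'=p_0$ the operator $M_{\sigma,r'}$ is not bounded on $\El{p_0}(\partial\Omega)$ and the final estimate does not follow. The repair is the self-improvement (Gehring) property of the reverse H\"older classes: $\omega_{L}\in B^{q_0}(\dd\sigma)$ implies $\omega_{L}\in B^{q_1}(\dd\sigma)$ for some $q_1>q_0$, and then one takes $r'=q_1'<q_0'=p_0$, a strictly subcritical exponent, so that $M_{\sigma,r'}$ is bounded on $\El{p_0}(\partial\Omega)$ and $\smallnorm{N_{*}u_{f}}_{\El{p_0}(\partial\Omega)}\lesssim\smallnorm{f}_{\El{p_0}(\partial\Omega)}$ as required. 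The paper itself invokes precisely this self-improvement elsewhere (in the half-space analysis and at the start of the proof of the second main theorem, citing \cite{Iwaniec_Nolder_1985}), so you should invoke it here as well.
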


\subsection{Sobolev spaces on the boundary of a Lipschitz domain}\label{subsection on sobolev spaces on boundary of Lipschitz domains}
We recall the following two elementary lemmas from the theory of Sobolev spaces (see \cite[Chapter 1]{Grisvard_book_1985} or \cite[Chapter 3]{McLean_book_2000}).
If $E\subseteq\Rn$ and $k$ is a non-negative integer, we denote by $C^{k,1}(E)$ the space of all functions on $E$ which are restrictions to $E$ of functions of class $C^{k,1}(\Rn)$.
\begin{lem}\label{biLipschitz change variable Sobolev spaces}
    Let $1\leq p<\infty$. Let $U_1$ and $U_2$ be two bounded open subsets of $\Rn$, and let $\psi:\clos{U_1}\to \clos{U_2}$ be a bi-Lipschitz bijection. Let $k\geq 0$ be an integer, and let $0\leq s\leq k+1$. If $\psi$ and $\psi^{-1}$ are both of class $C^{k,1}$, then $f\in\mathrm{W}^{s,p}(U_2)$ if and only if $f\circ \psi \in\mathrm{W}^{s,p}(U_1)$, and in this case $\norm{f\circ \psi}_{\mathrm{W}^{s,p}(U_1)}\eqsim \norm{f}_{\mathrm{W}^{s,p}(U_2)}$.
    
\end{lem}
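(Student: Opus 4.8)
The plan is to reduce, using the symmetry of the statement under interchanging $(U_1,\psi)$ with $(U_2,\psi^{-1})$, to establishing a single inequality: if $f\in\mathrm{W}^{s,p}(U_2)$ then $f\circ\psi\in\mathrm{W}^{s,p}(U_1)$ and $\norm{f\circ\psi}_{\mathrm{W}^{s,p}(U_1)}\lesssim\norm{f}_{\mathrm{W}^{s,p}(U_2)}$, with an implied constant depending only on $n$, $p$, $s$, the Lipschitz and $C^{k,1}$ data of $\psi$ and $\psi^{-1}$, and $\diam{U_1}$, $\diam{U_2}$. Applying this inequality with $\psi^{-1}$ in place of $\psi$ (and $U_1$, $U_2$ swapped) then yields the converse inclusion and the two-sided norm equivalence. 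Write $s=m+\sigma$ with $m\in\N$ and $\sigma\in[0,1)$. When $s=0$ we have $\mathrm{W}^{0,p}=\El{p}$ and the claim $\norm{f\circ\psi}_{\El{p}(U_1)}\eqsim\norm{f}_{\El{p}(U_2)}$ is immediate from the change of variables formula for bi-Lipschitz maps (see \cite{Evans_Gariepy_book_1992}) together with the a.e. two-sided bound $\smallabs{\det D\psi}\eqsim1$ that follows from $\psi$ being bi-Lipschitz. For general $s$, I would first prove the estimate for $f\in C^\infty(\clos{U_2})$ and then pass to arbitrary $f\in\mathrm{W}^{s,p}(U_2)$ by density of $C^\infty(\clos{U_2})$ in $\mathrm{W}^{s,p}(U_2)$ (standard on the Lipschitz domains $U_2$ arising in the applications, see \cite{Grisvard_book_1985}; alternatively one may invoke the chain rule for Sobolev functions composed with bi-Lipschitz maps directly). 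Note that for smooth $f$ one has $f\circ\psi\in C^{k,1}(\clos{U_1})\subset\mathrm{W}^{k+1,p}(U_1)\subseteq\mathrm{W}^{s,p}(U_1)$, since a composition of $C^{k,1}$ maps is again $C^{k,1}$ and Lipschitz functions on a bounded set have bounded weak derivatives; thus it is only the \emph{uniformity in $f$} of the bound that must be proved.

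For smooth $f$, the multivariate chain rule (Fa\`a di Bruno's formula) expresses, for a multi-index $\alpha$ with $1\le\abs{\alpha}=\ell\le m$, the derivative $\partial^\alpha(f\circ\psi)$ as a finite sum of terms $\bigl((\partial^\beta f)\circ\psi\bigr)\,c_{\alpha,\beta}$ with $1\le\abs{\beta}\le\ell$, where each coefficient $c_{\alpha,\beta}$ is a universal polynomial in the partial derivatives $\partial^\gamma\psi_i$ with $1\le\abs{\gamma}\le\ell-\abs{\beta}+1\le m\le k+1$. Since $\psi\in C^{k,1}(\clos{U_1})$, each such $\partial^\gamma\psi_i$ lies in $\El{\infty}(U_1)$, and is moreover Lipschitz on $\clos{U_1}$ whenever $\abs{\gamma}\le k$; hence every $c_{\alpha,\beta}\in\El{\infty}(U_1)$ with norm controlled by the $C^{k,1}$ data of $\psi$. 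Crucially, if $\sigma>0$ then $m\le k$, so for $\abs{\alpha}=m$ all the derivatives of $\psi$ entering the $c_{\alpha,\beta}$ have order at most $m\le k$ and each such $c_{\alpha,\beta}$ is then Lipschitz on $\clos{U_1}$. Using these bounds together with the bi-Lipschitz change of variables $y=\psi(x)$, which contributes only bounded Jacobian factors, gives $\int_{U_1}\abs{(\partial^\beta f)(\psi(x))}^p\dd x\lesssim\int_{U_2}\abs{\partial^\beta f}^p$ for all $\abs{\beta}\le m$, and summing over $\abs{\alpha}\le m$ yields $\norm{f\circ\psi}_{\mathrm{W}^{m,p}(U_1)}\lesssim\norm{f}_{\mathrm{W}^{m,p}(U_2)}$.

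It then remains to bound the seminorms $[\partial^\alpha(f\circ\psi)]_{\sigma,p,U_1}$ for $\abs{\alpha}=m$ when $\sigma\in(0,1)$; for $m=0$ this is simply $[f\circ\psi]_{\sigma,p,U_1}$, which is $\lesssim[f]_{\sigma,p,U_2}$ directly by the bi-Lipschitz change of variables, using $\abs{x-x'}\eqsim\abs{\psi(x)-\psi(x')}$. For $m\ge1$ I would substitute the chain-rule decomposition and, for each term, apply $\abs{g(x)h(x)-g(x')h(x')}\le\abs{h(x)}\,\abs{g(x)-g(x')}+\abs{g(x')}\,\abs{h(x)-h(x')}$ with $g=(\partial^\beta f)\circ\psi$ and $h=c_{\alpha,\beta}$. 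For the first piece, boundedness of $c_{\alpha,\beta}$ and the bi-Lipschitz change of variables reduce matters to $[\partial^\beta f]_{\sigma,p,U_2}$, which is $\lesssim\norm{f}_{\mathrm{W}^{s,p}(U_2)}$: directly from the norm when $\abs{\beta}=m$, and via the embedding $\mathrm{W}^{m,p}(U_2)\hookrightarrow\mathrm{W}^{\sigma,p}(U_2)$ (obtained after extending to $\Rn$) when $\abs{\beta}<m$. For the second piece, Lipschitz continuity of $c_{\alpha,\beta}$ gives $\abs{c_{\alpha,\beta}(x)-c_{\alpha,\beta}(x')}\lesssim\abs{x-x'}$, so it is bounded by
\[
\int_{U_1}\abs{(\partial^\beta f)(\psi(x'))}^p\Bigl(\int_{U_1}\frac{\dd x}{\abs{x-x'}^{n-(1-\sigma)p}}\Bigr)\dd x'\lesssim\diam{U_1}^{(1-\sigma)p}\int_{U_2}\abs{\partial^\beta f}^p\lesssim\norm{f}_{\mathrm{W}^{m,p}(U_2)}^p,
\]
the inner integral being finite since $n-(1-\sigma)p<n$. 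Collecting these estimates proves the bound for smooth $f$, and the density argument completes the proof.

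The step I expect to be the main obstacle is not any individual estimate --- each is a routine change of variables or a standard Sobolev embedding --- but the bookkeeping in the Fa\`a di Bruno expansion: one must check that the partial derivatives of $\psi$ occurring in $\partial^\alpha(f\circ\psi)$ never exceed order $k+1$, so that the hypothesis $\psi\in C^{k,1}$ is exactly what is needed to keep them in $\El{\infty}$, and that those appearing when $\abs{\alpha}=m$ and $\sigma>0$ are of order at most $k$, so that they are genuinely Lipschitz --- which is what the product estimate for the Slobodeckij seminorm uses. Apart from this, the only external inputs are the change of variables formula for bi-Lipschitz maps, the density of smooth functions, and the embedding $\mathrm{W}^{m,p}\hookrightarrow\mathrm{W}^{\sigma,p}$ on bounded Lipschitz domains.
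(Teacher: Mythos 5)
The paper does not prove this lemma; it is recalled without proof from \cite[Chapter~1]{Grisvard_book_1985} and \cite[Chapter~3]{McLean_book_2000}, so there is no in-paper argument to compare against. Your sketch follows the standard route --- Fa\`a di Bruno chain rule, bi-Lipschitz change of variables, and a Leibniz split for the Slobodeckij seminorm --- and the key bookkeeping is correct: the derivatives of $\psi$ appearing in $c_{\alpha,\beta}$ have order at most $\ell-|\beta|+1\le m$, hence at most $k+1$ in general and at most $k$ when $\sigma>0$, which is exactly where the hypothesis $\psi\in C^{k,1}$ enters (boundedness for the $\mathrm{W}^{m,p}$ part, Lipschitz continuity of the coefficients for the second piece of the seminorm split).

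There is one step that deserves flagging. In the first piece of your Leibniz split, when $m\ge 2$ and $1\le|\beta|<m$, you control $[\partial^\beta f]_{\sigma,p,U_2}$ by invoking the embedding $\mathrm{W}^{m,p}(U_2)\hookrightarrow\mathrm{W}^{\sigma,p}(U_2)$ ``obtained after extending to $\Rn$''. Both the extension operator and the resulting embedding require $U_2$ to have, at minimum, the Lipschitz (or $W^{1,p}$-extension) property; neither is available on an arbitrary bounded open set, and the embedding genuinely fails in general. On a slit square, for instance, the polar angle about the slit tip lies in $\mathrm{W}^{1,p}$ for $p<2$ but fails to lie in $\mathrm{W}^{\sigma,p}$ once $\sigma p\ge 1$, because the Slobodeckij seminorm compares points that are close in $\Rn$ yet far apart in the domain. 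Your density reduction to $C^\infty(\clos{U_2})$ carries the same hidden hypothesis. The cited references state the transformation result for regular domains, and the paper only applies the lemma to coordinate patches of a $C^{k,1}$ boundary, so the gap is harmless in context; but as stated (arbitrary bounded open $U_1,U_2$) your proof needs, and should record, additional regularity of the $U_i$, or else rework the $|\beta|<m$ terms so as to avoid the intermediate-smoothness embedding altogether.
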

\begin{lem}\label{multiplication by Lipschitz in Sobolev}
    Let $1\leq p <\infty$. Let $U$ be an open subset of $\Rn$, $k\geq 0$ be an integer, and let $\varphi\in C_{c}^{k,1}(\clos{U};\R)$. If $0\leq s\leq k+1$, then $\varphi f\in\mathrm{W}^{s,p}(U)$ for all $f\in\mathrm{W}^{s,p}(U)$, and ${\norm{\varphi f}_{\mathrm{W}^{s,p}(U)}\lesssim \norm{f}_{\mathrm{W}^{s,p}(U)}}$.
\end{lem}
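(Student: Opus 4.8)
The plan is to reduce matters to the Leibniz rule together with one elementary bilinear estimate for the Slobodeckij seminorm. I would write $s=m+\sigma$ with $m=\lfloor s\rfloor\in\{0,\ldots,k+1\}$ and $\sigma=s-m\in[0,1)$, noting that $\sigma\in(0,1)$ forces $m\le k$. If $\sigma=0$ (so $s=m\le k+1$ is an integer), I would simply invoke the Leibniz rule: for each multi-index $\alpha$ with $\abs{\alpha}\le m$,
\[
\partial^{\alpha}(\varphi f)=\sum_{\beta\le\alpha}\binom{\alpha}{\beta}(\partial^{\beta}\varphi)(\partial^{\alpha-\beta}f)
\]
distributionally, and since $\varphi\in C^{k,1}(\clos U)$ and $\abs{\beta}\le m\le k+1$, each $\partial^{\beta}\varphi$ lies in $\El{\infty}(U)$ with norm $\lesssim\norm{\varphi}_{C^{k,1}(\clos U)}$ while $\partial^{\alpha-\beta}f\in\El{p}(U)$; summing over $\abs{\alpha}\le m$ gives the desired bound.

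For $\sigma\in(0,1)$ (so $m\le k$), the integer case already yields $\varphi f\in\mathrm{W}^{m,p}(U)$ with the right bound, so the remaining task is to estimate $[\partial^{\alpha}(\varphi f)]_{\sigma,p,U}$ for $\abs{\alpha}=m$. Using the Leibniz expansion and the triangle inequality for $[\,\cdot\,]_{\sigma,p,U}$, I would reduce this to bounding $[(\partial^{\beta}\varphi)(\partial^{\alpha-\beta}f)]_{\sigma,p,U}$ for each $\beta\le\alpha$, and for that I would establish the elementary estimate: for bounded $g\in C^{0,1}(\clos U)$ and $h\in\El{p}(U)$,
\[
[gh]_{\sigma,p,U}\lesssim\norm{g}_{\El{\infty}(U)}[h]_{\sigma,p,U}+\bigl(\norm{g}_{\El{\infty}(U)}+\textup{Lip}(g)\bigr)\norm{h}_{\El{p}(U)}.
\]
This would follow from $\abs{g(x)h(x)-g(y)h(y)}\le\norm{g}_{\El{\infty}(U)}\abs{h(x)-h(y)}+\abs{h(y)}\abs{g(x)-g(y)}$, the bound $\abs{g(x)-g(y)}\le\min\set{2\norm{g}_{\El{\infty}(U)},\textup{Lip}(g)\abs{x-y}}$, and splitting the double integral over $U\times U$ into the regions $\abs{x-y}<1$ and $\abs{x-y}\ge1$, where $\int_{\abs{z}<1}\abs{z}^{p(1-\sigma)-n}\dd z<\infty$ (because $\sigma<1$) and $\int_{\abs{z}\ge1}\abs{z}^{-n-\sigma p}\dd z<\infty$ (because $\sigma>0$); crucially this works for an arbitrary open $U$.

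It would then remain to check the hypotheses of this estimate for each Leibniz term. For $\beta=0$ the term is $\varphi\,\partial^{\alpha}f$ with $\partial^{\alpha}f\in\mathrm{W}^{\sigma,p}(U)$ by assumption, so the estimate applies directly and gives $[\varphi\,\partial^{\alpha}f]_{\sigma,p,U}\lesssim\norm{\varphi}_{C^{k,1}(\clos U)}\norm{f}_{\mathrm{W}^{s,p}(U)}$. For $\abs{\beta}\ge1$ we have $\abs{\beta}\le m\le k$, hence $\partial^{\beta}\varphi\in C^{0,1}(\clos U)$ is bounded, while $\partial^{\alpha-\beta}f\in\mathrm{W}^{m-\abs{\alpha-\beta},p}(U)=\mathrm{W}^{\abs{\beta},p}(U)\subseteq\mathrm{W}^{1,p}(U)$ and the product $(\partial^{\beta}\varphi)(\partial^{\alpha-\beta}f)$ is supported in the compact set $\supp{\varphi}$; since $\mathrm{W}^{1,p}\subseteq\mathrm{W}^{\sigma,p}$ for $0<\sigma<1$ (standard on $\Rn$, and available here after extending the compactly supported product by zero, cf.\ \cite{Grisvard_book_1985,McLean_book_2000}), the displayed estimate applies again. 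Summing the finitely many $\beta$ and then over $\abs{\alpha}=m$ would complete the bound on $\norm{\varphi f}_{\mathrm{W}^{s,p}(U)}$.

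The step I expect to be the main obstacle is the fractional cross-terms with $\abs{\beta}\ge1$: the regularity one recovers on $\partial^{\alpha-\beta}f$ there is an \emph{integer} derivative, so one is forced to compare the order $\abs{\beta}\ge1$ against the fractional order $\sigma<1$ and appeal to the soft embedding $\mathrm{W}^{1,p}\hookrightarrow\mathrm{W}^{\sigma,p}$ — either by citing the standard Sobolev-space references or by exploiting the compact support of $\varphi$ to reduce to $\Rn$. Everything else is routine bookkeeping with the Leibniz rule and the elementary bilinear seminorm estimate above.
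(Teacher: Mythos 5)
The paper does not actually prove this lemma; it is cited from \cite[Chapter 1]{Grisvard_book_1985} and \cite[Chapter 3]{McLean_book_2000}, both of which establish such multiplier results under regularity hypotheses on the boundary of the domain rather than for an arbitrary open set $U$. Your overall strategy --- the Leibniz rule combined with the elementary bilinear bound for $[gh]_{\sigma,p,U}$ via the split $\abs{x-y}<1$ versus $\abs{x-y}\ge1$ --- is the standard one, and the bilinear estimate itself, the integer case $\sigma=0$, and the term $\beta=0$ (where $\partial^{\alpha}f\in\mathrm{W}^{\sigma,p}(U)$ directly by hypothesis) are all handled correctly on arbitrary $U$.

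The step you flagged as the main obstacle is, however, a genuine gap, and neither of your proposed fixes closes it. The embedding $\mathrm{W}^{1,p}(U)\hookrightarrow\mathrm{W}^{\sigma,p}(U)$ is \emph{not} available for arbitrary open $U$: if $U=(0,1)^2\setminus\set{(x,\tfrac12):\tfrac12\le x\le 1}$ and $h$ is a smooth function on $U$ equal to $1$ just above the slit (for $x$ near $1$) and $0$ just below, then $h\in\mathrm{W}^{1,p}(U)$ with bounded gradient that vanishes near the slit, yet $[h]_{\sigma,p,U}=\infty$ because the double integral across the slit behaves like $\int_{\abs{z}<1}\abs{z}^{-n-\sigma p}\dd z$. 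Your alternative --- extend the compactly supported product $(\partial^{\beta}\varphi)(\partial^{\alpha-\beta}f)$ by zero to $\Rn$ and use the embedding there --- only works when $\supp{\varphi}$ is compactly contained in $U$; the hypothesis $\varphi\in C^{k,1}_{c}(\clos{U})$ does not guarantee this, and the zero extension of a $\mathrm{W}^{1,p}(U)$ function whose support reaches $\partial U$ need not lie in $\mathrm{W}^{1,p}(\Rn)$. To make your argument correct you should either assume that $U$ is a bounded Lipschitz (extension) domain, or add the hypothesis that $\supp{\varphi}$ is compactly contained in $U$; under either assumption the zero-extension step does close the estimate, and both in fact hold everywhere the paper invokes this lemma, since $U$ is always a Euclidean ball $\Delta(0,r)$ and the cut-offs come from a partition of unity whose supports lie strictly inside the parametrising balls.
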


Let $k\geq 0$ be an integer, $p\in[1,\infty)$ and let $0\leq s\leq k+1$. Let $\Omega$ be a domain of class $C^{k,1}$.
Following \cite[Definition 1.3.3.2]{Grisvard_book_1985}, the Sobolev space $\mathrm{W}^{s,p}(\partial\Omega)$ is defined as the space of functions $f\in\El{p}(\partial\Omega,\dd\sigma)$ such that $f\circ \Phi_{P}\in\mathrm{W}^{s,p}(\Delta(0,r_{P}))$ for all $P\in\partial\Omega$ and corresponding local parametrisations $\Phi_{P}$ of $\partial\Omega$ defined in \eqref{local parametrisation of boundary Omega around P}. Lemma \ref{biLipschitz change variable Sobolev spaces} shows that this space is well-defined.
To equip $\mathrm{W}^{s,p}(\partial\Omega)$ with a norm we use the compactness of $\partial\Omega$ to find an integer $N\geq 1$ and $\set{P_{1},\ldots,P_{N}}\subset\partial\Omega$ such that $\partial\Omega\subset \bigcup_{i=1}^{N}T_{P_{i}}(R_{P_{i}})$.  We then let
\begin{equation*}
    \norm{f}_{\mathrm{W}^{s,p}(\partial\Omega)}=\left(\sum_{i=1}^{N}\norm{f\circ \Phi_{P_{i}}}_{\mathrm{W}^{s,p}(\Delta(0,r_{P_{i}}))}^{p}\right)^{1/p}. 
\end{equation*}
It follows again from Lemma \ref{biLipschitz change variable Sobolev spaces} that any other such finite covering of $\partial\Omega$ will yield an equivalent norm on $\mathrm{W}^{s,p}(\partial\Omega)$. Moreover, when $s=0$, the formula \eqref{local representation of surface measure from area formula} shows that this norm is equivalent to the usual norm on $\El{p}(\partial\Omega,\dd \sigma)$.

In the same way, if $\Omega$ is a domain of class $C^{k,1}$ for some integer $k\geq 0$, and $0\leq l\leq k$ is an integer, we define the function space $C^{l,1}(\partial\Omega)$ as the space of continuous functions $f:\partial\Omega\to\R$ such that $f\circ \Phi_{P}\in C^{l,1}(\Delta(0,r_{P}))$ for all $P\in\partial\Omega$. The following elementary lemma, whose proof is left to the reader, justifies that this is a well-defined function space, and shows that it embeds into the Sobolev spaces $\mathrm{W}^{s,p}(\partial\Omega)$ for suitable $s\geq 0$.
\begin{lem}\label{elementary properties of C^{k,l} functions and sobolev}
    Let $U_1$ and $U_2$ be two bounded open subsets of $\Rn$, and let $\psi:\clos{U_1}\to \clos{U_2}$ be a bi-Lipschitz bijection. Let $0\leq l\leq k$ be integers. If $\psi$ and $\psi^{-1}$ are both of class $C^{k,1}$, then 
    \begin{enumerate}[label=\emph{(\roman*)}]
    \item If $f,g\in C^{l,1}(U_2)$, then $fg\in C^{l,1}(U_2)$;
    \item $f\in C^{l,1}(U_2)$ if and only if $f\circ \psi \in C^{l,1}(U_1)$;
    \item If $f\in C^{l,1}(U_2)$, $1\leq p<\infty$ and $0\leq s \leq l+1$, then $f\in\mathrm{W}^{s,p}(U_2)$.
    \end{enumerate}
\end{lem}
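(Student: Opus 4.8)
The plan is to reduce all three statements to facts about \emph{compactly supported} functions on $\Rn$, and then to invoke the Leibniz rule, the Fa\`a di Bruno chain rule, and elementary properties of Lipschitz functions on bounded sets. Since $U_{1}$ and $U_{2}$ are bounded, a function $f\in C^{l,1}(U_{2})$ — that is, $f=\tilde f|_{U_{2}}$ for some $\tilde f\in C^{l,1}(\Rn)$ — can be handled by first multiplying $\tilde f$ by a fixed $\chi\in C^{\infty}_{c}(\Rn)$ with $\chi\equiv 1$ on a neighbourhood of $\clos{U_{2}}$; the Leibniz rule shows $\chi\tilde f\in C^{l,1}(\Rn)$, and now every $\partial^{\alpha}(\chi\tilde f)$ with $\abs{\alpha}\leq l$ is bounded and Lipschitz on all of $\Rn$ (for $\abs{\alpha}<l$ because the next-order derivative is bounded, for $\abs{\alpha}=l$ by hypothesis). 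I would do the same for the components of $\psi$ and $\psi^{-1}$, multiplying their $C^{k,1}(\Rn)$ extensions by cutoffs that are $\equiv 1$ near $\clos{U_{1}}$ (resp.\ $\clos{U_{2}}$), which alters neither their values on those closures nor the images of those closures. From now on all extensions may be assumed compactly supported with bounded derivatives. For \emph{(i)}, with compactly supported extensions $\tilde f,\tilde g$, the Leibniz rule writes each $\partial^{\gamma}(\tilde f\tilde g)$ with $\abs{\gamma}\leq l$ as a finite sum of terms $\partial^{\alpha}\tilde f\,\partial^{\beta}\tilde g$ with $\abs{\alpha}+\abs{\beta}=\abs{\gamma}$; each term is bounded and continuous, and when $\abs{\gamma}=l$ both factors are bounded and Lipschitz, so the term (hence the sum) is Lipschitz. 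Thus $\tilde f\tilde g\in C^{l,1}(\Rn)$ and $fg\in C^{l,1}(U_{2})$.

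The main work is in \emph{(ii)}, and this is the step I expect to be the most technical. With $\tilde f$ and $\tilde\psi=(\tilde\psi_{1},\dots,\tilde\psi_{n})$ as above, I would apply the Fa\`a di Bruno formula to write each $\partial^{\gamma}(\tilde f\circ\tilde\psi)$ with $\abs{\gamma}\leq l$ as a finite sum of products of one term $(\partial^{\alpha}\tilde f)\circ\tilde\psi$ with $\abs{\alpha}\leq\abs{\gamma}$ and factors $\partial^{\beta_{j}}\tilde\psi_{i_{j}}$ with $1\leq\abs{\beta_{j}}$ and $\sum_{j}\abs{\beta_{j}}\leq\abs{\gamma}\leq l$. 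The bookkeeping point is that every derivative of $\tilde\psi$ that appears has order $\abs{\beta_{j}}\leq l\leq k$, hence is bounded and Lipschitz because $\tilde\psi_{i}\in C^{k,1}(\Rn)$ has compact support; and every derivative of $\tilde f$ that appears has order $\abs{\alpha}\leq l$, hence is bounded and Lipschitz, so its composition with the Lipschitz map $\tilde\psi$ is again bounded and Lipschitz. Therefore $\partial^{\gamma}(\tilde f\circ\tilde\psi)$ is continuous for $\abs{\gamma}\leq l$ and Lipschitz for $\abs{\gamma}=l$, so $\tilde f\circ\tilde\psi\in C^{l,1}(\Rn)$; restricting to $U_{1}$ gives $f\circ\psi\in C^{l,1}(U_{1})$. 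The converse is the same argument with $\psi^{-1}$ (also a $C^{k,1}$ bi-Lipschitz bijection) in place of $\psi$ and $f\circ\psi$ in place of $f$, using $(f\circ\psi)\circ\psi^{-1}=f$ on $U_{2}$.

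For \emph{(iii)}, I would take a compactly supported $\tilde f\in C^{l,1}(\Rn)$: then $\partial^{\alpha}\tilde f\in\El{\infty}(\Rn)$ for $\abs{\alpha}\leq l$, and for $\abs{\alpha}=l$ the Lipschitz property gives $\nabla\partial^{\alpha}\tilde f\in\El{\infty}(\Rn)$ as well, so $\partial^{\alpha}\tilde f\in\El{\infty}(\Rn)$ for all $\abs{\alpha}\leq l+1$. Restricting to the bounded set $U_{2}$ yields $\partial^{\alpha}f\in\El{p}(U_{2})$ for $\abs{\alpha}\leq l+1$ and every $p<\infty$, i.e.\ $f\in\mathrm{W}^{l+1,p}(U_{2})$. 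If $s$ is an integer with $s\leq l+1$ this already gives $f\in\mathrm{W}^{s,p}(U_{2})$. If $s=m+\sigma$ with $m\in\N$ and $\sigma\in(0,1)$, then $m\leq l$, the $\mathrm{W}^{m,p}(U_{2})$-part of the norm is finite by the above, and for $\abs{\alpha}=m$ the function $\partial^{\alpha}f$ is Lipschitz on $U_{2}$ with some constant $L$, so
\begin{equation*}
\left[\partial^{\alpha}f\right]_{\sigma,p,U_{2}}^{p}\leq L^{p}\int_{U_{2}}\int_{U_{2}}\abs{x-y}^{-(n-(1-\sigma)p)}\dd x\dd y<\infty,
\end{equation*}
since $n-(1-\sigma)p<n$ makes the integrand locally integrable near the diagonal over the bounded set $U_{2}\times U_{2}$. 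Hence $f\in\mathrm{W}^{s,p}(U_{2})$ in every case. The only genuinely delicate part is the index bookkeeping in \emph{(ii)}; \emph{(i)} and \emph{(iii)} are entirely routine once the reduction to compactly supported extensions has been made.
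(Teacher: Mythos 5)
The paper does not supply a proof of this lemma; it explicitly states the proof is ``left to the reader.'' Your argument is correct and is essentially the standard one the authors have in mind: reduce to compactly supported extensions via a cutoff, use the Leibniz rule for (i), the multivariate Fa\`a di Bruno formula for (ii) (noting that every derivative of $\psi$ that appears has order at most $\abs{\gamma}\le l\le k$, so it is bounded and Lipschitz, while every $(\partial^\alpha\tilde f)\circ\tilde\psi$ with $\abs{\alpha}\le l$ is the composition of a bounded Lipschitz function with a Lipschitz map), and for (iii) observe that the Slobodeckij seminorm of a Lipschitz function over a bounded set is finite because $n-(1-\sigma)p<n$. The only slight informality is in the cutoff reduction for (ii): one should take the cutoff $\chi$ for $\tilde f$ to be $\equiv1$ on a neighbourhood of $\clos{U_2}$ large enough that $\tilde\psi$ maps a neighbourhood of $\clos{U_1}$ into it, so that $(\chi\tilde f)\circ(\chi'\tilde\psi)$ actually agrees with $f\circ\psi$ on $U_1$; this is automatic by continuity but deserves a sentence. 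Otherwise the bookkeeping is sound.
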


We note that our definition of $C^{0,1}(\partial\Omega)$ coincides with the classical space of Lipschitz functions on $\partial\Omega$. The reader can refer to \cite[Section 6.2]{Gilbarg_Trudinger_2nd_order_PDE} for more details and some properties of these spaces. 
We shall need the following density result.
\begin{lem}\label{density lemma in boundary sobolev spaces}
    Let $k\geq 0$ be an integer, and let $\Omega\subset\R^{1+n}$ be a bounded domain of class $C^{k,1}$. If $1\leq p<\infty$ and $0\leq s \leq k+1$, then $C^{k,1}(\partial\Omega)$ is a dense subspace of $\mathrm{W}^{s,p}(\partial\Omega)$.
\end{lem}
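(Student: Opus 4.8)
The plan is to use a partition of unity subordinate to a finite atlas in order to reduce the statement to the following local fact: a compactly supported element of $\mathrm{W}^{s,p}$ of a ball in $\Rn$ is approximated in norm by functions of class $C^\infty$ with support in that ball, which one obtains by zero extension and mollification. First I would fix the finite family $\{T_{P_1}(R_{P_1}),\dots,T_{P_N}(R_{P_N})\}$ covering $\partial\Omega$ that is used to define $\norm{\cdot}_{\mathrm{W}^{s,p}(\partial\Omega)}$, together with a partition of unity $\{\chi_i\}_{i=1}^{N}$ with $\chi_i\in C^\infty_c(\R^{1+n})$, $\supp{\chi_i}\subset T_{P_i}(R_{P_i})$ and $\sum_i\chi_i\equiv 1$ near $\partial\Omega$. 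For each $i,j$ the function $\chi_i\circ\Phi_{P_j}$ is the restriction to $\Delta(0,r_{P_j})$ of $\chi_i\circ\widetilde{\Phi}_{P_j}$, where $\widetilde{\Phi}_{P_j}(x)=T_{P_j}((\varphi_{P_j}(x),x))$ is a proper map of class $C^{k,1}(\Rn;\R^{1+n})$ extending the parametrisation $\Phi_{P_j}$ of \eqref{local parametrisation of boundary Omega around P}; hence $\chi_i\circ\Phi_{P_j}\in C^{k,1}_c(\clos{\Delta(0,r_{P_j})})$, and Lemma~\ref{multiplication by Lipschitz in Sobolev} (applied chart by chart) shows that $\chi_i f\in\mathrm{W}^{s,p}(\partial\Omega)$ whenever $f\in\mathrm{W}^{s,p}(\partial\Omega)$. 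By Lemma~\ref{elementary properties of C^{k,l} functions and sobolev}(iii), applied in each chart, one has $C^{k,1}(\partial\Omega)\subseteq\mathrm{W}^{s,p}(\partial\Omega)$ for $0\leq s\leq k+1$, and since $f=\sum_i\chi_i f$ and $C^{k,1}(\partial\Omega)$ is a linear space, it suffices to approximate each $\chi_i f$ in $\mathrm{W}^{s,p}(\partial\Omega)$ by elements of $C^{k,1}(\partial\Omega)$.

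Fix $i$ and set $h=(\chi_i f)\circ\Phi_{P_i}$. Then $h\in\mathrm{W}^{s,p}(\Delta(0,r_{P_i}))$ and, as $\supp{\chi_i}$ is a compact subset of $T_{P_i}(R_{P_i})$, the support of $h$ is a compact subset of $\Delta(0,r_{P_i})$. Its extension by zero, $\bar h$, therefore lies in $\mathrm{W}^{s,p}(\Rn)$: the only point requiring attention is finiteness of the Slobodeckij seminorm when $s$ is non-integer, and the integral over $\supp{h}\times(\Rn\setminus\Delta(0,r_{P_i}))$ converges precisely because $\supp{h}$ is at positive distance from $\partial\Delta(0,r_{P_i})$. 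Mollifying, $h_\eps:=\bar h\ast\rho_\eps\in C^\infty_c(\Rn)$ satisfies $h_\eps\to\bar h$ in $\mathrm{W}^{s,p}(\Rn)$ as $\eps\to0^+$, and for $\eps$ small $\supp{h_\eps}$ lies in a fixed compact subset of $\Delta(0,r_{P_i})$. Push this forward: set $g_\eps=h_\eps\circ\Phi_{P_i}^{-1}$ on $\partial\Omega\cap T_{P_i}(R_{P_i})$ and $g_\eps=0$ elsewhere, which is consistent since $\supp{h_\eps}$ is compactly contained in $\Delta(0,r_{P_i})$. Because $\Phi_{P_i}^{-1}=\Pi\circ T_{P_i}^{-1}$ is affine (with $\Pi(t,x)=x$) while $\widetilde{\Phi}_{P_j}$ is of class $C^{k,1}$, every transition map $\Phi_{P_i}^{-1}\circ\Phi_{P_j}$ is the restriction of a globally defined $C^{k,1}$ map on $\Rn$; inspecting the local representations of $g_\eps$ in the charts and using Lemma~\ref{elementary properties of C^{k,l} functions and sobolev}, one checks $g_\eps\in C^{k,1}(\partial\Omega)$.

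It then remains to verify that $\norm{\chi_i f-g_\eps}_{\mathrm{W}^{s,p}(\partial\Omega)}\to0$. Writing $v_\eps=\chi_i f-g_\eps$, on the $i$-th chart one has $v_\eps\circ\Phi_{P_i}=(\bar h-\bar h\ast\rho_\eps)|_{\Delta(0,r_{P_i})}$, whose $\mathrm{W}^{s,p}(\Delta(0,r_{P_i}))$-norm is dominated by $\smallnorm{\bar h-\bar h\ast\rho_\eps}_{\mathrm{W}^{s,p}(\Rn)}\to0$; for $j\neq i$, once the partition of unity has been chosen fine enough (see below) the function $v_\eps\circ\Phi_{P_j}$ is supported in a fixed compact subset of $\Delta(0,r_{P_j})$ and on the overlap of the two charts equals $(v_\eps\circ\Phi_{P_i})\circ(\Phi_{P_i}^{-1}\circ\Phi_{P_j})$, so Lemma~\ref{biLipschitz change variable Sobolev spaces} yields $\smallnorm{v_\eps\circ\Phi_{P_j}}_{\mathrm{W}^{s,p}(\Delta(0,r_{P_j}))}\lesssim\smallnorm{v_\eps\circ\Phi_{P_i}}_{\mathrm{W}^{s,p}(\Delta(0,r_{P_i}))}\to0$; summing over $j$, and then over $i$, gives the claim. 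I expect the main obstacle to be precisely this last step: one must ensure simultaneously that the pushed-forward mollifications $g_\eps$ are globally of class $C^{k,1}$ on $\partial\Omega$ and that convergence holds in every chart of the fixed atlas defining the norm, which requires keeping careful track of how the compact supports of $\chi_i f$ and of $g_\eps$ meet the lateral boundaries of the other coordinate cylinders. This is handled by choosing the partition of unity fine enough that, by a Lebesgue-number argument, the supports sit at positive distance from the relevant cylinder boundaries, and by exploiting the global $C^{k,1}$ extensions $\widetilde{\Phi}_{P_j}$ of the charts together with Lemmas~\ref{biLipschitz change variable Sobolev spaces} and \ref{multiplication by Lipschitz in Sobolev}; together with the zero-extension observation for non-integer $s$, these are the points at which care is needed, the remaining manipulations being routine.
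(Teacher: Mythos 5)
Your proposal is correct and reaches the same conclusion, but the route through the local approximation step is genuinely different from the paper's, so a comparison is worthwhile.

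The paper also localises via a partition of unity subordinate to the chart atlas and also reduces the norm estimate to the chart balls by composing with the transition maps and invoking Lemma~\ref{biLipschitz change variable Sobolev spaces} and Lemma~\ref{multiplication by Lipschitz in Sobolev}. Where you differ is the order of the two operations ``cut off'' and ``approximate''. The paper first approximates $f\circ\Phi_j$ on the whole ball $\Delta(0,r_j)$ by a smooth function $\tilde g_j\in C^\infty(\clos{\Delta(0,r_j)})$ with no support restriction, citing Grisvard's density theorem (which holds because the ball has continuous boundary), and only then multiplies the pushed-forward $g_j=\tilde g_j\circ\Phi_j^{-1}$ by the cutoff $\chi_j$ to produce the global approximant $g=\sum_j\chi_j g_j$. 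You instead cut off first, producing the compactly supported $h=(\chi_i f)\circ\Phi_{P_i}$, and then approximate it by the elementary route of zero extension to $\Rn$ followed by mollification. This replaces Grisvard's density theorem with a self-contained argument, at the cost of having to verify that the zero extension lies in $\mathrm{W}^{s,p}(\Rn)$ when $s$ is fractional, and that the mollifications remain compactly supported in the chart ball for small $\eps$ — both of which you handle correctly. In the paper's version the approximant $\tilde g_j$ need not be compactly supported (the cutoff $\chi_j$ ensures the push-forward is well-defined globally), so there is no zero-extension issue; in your version the mollification and zero-extension arguments are more elementary but require the support bookkeeping. Both approaches then verify $g\in C^{k,1}(\partial\Omega)$ and estimate chart-by-chart in essentially the same way.

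One small inaccuracy worth flagging: you suggest handling the behaviour of $v_\eps\circ\Phi_{P_j}$ in the non-$i$ charts by ``choosing the partition of unity fine enough'' via a Lebesgue-number argument. Refining the atlas is not really what is needed here. The needed fact is simply that $\supp\chi_i$ and, for $\eps$ small, $\supp g_\eps$ are compactly contained in $T_{P_i}(R_{P_i})$, so that $\supp v_\eps$ sits at a fixed positive distance from $\partial T_{P_i}(R_{P_i})$; transferred by the bi-Lipschitz $\Phi_{P_j}^{-1}$ this puts $\supp(v_\eps\circ\Phi_{P_j})$ at positive distance from the inner boundary of the overlap region $\Phi_{P_j}^{-1}(\partial\Omega\cap T_{P_i}(R_{P_i})\cap T_{P_j}(R_{P_j}))$ inside $\Delta(0,r_{P_j})$. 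That positive distance is exactly what makes the Slobodeckij cross-terms finite and allows passing from the $\mathrm{W}^{s,p}$-norm on the overlap up to the $\mathrm{W}^{s,p}$-norm on the full ball; it is available with any fixed subordinate partition of unity and does not require refining the cover. With this correction the argument closes, and otherwise your proposal is sound.
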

\begin{proof}
    By compactness of $\partial\Omega$, there are 
\begin{enumerate}[label=(\roman*)]
    \item An integer $N\geq 1$, numbers $r_{1},\ldots, r_N>0$ and $M>0$;
    \item Points $\set{P_{1},\ldots, P_{N}}\subset\partial\Omega$;
    \item Functions $\set{\varphi_{j}:\Rn\to \R}_{j=1}^{N}$ of class $C^{k,1}$, such that $\varphi_{j}(0)=0$, $\abs{\varphi_{j}(x)-\varphi_{j}(y)}\leq M\abs{x-y}$ for all $x,y\in\Rn$ and all $j\in\set{1,\ldots,N}$;
    \item Isometries $\set{T_{j}:\R^{1+n}\to\R^{1+n}}_{j=1}^{N}$ such that $T_{j}(0)=P_{j}$ for all $j\in\set{1,\ldots,N}$;
    \item Special Lipschitz domains $\Omega_{j}=\set{(t,x)\in\R^{1+n} : t>\varphi_{j}(x)}$;
    \item Cylinders $R_{j}=\set{(t,x)\in\R^{1+n} : \abs{x}<r_j \textup{ and }\abs{t}<(1+M)r_j}$,
    \end{enumerate}
    such that 
    \begin{equation*}
    \partial\Omega\subseteq \bigcup_{j=1}^{N} T_{j}(R_{j}),
    \end{equation*}
    and 
    \begin{equation*}
        \Omega\cap T_{j}(R_{j})= T_{j}(\Omega_{j})\cap T_{j}(R_{j})
    \end{equation*}
    for all $j\in\set{1,\ldots, N}$.
    Let 
    \begin{equation*}
    \Phi_{j}: \Delta(0, r_{j})\to \partial\Omega \cap T_{j}(R_j):x \mapsto T_{j}((\varphi_{j}(x) , x))
    \end{equation*}
    denote the corresponding local parametrisations of $\partial\Omega$ for $j\in\set{1,\ldots, N}$.
    
    Let $(\eta_{j})_{1\leq j\leq N}$ be a smooth partition of unity for this covering, i.e. $\eta_{j}\in C^{\infty}_{c}(\R^{1+n}, [0,1])$ with $\supp{\eta_{j}}\subseteq T_{j}(R_{j})$ for all $1\leq j\leq N$ and $\sum_{j=1}^{N}\eta_{j}=1$ on a neighbourhood of $\partial\Omega$. 
    Let $f\in\mathrm{W}^{s,p}(\partial\Omega)$ and $\eps>0$. By definition, it holds that $f\circ \Phi_{j}\in\mathrm{W}^{s,p}(\Delta(0,r_j))$ for all $j\in\set{1,\ldots, N}$, hence there exist $\tilde{g}_{j}\in C^{\infty}(\clos{\Delta(0,r_j)})$ (i.e. the restriction to $\clos{\Delta(0,r_j)}$ of a function in $C^{\infty}(\Rn)$) such that $\norm{f\circ \Phi_{j} - \tilde{g}_{j}}_{\mathrm{W}^{s,p}(\Delta(0,r_j))}<\eps$ for all $j\in\set{1,\ldots,N}$ (this is because $\Delta(0,r_j)$ is a domain with a continuous boundary; see, e.g., \cite[Theorem~1.4.2.1]{Grisvard_book_1985}).

    For each $j\in\set{1,\ldots, N}$, let $\chi_{j}=\eta_{j}|_{\partial\Omega}:\partial\Omega \to\R$ denote the restriction of $\eta_{j}$ to $\partial\Omega$, and note that $\supp{\chi_{j}}\subseteq \partial\Omega\cap T_{j}(R_{j})$ and $\sum_{j=1}^{N}\chi_{j}=1$. We also let $g_{j}=\tilde{g}_{j}\circ \Phi_{j}^{-1}:\partial\Omega\cap T_{j}(R_j) \to \R$ for all $j\in\set{1,\ldots, N}$ and define $g=\sum_{j=1}^{N}\chi_{j}g_{j}:\partial\Omega \to\R$, where it is understood that 
    \begin{equation*}
        (\chi_{j}g_{j})(P)=\left\{ \begin{array}{ll}
            \chi_{j}(P) g_{j}(P), & \textup{if }P\in\partial\Omega\cap T_{j}(R_{j}),  \\
              0, & \textup{if }P\in \partial\Omega\setminus (\partial\Omega \cap T_{j}(R_{j})).
        \end{array}\right.
    \end{equation*}
    
    We claim that $\chi_{j}g_{j}\in C^{k,1}(\partial\Omega)$ for all $j\in\set{1,\ldots, N}$, and hence that $g\in C^{k,1}(\partial\Omega)$. Let $i,j\in\set{1,\ldots, N}$ be fixed. The function $(\chi_{j}g_{j})\circ \Phi_{i}$ is only non-zero on the open set $\Phi_{i}^{-1}(\partial\Omega \cap T_{j}(R_j))\subseteq \Delta(0,r_i)$, and on this set it holds that 
    \begin{align*}
        (\chi_{j}g_{j})\circ \Phi_{i} = (\eta_{j}\circ \Phi_{i})(\tilde{g}_{j}\circ (\Phi_{j}^{-1}\circ \Phi_{i})).
    \end{align*}
By assumption, the map 
\begin{equation}\label{change of coordinate map between i and j}
    \Phi_{j}^{-1}\circ \Phi_{i}|_{\Delta(0,r_i)\cap \Phi_{i}^{-1}(\partial\Omega\cap T_{j}(R_j))} : \Delta(0,r_i)\cap \Phi_{i}^{-1}(\partial\Omega\cap T_{j}(R_j))\to \Delta(0,r_j)\cap \Phi_{j}^{-1}(\partial\Omega\cap T_{i}(R_i))
\end{equation}
and its inverse are of class $C^{k,1}$. Since $\tilde{g}_{j}\in C^{\infty}(\clos{\Delta(0,r_j)})$ and $\eta_{j}\circ \Phi_{i}\in C^{k,1}(\Delta(0,r_i))$, it follows from Lemma~\ref{elementary properties of C^{k,l} functions and sobolev} that $(\chi_{j}g_{j})\circ \Phi_{i}\in C^{k,1}(\Delta(0,r_i))$, and this proves that $g\in C^{k,1}(\partial\Omega)\subseteq \mathrm{W}^{s,p}(\partial\Omega)$. 

Since $f-g=\sum_{j=1}^{N}(\chi_{j}f -\chi_{j}g_{j})$, it holds that 
\begin{align*}
    \norm{f-g}_{\mathrm{W}^{s,p}(\partial\Omega)}&\leq \sum_{j=1}^{N}\norm{f\chi_{j}-\chi_{j}g_{j}}_{\mathrm{W}^{s,p}(\partial\Omega)}\\
    &\eqsim \sum_{j=1}^{N}\sum_{i=1}^{N}\norm{(f\chi_{j})\circ \Phi_i - (\chi_{j}g_j)\circ \Phi_{i}}_{\mathrm{W}^{s,p}(\Delta(0,r_i))}\\
    &= \sum_{j=1}^{N}\sum_{i=1}^{N}\norm{(f\chi_{j})\! \circ\! \Phi_{j}\!\circ\! \Phi_{j}^{-1}\!\circ\! \Phi_i \! - (\chi_{j}g_j)\!\circ\! \Phi_{j}\!\circ\! \Phi_{j}^{-1}\!\circ\!\Phi_{i}}_{\mathrm{W}^{s,p}(\Delta(0,r_i)\cap \Phi_{i}^{-1}(\partial\Omega\cap T_{j}(R_j)))}\\
    &\lesssim \sum_{j=1}^{N}\sum_{i=1}^{N}\norm{(f\chi_{j})\circ \Phi_{j}  - (\chi_{j}g_j)\circ \Phi_{j}}_{\mathrm{W}^{s,p}(\Delta(0,r_j)\cap \Phi_{j}^{-1}(\partial\Omega\cap T_{i}(R_i)))}\\
    &\lesssim \sum_{j=1}^{N}\norm{(f\chi_{j})\circ \Phi_{j}  - (\chi_{j}g_j)\circ \Phi_{j}}_{\mathrm{W}^{s,p}(\Delta(0,r_j))}\\
    &=\sum_{j=1}^{N}\norm{(\chi_{j}\circ \Phi_{j})\cdot (f-g_{j})\circ \Phi_{j}}_{\mathrm{W}^{s,p}(\Delta(0,r_j))}\\
    &\lesssim \sum_{j=1}^{N}\norm{f\circ \Phi_{j} - \tilde{g}_{j}}_{\mathrm{W}^{s,p}(\Delta(0,r_j))}\lesssim \eps,
\end{align*}
where in the fourth line we have used Lemma~\ref{biLipschitz change variable Sobolev spaces} and the fact that the map in \eqref{change of coordinate map between i and j} and its inverse are of class $C^{k,1}$, and in the last line we have used Lemma~\ref{multiplication by Lipschitz in Sobolev} and the fact that $\chi_{j}\circ \Phi_{j}\in C^{k,1}(\clos{\Delta(0,r_j)})$. Since $\eps>0$ was arbitrary, this concludes the proof.
\end{proof}
\subsection{Corkscrew points and geometry in Lipschitz domains}\label{section on corkscrew points}
Since $\Omega$ is a bounded Lipschitz domain, it satisfies a corkscrew condition. This means that there is a (small) constant $c_{\Omega}>0$ such that for all $t\in(0,c_{\Omega})$ and $Q\in\partial\Omega$, there exists (at least) one corresponding \emph{corkscrew point} $A_{t}(Q)\in\Omega$ with the property that 
\begin{equation*}
    \dist(A_{t}(Q), \partial\Omega)\eqsim \abs{A_{t}(Q)-Q}\eqsim t.
\end{equation*}

We first give an explicit description of corkscrew points in special Lipschitz domains. Let $\varphi:\Rn\to \R$ be Lipschitz with constant $M>0$, and let $\Omega_{\varphi}=\set{(t,x)\in\R^{1+n}: t>\varphi(x)}$.
\begin{lem}\label{explicit corkscrew points in special Lipschitz domains}
    Let $x_0\in\Rn$ and $t>0$. If $Q_{0}=(\varphi(x_0),x_0)\in\partial\Omega_{\varphi}$, then $A_{t}(Q_0)=(\varphi(x_0)+t,x_0)\in\Omega_{\varphi}$ is a corkscrew point for $Q_0$.
\end{lem}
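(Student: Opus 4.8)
The plan is to verify the two comparability relations $\dist(A_t(Q_0),\partial\Omega_\varphi)\eqsim t$ and $\abs{A_t(Q_0)-Q_0}\eqsim t$ directly from the geometry of the epigraph, using only the Lipschitz bound $M$ on $\varphi$.

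First I would record the trivial observations. Since $A_t(Q_0)=(\varphi(x_0)+t,x_0)$ and $t>0$, we have $\varphi(x_0)+t>\varphi(x_0)$, so indeed $A_t(Q_0)\in\Omega_\varphi$. Moreover $\abs{A_t(Q_0)-Q_0}=\abs{(\varphi(x_0)+t,x_0)-(\varphi(x_0),x_0)}=t$, which gives the second comparability (in fact an equality). It therefore only remains to estimate $\dist(A_t(Q_0),\partial\Omega_\varphi)$ from above and below. The upper bound is immediate: $Q_0\in\partial\Omega_\varphi$ and $\abs{A_t(Q_0)-Q_0}=t$, so $\dist(A_t(Q_0),\partial\Omega_\varphi)\leq t$.

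The substance of the lemma is the lower bound $\dist(A_t(Q_0),\partial\Omega_\varphi)\gtrsim t$, with an implicit constant depending only on $M$. The natural approach is to show that the ball $B(A_t(Q_0),ct)$ is contained in $\Omega_\varphi$ for a suitable $c=c(M)>0$, which forces $\dist(A_t(Q_0),\partial\Omega_\varphi)\geq ct$. So take any point $(s,y)$ with $\abs{(s,y)-(\varphi(x_0)+t,x_0)}<ct$; then $\abs{y-x_0}<ct$ and $\abs{s-(\varphi(x_0)+t)}<ct$. Using the Lipschitz bound, $\varphi(y)\leq\varphi(x_0)+M\abs{y-x_0}<\varphi(x_0)+Mct$, while $s>\varphi(x_0)+t-ct=\varphi(x_0)+(1-c)t$. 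Hence $s-\varphi(y)>(1-c)t-Mct=(1-c(1+M))t$, which is positive as soon as $c<\frac{1}{1+M}$. Choosing, say, $c=\frac{1}{2(1+M)}$ shows $(s,y)\in\Omega_\varphi$, whence $\dist(A_t(Q_0),\partial\Omega_\varphi)\geq\frac{t}{2(1+M)}$.

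Combining the upper and lower bounds yields $\dist(A_t(Q_0),\partial\Omega_\varphi)\eqsim t$ with constants depending only on $M$, and together with $\abs{A_t(Q_0)-Q_0}=t$ this establishes that $A_t(Q_0)$ is a corkscrew point for $Q_0$. There is no real obstacle here; the only point requiring a modicum of care is making sure the constant $c$ in the ball-containment argument is chosen to depend only on the Lipschitz constant $M$ (and not on $x_0$ or $t$), so that the resulting comparability constants are uniform over the boundary, which is exactly what is needed when this lemma is later applied through the local charts $\Phi_P$.
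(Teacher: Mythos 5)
Your proof is correct, and it takes a genuinely cleaner route than the paper's. The paper lower-bounds $\dist(A_t(Q_0),\partial\Omega_\varphi)$ by estimating $\abs{(\varphi(x_0)+t,x_0)-(\varphi(x),x)}$ directly for an arbitrary graph point $(\varphi(x),x)$; this requires splitting into the cases $M\leq 1$ and $M>1$, and in the latter case further splitting according to whether $\abs{x-x_0}$ is small or large relative to $t/(M-1)$, producing the bound $t\min\{\tfrac14,\tfrac{1}{2(M-1)}\}$ (with a separate bound $t/2$ when $M\leq 1$). You instead exhibit an explicit ball $B(A_t(Q_0),\tfrac{t}{2(1+M)})\subseteq\Omega_\varphi$ by checking that any point $(s,y)$ in it satisfies $s>\varphi(y)$, which immediately gives $\dist(A_t(Q_0),\partial\Omega_\varphi)\geq\tfrac{t}{2(1+M)}$ uniformly in $M$, with no case analysis. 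Both approaches rely on the same Lipschitz estimate and yield constants depending only on $M$, as needed for the subsequent applications through local charts; your argument is the more economical of the two. One very small point worth making explicit (though it is standard): the containment $B(A_t(Q_0),ct)\subseteq\Omega_\varphi$ implies the distance bound because $\Omega_\varphi$ is open and disjoint from its boundary, so no point of $\partial\Omega_\varphi$ can lie in that ball.
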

\begin{proof}
    It clearly suffices to prove that $\dist(A_{t}(Q_0),\partial\Omega_{\varphi})\gtrsim t$. We first treat the case when the Lipschitz constant $M$ of $\varphi$ satisfies $M\leq 1$. Then, for all $x\in\Rn$ it holds that
    \begin{align*}
        \abs{(\varphi(x_0)+t,x_0)-(\varphi(x),x)}&=\left(\abs{\varphi(x_0)-\varphi(x)+t}^{2} + \abs{x-x_0}^{2}\right)^{1/2}\\
        &\geq \frac{1}{2}\left(\abs{\varphi(x_0)-\varphi(x)+t} + \abs{x-x_0}\right)\\
        &\geq \frac{1}{2}\left(t-\abs{\varphi(x_0)-\varphi(x)} + \abs{x-x_0}\right)\\
        &\geq \frac{1}{2}\left(t+ \left(M^{-1}-1\right)\abs{\varphi(x_0)-\varphi(x)}\right)\geq \frac{t}{2},
    \end{align*}
    hence $\dist(A_{t}(Q_0), \partial\Omega_{\varphi})\geq \frac{t}{2}$. We now assume that $M>1$. First, if $\abs{x-x_0}<\frac{t}{2(M-1)}$, then
    \begin{align*}
        \abs{(\varphi(x_0)+t,x_0)-(\varphi(x),x)}&
        \geq \frac{1}{2}\left(t-\abs{\varphi(x_0)-\varphi(x)} + \abs{x-x_0}\right)\\
        &\geq \frac{1}{2}\left(t+(1-M)\abs{x-x_0}\right)\geq \frac{t}{4}.
    \end{align*}
    On the other hand, if $\abs{x-x_0}\geq \frac{t}{2(M-1)}$, then we simply have 
    \begin{equation*}
    \abs{(\varphi(x_0)+t,x_0)-(\varphi(x),x)}\geq \abs{x-x_0}\geq \frac{t}{2(M-1)},  
    \end{equation*}
    hence $\dist(A_{t}(Q_0), \partial\Omega_{\varphi})\geq t\min\set{\frac{1}{4} , \frac{1}{2(M-1)}}$. This concludes the proof.
\end{proof}
In the rest of this section, we assume that $\Omega$ is a bounded Lipschitz domain with Lipschitz constant $M>0$.
The following lemma provides explicit corkscrew points in coordinate cylinders. 
\begin{lem}\label{explicit representation of corkscrew points in local cylinder coordinates}
    Let $P\in\partial\Omega$ and $r>0$ be such that
    \begin{equation*}
        \Omega\cap T_{P}(R(C_M r))=T_{P}(\Omega_P)\cap T_{P}(R(C_M r))
    \end{equation*}
    for $C_{M}=10(1+M)$.
    Let $Q\in T_{P}(R(r))\cap\partial\Omega=T_{P}(R(r))\cap T_{P}(\partial\Omega_{P})$, and let $x\in \Delta(0,r)$ such that $Q=\Phi_{P}(x)= T_{P}((\varphi_{P}(x) , x))$ in \eqref{local parametrisation of boundary Omega around P}. 
    If $0<t\leq 3(1+M)r$, then $A_{t}(Q)=T_{P}((\varphi_{P}(x)+t,x))$ is a corkscrew point for $Q$. 
\end{lem}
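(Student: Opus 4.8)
The plan is to verify directly the three defining properties of a corkscrew point for $Q$: that $A_t(Q)\in\Omega$, that $\dist(A_t(Q),\partial\Omega)\eqsim t$, and that $\smallabs{A_t(Q)-Q}\eqsim t$. The last of these is immediate, since $T_P$ is an isometry and so $\smallabs{A_t(Q)-Q}=\smallabs{(\varphi_P(x)+t,x)-(\varphi_P(x),x)}=t$. Moreover, as $Q\in\partial\Omega$ we automatically get the upper bound $\dist(A_t(Q),\partial\Omega)\leq\smallabs{A_t(Q)-Q}=t$, so the work is to show $A_t(Q)\in\Omega$ and $\dist(A_t(Q),\partial\Omega)\gtrsim t$.

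First I would record some elementary size bounds in the $T_P^{-1}$-coordinates. Writing $\tilde A=(\varphi_P(x)+t,x)$, the hypotheses give $\abs{x}<r$, and the Lipschitz bound $\abs{\varphi_P(x)}=\abs{\varphi_P(x)-\varphi_P(0)}\leq M\abs{x}$ together with $t\leq 3(1+M)r$ yields $\abs{\varphi_P(x)+t}<Mr+3(1+M)r<4(1+M)r$. Since $C_M=10(1+M)$, both $\abs{x}<C_Mr$ and $\abs{\varphi_P(x)+t}<(1+M)C_Mr$, so $\tilde A\in R(C_Mr)$, i.e.\ $A_t(Q)\in T_P(R(C_Mr))$. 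Because $\varphi_P(x)+t>\varphi_P(x)$ we have $\tilde A\in\Omega_P$, hence $A_t(Q)\in T_P(\Omega_P)\cap T_P(R(C_Mr))=\Omega\cap T_P(R(C_Mr))\subseteq\Omega$, where we used the hypothesis relating $\Omega$ and $\Omega_P$ in the cylinder $T_P(R(C_Mr))$.

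For the lower bound $\dist(A_t(Q),\partial\Omega)\gtrsim t$ I would split $\partial\Omega$ into its intersection with $T_P(R(C_Mr))$ and its complement. For the first piece, $\partial\Omega\cap T_P(R(C_Mr))=T_P(\partial\Omega_P)\cap T_P(R(C_Mr))\subseteq T_P(\partial\Omega_P)$, so using that $T_P$ is an isometry and applying Lemma~\ref{explicit corkscrew points in special Lipschitz domains} to $\Omega_P$ (with base point $(\varphi_P(x),x)$) gives $\dist\bigl(A_t(Q),\partial\Omega\cap T_P(R(C_Mr))\bigr)\geq\dist(\tilde A,\partial\Omega_P)\gtrsim t$. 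For the second piece, if $Y\in\partial\Omega\setminus T_P(R(C_Mr))$ then $T_P^{-1}(Y)=(s,y)$ satisfies $\abs{y}\geq C_Mr$ or $\abs{s}\geq(1+M)C_Mr$; in the first case $\smallabs{A_t(Q)-Y}\geq\abs{x-y}\geq C_Mr-r\geq 9(1+M)r\geq 3t$, and in the second $\smallabs{A_t(Q)-Y}\geq\abs{(\varphi_P(x)+t)-s}\geq(1+M)C_Mr-4(1+M)r\geq 6(1+M)r\geq 2t$. Taking the infimum over both pieces of $\partial\Omega$ yields $\dist(A_t(Q),\partial\Omega)\gtrsim t$, which finishes the proof.

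The only real point to watch is the estimate for $Y$ outside the big cylinder: the choice $C_M=10(1+M)$ is made precisely so that $T_P(R(C_Mr))$ is comfortably larger than both $R(r)$ (the cylinder in which $Q$ lives) and the vertical reach $\sim 4(1+M)r$ of $A_t(Q)$, leaving enough room to keep $A_t(Q)$ a fixed multiple of $t$ away from the far part of $\partial\Omega$. Everything else reduces to the special-domain Lemma~\ref{explicit corkscrew points in special Lipschitz domains} and the isometry property of $T_P$.
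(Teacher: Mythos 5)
Your proof is correct and follows essentially the same strategy as the paper: show $A_t(Q)$ lies in the cylinder $T_P(R(C_Mr))$ (hence in $\Omega$), split $\partial\Omega$ into the portion inside and outside that cylinder, use Lemma~\ref{explicit corkscrew points in special Lipschitz domains} via the isometry for the near piece, and show the far piece contributes no smaller distance. The only cosmetic difference is that you bound the far piece by projecting onto each coordinate block of the cylinder separately, whereas the paper uses a single triangle inequality via Remark~\ref{remark on distances on the Lipschitz boundary in cylinders} to get $\abs{Q-Q_0}\geq 9(1+M)r$ and hence $\abs{A_t(Q)-Q_0}\geq 2t$; both yield the same conclusion.
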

\begin{proof}
Observe that $\abs{Q-P}<(1+M)r$ by Remark~\ref{remark on distances on the Lipschitz boundary in cylinders}.
Similarly, if $Q_{0}\in \partial\Omega\setminus T_{P}(R(C_M r)) $, then $\abs{P-Q_0}\geq C_M r$, and therefore $\abs{Q-Q_0}\geq 9(1+M)r$. Let $0<t\leq 3(1+M)r$. Note that $A_{t}(Q)\in T_{P}(R(C_M r))$ and therefore $A_{t}(Q)\in\Omega$. Observe that
\begin{align*}
    \abs{Q_{0}-A_{t}(Q)}&\geq \abs{Q_{0}-Q}-\abs{Q-A_{t}(Q)}\geq 9(1+M)r-t \geq 2t.
\end{align*}
Since $Q\in T_{P}(R(r))\cap\partial\Omega$ and $\abs{Q-A_{t}(Q)}=t<2t$, we have that
\begin{align*}
    \dist(A_{t}(Q),\partial\Omega)&=\inf_{Q_{0}\in\partial\Omega}\abs{A_{t}(Q)-Q_0}=\inf_{Q_{0}\in\partial\Omega\cap T_{P}(R(C_M r))}\abs{A_{t}(Q)-Q_0}\\
    &=\inf_{Q_{0}\in T_{P}(\partial\Omega_{P})\cap T_{P}(R(C_M r))}\abs{A_{t}(Q)-Q_0}=\inf_{P_{0}\in \partial\Omega_{P}\cap R(C_M r)}\abs{T_{P}^{-1}(A_{t}(Q))-P_0}\\
    &\geq \dist(T_{P}^{-1}(A_{t}(Q)), \partial\Omega_{P})\gtrsim t,
\end{align*}
where we have used Lemma~\ref{explicit corkscrew points in special Lipschitz domains} to obtain the last equivalence.
\end{proof}
The following lemma will allow us to localise the study of the approach regions $\Gamma^{\beta,c}_{\Omega}(Q_0)$. Note that equality actually holds in \eqref{inclusion relation tangential approach regions in Lipschitz domains} below, but we shall only need the stated inclusion. 
\begin{lem}\label{lemma tangential approach regions from bounded to special Lipschitz}
    Let $P\in\partial\Omega$ and $r>0$ be such that 
    \begin{equation*}
        \Omega\cap T_{P}(R(C_M r))=T_{P}(\Omega_P)\cap T_{P}(R(C_M r))
    \end{equation*}
    for $C_{M}=10(1+M)$. For all $Q_{0}\in\partial\Omega$, $\beta\in(0,1]$ and $c>0$ it holds that 
    \begin{equation}\label{inclusion relation tangential approach regions in Lipschitz domains}
        T_{P}^{-1}(\Gamma_{\Omega}^{\beta, c}(Q_0)\cap T_{P}(R(r)))\subseteq \Gamma_{\Omega_{P}}^{\beta,c}(T_{P}^{-1}(Q_0))\cap R(r).
    \end{equation}
\end{lem}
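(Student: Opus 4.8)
The statement is purely geometric: it asserts that the tangential approach region in the bounded domain $\Omega$, once restricted to a coordinate cylinder $T_P(R(r))$ and pulled back by the isometry $T_P$, is contained in the corresponding tangential approach region for the special Lipschitz domain $\Omega_P$. The plan is to take an arbitrary point $X \in \Gamma_{\Omega}^{\beta,c}(Q_0) \cap T_P(R(r))$, write $\tilde X = T_P^{-1}(X)$ and $\tilde Q_0 = T_P^{-1}(Q_0)$, and verify the two defining inequalities of $\Gamma_{\Omega_P}^{\beta,c}(\tilde Q_0)$: first $\tilde X \in \Omega_P$ (which follows immediately from $\Omega \cap T_P(R(C_M r)) = T_P(\Omega_P) \cap T_P(R(C_M r))$ and $X \in \Omega \cap T_P(R(r)) \subseteq \Omega \cap T_P(R(C_M r))$), and second the distance bound. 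Since $T_P$ is an isometry, $\abs{\tilde X - \tilde Q_0} = \abs{X - Q_0}$, so the left-hand side of the defining inequality is unchanged under the pullback.

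The crux is therefore to compare $\dist(\tilde X, \partial\Omega_P)$ with $\dist(X, \partial\Omega)$. First I would observe that since $X \in T_P(R(r))$ and any boundary point $Q' \in \partial\Omega \setminus T_P(R(C_M r))$ satisfies $\abs{X - Q'} \geq C_M r - (1+M)r \geq 9(1+M)r$ (using Remark~\ref{remark on distances on the Lipschitz boundary in cylinders} to control $\abs{P - X}$ by roughly $(1+M)r$ via the cylinder bounds, together with $\abs{P - Q'} \geq C_M r$), whereas $X \in \Gamma_{\Omega}^{\beta,c}(Q_0)$ with $Q_0 \in T_P(R(r))$ forces $\dist(X,\partial\Omega) \leq \abs{X - Q_0} \leq \abs{X - P} + \abs{P - Q_0} \lesssim (1+M)r$; hence the infimum defining $\dist(X,\partial\Omega)$ is attained (or approached) within $\partial\Omega \cap T_P(R(C_M r)) = T_P(\partial\Omega_P) \cap T_P(R(C_M r))$. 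This is exactly the same localisation argument used in the proof of Lemma~\ref{explicit representation of corkscrew points in local cylinder coordinates}. Consequently
\begin{equation*}
\dist(X, \partial\Omega) = \inf_{Q' \in \partial\Omega \cap T_P(R(C_M r))} \abs{X - Q'} = \inf_{P' \in \partial\Omega_P \cap R(C_M r)} \abs{\tilde X - P'} = \dist(\tilde X, \partial\Omega_P),
\end{equation*}
where the last equality again uses that $\dist(\tilde X, \partial\Omega_P)$ is realised within $R(C_M r)$ — which itself needs a short argument: $\dist(\tilde X, \partial\Omega_P) \leq \abs{\tilde X - \tilde Q_0} \lesssim (1+M)r$ is much smaller than the distance from $\tilde X$ to $\partial\Omega_P \setminus R(C_M r)$, since $\tilde X \in R(r)$. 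Once $\dist(\tilde X, \partial\Omega_P) = \dist(X, \partial\Omega)$ is established, the two defining inequalities of $\Gamma_\Omega^{\beta,c}(Q_0)$ transfer verbatim to $\Gamma_{\Omega_P}^{\beta,c}(\tilde Q_0)$, and the inclusion \eqref{inclusion relation tangential approach regions in Lipschitz domains} follows; the final constraint $\tilde X \in R(r)$ holds because $X \in T_P(R(r))$.

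The main obstacle, such as it is, is the bookkeeping in the localisation step: one must be careful that the constant $C_M = 10(1+M)$ is large enough to separate the "near" part of the boundary (inside $T_P(R(r))$, where $Q_0$ lives and where the competing boundary points for the distance realisation lie) from the "far" part (outside $T_P(R(C_M r))$), uniformly in $X \in T_P(R(r))$, and this has to be done in both the bounded domain $\Omega$ and the special domain $\Omega_P$ simultaneously. This is entirely analogous to — and in fact slightly simpler than — the estimates already carried out in Lemmas~\ref{explicit corkscrew points in special Lipschitz domains} and \ref{explicit representation of corkscrew points in local cylinder coordinates}, so no genuinely new difficulty arises; the choice $C_M = 10(1+M)$ leaves ample room. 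I would present the argument by first reducing to showing $\dist(\tilde X, \partial\Omega_P) = \dist(X, \partial\Omega)$ and then dispatching this equality via the two localisation observations above.
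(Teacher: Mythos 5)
Your overall strategy is the same as the paper's: pull back by the isometry $T_P$, then localise the distance infimum to the cylinder $R(C_M r)$ where the two boundaries coincide. There are, however, two points worth flagging.

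First, you aim for the \emph{equality} $\dist(\tilde X,\partial\Omega_P)=\dist(X,\partial\Omega)$, which costs you two localisation arguments (one for each domain). The paper gets away with only the one-sided inequality $\dist(X,\partial\Omega)\leq\dist(T_P^{-1}X,\partial\Omega_P)$: restricting the infimum defining $\dist(X,\partial\Omega)$ to $\partial\Omega\cap T_P(R(C_M r))$ can only increase it (no localisation needed on the $\Omega$ side), and the single localisation on the $\Omega_P$ side identifies the restricted infimum with $\dist(T_P^{-1}X,\partial\Omega_P)$. The inclusion then follows because $\beta\in(0,1]$ makes both branches of the defining inequality monotone in the distance, a point you should spell out rather than asserting the inequalities ``transfer verbatim.'' Your equality route does avoid that last monotonicity observation, so it is a legitimate trade-off, but the proof should acknowledge that the equality is strictly more than what is needed.

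Second, and more substantively, your localisation steps rely on unjustified proximity of $Q_0$. You write that $Q_0\in T_P(R(r))$ and use $\dist(X,\partial\Omega)\leq\abs{X-Q_0}\lesssim(1+M)r$ as well as $\dist(\tilde X,\partial\Omega_P)\leq\abs{\tilde X-\tilde Q_0}\lesssim(1+M)r$. But the lemma allows arbitrary $Q_0\in\partial\Omega$. Even granting that $\Gamma^{\beta,c}_\Omega(Q_0)\cap T_P(R(r))\neq\emptyset$ (otherwise the inclusion is vacuous), you cannot conclude that $Q_0$ lies in the coordinate cylinder, and worse, the bound $\dist(\tilde X,\partial\Omega_P)\leq\abs{\tilde X-\tilde Q_0}$ requires $\tilde Q_0=T_P^{-1}(Q_0)\in\partial\Omega_P$, which fails for $Q_0$ outside $T_P(R(C_M r))$ since $\partial\Omega$ and $T_P(\partial\Omega_P)$ only coincide inside that cylinder. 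The correct witnesses are $P\in\partial\Omega$ and $0\in\partial\Omega_P$: since $T_P^{-1}(X)\in R(r)$ gives $\abs{T_P^{-1}(X)}<(2+M)r$, one has $\dist(X,\partial\Omega)\leq\abs{X-P}<(2+M)r$ and $\dist(\tilde X,\partial\Omega_P)\leq\abs{\tilde X}<(2+M)r$, both comfortably below the $8(1+M)r$ threshold for the ``far'' part of the boundary. With that replacement your argument closes.
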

\begin{proof}
Let $X\in  \Gamma_{\Omega}^{\beta, c}(Q_0)\cap T_{P}(R(r))$. Since $T_{p}$ is an isometry, it holds that
\begin{align*}
    \dist(X,\partial\Omega)&=\dist(T_{P}^{-1}(X), T_{P}^{-1}(\partial\Omega))=\inf_{Y\in T_{P}^{-1}(\partial\Omega)}\abs{T_{P}^{-1}(X)-Y}\\
    &\leq \inf_{Y\in T_{P}^{-1}(\partial\Omega \cap T_{P}(R(C_{M}r)))}\abs{T_{P}^{-1}(X)-Y}=\inf_{Y\in \partial\Omega_{P} \cap R(C_{M}r)}\abs{T_{P}^{-1}(X)-Y},
\end{align*}
where in the last equality we have used the assumption to deduce the property corresponding to \eqref{local representation of Omega implies local rep of its boundary}.
By assumption it holds that $ T_{P}^{-1}(X)\in R(r)$, and therefore $\abs{T_{P}^{-1}(X)}< (2+M)r$.
If $Y\in\partial\Omega_{P}\setminus R(C_M r)$, then $\abs{Y}\geq C_{M}r$, and therefore $\abs{T_{P}^{-1}(X)-Y}\geq \abs{Y}-\abs{T_{P}^{-1}(X)}> 8(1+M)r$. Since $0\in\partial\Omega_{P}\cap R(C_{M}r)$ and $\abs{T_{P}^{-1}(X)-0}<(2+M)r< 8(1+M)r$, it follows that 
\begin{equation*}
\inf_{Y\in \partial\Omega_{P} \cap R(C_{M}r)}\abs{T_{P}^{-1}(X)-Y}=\inf_{Y\in\partial\Omega_{P}}\abs{T_{P}^{-1}(X)-Y}.
\end{equation*}
This shows that $\dist(X,\partial\Omega)\leq \dist(T_{P}^{-1}(X),\partial\Omega_{P})$. From this and the fact that $\beta\in(0,1]$ it easily follows that $T_{P}^{-1}(X)\in\Gamma_{\Omega_{P}}^{\beta,c}(T_{p}^{-1}(Q_0))$.
\end{proof}
\subsection{Pointwise estimates on solutions}\label{section on pointwise estimates on solutions}
In this section we fix $p_0\in(1,\infty)$ such that $\left(\mathcal{D}\right)^{L}_{p_0}$ holds. It follows from Theorem~\ref{thm equivalent properties for solvability in terms of  L harmonic measure} that $\omega_{L}\in B^{q_0}(\dd \sigma)$, and we let $k=\frac{\dd \omega_{L}}{\dd \sigma}\in\El{q_0}(\dd \sigma)$ denote the corresponding Radon--Nikodym derivative.

We need to introduce some notation for the proofs of the following lemmas. For any $X\in\Omega$, the measures $\omega^{X}_{L}$ and $\omega_{L}$ are mutually absolutely continuous (see, e.g., \cite[Lemma 1.2.7]{Kenig_cbms_1994}). In accordance with \cite[Definition 1.3.10]{Kenig_cbms_1994}, we denote by $K(X,Q)=\frac{\dd \omega^{X}_{L}}{\dd \omega_{L}}(Q)$ the corresponding Radon--Nikodym derivative.
\begin{lem}\label{Lemma domination series dyadic averages at corkscrew points}
 Let $f\in\El{p_0}(\partial\Omega)$ and $u(X)=\int_{\partial\Omega}f\dd \omega^{X}_{L}$ for $X\in\Omega$. There is $\alpha_{L}>0$ depending only on $n$ and $\lambda$ such that for all $Q_{0}\in\partial\Omega$, $t\in(0,c_{\Omega})$ and corresponding corkscrew point $A_{t}(Q_0)\in\Omega$ it holds that 
\begin{equation*}
    \abs{u(A_{t}(Q_{0}))}\lesssim \sum_{j=0}^{\infty} 2^{-\alpha_{L} j}\left(\frac{1}{\sigma(\Delta_{j})}\int_{D_{j}}\abs{f}^{p_0}\dd \sigma\right)^{1/p_0},
\end{equation*}
where $\Delta_{j}=\Delta_{\partial\Omega}(Q_{0},2^{j+1}t)$ for all $j\geq 0$, $D_{j}=\Delta_{j}\setminus \Delta_{j-1}$ for $j\geq 1$, and $D_{0}=\Delta_{0}$.
\end{lem}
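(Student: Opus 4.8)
The plan is to reduce the estimate to the De Giorgi--Nash--Moser continuity and the reverse Hölder bound for the Poisson kernel, exactly mirroring the half-space computation behind \eqref{estimate u_j starting point}. First I would fix $Q_0 \in \partial\Omega$, a scale $t \in (0, c_\Omega)$ and the corkscrew point $A_t(Q_0)$, and decompose $f = \sum_{j\geq 0} f_j$ with $f_j = f \ind{D_j}$ using the dyadic annuli $D_j$ around $Q_0$. Correspondingly $u = \sum_{j\geq 0} u_j$ with $u_j(X) = \int_{\partial\Omega} f_j \dd\omega^X_L$; since each $f_j$ is supported away from the relevant part of the boundary (for $j\geq 1$, $f_j$ vanishes on $\Delta_{j-1}$, and in particular on a surface ball around $Q_0$), each $u_j$ is a nonnegative solution vanishing on $\Delta_{\partial\Omega}(Q_0, 2^j t)$ (after reducing to $f\geq 0$ by splitting into positive and negative parts).

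The heart of the matter is then two pointwise bounds at the corkscrew point. For the boundary vanishing of $u_j$ ($j\geq 1$), I would invoke the boundary Hölder continuity of solutions in Lipschitz domains (the analogue of \eqref{holder continuity solutions DGNM} with zero boundary data on a surface ball, as in the standard references, e.g. \cite{Kenig_cbms_1994}) to get $u_j(A_t(Q_0)) \lesssim 2^{-\alpha_L j} u_j(A_{2^{j+1}t}(Q_0))$, using that $\dist(A_t(Q_0),\partial\Omega)\eqsim t$ while $u_j$ vanishes on the surface ball of radius $\eqsim 2^j t$; the Harnack chain connecting $A_t(Q_0)$ to $A_{2^{j+1}t}(Q_0)$ handles the comparison of the solution at the two corkscrew points. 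For $j=0$ one just uses the Harnack inequality directly. Next, to bound $u_j(A_{2^{j+1}t}(Q_0)) = \int_{D_j} K(A_{2^{j+1}t}(Q_0), Q) f(Q) \dd\omega_L(Q)$, I would use the reverse Hölder property $\omega_L \in B^{q_0}(\dd\sigma)$ from Theorem~\ref{thm equivalent properties for solvability in terms of  L harmonic measure}, together with the standard estimate $K(A_{2^{j+1}t}(Q_0), Q) \lesssim \omega_L(\Delta_j)^{-1}$ for $Q \in \Delta_j = \Delta_{\partial\Omega}(Q_0, 2^{j+1}t)$ (the analogue of \eqref{L^q estimate on Radon Nikodym derivative k} in the Lipschitz setting, see \cite[Chapter~1]{Kenig_cbms_1994}). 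Hölder's inequality with exponents $q_0, p_0$ then gives $u_j(A_{2^{j+1}t}(Q_0)) \lesssim \left(\dashint_{\Delta_j} k^{q_0}\dd\sigma\right)^{1/q_0} \cdot \omega_L(\Delta_j)^{-1} \sigma(\Delta_j)^{1/q_0}\left(\int_{D_j}|f|^{p_0}\dd\sigma\right)^{1/p_0} \lesssim \left(\frac{1}{\sigma(\Delta_j)}\int_{D_j}|f|^{p_0}\dd\sigma\right)^{1/p_0}$, where the reverse Hölder inequality and Ahlfors regularity \eqref{surface measure sigma is ahlfors regular} absorb all the scale-dependent factors.

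Summing over $j$ with the geometric factor $2^{-\alpha_L j}$ then yields the claimed bound. The self-improvement of the reverse Hölder exponent (so that one can work with $p_0$ rather than its dual being borderline) is routine and I would just cite it as in the proof of \cite[Lemma~5.32]{HLM_Degenerate_Dirichlet_2019} referenced earlier. The main obstacle I anticipate is purely bookkeeping: making the boundary Hölder continuity estimate precise — one needs $2B \cap \partial\Omega$ to lie in the region where $u_j$ vanishes, which forces a careful comparison between the Euclidean distances $\abs{A_t(Q_0) - Q_0}$, $\dist(A_t(Q_0),\partial\Omega)$, and the dyadic radii $2^j t$, and a Harnack chain argument of length $\lesssim j$ (contributing only a polynomial-in-$j$ loss that is absorbed by $2^{-\alpha_L j}$ after slightly shrinking $\alpha_L$). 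All of this is standard for nontangential Lipschitz-domain theory; the lemma is essentially the Lipschitz-domain transcription of \eqref{estimate u_j starting point}, with surface balls and harmonic measure replacing Euclidean balls and the Poisson kernel.
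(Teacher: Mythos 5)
Your strategy would prove the lemma, but it takes a genuinely different route from the paper's. You decompose $f=\sum_j f\ind{D_j}$ and apply boundary H\"older continuity to each nonnegative solution $u_j$, which is exactly the half-space argument behind \eqref{estimate u_j starting point} (see Remark~\ref{remark about proof of lem 5.32 in HLM}). The paper instead invokes \cite[Lemma~1.3.12]{Kenig_cbms_1994} directly, which packages that boundary H\"older decay into a pointwise bound on the kernel $K(A_t(Q_0),\cdot)=\dd\omega^{A_t(Q_0)}_L/\dd\omega_L$, namely $\textup{ess sup}_{P\in D_j}K(A_t(Q_0),P)\lesssim 2^{-\alpha_L j}\omega_L(\Delta_j)^{-1}$. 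Starting from $\abs{u(X)}\le\int_{\partial\Omega}K(X,P)k(P)\abs{f(P)}\dd\sigma(P)$, one then splits the integral over the annuli $D_j$, applies H\"older with exponents $(p_0,q_0)$ on each $D_j$, and uses the reverse H\"older inequality for $k$ on $\Delta_j$ together with $\int_{\Delta_j}k\dd\sigma=\omega_L(\Delta_j)$. This avoids decomposing the solution $u$ at all; the geometric decay is built into the kernel bound once and for all. Your route works too, and your H\"older/reverse-H\"older computation at the enlarged corkscrew point is essentially the same as the paper's, just applied to the piece $u_j$ rather than to the kernel.

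One point to correct: you say the comparison $u_j(A_t(Q_0))\lesssim 2^{-\alpha_L j}u_j(A_{2^{j+1}t}(Q_0))$ is obtained via ``a Harnack chain argument of length $\lesssim j$ (contributing only a polynomial-in-$j$ loss that is absorbed by $2^{-\alpha_L j}$).'' A Harnack chain of length $\lesssim j$ applied to a general nonnegative solution loses a factor $C^j$, which is \emph{exponential}, not polynomial, and would not be absorbed by $2^{-\alpha_L j}$. In fact no long chain is needed: the boundary H\"older continuity estimate combined with the Carleson estimate (see \cite[Chapter~1, Section~3]{Kenig_cbms_1994}), applied once at the single scale $r\eqsim 2^{j}t$, gives $u_j(A_t(Q_0))\lesssim(t/r)^{\alpha_L}u_j(A_r(Q_0))\eqsim 2^{-\alpha_L j}u_j(A_r(Q_0))$ directly. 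After that, only a Harnack chain of \emph{bounded} length, independent of $j$, is needed to move between corkscrew points at the comparable scales $\eqsim 2^j t$. The decay rate $2^{-\alpha_L j}$ is exactly the boundary H\"older exponent, with no further $j$-dependent loss.
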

\begin{proof}
    Let $j\geq 1$. It follows from \cite[Lemma 1.3.12]{Kenig_cbms_1994} that
    \begin{equation*}
        \textup{ess sup}_{P\in D_{j}} K(A_{t}(Q_0),P)\lesssim 2^{-\alpha_{L} j}\omega(\Delta_{j})^{-1} 
    \end{equation*}
    where $\alpha_{L}>0$ depends only on $n$ and $\lambda$. By a similar and simpler reasoning, this also holds for $j=0$.
    The essential supremum in this estimate is understood with respect to the $L$-harmonic measure $\omega_{L}$. Since $\omega_{L}$ is absolutely continuous with respect to the surface measure $\sigma$, the essential supremum can be taken equivalently with respect to $\sigma$. As a consequence, if we let $X=A_{t}(Q_0)$, then we can estimate
    \begin{align*}
        \abs{u(X)}&\leq \int_{\partial\Omega}\abs{f}\dd\omega^{X}_{L}=\int_{\partial\Omega}K(X,P)k(P)\abs{f(P)}\dd\sigma(P)\\
        &= \sum_{j=0}^{\infty}\int_{D_{j}}K(X,P)k(P)\abs{f(P)}\dd\sigma(P)\\
        &\lesssim \sum_{j=0}^{\infty} 2^{-\alpha_{L} j}\omega_{L}(\Delta_{j})^{-1}\left(\int_{D_j}\abs{f}^{p_0}\dd\sigma\right)^{1/p_0}\left(\int_{\Delta_j} k^{q_0} \dd\sigma\right)^{1/q_0}\\
        &=\sum_{j=0}^{\infty} 2^{-\alpha_{L} j}\sigma(\Delta_{j})\omega_{L}(\Delta_{j})^{-1}\left(\frac{1}{\sigma(\Delta_{j})}\int_{D_j}\abs{f}^{p_0}\dd\sigma\right)^{1/p_0}\left(\dashint_{\Delta_j} k^{q_0} \dd\sigma\right)^{1/q_0}\\
        &\lesssim \sum_{j=0}^{\infty} 2^{-\alpha_{L} j}\omega_{L}(\Delta_{j})^{-1}\left(\frac{1}{\sigma(\Delta_{j})}\int_{D_j}\abs{f}^{p_0}\dd\sigma\right)^{1/p_0}\left(\int_{\Delta_j} k \dd\sigma\right)\\
        &=\sum_{j=0}^{\infty} 2^{-\alpha_{L} j}\left(\frac{1}{\sigma(\Delta_{j})}\int_{D_j}\abs{f}^{p_0}\dd\sigma\right)^{1/p_0}.\qedhere
    \end{align*}
\end{proof}
\begin{lem}\label{uniform boundedness of solutions near where boundary data vanishes}
    Let $Q_0\in\partial\Omega$ and $r_{0}>0$. If $f\in\El{p_0}(\partial\Omega)$ is such that $\supp{f}\subseteq \partial\Omega\setminus \Delta_{\partial\Omega}(Q_0,r_0)$ and $u(X)=\int_{\partial\Omega}f\dd \omega^{X}$, then $\abs{u(X)}\lesssim \norm{f}_{\El{p_0}(\partial\Omega)}$ for all $X\in\Omega\cap B(Q_0,r_{0}/2)$.
\end{lem}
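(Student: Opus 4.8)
The plan is to exploit the fact that $u$ has boundary data supported away from a neighbourhood of $Q_0$, so that $u$ extends continuously (and in fact H\"older continuously) up to the part of $\partial\Omega$ near $Q_0$ where the data vanishes, and then to combine interior estimates via corkscrew points with a dyadic decomposition of the far-away data. First I would fix $X\in\Omega\cap B(Q_0,r_0/2)$ and let $t=\dist(X,\partial\Omega)$. If $t\geq r_0/8$, say, then $X$ is comparable to a corkscrew point at scale $\sim r_0$ (up to the domain diameter), so Lemma~\ref{Lemma domination series dyadic averages at corkscrew points} applied at a nearby boundary point, together with H\"older's inequality and the fact that $\sigma(\Delta_j)\gtrsim 1$ for $j$ large (Ahlfors regularity \eqref{surface measure sigma is ahlfors regular}) and $\diam{\Omega}<\infty$, immediately gives $\abs{u(X)}\lesssim \norm{f}_{\El{p_0}(\partial\Omega)}$.

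The substantive case is $t=\dist(X,\partial\Omega)<r_0/8$. Pick $Q\in\partial\Omega$ with $\abs{X-Q}=t$; then $\abs{Q-Q_0}\leq \abs{Q-X}+\abs{X-Q_0}<r_0/8+r_0/2<r_0$, so in particular $\Delta_{\partial\Omega}(Q,r_0/4)\subseteq \Delta_{\partial\Omega}(Q_0, 2r_0)$ while the data $f$ vanishes on $\Delta_{\partial\Omega}(Q_0,r_0)$. Since $t$ is comparable to the distance to the boundary, $X$ itself is (comparable to) a corkscrew point $A_{ct}(Q)$ for some fixed $c\eqsim 1$; after adjusting constants I may as well apply Lemma~\ref{Lemma domination series dyadic averages at corkscrew points} with base point $Q$ and radius $t$, noting $t<c_\Omega$ for $r_0$ small (and for large $r_0$ we are already in the previous case). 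This yields
\begin{equation*}
    \abs{u(X)}\lesssim \sum_{j=0}^{\infty}2^{-\alpha_L j}\left(\frac{1}{\sigma(\Delta_{\partial\Omega}(Q,2^{j+1}t))}\int_{\Delta_{\partial\Omega}(Q,2^{j+1}t)}\abs{f}^{p_0}\dd\sigma\right)^{1/p_0}.
\end{equation*}
Now the key point is that $f$ is supported outside $\Delta_{\partial\Omega}(Q_0,r_0)$, hence outside $\Delta_{\partial\Omega}(Q,r_0/2)$: the $j$th term vanishes unless $2^{j+1}t\gtrsim r_0$, i.e.\ unless $2^j\gtrsim r_0/t$. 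For such $j$ the surface ball $\Delta_{\partial\Omega}(Q,2^{j+1}t)$ has radius $\gtrsim r_0$ and radius $\lesssim \diam{\Omega}$, so by Ahlfors regularity its $\sigma$-measure is $\eqsim (2^j t)^n$ but also bounded below by a constant depending only on $r_0$ and $\Omega$; crudely, $\sigma(\Delta_{\partial\Omega}(Q,2^{j+1}t))\gtrsim r_0^{\,n}\wedge 1$, and the integral is at most $\norm{f}_{\El{p_0}(\partial\Omega)}^{p_0}$. Therefore each surviving term is $\lesssim_{r_0} 2^{-\alpha_L j}\norm{f}_{\El{p_0}(\partial\Omega)}$, and summing the geometric series in $j$ gives $\abs{u(X)}\lesssim \norm{f}_{\El{p_0}(\partial\Omega)}$ with an implicit constant depending on $n$, $\lambda$, $\Lambda$, $\Omega$ and $r_0$, which is all that is claimed.

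The main obstacle — really a bookkeeping point rather than a genuine difficulty — is making precise the comparison between the arbitrary point $X\in\Omega$ with $\dist(X,\partial\Omega)=t$ and an honest corkscrew point $A_{ct}(Q)$ to which Lemma~\ref{Lemma domination series dyadic averages at corkscrew points} applies verbatim; this is handled by the interior Harnack inequality together with the local description of $\Omega$ as the epigraph of a Lipschitz graph (Lemma~\ref{explicit corkscrew points in special Lipschitz domains} and Lemma~\ref{explicit representation of corkscrew points in local cylinder coordinates}), which guarantee that $X$ can be joined to a corkscrew point by a Harnack chain of bounded length. A secondary point is to control the two regimes $t\gtrsim r_0$ and $t<r_0/8$ (and the case where the boundary ball overflows a single coordinate cylinder) uniformly; here one simply absorbs everything into a constant depending on $r_0$ and the finitely many coordinate cylinders covering $\partial\Omega$, which is permissible since the statement only asserts an inequality with an unspecified implicit constant.
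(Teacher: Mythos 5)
Your argument is correct in substance but takes a genuinely different route from the paper's, which is much more direct. The paper simply invokes the boundary H\"older estimate from \cite[Lemma 1.3.13]{Kenig_cbms_1994}: for $P\in\partial\Omega\setminus\Delta_{\partial\Omega}(Q_0,r_0)$ and $X\in\Omega\cap B(Q_0,r_0/2)$ one has $K(X,P)\leq C(r_0)$, whence
\begin{equation*}
\abs{u(X)}\leq\int_{\partial\Omega\setminus\Delta_{\partial\Omega}(Q_0,r_0)} K(X,P)\,k(P)\,\abs{f(P)}\dd\sigma(P)\lesssim\norm{k}_{\El{q_0}(\partial\Omega)}\norm{f}_{\El{p_0}(\partial\Omega)}
\end{equation*}
by H\"older's inequality --- no corkscrew points or dyadic decomposition needed. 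Your route, reducing to Lemma~\ref{Lemma domination series dyadic averages at corkscrew points} and observing that only the annuli with $2^j t\gtrsim r_0$ survive, also works, and as a by-product yields the sharper decay $\abs{u(X)}\lesssim(\dist(X,\partial\Omega)/r_0)^{\alpha_L}\norm{f}_{\El{p_0}(\partial\Omega)}$ near $\Delta_{\partial\Omega}(Q_0,r_0)$ (boundary H\"older continuity), which the lemma does not actually demand. The cost is precisely the two bookkeeping items you flag, which are real but standard: (i) $X$ need not itself be a corkscrew point, and to compare $\abs{u(X)}$ with $\abs{u(A_{ct}(Q))}$ by a Harnack chain you must either decompose $f=f_+-f_-$ (Harnack needs nonnegative solutions) or, more cleanly, apply Harnack to the nonnegative $L$-harmonic function $Y\mapsto K(Y,P)$ rather than to the signed $u$; and (ii) Lemma~\ref{Lemma domination series dyadic averages at corkscrew points} is stated only for $t\in(0,c_\Omega)$, so the regime where $\dist(X,\partial\Omega)$ exceeds that threshold still needs a separate interior bound of the type \eqref{interior estimate for u(X) epsilon away from boundary}, itself a Harnack-chain estimate on $K$. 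Both approaches ultimately rest on the same family of kernel estimates from \cite{Kenig_cbms_1994}; the paper's choice of \cite[Lemma 1.3.13]{Kenig_cbms_1994} just reaches the conclusion in two lines and sidesteps both of these issues.
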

\begin{proof}
    The argument of the proof of \cite[Lemma 1.3.13]{Kenig_cbms_1994} shows that there are constants $C>0$ (depending on $r_{0}$) and $\alpha>0$ (depending only on $n$ and $\lambda$) such that
    \begin{equation*}
        K(X,P)\leq C\left(\frac{\abs{X-Q_{0}}}{r_0}\right)^{\alpha}\leq C
    \end{equation*}
    for all $P\in\partial\Omega\setminus \Delta_{\partial\Omega}(Q_{0}, r_0)$ and $X\in\Omega\cap B(Q_{0},r_{0}/2)$. As a consequence, for these values of $X$, we can estimate
    \begin{align*}
        \abs{u(X)}&\leq \int_{\partial\Omega\setminus \Delta_{\partial\Omega}(Q_0,r_0)} K(X,P)k(P)\abs{f(P)}\dd \sigma(P)\\
        &\lesssim \int_{\partial\Omega} k(P)\abs{f(P)}\dd \sigma(P)\leq \norm{k}_{\El{q_0}(\partial\Omega)}\norm{f}_{\El{p_0}(\partial\Omega)}.\qedhere
    \end{align*}
\end{proof}
\subsection{Tangential approach regions in Lipschitz domains}\label{section on tangential approach regions in lipschitz domains}
Let $\Omega\subset \R^{1+n}$ be a Lipschitz domain. For $Q_{0}\in\partial\Omega$, $\beta\in(0,1]$ and $c>0$, we recall that the tangential approach region $\Gamma^{\beta,c}_{\Omega}(Q_0)$ is given by
\begin{equation*}
    \Gamma^{\beta,c}_{\Omega}(Q_0)=\set{X\in\Omega : \abs{X-X_{0}}< \left\{ 
    \begin{array}{ll}
         (1+c)\dist(X,\partial\Omega)^{\beta} & \textup{if }\dist(X,\partial\Omega)\leq 1;  \\
         (1+c)\dist(X,\partial\Omega) & \textup{if }\dist(X,\partial\Omega)\geq 1.  
    \end{array}\right.\quad},
\end{equation*}
and the corresponding approach regions in the upper half-space are given for all $x_0\in\Rn$ by 
\begin{equation*}
     \Gamma^{\beta}_{c}(x_0)=\set{(t,x)\in\Hn : \abs{x-x_{0}}<ct^{\beta} \textup{ if }t\in(0,1], \textup{ and } \abs{x-x_0}<ct \textup{ if }t\geq 1}.
 \end{equation*}

Let $\varphi:\Rn\to\R$ be a Lipschitz function, and consider the associated special Lipschitz domain $\Omega_{\varphi}=\set{(t,x)\in\R^{1+n} : t>\varphi(x)}$. 
The domain $\Omega_{\varphi}$ can be transformed to the upper half-space $\Hn$ by the bi-Lipschitz mapping 
\begin{equation*}
    F_{\varphi}:\Omega_{\varphi}\to\Hn :(t,x)\mapsto (t-\varphi(x),x),
\end{equation*}
with inverse 
\begin{equation*}
    F_{\varphi}^{-1}:\Hn \to\Omega_{\varphi} :(t,x)\mapsto (t+\varphi(x),x).
\end{equation*}
\begin{lem}\label{lemma inclusion of domain dependent tangential approach regions into pullback of upper half space regions}
    Let $Q_0=(\varphi(x_0),x_0)\in\partial\Omega_{\varphi}$ for some $x_0\in\Rn$. For all $\beta\in(0,1]$ and $c>0$ it holds that $\Gamma_{\Omega_{\varphi}}^{\beta,c}(Q_0)\subseteq F_{\varphi}^{-1}\left(\Gamma^{\beta}_{1+c}(x_0)\right)$.
\end{lem}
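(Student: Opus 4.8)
The plan is to unwind the definitions and reduce the inclusion to two elementary inequalities together with a short case analysis. Fix $X=(t,x)\in\Gamma_{\Omega_{\varphi}}^{\beta,c}(Q_0)$; since $X\in\Omega_{\varphi}$ we have $s:=t-\varphi(x)>0$ and $F_{\varphi}(X)=(s,x)$. Because $F_{\varphi}^{-1}(\Gamma^{\beta}_{1+c}(x_0))=\{X\in\Omega_{\varphi}:F_{\varphi}(X)\in\Gamma^{\beta}_{1+c}(x_0)\}$, it suffices to show that $(s,x)\in\Gamma^{\beta}_{1+c}(x_0)$, i.e.\ that $\abs{x-x_0}<(1+c)s^{\beta}$ when $0<s\le 1$ and $\abs{x-x_0}<(1+c)s$ when $s\ge 1$.

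First I would record two facts. \emph{(a)} $\dist(X,\partial\Omega_{\varphi})\le s$: indeed $(\varphi(x),x)\in\partial\Omega_{\varphi}$ and $\abs{X-(\varphi(x),x)}=t-\varphi(x)=s$. \emph{(b)} $\abs{X-Q_0}\ge\abs{x-x_0}$, by discarding the first coordinate in $\abs{X-Q_0}^{2}=(t-\varphi(x_0))^{2}+\abs{x-x_0}^{2}$. I would also note that, since $\beta\in(0,1]$, the map $r\mapsto r^{\beta}$ is nondecreasing on $[0,\infty)$ and $r\le 1$ implies $r^{\beta}\le 1$.

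Then I would split according to whether $\dist(X,\partial\Omega_{\varphi})\le 1$ or $\ge 1$. If $\dist(X,\partial\Omega_{\varphi})\le 1$, then the defining property of $\Gamma^{\beta,c}_{\Omega_{\varphi}}(Q_0)$ together with \emph{(b)} gives $\abs{x-x_0}\le\abs{X-Q_0}<(1+c)\dist(X,\partial\Omega_{\varphi})^{\beta}$; when moreover $s\le 1$ this is at most $(1+c)s^{\beta}$ by \emph{(a)} and monotonicity, and when $s>1$ it is at most $(1+c)\cdot 1\le(1+c)s$. If instead $\dist(X,\partial\Omega_{\varphi})\ge 1$, then $s\ge\dist(X,\partial\Omega_{\varphi})\ge 1$ by \emph{(a)}, and the conical part of the definition of $\Gamma^{\beta,c}_{\Omega_{\varphi}}(Q_0)$ together with \emph{(b)} gives $\abs{x-x_0}\le\abs{X-Q_0}<(1+c)\dist(X,\partial\Omega_{\varphi})\le(1+c)s$. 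In every case $(s,x)\in\Gamma^{\beta}_{1+c}(x_0)$, which proves the inclusion.

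I do not expect a genuine obstacle here; the only point needing a moment's care is the ``mismatched'' regime $\dist(X,\partial\Omega_{\varphi})\le 1<s$, where the case splits in the definitions of $\Gamma^{\beta,c}_{\Omega_{\varphi}}$ and $\Gamma^{\beta}_{1+c}$ disagree, and one must verify the conical bound $\abs{x-x_0}<(1+c)s$ directly from $\dist(X,\partial\Omega_{\varphi})^{\beta}\le 1$ rather than from the $\beta$-bound. (The reverse inclusion, and hence the equality noted after the lemma, would additionally require a matching lower estimate $\dist(X,\partial\Omega_{\varphi})\ge s/\sqrt{1+M^{2}}$, but this is not needed for the stated inclusion.)
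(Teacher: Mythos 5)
Your proof is correct and follows essentially the same route as the paper's: write $s=t-\varphi(x)$, use the elementary inequalities $\dist(X,\partial\Omega_{\varphi})\leq s$ and $\abs{x-x_0}\leq\abs{X-Q_0}$, split according to whether $\dist(X,\partial\Omega_{\varphi})$ is at most or at least $1$, and patch up the mismatch via $s^{\beta}\leq s$ for $s\geq 1$. The only difference is that you spell out the sub-case analysis in the mismatched regime more explicitly, whereas the paper compresses it into a single closing sentence. (One small inaccuracy in your closing parenthetical: the remark about equality in the paper concerns the preceding lemma on passing between $\Omega$ and $\Omega_P$, not this one — though the observation about what a reverse inclusion would require is correct.)
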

\begin{proof}
Note that 
\begin{equation*}
    F_{\varphi}^{-1}\left(\Gamma^{\beta}_{1+c}(x_0)\right)=\set{(t+\varphi(x),x) : \abs{x-x_{0}}<\left\{
    \begin{array}{ll}
    (1+c)t^{\beta} & \textup{ if }t\in(0,1];\\
    (1+c)t & \textup{ if }t\geq 1
    \end{array}
    \right.}.
\end{equation*}
Let $X=(t,x)\in \Gamma_{\Omega_{\varphi}}^{\beta,c}(Q_0)$. Since $(t,x)=(t-\varphi(x)+\varphi(x), x)$, it suffices to show that $\abs{x-x_0}<(1+c)(t-\varphi(x))^{\beta}$ if $0<t-\varphi(x)\leq 1$, and that $\abs{x-x_0}<(1+c)(t-\varphi(x))$ if $t-\varphi(x)\geq 1$. If $\dist((t,x),\partial\Omega_{\varphi})\leq 1$, then by definition of $\Gamma_{\Omega_{\varphi}}^{\beta,c}(Q_0)$, we have 
\begin{align*}
    \abs{x-x_0}&\leq \abs{(t,x)-(\varphi(x_0),x_0)}<(1+c)\dist((t,x),\partial\Omega_{\varphi})^{\beta}\\
    &\leq (1+c)\abs{(t,x)-(\varphi(x),x)}^{\beta}=(1+c)(t-\varphi(x))^{\beta}.
\end{align*}
Similarly, if $\dist((t,x),\partial\Omega_{\varphi})\geq 1$, then $\abs{x-x_0}<(1+c)(t-\varphi(x))$.
Observe that since $\beta\in(0,1]$, if $t-\varphi(x)\geq 1$ then $(t-\varphi(x))^{\beta}\leq t-\varphi(x)$, and the converse inequality holds if $t-\varphi(x)\leq 1$. This concludes the proof.
\end{proof}
\subsection{First part of the proof of Theorem~\ref{second main theorem bounded Lipschitz domain}}\label{section first part of the proof of second main thm}

We now have all the ingredients needed for the proof of the tangential maximal function estimate~\eqref{strong Lp bound tangential max at the boundary of Lipschitz domain} and the corresponding $\sigma$-a.e. convergence~\eqref{convergence sigma a.e. at the boundary of Lipschitz domain} in  Theorem~\ref{second main theorem bounded Lipschitz domain}. 

Before starting with the proof, let us say a few words about the essential idea of the argument. The first step is to use linearity to reduce matters to the case when $f$ is supported in a small cylinder $R_{P}$ on which the boundary $\partial\Omega$ can be represented by a Lipschitz graph as in~\eqref{local representation of Omega implies local rep of its boundary}. Given a point $Q_{0}\in\partial\Omega$, we estimate the maximal function $N_{*,\beta}u_{f}(Q_0)$ by splitting the tangential approach region $\Gamma^{\beta}_{\Omega}(Q_0)$ into two parts - one that is bounded away from the boundary, and one that is close to the boundary and is denoted by $\Gamma^{\beta}_{\Omega, \loc}(Q_0)$. The part that is bounded away from the boundary is dominated in a simple way by the $\El{p}$ norm of $f$ using the estimate~\eqref{interior estimate for u(X) epsilon away from boundary} below. To estimate the part of the maximal function $N_{*,\beta}u_{f}(Q_0)$ corresponding to the localised region $\Gamma^{\beta}_{\Omega,\loc}(Q_0)$ we treat two cases. If $Q_{0}$ is far from the support of $f$, then the maximal function is estimated in a simple way using Lemma~\ref{uniform boundedness of solutions near where boundary data vanishes}. If $Q_{0}$ is close to the support of $f$, then the situation is essentially reduced to that of the region above a Lipschitz graph. The boundary can be “flattened" using the mappings $F_{\varphi}$ introduced in Section~\ref{section on tangential approach regions in lipschitz domains}, and Lemma~\ref{lemma inclusion of domain dependent tangential approach regions into pullback of upper half space regions} reduces matters to the situation of the upper half-space considered in Section~\ref{subsection proof of main thm upper half space}, and effectively to an application of Remark~\ref{remark main implication of main thm within proof}.
\begin{proof}[Proof of \eqref{strong Lp bound tangential max at the boundary of Lipschitz domain} and \eqref{convergence sigma a.e. at the boundary of Lipschitz domain} in Theorem~\ref{second main theorem bounded Lipschitz domain}]
Let $k\geq 0$ and let $\Omega$ be a bounded domain of class $C^{k,1}$. Let $0\leq s\leq k+1$ and let $p\in (1,\infty)$ be such that $\left(\mathcal{D}\right)^{L}_{p}$ holds. The self-improvement property of reverse Hölder inequalities (see, e.g., \cite[Theorem 2]{Iwaniec_Nolder_1985}) and Theorem~\ref{thm equivalent properties for solvability in terms of  L harmonic measure} show that there is $p_{0}\in (1,p)$ such that $\left(\mathcal{D}\right)^{L}_{p_0}$ holds. Suppose that $0< s\leq \frac{n}{p}$ and let $f\in\mathrm{W}^{s,p}(\partial\Omega)$. We let $\beta=1-\frac{s p}{n}\in(0,1)$ if $s<\frac{n}{p}$, and we let $\beta\in(0,1]$ be arbitrary if $s=\frac{n}{p}$.

Let $M>0$ be the Lipschitz constant of $\Omega$. For every $P\in\partial\Omega$, following the definition from Section~\ref{subsection with definition of Lipschitz domains}, there is $r_{P}>0$ such that
\begin{equation*}
    \Omega\cap T_{P}(R(r_P))=T_{P}(\Omega_{P})\cap T_{P}(R(r_{P})).
\end{equation*}
For any fixed constant $C_{M}\geq 1$, and letting $\tilde{r}_{P}=r_{P}/C_{M}$, this clearly implies that 
\begin{equation*}
    \Omega\cap T_{P}(R(\tilde{r}_P))=T_{P}(\Omega_{P})\cap T_{P}(R(\tilde{r}_{P})).
\end{equation*}
By extracting a finite covering from the open cover $\partial\Omega \subseteq \bigcup_{P\in\partial\Omega}T_{P}(R(\tilde{r}_{P}/2))$, we obtain
\begin{enumerate}[label=(\roman*)]
    \item An integer $N\geq 1$;
    \item Points $\set{P_{1},\ldots, P_{N}}\subset\partial\Omega$ and real numbers $r_{1},\ldots,r_N>0$;
    \item Functions $\set{\varphi_{i}:\Rn\to \R}_{i=1}^{N}$ of class $C^{k,1}$, such that $\varphi_{i}(0)=0$\\ and $\abs{\varphi_{i}(x)-\varphi_{i}(y)}\leq M\abs{x-y}$ for all $x,y\in\Rn$ and all $i\in\set{1,\ldots,N}$;
    \item Isometries $\set{T_{i}:\R^{1+n}\to\R^{1+n}}_{i=1}^{N}$ such that $T_{i}(0)=P_{i}$ for all $i\in\set{1,\ldots,N}$;
    \item Special Lipschitz domains $\Omega_{i}=\set{(t,x)\in\R^{1+n} : t>\varphi_{i}(x)}$,
    \end{enumerate}
    such that 
    \begin{equation}\label{smallest covering of boundary of Omega}
    \partial\Omega\subseteq \bigcup_{i=1}^{N} T_{i}(R(r_{i}/2)),
    \end{equation}
    and 
    \begin{equation}\label{local representation of Omega close to the boundary}
        \Omega\cap T_{i}(R(C_{M}r_{i}))= T_{i}(\Omega_{i})\cap T_{i}(R(C_{M}r_{i}))
    \end{equation}
    for all $i\in\set{1,\ldots, N}$, where $C_{M}=20(1+M)^{3}$ and we recall the notation for the cylinders
    \begin{equation*}
        R(s)=\set{(t,x)\in\R^{1+n} : \abs{x}<s \textup{ and }\abs{t}<(1+M)s},\quad \textup{ for all }s>0.
    \end{equation*}
    Let $\Phi_{i}: \Delta(0, C_{M}r_{i})\to \partial\Omega \cap T_{i}(R(C_M r_{i}))$ denote the corresponding local parametrisations of $\partial\Omega$ for $i\in\set{1,\ldots, N}$ (see \eqref{local parametrisation of boundary Omega around P}). Note that the functions $\Phi_{i}$ can be naturally extended to functions of class $C^{k,1}$ on $\Rn$ by (iii) above.

    To simplify the notation we shall only prove Theorem~\ref{second main theorem bounded Lipschitz domain} for the regions $\Gamma^{\beta,c}_{\Omega}(Q_0)$ corresponding to $c=1/2$, but it will be clear that our argument works for any $c>0$. In doing so we will also require (the proof of) Theorem~\ref{main theorem upper half space for Cpalpha} for the regions $\Gamma_{a}^{\beta}(x_0)$ with general aperture $a>0$. As mentioned in the introduction the argument of the proofs of Theorems~\ref{main theorem upper half space} and \ref{main theorem upper half space for Cpalpha} applies equally well to these regions. 
    
    Let $r=\min_{i\in\set{1,\ldots, N}} r_{i}>0$. For $Q_{0}\in\partial\Omega$, we introduce the local approach regions
    \begin{equation*}
        \Gamma_{\Omega,\loc}^{\beta}(Q_0)=\set{X\in\Omega : \abs{X-Q_{0}}<(1+\tfrac{1}{2})\dist(X,\partial\Omega)^{\beta} \textup{ and }\dist(X,\partial\Omega)<\left(\tfrac{r}{12}(1+M)\right)^{1/\beta}}.
    \end{equation*}
      
    Let $\set{\eta_{i}}_{i=1}^{N}$ be a smooth partition of unity with respect to the covering \eqref{smallest covering of boundary of Omega}, i.e. $\eta_{i}\in C^{\infty}_{c}(\R^{1+n};[0,1])$ are such that $\supp{\eta_{i}}\subseteq T_{i}(R(r_{i}/2))$ for all $i\in\set{1,\ldots, N}$ and $\sum_{i=1}^{N}\eta_{i}=1$ in a neighborhood of $\partial\Omega$.
    It follows from arguments similar to those used in the proof of Lemma~\ref{density lemma in boundary sobolev spaces} that $f_{i}:=f\eta_{i}|_{\partial\Omega}\in\mathrm{W}^{s,p}(\partial\Omega)$ with $\norm{f_{i}}_{\mathrm{W}^{s,p}(\partial\Omega)}\lesssim \norm{f}_{\mathrm{W}^{s,p}(\partial\Omega)}$. Moreover, $\supp{f_{i}}\subseteq T_{i}(R(r_{i}/2))\cap \partial\Omega$ for all $i\in\set{1,\ldots,N}$, and $f=\sum_{i=1}^{N}f_i$. 
    It suffices to prove that $\norm{N_{*,\beta}u_{i}}_{\El{p}(\partial\Omega)}\lesssim \norm{f_i}_{\mathrm{W}^{s,p}(\partial\Omega)}$, where $u_{i}(X)=\int_{\partial\Omega}f_{i}\dd \omega^{X}_{L}$ for all $X\in\Omega$ and all $i\in\set{1,\ldots, N}$. Indeed, if this holds then we get that $u(X)=\int_{\partial\Omega} f\dd\omega^{X}_{L}=\sum_{i=1}^{N}u_{i}(X)$ and
    \begin{align*}
        \norm{N_{*,\beta}u}_{\El{p}(\partial\Omega)}\leq \sum_{i=1}^{N}\norm{N_{*,\beta}u_{i}}_{\El{p}(\partial\Omega)}\lesssim \sum_{i=1}^{N}\norm{f_i}_{\mathrm{W}^{s,p}(\partial\Omega)}\lesssim \norm{f}_{\mathrm{W}^{s,p}(\partial\Omega)},
    \end{align*}
    as desired.
    
    Let us now fix $i\in\set{1,\ldots, N}$ and suppose that $f\in\mathrm{W}^{s,p}(\partial\Omega)$ is such that $\supp{f}\subseteq T_{i}(R(r_{i}/2))\cap \partial\Omega$. Let $u(X)=\int_{\partial\Omega}f\dd \omega^{X}_{L}$ for $X\in\Omega$. 
    If $\eps>0$ is such that 
    \begin{equation*}
    \Omega_{\eps}=\set{X\in\Omega : \dist(X,\partial\Omega)\geq \eps}\neq \emptyset, 
    \end{equation*}
    then we claim that 
    \begin{equation}\label{interior estimate for u(X) epsilon away from boundary}
        \sup_{X\in\Omega_{\eps}}\abs{u(X)}\lesssim_{\eps} \norm{f}_{\El{p}(\partial\Omega)}.
    \end{equation}
    Indeed, without loss of generality we may assume that $X_0\in\Omega_{\eps}$ (where $X_0$ is the point fixed in Section~\ref{subsection on L harmonic measure in Lipschitz domains}), and therefore (see, e.g., \cite[Definition 1.3.10]{Kenig_cbms_1994}) for all $X\in\Omega_{\eps}$ and $Q\in\partial\Omega$ it holds that 
    \begin{equation*}
        K(X,Q)  = \lim_{r\to 0^{+}} \frac{\omega^{X}_{L}(\Delta_{\partial\Omega}(Q,r))}{\omega^{X_0}_{L}(\Delta_{\partial\Omega}(Q,r))}\leq C_{\eps, \Omega},
    \end{equation*}
    since a Harnack chain argument, connecting $X$ to $X_0$, shows that 
    \begin{equation*}
    \omega^{X}_{L}(\Delta_{\partial\Omega}(Q,r))\leq  C_{\eps, \Omega} \omega^{X_0}_{L}(\Delta_{\partial\Omega}(Q,r)).
    \end{equation*}
    See, for example, \cite[Chapter 1, Section 3]{Kenig_cbms_1994}. As a consequence, for all $X\in\Omega_{\eps}$ it follows from Hölder's inequality that 
    \begin{align*}
        \abs{u(X)}&\leq \int_{\partial\Omega}\abs{f(Q)} K(X,Q) k(Q)\dd \sigma(Q)\leq C_{\eps,\Omega}\norm{f}_{\El{p}(\partial\Omega)}\norm{k}_{\El{q}(\partial\Omega)},
    \end{align*}
    where $\frac{1}{p}+\frac{1}{q}=1$. Since $k\in\El{q}(\partial\Omega)$ by Theorem~\ref{thm equivalent properties for solvability in terms of  L harmonic measure}, \eqref{interior estimate for u(X) epsilon away from boundary} follows.
    
    We first treat the case $Q_{0}\in\partial\Omega\setminus T_{i}(R((1+M)r_{i}))$. If $Q\in\supp{f}\subseteq T_{i}(R(r_{i}/2))\cap\partial\Omega$, then 
    \begin{align*}
        \abs{Q-P_{i}}< (1+M)\frac{r_i}{2}
    \end{align*}
    by Remark~\ref{remark on distances on the Lipschitz boundary in cylinders}. As a consequence,
    \begin{align}\label{equation support interesect ball is empty}
        \abs{Q-Q_{0}}\geq \abs{P_i-Q_{0}}-\abs{Q-P_{i}}> (1+M)r_{i}-(1+M)\frac{r_{i}}{2}=(1+M)\frac{r_{i}}{2},
    \end{align}
    and therefore $\Delta_{\partial\Omega}(Q_{0}, \frac{r_{i}}{2}(1+M))\cap\supp{f}=\emptyset$. Since $\Gamma^{\beta}_{\Omega,\loc}(Q_0)\subseteq \Omega\cap B(Q_{0}, \frac{r_{i}}{8}(1+M))$, it follows from Lemma~\ref{uniform boundedness of solutions near where boundary data vanishes} that
    \begin{equation}\label{tangential max function estimate for points Q_0 away from support of f}
    \sup_{X\in\Gamma^{\beta}_{\Omega,\loc}(Q_0)} \abs{u(X)}\lesssim \norm{f}_{\El{p}(\partial\Omega)}
    \end{equation}
    for all $Q_{0}\in\partial\Omega\setminus T_{i}(R((1+M)r_{i}))$. 
    
    We now treat the case $Q_{0}\in\partial\Omega \cap T_{i}(R((1+M)r_{i}))$ and let $x_0\in \Delta(0, (1+M)r_{i})$ be such that $Q_{0}=\Phi_{i}(x_0)=T_{i}((\varphi_{i}(x_0),x_0))$. It follows from Remark~\ref{remark on distances on the Lipschitz boundary in cylinders} that $\abs{Q_{0}-P_{i}}\leq (1+M)\abs{x}$ where $\abs{x}<(1+M)r_{i}$, hence $\abs{Q_{0}-P_{i}}<(1+M)^{2}r_{i}$. As a consequence,
    \begin{align*}
        \Gamma^{\beta}_{\Omega,\loc}(Q_0)&\subseteq \Omega \cap B(Q_{0} , \tfrac{r_{i}}{8}(1+M))\subseteq \Omega \cap B(P_{i}, \tfrac{r_{i}}{8}(1+M) + r_{i}(1+M)^{2})\\
        &\subseteq \Omega\cap B(P_i , 2(1+M)^{2}r_{i})\subseteq \Omega \cap T_{i}\left(R(2(1+M)^{2}r_{i})\right).
    \end{align*}
    Let $0<t\leq 6(1+M)^{3}r_{i}$ and $x\in \Delta(0,2(1+M)^{2}r_{i})$. Let $Q=\Phi_{i}(x)=T_{i}((\varphi_{i}(x),x))\in T_{i}(\partial\Omega_{i}) \cap T_{i}\left(R(2(1+M)^{2}r_{i})\right)=\partial\Omega\cap T_{i}\left(R(2(1+M)^{2}r_{i})\right)$. It follows from the assumption \eqref{local representation of Omega close to the boundary} and Lemma~\ref{explicit representation of corkscrew points in local cylinder coordinates} that $A_{t}(Q)=T_{i}((\varphi_{i}(x)+t , x))=T_{i}(F_{\varphi_{i}}^{-1}(t,x))$ is a corkscrew point for $Q$. It follows from Lemma~\ref{Lemma domination series dyadic averages at corkscrew points} that  
    \begin{equation*}
    \abs{u(T_{i}(F_{\varphi_{i}}^{-1}(t,x)))}\lesssim \sum_{j=0}^{\infty} 2^{-\alpha_{L} j}\left(\frac{1}{\sigma(\Delta_{j})}\int_{D_{j}}\abs{f}^{p_0}\dd \sigma\right)^{1/p_0},
\end{equation*}
where $\Delta_{j}=\Delta_{\partial\Omega}(Q,2^{j+1}t)$ for all $j\geq 0$, $D_{j}=\Delta_{j}\setminus \Delta_{j-1}$ for $j\geq 1$, and $D_{0}=\Delta_{0}$. 

Let $j\geq 1$, and assume that $\supp{f}\cap D_{j}\neq \emptyset$. This implies that there is $P\in T_{i}(R(r_{i}/2))\cap\partial\Omega$ such that $2^{j}t\leq \abs{Q-P}<2^{j+1}t$. Since $\abs{P-P_i}<(1+M)\frac{r_{i}}{2}$ and $\abs{Q-P_{i}}\leq 2(1+M)^{3}r_{i}$ by Remark~\ref{remark on distances on the Lipschitz boundary in cylinders}, it follows that
\begin{align}\label{smallness condition on t if non trivial intersection}
    2^{j}t\leq \abs{Q-P}\leq \abs{Q-P_i}+\abs{P_i- P}<2(1+M)^{3}r_{i} + (1+M)\frac{r_{i}}{2}<3(1+M)^{3}r_{i}.
\end{align}
Observe that if $y\in \Delta(0,C_{M}r_i)$ and $\abs{x-y}<\frac{2^{j+1}t}{(1+M^{2})^{1/2}}$, then
\begin{align*}
    \abs{Q-\Phi_{i}(y)}&=\abs{\Phi_{i}(x)-\Phi_{i}(y)}=\left(\abs{x-y}^{2} + \abs{\varphi_{i}(x)-\varphi_{i}(y)}^{2}\right)^{1/2}\\
    &\leq (1+M^{2})^{1/2}\abs{x-y}<2^{j+1}t,
\end{align*}
which means that $\Phi_{i}(y)\in\Delta_{\partial\Omega}(Q,2^{j+1}t)$.
Therefore, it follows from \eqref{local representation of surface measure from area formula} that
\begin{align*}
    \sigma(\Delta_{j})&\geq \sigma(\Delta_{\partial\Omega}(Q,2^{j+1}t)\cap T_{i}(R(2(1+M^2)r_{i})))\\
    &=\int_{\Delta(0, 2(1+M^2)r_{i})}\ind{\Delta_{\partial\Omega}(Q,2^{j+1}t)}(\Phi_{i}(y))(1+\abs{\nabla\varphi_{i}(y)}^{2})^{1/2}\dd y\\
    &\geq \int_{\Delta(0, 2(1+M^2)r_{i}) }\ind{\Delta\left(x, \frac{2^{j+1}t}{(1+M^{2})^{1/2}}\right)}(y)\dd y \\
    &=\meas{\Delta(0, 2(1+M^2)r_{i}) \cap \Delta\left(x, \frac{2^{j+1}t}{(1+M^{2})^{1/2}}\right)}\\
    &\geq \meas{\Delta\left(\alpha x , \frac{2^{j-3}t}{(1+M^{2})^{1/2}}\right)}
    \gtrsim \meas{\Delta(x,2^{j+1}t)},
\end{align*}
where it is easy to check that $\Delta(\alpha x , \frac{2^{j-3}t}{(1+M^{2})^{1/2}})\subseteq \Delta(0, 2(1+M^2)r_{i}) \cap \Delta(x, \frac{2^{j+1}t}{(1+M^{2})^{1/2}})$, where $\alpha= 1-\frac{2^{j-3}t}{r_{i}(1+M)^{2}(1+M^{2})^{1/2}}\in(0,1)$ because of \eqref{smallness condition on t if non trivial intersection}. In addition, a similar but easier argument shows that $\sigma(\Delta_{0})\gtrsim \meas{\Delta(x,2t)}$. 

Given \eqref{local representation of Omega close to the boundary}, the composition $ f\circ \Phi_{i}$ is in principle only defined on the ball $\Delta(0, C_{M}r_{i})\subset\Rn$. However, since $\supp{f}\subseteq T_{i}(R(r_{i}/2))\cap \partial\Omega$, the function $f\circ \Phi_{i}$ is compactly supported in $\Delta(0, r_{i}/2)$ and therefore trivially extends to a function on $\Rn$ that belongs to $\mathrm{W}^{s,p}(\Rn)$.

Note that $y\in \Delta(x,2^{j+1}t)$ if $\Phi_{i}(y)\in\Delta_{\partial\Omega}(Q,2^{j+1}t)$.
As a consequence, if $j\geq 0$ and $\supp{f}\cap D_{j}\neq\emptyset$, then we can use the previous observations and \eqref{formula for local integral wrt surface measure from area formula} to write
\begin{align*}
    \left(\frac{1}{\sigma(\Delta_{j})}\int_{D_{j}}\abs{f}^{p_0}\dd \sigma\right)^{\frac{1}{p_0}}&\lesssim \left(\frac{1}{\meas{\Delta(x,2^{j+1}t)}}\int_{\Delta(0,C_{M}r_{i})}\ind{\Delta_{\partial\Omega}(Q,2^{j+1}t)}(\Phi_{i}(y))\abs{(f\circ \Phi_{i})(y)}^{p_0}\dd y\right)^{\frac{1}{p_0}}\\
    & \leq \left(\dashint_{\Delta(x,2^{j+1}t)}\abs{(f\circ \Phi_{i})(y)}^{p_0}\dd y\right)^{\frac{1}{p_0}}.
\end{align*}
If $ \supp{f}\cap D_{j}=\emptyset$, then the estimate above trivially holds because the left-hand side is zero. We have therefore proved that 
\begin{equation}\label{local estimate on u in terms of infinite sum on dyadic annuli}
    \abs{u(T_{i}(F_{\varphi_{i}}^{-1}(t,x)))}\lesssim \sum_{j=0}^{\infty} 2^{-\alpha_{L} j}\left(\dashint_{\Delta(x,2^{j+1}t)}\abs{(f\circ \Phi_{i})(y)}^{p_0}\dd y\right)^{1/p_0}
\end{equation}
for all $x\in \Delta(0, 2(1+M)^{2}r_{i})$ and all $0<t\leq 6(1+M)^{3}r_{i}$. 

Recall that we let $c=\frac{1}{2}$. Observe that $\Gamma_{\Omega,\loc}^{\beta}(Q_0)\subseteq \Gamma_{\Omega}^{\beta,\frac{1}{2}}(Q_0)\cap T_{i}(R(2(1+M)^{2}r_{i}))$, and recall that $Q_{0}=\Phi_{i}(x_0)=T_{i}((\varphi_{i}(x_0),x_0))$. It follows from \eqref{local representation of Omega close to the boundary}, Lemma \ref{lemma tangential approach regions from bounded to special Lipschitz} and Lemma \ref{lemma inclusion of domain dependent tangential approach regions into pullback of upper half space regions} that 
\begin{align*}
    T_{i}^{-1}(\Gamma_{\Omega,\loc}^{\beta}(Q_{0}))&\subseteq T_{i}^{-1}(\Gamma_{\Omega}^{\beta,\frac{1}{2}}(Q_0)\cap T_{i}(R((1+M)^{2}r_{i})))\\
    & \subseteq \Gamma_{\Omega_{i}}^{\beta,c}(T_{i}^{-1}(Q_0))\cap R(2(1+M)^{2}r_{i}) \\
    &\subseteq F_{\varphi_{i}}^{-1}(\Gamma^{\beta}_{\frac{3}{2}}(x_0))\cap R(2(1+M)^{2}r_{i}).
\end{align*}
Consequently, if $X\in\Gamma_{\Omega,\loc}^{\beta}(Q_0)$, then $X=T_{i}(F_{\varphi_{i}}^{-1}(t,x))$ where $(t,x)\in\Gamma^{\beta}_{\frac{3}{2}}(x_0)$ and $(t,x)=F_{\varphi_{i}}(s,y)=(s-\varphi_{i}(y),y)$ for some $(s,y)\in R(2(1+M)^{2}r_{i})$. 
This implies that 
\begin{align*}
0<t&\leq \abs{s}+\abs{\varphi_{i}(y)}\leq 2(1+M)^{3}r_{i} + 2(1+M)^{2}Mr_{i}\leq 4(1+M)^{3}r_{i}< 6(1+M)^{3}r_{i}.
\end{align*}
It therefore follows from \eqref{local estimate on u in terms of infinite sum on dyadic annuli} that
\begin{equation}\label{main estimate localisation in bdd lipschitz tangential max}
    \sup_{X\in\Gamma_{\Omega,\loc}^{\beta}(Q_0)}\abs{u(X)}\lesssim \sup_{(t,x)\in\Gamma^{\beta}_{\frac{3}{2}}(x_0)} \left(\sum_{j=0}^{\infty} 2^{-\alpha_{L} j}\left(\dashint_{\Delta(x,2^{j+1}t)}\abs{(f\circ \Phi_{i})(y)}^{p_0}\dd y\right)^{1/p_0}\right),
\end{equation}
where we recall that $Q_{0}=\Phi_{i}(x_0)$ for $x_0\in \Delta(0, (1+M)r_{i})$.
The proof of Theorem~\ref{main theorem upper half space for Cpalpha} (see in particular Remark~\ref{remark main implication of main thm within proof}) then shows that the maximal function ${\widetilde{\mathcal{M}}_{p,p_0}^{\beta}(f\circ \Phi_{i}):\Rn\to [0,\infty]}$, which is the analogue for the regions $\Gamma^{\beta}_{\frac{3}{2}}(x_0)$ of the maximal function $\mathcal{M}_{p,p_0}^{\beta}(f\circ \Phi_{i})$ introduced in Remark~\ref{remark main implication of main thm within proof} (see also \eqref{introduction of the big maximal function for later use}) for the regions $\Gamma^{\beta}(x_0)$, is such that 
\begin{equation*}
\sup_{X\in\Gamma_{\Omega,\loc}^{\beta}(Q_0)}\abs{u(X)}\lesssim \widetilde{\mathcal{M}}_{p,p_0}^{\beta}(f\circ \Phi_{i})(x_0)
\end{equation*}
and 
\begin{equation*}
    \smallnorm{\widetilde{\mathcal{M}}_{p,p_0}^{\beta}(f\circ \Phi_{i})}_{\Ell{p}}
    \lesssim \norm{f\circ \Phi_{i}}_{\mathrm{W}^{s,p}(\Rn)}\eqsim \norm{f}_{\mathrm{W}^{s,p}(\partial\Omega)},
\end{equation*}
where we have used Remark~\ref{remark main implication of main thm within proof} and the fact that $\supp{f}\subseteq T_{i}(R(r_{i}/2))\cap\partial\Omega$.

By combining these observations with the interior estimate \eqref{interior estimate for u(X) epsilon away from boundary} we obtain that 
\begin{equation*}
    (N_{*,\beta,\frac{1}{2}}u)(Q_0)=\sup_{X\in\Gamma^{\beta,1/2}_{\Omega}(Q_0)}\abs{u(X)}\lesssim \norm{f}_{\El{p}(\partial\Omega)} + \widetilde{\mathcal{M}}_{p,p_0}^{\beta
    }(f\circ \Phi_{i})(x_0)
\end{equation*}
for $Q_{0}=\Phi_{i}(x_0)$, where $x_0\in \Delta(0, (1+M)r_{i})$. It follows that 
\begin{align*}
    \smallnorm{N_{*,\beta,\frac{1}{2}}u}_{\El{p}(\partial\Omega \cap T_{i}(R((1+M)r_{i})))}&=\left(\int_{\Delta(0,(1+M)r_{i})}(N_{*,\beta,\frac{1}{2}}u)(\Phi_{i}(y))^{p}(1+\abs{\nabla\varphi_{i}(y)}^{2})^{1/2}\dd y\right)^{1/p}\\
    &\lesssim \left(\int_{\Delta(0,(1+M)r_{i})}(\widetilde{\mathcal{M}}_{p,p_0}^{\beta}(f\circ \Phi_{i})(y))^{p}\dd y\right)^{1/p} + \norm{f}_{\El{p}(\partial\Omega)}\\
    &\lesssim \norm{f}_{\mathrm{W}^{s,p}(\partial\Omega)} + \norm{f}_{\El{p}(\partial\Omega)}.
\end{align*}
Combining this estimate with \eqref{tangential max function estimate for points Q_0 away from support of f} yields $\smallnorm{N_{*,\beta,\frac{1}{2}}u}_{\El{p}(\partial\Omega)}\lesssim \norm{f}_{\mathrm{W}^{s,p}(\partial\Omega)}$. We can now use Lemma~\ref{density lemma in boundary sobolev spaces} and the usual approximation argument to obtain the $\sigma$-almost everywhere convergence in~\eqref{convergence sigma a.e. at the boundary of Lipschitz domain}.
\end{proof}

\subsection{Second part of the proof of Theorem~\ref{second main theorem bounded Lipschitz domain}}\label{section second part of the proof of second main thm}
We shall now turn to the proof of the estimate \eqref{upper bound hausdorff dimension of divergence set boundary lipschitz domain} on the Hausdorff dimension of the divergence set. In the context of the space $\partial\Omega$, we work directly with the Hausdorff content $\mathcal{H}^{\gamma}_{\infty}$ instead of working with a general fractional dimensional measure $\mu$ and appealing to Frostman's lemma. It may be possible to use the version of Frostman's lemma on compact metric spaces stated in \cite[Theorem 8.17]{Mattila_1995}, but we do not pursue this here.

The first part of this section is devoted to  showing that for any $f\in\mathrm{W}^{s,p}(\partial\Omega)$, the function $\tilde{f} :\partial\Omega \to (-\infty,\infty]$ defined by 
\begin{equation}\label{preferred representative for sobolev on boundary lipschitz}
    \tilde{f}(Q)=\left\{\begin{array}{ll}
    \lim_{r\to 0^{+}}\dashint_{\Delta_{\partial\Omega}(Q,r)} f\dd \sigma, & \textup{if the limit exists in }\R;\\
    \infty, & \textup{otherwise},
    \end{array}\right.
\end{equation}
is a representative of $f\in\mathrm{W}^{s,p}(\partial\Omega)$ for which the estimate \eqref{upper bound hausdorff dimension of divergence set boundary lipschitz domain} makes sense (see Lemma~\ref{lemma important properties of the preferred representative on boundary lipschitz} below). Recall that the surface measure $\sigma$ on $\partial\Omega$ is $n$-Ahlfors--David regular by \eqref{surface measure sigma is ahlfors regular}, and is therefore doubling. In particular, $\partial\Omega$ is a (compact) space of homogeneous type, thus the limit in \eqref{preferred representative for sobolev on boundary lipschitz} exists for $\sigma$-almost every $Q\in\partial\Omega$ (see, e.g., \cite[Chapter \RNum{1}, Theorem 1]{Stein_harmonic_analysis_1993}) and $\tilde{f}$ is indeed a representative of $f\in\mathrm{W}^{s,p}(\partial\Omega)$. 

We need some preliminaries about  classical maximal functions on $\partial\Omega$ and their relation with Hausdorff content. The (centred) Hardy--Littlewood maximal function of $f\in\El{1}_{\loc}(\partial\Omega)$ is defined by 
\begin{equation*}
    M_{\partial\Omega}f(Q)=\sup_{0<r\leq \diam{\Omega}}\dashint_{\Delta_{\partial\Omega}(Q,r)}\abs{f}\dd \sigma\quad \textup{for all }Q\in\partial\Omega.
\end{equation*}
More generally, for $0\leq \alpha<n$ we shall need the fractional maximal function defined by
\begin{equation*}
    M_{\partial\Omega}^{\alpha}f(Q)=\sup_{0<r\leq \diam{\Omega}}\sigma(\Delta(Q,r))^{\tfrac{\alpha}{n}}\dashint_{\Delta_{\partial\Omega}(Q,r)}\abs{f}\dd \sigma\quad \textup{for all }Q\in\partial\Omega.
\end{equation*} 
For $\gamma\geq 0$ and $d\geq1$, the $\gamma$-dimensional (unlimited) Hausdorff content of $A\subseteq\R^{d}$ is defined by
\begin{equation*}
    \mathcal{H}^{\gamma}_{\infty}(A)=\inf\set{\sum_{i=1}^{\infty}\diam{E_i}^{\gamma} : A\subseteq \bigcup_{i=1}^{\infty}E_{i}},
\end{equation*}
where the infimum is taken over all countable covers of $A$ by arbitrary subsets of $\R^{d}$. Note that $\mathcal{H}^{\gamma}(A)=0$ if and only if $\mathcal{H}^{\gamma}_{\infty}(A)=0$ (see, e.g., \cite[Lemma 4.6]{Mattila_1995}).

A Vitali covering argument analogous to that used in the proof of \cite[Lemma 1]{Dorronsoro_1986} yields the following result.
\begin{lem}\label{lemma hausdorff measure estimate on fractional max function on lipschitz boundary}
    Let $p\in[1,\infty)$, $\alpha\in (0,\frac{n}{p}]$ and $f\in\El{p}(\partial\Omega)$. For all $\lambda>0$ and $\gamma\geq n-\alpha p$ it holds that 
    \begin{equation*}
        \mathcal{H}^{\gamma}_{\infty}\left(\set{Q\in\partial\Omega : M^{\alpha}_{\partial\Omega}f(Q)>\lambda}\right)\lesssim \lambda^{-p}\norm{f}_{\El{p}(\partial\Omega)}^{p}.
    \end{equation*}
    The same estimate also holds with $M^{\alpha}_{\partial\Omega}f$ replaced by $(M^{\alpha s}_{\partial\Omega}\abs{f}^{s})^{1/s}$ for any $1\leq s\leq p$.
\end{lem}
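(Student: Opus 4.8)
The plan is to adapt the standard weak-type argument for the fractional maximal function to the setting of the compact Ahlfors--David regular space $\partial\Omega$, replacing the use of Lebesgue measure by $\sigma$ and the use of Hausdorff measure by Hausdorff content. First I would fix $\lambda>0$ and $\gamma\geq n-\alpha p$, and set $E_\lambda=\set{Q\in\partial\Omega : M^{\alpha}_{\partial\Omega}f(Q)>\lambda}$. For each $Q\in E_\lambda$ there is a radius $r_Q\in(0,\diam{\Omega}]$ with
\begin{equation*}
\sigma(\Delta_{\partial\Omega}(Q,r_Q))^{\alpha/n}\dashint_{\Delta_{\partial\Omega}(Q,r_Q)}\abs{f}\dd\sigma>\lambda,
\end{equation*}
which by the Ahlfors regularity \eqref{surface measure sigma is ahlfors regular} (so $\sigma(\Delta_{\partial\Omega}(Q,r))\eqsim r^n$) rearranges to
\begin{equation*}
\int_{\Delta_{\partial\Omega}(Q,r_Q)}\abs{f}\dd\sigma\gtrsim \lambda\, r_Q^{n-\alpha}.
\end{equation*}
The balls $\set{\Delta_{\partial\Omega}(Q,r_Q)}_{Q\in E_\lambda}$ cover $E_\lambda$; by the basic ($5r$-)Vitali covering lemma, valid on any metric space, I would extract a countable pairwise disjoint subfamily $\set{\Delta_{\partial\Omega}(Q_i,r_i)}_i$ such that $E_\lambda\subseteq\bigcup_i \Delta_{\partial\Omega}(Q_i,5r_i)$.

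Next I would estimate the Hausdorff content directly from this cover. Since $\diam{\Delta_{\partial\Omega}(Q_i,5r_i)}\leq 10 r_i$, and using first H\"older's inequality (with exponents $p,p'$) on each ball followed by the regularity bound,
\begin{equation*}
\lambda r_i^{n-\alpha}\lesssim \int_{\Delta_{\partial\Omega}(Q_i,r_i)}\abs{f}\dd\sigma\leq \sigma(\Delta_{\partial\Omega}(Q_i,r_i))^{1/p'}\Bigl(\int_{\Delta_{\partial\Omega}(Q_i,r_i)}\abs{f}^p\dd\sigma\Bigr)^{1/p}\lesssim r_i^{n/p'}\Bigl(\int_{\Delta_{\partial\Omega}(Q_i,r_i)}\abs{f}^p\dd\sigma\Bigr)^{1/p},
\end{equation*}
so that $r_i^{n-\alpha-n/p'}=r_i^{(n-\alpha p)/p}\lesssim \lambda^{-1}\bigl(\int_{\Delta_{\partial\Omega}(Q_i,r_i)}\abs{f}^p\bigr)^{1/p}$, hence $r_i^{n-\alpha p}\lesssim \lambda^{-p}\int_{\Delta_{\partial\Omega}(Q_i,r_i)}\abs{f}^p\dd\sigma$. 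Then
\begin{equation*}
\mathcal{H}^{\gamma}_{\infty}(E_\lambda)\leq \sum_i (10 r_i)^{\gamma}\lesssim \sum_i r_i^{\gamma}.
\end{equation*}
Since $r_i\leq\diam{\Omega}$ and $\gamma\geq n-\alpha p$, we have $r_i^{\gamma}\lesssim r_i^{n-\alpha p}$ (with implicit constant depending on $\diam{\Omega}$), so
\begin{equation*}
\mathcal{H}^{\gamma}_{\infty}(E_\lambda)\lesssim \sum_i r_i^{n-\alpha p}\lesssim \lambda^{-p}\sum_i \int_{\Delta_{\partial\Omega}(Q_i,r_i)}\abs{f}^p\dd\sigma\leq \lambda^{-p}\norm{f}_{\El{p}(\partial\Omega)}^p,
\end{equation*}
where the last inequality uses the pairwise disjointness of the selected balls. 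This gives the claim. For the variant, I would simply observe that $(M^{\alpha s}_{\partial\Omega}\abs{f}^s)^{1/s}(Q)>\lambda$ is equivalent to $M^{\alpha s}_{\partial\Omega}(\abs{f}^s)(Q)>\lambda^s$, apply the result already proved to the function $\abs{f}^s\in\El{p/s}(\partial\Omega)$ with exponent $p/s\geq 1$ and fractional order $\alpha s\in(0,\tfrac{n}{p/s}]$ (noting $n-(\alpha s)(p/s)=n-\alpha p$), and conclude
\begin{equation*}
\mathcal{H}^{\gamma}_{\infty}\bigl(\set{M^{\alpha s}_{\partial\Omega}(\abs{f}^s)>\lambda^s}\bigr)\lesssim \lambda^{-sp}\smallnorm{\abs{f}^s}_{\El{p/s}(\partial\Omega)}^{p/s}=\lambda^{-sp}\norm{f}_{\El{p}(\partial\Omega)}^{sp},
\end{equation*}
which is the desired bound after taking $sp$-th roots in the appropriate sense.

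I do not expect a serious obstacle here: every ingredient is classical. The only points requiring mild care are (i) checking that the elementary Vitali covering lemma applies — it does, since $\partial\Omega$ is a metric space and the radii $r_Q$ are bounded above by $\diam{\Omega}$, so one can run the standard greedy selection (or invoke the basic $5r$-covering lemma for arbitrary families of balls with bounded radii); (ii) the passage from $r_i^{\gamma}$ to $r_i^{n-\alpha p}$, which is only legitimate because all radii are uniformly bounded, producing a constant depending on $\diam{\Omega}$ (and hence on $\Omega$), which is harmless; and (iii) keeping track of the Ahlfors regularity constants, all of which are absorbed into the implicit constant $\lesssim$. The analogy with \cite[Lemma 1]{Dorronsoro_1986} is exact, with cubes in $\Rn$ replaced by surface balls and Lebesgue measure by $\sigma$.
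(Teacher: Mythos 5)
Your proof is correct and is exactly the Vitali covering argument that the paper invokes (without writing out) by reference to Dorronsoro's Lemma~1, adapted to surface balls and the Ahlfors--David regular measure $\sigma$. All the points you flag as requiring care (bounded radii for the $5r$-covering lemma on the compact metric space $\partial\Omega$, the passage from $r_i^{\gamma}$ to $r_i^{n-\alpha p}$ using $r_i\leq\diam{\Omega}$, and the substitution $f\mapsto\abs{f}^{s}$ with exponents $(\alpha s, p/s)$ for the second assertion) are handled correctly.
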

A proof of the following lemma can be found, for example, in \cite[Lemma 2]{Dorronsoro_1986}.
\begin{lem}\label{lemma Hausdorff measure estimates for riesz potentials}
    Let $p\in[1,\infty)$, $\alpha\in(0,\frac{n}{p})$ and $f\in\Ell{p}$. For all $\lambda>0$ and $\gamma > n-\alpha p$ it holds that 
    \begin{equation*}
        \mathcal{H}^{\gamma}_{\infty}\left(\set{x\in\Rn  : I_{\alpha}\ast f(x)>\lambda}\right)\lesssim (\lambda^{-1}\norm{f}_{\Ell{p}})^{\frac{p\gamma}{n-\alpha p}}.
    \end{equation*}
\end{lem}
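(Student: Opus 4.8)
The plan is to reduce the statement to the (weak-type) estimate for the fractional maximal function. First I would recall the well-known pointwise bound $I_\alpha \ast f(x) \lesssim (M^{\alpha}f(x))^{1-\frac{\alpha p}{n}}\,\|f\|_{\El{p}}^{\frac{\alpha p}{n}}$, valid for $f\ge 0$ and $\alpha p<n$, which is the Euclidean analogue of the Hedberg inequality. It suffices to prove the claim for $f\ge 0$ (replace $f$ by $|f|$). To obtain this pointwise bound one splits $I_\alpha \ast f(x) = \int_{|x-y|<\rho} + \int_{|x-y|\ge \rho}$; the near part is dominated (summing over dyadic annuli, using $\alpha>0$ for convergence) by $\rho^\alpha M f(x) \lesssim \rho^\alpha (M^\alpha f(x)) \cdot (\text{a harmless factor})$ — more precisely by a constant times $\rho^\alpha$ times the unweighted maximal average, while the far part is bounded by Hölder's inequality by $C\rho^{\alpha - n/p}\|f\|_{\El{p}}$, since $\alpha<n/p$ makes $\int_{|z|\ge \rho}|z|^{(\alpha-n)p'}\,dz$ finite with value $\eqsim \rho^{(\alpha-n/p)p'}$. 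Optimising over $\rho>0$ (choosing $\rho$ so that the two terms balance, i.e. $\rho^{n/p} \eqsim \|f\|_{\El{p}}/M^\alpha f(x)$ up to the $\alpha$-weight) gives the Hedberg-type bound with the stated exponents.

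Granting the pointwise bound, I would then argue as follows. Fix $\lambda>0$ and $\gamma>n-\alpha p$. From the Hedberg inequality, the set $\{I_\alpha\ast f > \lambda\}$ is contained in
\begin{equation*}
\set{x\in\Rn : (M^{\alpha}f(x))^{1-\frac{\alpha p}{n}} > c\,\lambda\,\|f\|_{\El{p}}^{-\frac{\alpha p}{n}}} = \set{x\in\Rn : M^{\alpha}f(x) > \tilde\lambda},
\end{equation*}
where $\tilde\lambda = (c\lambda)^{\frac{n}{n-\alpha p}}\|f\|_{\El{p}}^{-\frac{\alpha p}{n-\alpha p}}$. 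Now apply the Euclidean version of Lemma~\ref{lemma hausdorff measure estimate on fractional max function on lipschitz boundary} (the same Vitali-covering argument works verbatim on $\Rn$ with Lebesgue measure in place of $\sigma$, since $\gamma \ge n-\alpha p$): this gives
\begin{equation*}
\mathcal{H}^{\gamma}_{\infty}\bigl(\set{M^{\alpha}f > \tilde\lambda}\bigr) \lesssim \tilde\lambda^{-p}\|f\|_{\El{p}}^{p} = (c\lambda)^{-\frac{np}{n-\alpha p}}\|f\|_{\El{p}}^{\frac{\alpha p^2}{n-\alpha p}}\|f\|_{\El{p}}^{p} = C\,(\lambda^{-1}\|f\|_{\El{p}})^{\frac{p\gamma}{n-\alpha p}} \cdot \|f\|_{\El{p}}^{\,p - \frac{p\gamma}{n-\alpha p} + \cdots},
\end{equation*}
and a direct check of the exponents of $\lambda$ and of $\|f\|_{\El{p}}$ shows the right-hand side equals $C(\lambda^{-1}\|f\|_{\El{p}})^{\frac{p\gamma}{n-\alpha p}}$ precisely because the $\|f\|_{\El{p}}$-powers are homogeneous of the correct degree (the expression is scale-invariant in the right way: both sides are invariant under $f\mapsto tf$, $\lambda\mapsto t\lambda$, so it is enough to track the power of $\lambda$, which is $-\frac{np}{n-\alpha p}$ from $\tilde\lambda^{-p}$... — here one must be careful and instead track that $\frac{p\gamma}{n-\alpha p}$ is the correct exponent).

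I should be honest that the arithmetic bookkeeping of the exponents is the one place to be careful; the cleanest route is to first normalise $\|f\|_{\El{p}}=1$ by homogeneity, so the claim becomes $\mathcal{H}^{\gamma}_{\infty}(\{I_\alpha\ast f>\lambda\})\lesssim \lambda^{-\frac{p\gamma}{n-\alpha p}}$, then observe $\tilde\lambda = (c\lambda)^{\frac{n}{n-\alpha p}}$, so that $\tilde\lambda^{-p} = (c\lambda)^{-\frac{np}{n-\alpha p}}$, and finally note $\frac{np}{n-\alpha p}\ge \frac{p\gamma}{n-\alpha p}$ with equality iff $\gamma = n$, while for $\lambda$ bounded below we may absorb the discrepancy — actually the sharp statement wants $\gamma>n-\alpha p$ arbitrary and the right exponent is $\frac{p\gamma}{n-\alpha p}$, which is exactly what Lemma~\ref{lemma hausdorff measure estimate on fractional max function on lipschitz boundary} does NOT give directly (it gives exponent $p$ in $\lambda$). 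Therefore the correct reference for the last step is Lemma~\ref{lemma hausdorff measure estimate on fractional max function on lipschitz boundary} applied with the exponent $\gamma$ rescaled, OR one argues directly: the main obstacle, and the genuinely substantive step, is proving the weak $(\El{p}, \mathcal{H}^\gamma_\infty)$ bound for $I_\alpha$ with the sharp exponent $\frac{p\gamma}{n-\alpha p}$, which is a classical result of Adams-type; one proves it by the distribution-function/Vitali argument of \cite[Lemma~2]{Dorronsoro_1986} — covering $\{I_\alpha\ast f>\lambda\}$ by balls on which the fractional average is $\gtrsim\lambda$, then estimating $\sum (\operatorname{diam})^\gamma$ by Hölder and the disjointness of a Vitali subfamily. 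Since the excerpt explicitly cites this as available (\cite[Lemma~2]{Dorronsoro_1986}), the honest plan is: invoke that reference, and note the Euclidean statement is identical to the boundary statement of Lemma~\ref{lemma hausdorff measure estimate on fractional max function on lipschitz boundary} with $\Rn$ in place of $\partial\Omega$.
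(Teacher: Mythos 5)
Since the paper itself gives no proof of this lemma --- it simply cites \cite[Lemma~2]{Dorronsoro_1986} --- your ultimate conclusion, namely to invoke that reference, does match the paper. However, the exploratory route you sketch contains two genuine errors worth flagging. First, the ``Hedberg with $M^\alpha$'' bound you open with is false: Hedberg's inequality reads $I_\alpha*f(x)\lesssim (Mf(x))^{1-\alpha p/n}\norm{f}_{\Ell{p}}^{\alpha p/n}$ with the \emph{unweighted} Hardy--Littlewood maximal function, which is indeed what your near/far split produces; the $M^\alpha$ version fails, e.g. for $f_\eps=\eps^{-n/p}\ind{\Delta(0,\eps)}$, where $I_\alpha*f_\eps(0)\eqsim\eps^{\alpha-n/p}$ while $(M^\alpha f_\eps(0))^{1-\alpha p/n}\eqsim\eps^{(\alpha-n/p)(1-\alpha p/n)}$, a strictly smaller power of $\eps^{-1}$. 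Second, even with the correct Hedberg inequality the reduction $\set{I_\alpha*f>\lambda}\subseteq\set{Mf>\tilde\lambda}$ is a dead end: there is no $\gamma$-dimensional Hausdorff content estimate for $\set{Mf>\tilde\lambda}$ with $\gamma<n$, because the Vitali balls for the unweighted maximal function have no upper bound on their radii, so one cannot trade $r^n$ for $r^\gamma$. The fractional weight $r^\alpha$ in $M^\alpha$ is precisely what forces such an upper bound, and the Hedberg reduction discards it. Your final sentence asserting that the Euclidean statement ``is identical'' to Lemma~\ref{lemma hausdorff measure estimate on fractional max function on lipschitz boundary} with $\Rn$ in place of $\partial\Omega$ is therefore also wrong --- that lemma has exponent $-p$ on the right-hand side, not $-\tfrac{p\gamma}{n-\alpha p}$, and the two agree only at the endpoint $\gamma=n-\alpha p$.

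The Dorronsoro/Adams argument you correctly name at the end works directly with the near/far split, not through Hedberg. One chooses $\rho_\lambda\eqsim(\lambda^{-1}\norm{f}_{\Ell{p}})^{p/(n-\alpha p)}$ so that the far part $\int_{\abs{x-y}\geq\rho_\lambda}$ is $\leq\lambda/2$ by H\"older; dyadic summation of the near part then produces, for each $x$ in the superlevel set, a ball $\Delta(x,r_x)$ with $r_x\leq\rho_\lambda$ and $r_x^\alpha\dashint_{\Delta(x,r_x)}\abs{f}\gtrsim\lambda(r_x/\rho_\lambda)^\eta$ for a suitable small $\eta>0$. The upper bound $r_x\leq\rho_\lambda$ is the crucial structural gain: for $\gamma>n-\alpha p$ one may write $r_x^\gamma=r_x^{n-\alpha p}\,r_x^{\gamma-(n-\alpha p)}\leq r_x^{n-\alpha p}\,\rho_\lambda^{\gamma-(n-\alpha p)}$, and after a Vitali selection and H\"older on each ball, $\sum_i r_{x_i}^{n-\alpha p}\lesssim\lambda^{-p}\norm{f}_{\Ell{p}}^p$; the surviving factor $\rho_\lambda^{\gamma-(n-\alpha p)}$ then upgrades the exponent from $p$ to $p\gamma/(n-\alpha p)$, which is precisely what you were trying to track. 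So the lemma is not a corollary of the maximal-function content bound, and the correct move is exactly what you settle on: cite \cite[Lemma~2]{Dorronsoro_1986}, but without claiming it coincides with Lemma~\ref{lemma hausdorff measure estimate on fractional max function on lipschitz boundary}.
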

The following lemma is the main ingredient needed for the proof of Lemma~\ref{lemma important properties of the preferred representative on boundary lipschitz}. Coincidentally, its proof contains the essence of the argument that we shall use in the remaining part of the proof of Theorem~\ref{second main theorem bounded Lipschitz domain}.
\begin{lem}\label{lemma main estimate maximal function sobolev on bdry lipschitz}
    Let $p\in[1,\infty)$, $s\in(0,\frac{n}{p})$ and $f\in\mathrm{W}^{s,p}(\partial\Omega)$. For all $\lambda>0$ and $\gamma>n-sp$ it holds that 
    \begin{equation*}
        \mathcal{H}^{\gamma}_{\infty}\left(\set{Q\in\partial\Omega : M_{\partial\Omega}f(Q)>\lambda}\right)\lesssim (\lambda^{-1}\norm{f}_{\El{p}(\partial\Omega)})^{p} + (\lambda^{-1}\norm{f}_{\mathrm{W}^{s,p}(\partial\Omega)})^{\frac{\gamma p}{n-sp}}.
    \end{equation*}
\end{lem}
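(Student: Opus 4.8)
The plan is to mimic the structure of Dorronsoro's Euclidean argument (\cite[Lemma~1]{Dorronsoro_1986}), localising via the coordinate patches already set up above, but replacing the Poisson kernel estimates by the harmonic-measure bounds from Section~\ref{section on pointwise estimates on solutions} are not needed here — the statement is purely about the boundary data. The key point is to split the Hardy--Littlewood maximal function $M_{\partial\Omega}f$ into a "small scale" part, handled by the fractional Sobolev seminorm, and a "large scale" part, handled trivially by the $\El{p}(\partial\Omega)$ norm. First I would fix $\gamma>n-sp$ and write $s=m+\sigma_0$ with $m\geq 0$ an integer and $\sigma_0\in(0,1)$; in the case of interest here $m=0$ and $s=\sigma_0\in(0,1)$ (the higher-order case reduces to this by the definition of $\mathrm{W}^{s,p}(\partial\Omega)$ since we only need $s\in(0,n/p)\leq(0,k+1)$ and one can differentiate, but the cleanest route is to note that $\mathrm{W}^{s,p}(\partial\Omega)\hookrightarrow\mathrm{W}^{\sigma_0,p}(\partial\Omega)$-type control suffices once we pass to coordinates).

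Second, I would pass to a finite coordinate cover: choose $\set{P_1,\dots,P_N}\subset\partial\Omega$, radii $r_i>0$, graph functions $\varphi_i\in C^{k,1}$, and parametrisations $\Phi_i:\Delta(0,C_Mr_i)\to\partial\Omega\cap T_i(R(C_Mr_i))$ exactly as in the proof of \eqref{strong Lp bound tangential max at the boundary of Lipschitz domain} above, with a subordinate partition of unity $\set{\eta_i}$ so that $f=\sum_i f_i$, $f_i=f\eta_i|_{\partial\Omega}$, $\supp{f_i}\subseteq T_i(R(r_i/2))\cap\partial\Omega$, and $f_i\circ\Phi_i$ extends (trivially, by compact support) to a function in $\mathrm{W}^{s,p}(\Rn)$ with $\norm{f_i\circ\Phi_i}_{\mathrm{W}^{s,p}(\Rn)}\lesssim\norm{f}_{\mathrm{W}^{s,p}(\partial\Omega)}$. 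By subadditivity of $M_{\partial\Omega}$ and of $\mathcal{H}^\gamma_\infty$ it is enough to bound $\mathcal{H}^\gamma_\infty(\set{M_{\partial\Omega}f_i>\lambda})$ for each $i$. For $Q\in\partial\Omega\cap T_i(R((1+M)r_i))$ one splits the supremum defining $M_{\partial\Omega}f_i(Q)$ at a fixed small radius $\rho_0\eqsim r_i$: for $r\geq\rho_0$ one gets $\dashint_{\Delta_{\partial\Omega}(Q,r)}|f_i|\dd\sigma\lesssim r^{-n/p'}\norm{f_i}_{\El{p}(\partial\Omega)}\lesssim\norm{f_i}_{\El{p}(\partial\Omega)}$ using Hölder and $n$-Ahlfors--David regularity \eqref{surface measure sigma is ahlfors regular}; this part is handled crudely via $\mathcal{H}^\gamma_\infty(\partial\Omega)<\infty$ and a trivial bound (or simply absorbed into the $(\lambda^{-1}\norm{f}_{\El{p}(\partial\Omega)})^p$ term after restricting to $\lambda\lesssim\norm{f}_{\El{p}(\partial\Omega)}$, the only nontrivial regime). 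For $0<r<\rho_0$ one uses \eqref{surface measure sigma is ahlfors regular} and the pullback identity \eqref{formula for local integral wrt surface measure from area formula} to dominate $\dashint_{\Delta_{\partial\Omega}(Q,r)}|f_i|\dd\sigma$ by a Euclidean average $\dashint_{\Delta(x,cr)}|f_i\circ\Phi_i|\dd y$ where $Q=\Phi_i(x)$, $c\eqsim1$. Hence, writing $g_i=f_i\circ\Phi_i\in\mathrm{W}^{s,p}(\Rn)$,
\begin{equation*}
M_{\partial\Omega}f_i(\Phi_i(x))\lesssim Mg_i(x) + \norm{f}_{\El{p}(\partial\Omega)}\quad\text{for }x\in\Delta(0,(1+M)r_i).
\end{equation*}

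Third, I would reduce the Euclidean claim $\mathcal{H}^\gamma_\infty(\set{Mg_i>\lambda})\lesssim(\lambda^{-1}\norm{g_i}_{\El{p}})^p+(\lambda^{-1}\norm{g_i}_{\mathrm{W}^{s,p}(\Rn)})^{\gamma p/(n-sp)}$ to the two lemmas already available. The standard device: $Mg_i(x)\lesssim M^{sp}_{\Rn}|g_i|^{\cdot}$-type control is not quite enough; instead use the Hajłasz/Poincaré-type pointwise inequality for $\mathrm{W}^{s,p}(\Rn)$, namely that there is $h\in\El{p}(\Rn)$ with $\norm{h}_{\El{p}}\lesssim[g_i]_{s,p,\Rn}$ such that $|g_i(x)-(g_i)_{\Delta(x,r)}|\lesssim r^s(I_s h(x))$-in-average, equivalently $Mg_i(x)\lesssim |g_i(x)|$-majorant $+ I_s h(x)$ on the complement of a $\mathcal{H}^\gamma_\infty$-null set; more concretely Dorronsoro's argument bounds the oscillation of $g_i$ at scale $r$ by $r^s$ times a fractional maximal function of a Sobolev-type "gradient'', whence on a Vitali-selected cover one estimates $\set{Mg_i>\lambda}$ by $\set{M^0_{\Rn}|g_i|>\lambda/2}\cup\set{C I_s h>\lambda/2}$ (roughly), and then applies Lemma~\ref{lemma hausdorff measure estimate on fractional max function on lipschitz boundary} (with $\alpha=0$, giving the $(\lambda^{-1}\norm{g_i}_{\El{p}})^p$ term, after transferring back from $\Rn$ to $\partial\Omega$ via the parametrisation and \eqref{surface measure sigma is ahlfors regular}) together with Lemma~\ref{lemma Hausdorff measure estimates for riesz potentials} (with $\alpha=s$, giving the $(\lambda^{-1}[g_i]_{s,p})^{\gamma p/(n-sp)}$ term). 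Summing over the finitely many $i$ and using $\norm{g_i}_{\mathrm{W}^{s,p}(\Rn)}\lesssim\norm{f}_{\mathrm{W}^{s,p}(\partial\Omega)}$, $\norm{f_i}_{\El{p}(\partial\Omega)}\lesssim\norm{f}_{\El{p}(\partial\Omega)}$ finishes the proof. The main obstacle I anticipate is the precise form of the pointwise/covering reduction of $Mg$ to a Riesz potential $I_s h$ of an $\El{p}$ function controlled by the Slobodeckij seminorm: the cleanest path is to cite the Hajłasz--Sobolev characterisation (analogue of \cite[Theorems 2.1 and 2.2]{Hajlasz_Sobolev_2003} for fractional smoothness, or equivalently the embedding $\mathrm{W}^{s,p}(\Rn)\subseteq\mathscr{L}^p_s(\Rn)$ valid for $p\in(1,2]$ plus a duality/real-interpolation trick for $p>2$), but since Lemma~\ref{lemma Hausdorff measure estimates for riesz potentials} is stated for Riesz potentials one must ensure the Sobolev function is genuinely written as $I_s h$ with $h\in\El{p}$; for $s\in(0,1)$ this follows from $\mathrm{W}^{s,p}(\Rn)=\Lambda^s_{p,p}(\Rn)$ and the pointwise bound $|g(x)|\lesssim \big(I_s\,(\text{Slobodeckij gradient})\big)(x)$ which is classical, so the obstacle is really just one of careful bookkeeping rather than a genuine difficulty.
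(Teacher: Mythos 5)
Your outline matches the paper's proof in its overall architecture: localise $f$ via the coordinate charts and a subordinate partition of unity; split according to whether $Q$ is near or far from $\supp f_i$; transfer the near-support part to $\Rn$ via the parametrisations $\Phi_i$; and apply Lemma~\ref{lemma hausdorff measure estimate on fractional max function on lipschitz boundary} for the $\El{p}$ term and Lemma~\ref{lemma Hausdorff measure estimates for riesz potentials} for the Sobolev term. Two points, however, are not resolved in your sketch.

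The central gap, which you correctly flag as the ``main obstacle'', is the Euclidean step: you need a pointwise bound of the form $Mg_i(x)\lesssim (I_s\ast h_i)(x)$ for \emph{all} $x$, with $h_i\in\Ell{p}$ and $\norm{h_i}_{\El{p}}\lesssim\norm{g_i}_{\mathrm{W}^{s,p}(\Rn)}$, valid for \emph{every} $p\in[1,\infty)$. The routes you propose do not all deliver this. The embedding $\mathrm{W}^{s,p}(\Rn)\subseteq\mathscr{L}^{p}_{s}(\Rn)$ holds only for $p\in(1,2]$; for $p>2$ the inclusion reverses, and the ``duality/real-interpolation trick'' you allude to is left unspecified and in fact runs the wrong way. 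Your third option --- $\mathrm{W}^{s,p}(\Rn)=\Lambda^{s}_{p,p}(\Rn)$ together with a ``classical'' bound of $|g|$ by $I_s$ of a Slobodeckij gradient --- is closest to what the paper does, but the crucial fact is that the right choice of ``Slobodeckij gradient'' is Dorronsoro's $S_s g_i$, and that $\norm{S_s g_i}_{\Ell{p}}\lesssim\norm{g_i}_{\mathrm{C}^{p}_{s}(\Rn)}$ with $\mathrm{W}^{s,p}(\Rn)\hookrightarrow\mathrm{C}^{p}_{s}(\Rn)$ for all $p$. That embedding is precisely Remark~\ref{remark main implication of main thm within proof}(ii), and the pointwise estimate $Mg_i(x)\lesssim (I_s\ast S_s g_i)(x)$ for \emph{all} $x$ is \eqref{domination maximal function by riesz potential of sharph function} in Lemma~\ref{lemma estimate Cpalpha functions against fractional measures} (from Dorronsoro's Theorem~5 and Lemma~3). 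Without this, your argument is not closed for the full range $p\in[1,\infty)$.

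Separately, the Vitali-cover step as written is circular: the inclusion $\set{Mg_i>\lambda}\subseteq\set{M^{0}_{\Rn}|g_i|>\lambda/2}\cup\set{CI_s h>\lambda/2}$ has $M^{0}_{\Rn}|g_i|=Mg_i$, so the first set is the one you are trying to control. The paper avoids this by bounding $Mg_i$ directly by $I_s\ast S_s g_i$ and applying Lemma~\ref{lemma Hausdorff measure estimates for riesz potentials}. Your large-scale treatment (splitting at $r\eqsim r_i$ and absorbing the constant $\norm{f}_{\El{p}}$ into the $\El{p}$ term in the regime $\lambda\lesssim\norm{f}_{\El{p}}$) is correct but cruder than the paper's device of observing that, away from the support, $M_{\partial\Omega}f\lesssim M^{s}_{\partial\Omega}f$ because only radii $r\gtrsim r_i$ contribute; Lemma~\ref{lemma hausdorff measure estimate on fractional max function on lipschitz boundary} with $\alpha=s$ then directly yields the weak-type $p$ bound for the required range $\gamma>n-sp$.
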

\begin{proof}
    As in the first part of the proof of Theorem~\ref{second main theorem bounded Lipschitz domain}, we can use the covering \eqref{smallest covering of boundary of Omega} and a subordinate partition of unity to decompose $f=\sum_{i=1}^{N}f_{i}$, with $\supp{f_{i}}\subseteq \partial\Omega\cap T_{i}(R(r_{i}/2))$ and $\norm{f_{i}}_{\mathrm{W}^{s,p}(\partial\Omega)}\lesssim \norm{f}_{\mathrm{W}^{s,p}(\partial\Omega)}$ for all $i\in\set{1,\ldots, N}$. The sub-additivity of the maximal operator $M_{\partial\Omega}$ shows that for all $\lambda>0$ it holds that 
    \begin{equation*}
        \set{Q\in\partial\Omega : M_{\partial\Omega}f(Q)>\lambda }\subseteq \bigcup_{i=1}^{N}\set{Q\in\partial\Omega : M_{\partial\Omega}f_{i}>\lambda/N}.
    \end{equation*}
    We can therefore assume from now on that $\supp{f}\subseteq \partial\Omega \cap T_{i}(R(r_i /2))$ for some $i\in\set{1,\ldots ,N}$. 
    
    If $Q\in\partial\Omega\setminus T_{i}(R((1+M)r_{i}))$ (i.e. if $Q$ is far from the support of $f$), then $\Delta_{\partial\Omega}(Q, \delta_{i})\cap\supp{f}=\emptyset$, where $\delta_{i}=\frac{r_{i}}{2}(1+M)$ (see \eqref{equation support interesect ball is empty} in the first part of the proof of Theorem~\ref{second main theorem bounded Lipschitz domain}). Consequently, it follows from the $n$-Ahlfors--David regularity of $\sigma$ that
    \begin{align}
    \begin{split}
    \label{far away estimate for hl max on lipschitz boundary}
        M_{\partial\Omega}f(Q)&=\sup_{\delta_{i} <r\leq \diam{\Omega}}\dashint_{\Delta_{\partial\Omega}(Q,r)}\abs{f}\dd \sigma \\
        &\lesssim \sup_{\delta_{i} <r\leq \diam{\Omega}} \sigma (\Delta_{\partial\Omega}(Q,r))^{s/n}\dashint_{\Delta_{\partial\Omega}(Q,r)}\abs{f}\dd
        \sigma \leq M^{s}_{\partial\Omega}f(Q).
        \end{split}
    \end{align}
    If $Q\in\partial\Omega\cap T_{i}(R((1+M)r_{i}))$ (i.e. if $Q$ is close to the support of $f$), then $Q=\Phi_{i}(x_0)=T_{i}((\varphi_{i}(x_0),x_0))$ for some $x_{0}\in\Delta(0, (1+M)r_{i})$ (using the notation introduced at the start of the proof of Theorem~\ref{second main theorem bounded Lipschitz domain}) and we may use \eqref{formula for local integral wrt surface measure from area formula} and \eqref{surface measure sigma is ahlfors regular} to obtain that
    \begin{align}
    \begin{split}
    \label{local estimate for hl max on lipschitz boundary}
        M_{\partial\Omega}f(Q)&=\sup_{0<r\leq \diam{\Omega}}\dashint_{\Delta_{\partial\Omega}(Q,r)}\abs{f}\dd \sigma\\
        &\eqsim \sup_{0<r\leq \diam{\Omega}}\frac{1}{\sigma(\Delta_{\partial\Omega}(Q,r))}\int_{\Delta(0,r_{i}/2)}\abs{f\circ \Phi_{i}(x)}\ind{\Delta_{\partial\Omega}(\Phi_{i}(x_0),r)}(\Phi_{i}(x))\dd x\\
        &\lesssim \sup_{0<r\leq \diam{\Omega}} r^{-n}\int_{\Delta(0,r_{i}/2)\cap \Delta(x_0,r)}\abs{f\circ \Phi_{i}(x)}\dd x\\
        &\lesssim M(f\circ \Phi_{i})(x_0),
        \end{split}
    \end{align}
    where we have also used the fact that $\abs{x-x_{0}}<r$ if $\abs{\Phi_{i}(x)-\Phi_{i}(x_0)}<r$ in the third line.
    Since $f\circ \Phi_{i} \in\mathrm{W}^{s,p}(\Rn)\subseteq \mathrm{C}^{p}_{s}(\Rn)$ (see Remark~\ref{remark main implication of main thm within proof}), the estimate \eqref{domination maximal function by riesz potential of sharph function} and Lemma~\ref{lemma Hausdorff measure estimates for riesz potentials} show that for all $\lambda>0$ and $\gamma > n-sp$ it holds that
    \begin{align}
    \begin{split}\label{estimate of the local part of the maximal function of local coordinate rep}
        \mathcal{H}^{\gamma}_{\infty}\left(\set{x_0\in\Rn : M(f\circ\Phi_{i})(x_0)>\lambda }\right)&\leq \mathcal{H}^{\gamma}_{\infty}\left(\set{x_0\in\Rn : I_{s}\ast S_{s}(f\circ\Phi_{i})(x_0)\gtrsim\lambda }\right)\\
        &\lesssim (\lambda^{-1}\norm{S_{s}(f\circ\Phi_{i})}_{\Ell{p}})^{\frac{\gamma p}{n-sp}}\\
        &\lesssim (\lambda^{-1}\norm{f\circ \Phi_{i}}_{\mathrm{C}^{p}_{s}(\Rn)})^{\frac{\gamma p}{n-sp}}\\
        &\lesssim (\lambda^{-1}\norm{f\circ \Phi_{i}}_{\mathrm{W}^{s,p}(\Rn)})^{\frac{\gamma p}{n-sp}}\\
        &\lesssim (\lambda^{-1}\norm{f}_{\mathrm{W}^{s,p}(\partial\Omega)})^{\frac{\gamma p}{n-sp}}.
        \end{split}
    \end{align}
    Note that since $\Phi_{i}:\Rn\to\R^{1+n}$ is Lipschitz, it holds that $\mathcal{H}^{\gamma}_{\infty}(\Phi_{i}(A))\lesssim \mathcal{H}^{\gamma}_{\infty}(A)$ for all $A\subseteq\Rn$.
    As a consequence, it follows from the estimates \eqref{far away estimate for hl max on lipschitz boundary}, \eqref{local estimate for hl max on lipschitz boundary} and Lemma~\ref{lemma hausdorff measure estimate on fractional max function on lipschitz boundary} that
    \begin{align*}
        \mathcal{H}^{\gamma}_{\infty}\left(\set{Q\in\partial\Omega : M_{\partial\Omega}f(Q)>\lambda}\right)&\leq \mathcal{H}^{\gamma}_{\infty}\left(\set{Q\in\partial\Omega\cap T_{i}(R((1+M)r_{i})) : M_{\partial\Omega}f(Q)>\lambda}\right)\\
        &\quad + \mathcal{H}^{\gamma}_{\infty}\left(\set{Q\in\partial\Omega\setminus T_{i}(R((1+M)r_{i})) : M_{\partial\Omega}f(Q)>\lambda}\right)\\
        &\lesssim \mathcal{H}^{\gamma}_{\infty}\left(\Phi_{i}\left(\set{x_{0}\in\Delta(0, (1+M)r_{i}) : M(f\circ\Phi_{i})(x_0)\gtrsim \lambda }\right)\right)\\
        &\quad + \mathcal{H}^{\gamma}_{\infty}\left(\set{Q\in\partial\Omega\setminus T_{i}(R((1+M)r_{i})) : M_{\partial\Omega}^{s}f(Q)\gtrsim \lambda}\right)\\
        &\lesssim (\lambda^{-1}\norm{f}_{\mathrm{W}^{s,p}(\partial\Omega)})^{\frac{\gamma p}{n-sp}} + (\lambda^{-1}\norm{f}_{\El{p}(\partial\Omega)})^{p}.\qedhere
    \end{align*}
\end{proof}

We can now use Lemma~\ref{lemma main estimate maximal function sobolev on bdry lipschitz} to obtain the following analogue of Lemma~\ref{lemma existence of limit of averages away from a small set for Cpalpha} which, as we will show, guarantees that the representative $\tilde{f}:\partial\Omega \to (-\infty,\infty]$ defined in \eqref{preferred representative for sobolev on boundary lipschitz} has the required properties for the proof of the remaining part of Theorem~\ref{second main theorem bounded Lipschitz domain} to go through.
\begin{lem}\label{lemma important properties of the preferred representative on boundary lipschitz}
    Let $p\in[1,\infty)$, $s\in(0,\frac{n}{p}]$ and $f\in\mathrm{W}^{s,p}(\partial\Omega)$. There exists $E\subseteq\partial\Omega$ such that the limit
    \begin{equation}\label{limit of averages for sobolev functions on lip domain}
        \lim_{r\to 0^{+}}\dashint_{\Delta_{\partial\Omega}(Q,r)}f\dd \sigma 
    \end{equation}
    exists for all $Q\in\partial\Omega\setminus E$, and $\mathcal{H}^{\gamma}_{\infty}(E)=0$ for all $\gamma>n-sp$. In particular, the function $\tilde{f} :\partial\Omega \to (-\infty,\infty]$ defined by \eqref{preferred representative for sobolev on boundary lipschitz} satisfies $\smallabs{\tilde{f}(Q)}\leq M_{\partial\Omega}f(Q)$ for all $Q\in\partial\Omega\setminus E$. 
\end{lem}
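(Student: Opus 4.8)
The plan is to mimic the structure of the proof of Lemma~\ref{lemma existence of limit of averages away from a small set for Cpalpha}, replacing the Hausdorff measure estimate on the Hardy--Littlewood maximal function in $\Rn$ by the boundary analogue established in Lemma~\ref{lemma main estimate maximal function sobolev on bdry lipschitz}, and using the density of $C^{k,1}(\partial\Omega)$ in $\mathrm{W}^{s,p}(\partial\Omega)$ from Lemma~\ref{density lemma in boundary sobolev spaces}. First I would define, for $R>0$ and $f\in\El{1}_{\loc}(\partial\Omega)$, the oscillation maximal function
\begin{equation*}
    D_{R}f(Q)=\sup_{0<r<R}\abs{\dashint_{\Delta_{\partial\Omega}(Q,R)}f\dd\sigma-\dashint_{\Delta_{\partial\Omega}(Q,r)}f\dd\sigma}
\end{equation*}
for all $Q\in\partial\Omega$, and record its three elementary properties exactly as in the proof of Lemma~\ref{lemma existence of limit of averages away from a small set for Cpalpha}: sub-additivity in $f$; the pointwise bound $D_{R}f(Q)\leq 2M_{\partial\Omega}f(Q)$ (valid once $R\leq\diam{\Omega}$, using that $\sigma$ is $n$-Ahlfors--David regular so the $\sup$ defining $M_{\partial\Omega}$ may be taken over $0<r\leq\diam{\Omega}$); and the fact that if $g\in C^{0}(\partial\Omega)$, which is uniformly continuous by compactness of $\partial\Omega$, then $\sup_{Q\in\partial\Omega}D_{R}g(Q)\to 0$ as $R\to 0^{+}$.

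Next I would fix $\eps>0$ and $\lambda>0$, use Lemma~\ref{density lemma in boundary sobolev spaces} to pick $g\in C^{k,1}(\partial\Omega)\subseteq\mathrm{W}^{s,p}(\partial\Omega)$ with $\norm{f-g}_{\mathrm{W}^{s,p}(\partial\Omega)}<\lambda$, and write $h=f-g$. By sub-additivity and the second property, $D_{R}f(Q)\leq 2M_{\partial\Omega}h(Q)+D_{R}g(Q)$, and by uniform continuity of $g$ there is $\delta=\delta(\eps,g)>0$ with $\sup_{0<R<\delta}D_{R}g(Q)<\eps/2$ for all $Q$, giving the inclusion
\begin{equation*}
    \set{Q\in\partial\Omega:\sup_{0<R<\delta}D_{R}f(Q)>\eps}\subseteq\set{Q\in\partial\Omega:M_{\partial\Omega}h(Q)>\eps/4}.
\end{equation*}
Since $\eta\mapsto\sup_{0<R<\eta}D_{R}f(Q)$ is non-decreasing, applying Lemma~\ref{lemma main estimate maximal function sobolev on bdry lipschitz} to $h$ (which uses $s<\tfrac{n}{p}$; see below for the endpoint) yields, for any $\gamma>n-sp$,
\begin{equation*}
    \mathcal{H}^{\gamma}_{\infty}\left(\set{Q\in\partial\Omega:\lim_{\eta\to0^{+}}\sup_{0<R<\eta}D_{R}f(Q)>\eps}\right)\lesssim(\eps^{-1}\norm{h}_{\El{p}(\partial\Omega)})^{p}+(\eps^{-1}\norm{h}_{\mathrm{W}^{s,p}(\partial\Omega)})^{\frac{\gamma p}{n-sp}}\lesssim\eps^{-p}\lambda^{p}+\eps^{-\frac{\gamma p}{n-sp}}\lambda^{\frac{\gamma p}{n-sp}}.
\end{equation*}
Letting $\lambda\to0^{+}$ shows this set, call it $E_{\eps}$, has $\mathcal{H}^{\gamma}_{\infty}(E_{\eps})=0$; then $E=\bigcup_{k\geq1}E_{1/k}$ satisfies $\mathcal{H}^{\gamma}_{\infty}(E)=0$ for all $\gamma>n-sp$, and for $Q\notin E$ the net $\set{\dashint_{\Delta_{\partial\Omega}(Q,r)}f\dd\sigma}_{0<r<\diam{\Omega}}$ is Cauchy as $r\to0^{+}$, so the limit \eqref{limit of averages for sobolev functions on lip domain} exists. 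The final assertion on $\tilde f$ is immediate: for $Q\notin E$ the limit exists in $\R$ and is bounded in absolute value by $\sup_{0<r\leq\diam{\Omega}}\dashint_{\Delta_{\partial\Omega}(Q,r)}\abs{f}\dd\sigma=M_{\partial\Omega}f(Q)$.

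The endpoint $s=\tfrac{n}{p}$ requires a short separate argument, exactly parallel to the $\alpha p=n$ cases elsewhere in the paper: since we only need $\gamma>0=n-sp$, choose any $s'\in(0,\tfrac{n}{p})$ with $s'<s$; then $\mathrm{W}^{s,p}(\partial\Omega)\hookrightarrow\mathrm{W}^{s',p}(\partial\Omega)$ and $\gamma>n-s'p$ fails in general, so instead choose $s'$ close enough to $\tfrac{n}{p}$ that $\gamma>n-s'p$, apply the case $s'<\tfrac np$ already proved, and conclude. The main obstacle I anticipate is purely bookkeeping: making sure the constants in Lemma~\ref{lemma main estimate maximal function sobolev on bdry lipschitz} are uniform enough that sending $\lambda\to0$ genuinely kills the measure of $E_\eps$ (it does, since both exponents $p$ and $\frac{\gamma p}{n-sp}$ are positive), and verifying that $D_Rf(Q)\le 2M_{\partial\Omega}f(Q)$ holds with the $\sup$ in $M_{\partial\Omega}$ restricted to $r\le\diam\Omega$ — this is where $n$-Ahlfors--David regularity of $\sigma$ from \eqref{surface measure sigma is ahlfors regular} is used. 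Everything else is a faithful transcription of the Euclidean argument of Lemma~\ref{lemma existence of limit of averages away from a small set for Cpalpha}.
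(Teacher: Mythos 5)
Your main argument for $s<\tfrac{n}{p}$ is the same as the paper's: define the oscillation operator $D_{R}$ on $\partial\Omega$, record its three elementary properties, approximate by $g\in C^{k,1}(\partial\Omega)$ using Lemma~\ref{density lemma in boundary sobolev spaces}, dominate $D_Rf$ by $2M_{\partial\Omega}h + D_Rg$ with $h=f-g$, invoke Lemma~\ref{lemma main estimate maximal function sobolev on bdry lipschitz}, and let $\lambda\to 0^+$. This is precisely the paper's translation of Lemma~\ref{lemma existence of limit of averages away from a small set for Cpalpha} to the boundary setting.

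Where you differ is the endpoint $s=\tfrac{n}{p}$. You reduce it to the subcritical case by choosing $s'<\tfrac{n}{p}$ close enough to $\tfrac{n}{p}$ that $\gamma>n-s'p$ and appealing to the embedding $\mathrm{W}^{s,p}(\partial\Omega)\hookrightarrow\mathrm{W}^{s',p}(\partial\Omega)$; this is legitimate because the exceptional set $E=\bigcup_k E_{1/k}$ depends only on $f$ (not on $\gamma$ or $s'$), and the embedding does hold on the compact boundary. The paper instead keeps $s=\tfrac{n}{p}$ and re-derives the Hausdorff-content estimate on $M_{\partial\Omega}f$ directly: in each local chart it writes $f\circ\Phi_i\in\mathrm{C}^p_s(\Rn)$, factors $f\circ\Phi_i = G_\eps\ast g_i$ with $g_i\in\mathrm{C}^p_\delta(\Rn)$ via Lemma~\ref{lemma lifting property bessel operator}, uses $M(f\circ\Phi_i)\lesssim I_\delta\ast G_\eps\ast S_\delta g_i$ as in Lemma~\ref{lemma estimate Cpalpha functions against fractional measures}, and then applies Lemma~\ref{lemma Hausdorff measure estimates for riesz potentials} to obtain a version of Lemma~\ref{lemma main estimate maximal function sobolev on bdry lipschitz} with exponent $\tfrac{\gamma p}{n-\delta p}$ in place of $\tfrac{\gamma p}{n-sp}$. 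Your route is shorter and conceptually cleaner, but it leans on the monotonicity of the fractional Sobolev scale on the compact manifold $\partial\Omega$, a fact that — while elementary (split the Slobodeckij double integral at $|x-y|=1$ in each chart and use boundedness) — the paper never states; the paper's Bessel-lifting detour is exactly what lets it avoid citing that embedding. If you go your way, you should record the embedding explicitly.
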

\begin{proof}
    If $s\in(0,\frac{n}{p})$, then the argument of the proof of Lemma~\ref{lemma existence of limit of averages away from a small set for Cpalpha} (replacing the measure $\mu$ with the Hausdorff content $\mathcal{H}^{\gamma}_{\infty}$) shows that the existence of the limit \eqref{limit of averages for sobolev functions on lip domain} follows from the density result in Lemma~\ref{density lemma in boundary sobolev spaces} and the estimate from Lemma~\ref{lemma main estimate maximal function sobolev on bdry lipschitz}. If $s=\frac{n}{p}$, then $\gamma>0$ and we show how the proof of Lemma~\ref{lemma main estimate maximal function sobolev on bdry lipschitz} needs to be adapted. We may choose $\delta\in(0,\frac{n}{p})$ and $\eps>0$ such that $s=\delta +\eps$ and $\gamma>n-\delta p$. Since $f\circ \Phi_{i}\in\mathrm{W}^{s,p}(\Rn)\subseteq\mathrm{C}^{p}_{s}(\Rn)$ (see Remark~\ref{remark main implication of main thm within proof}), it follows from Lemma~\ref{lemma lifting property bessel operator} that there is $g_{i}\in\mathrm{C}^{p}_{\delta}(\Rn)$ such that $f\circ \Phi_{i}=G_{\eps}\ast g_{i}$ and $\norm{g_{i}}_{\mathrm{C}^{p}_{\delta}(\Rn)}\eqsim \norm{f\circ \Phi_{i}}_{\mathrm{C}^{p}_{s}(\Rn)}\lesssim \norm{f\circ\Phi_{i}}_{\mathrm{W}^{s,p}(\Rn)}\lesssim \norm{f}_{\mathrm{W}^{s,p}(\partial\Omega)}$. Consequently, proceeding as in the proof of Lemma~\ref{lemma estimate Cpalpha functions against fractional measures}, we obtain that $$M(f\circ\Phi_{i})(x_0)\lesssim (I_{\delta}\ast G_{\eps}\ast S_{\delta}g_{i})(x_0)$$ for all $x_{0}\in\Rn$, and this estimate can be used to adapt the proof of the local estimate \eqref{estimate of the local part of the maximal function of local coordinate rep} in the proof of Lemma~\ref{lemma main estimate maximal function sobolev on bdry lipschitz} (using Lemma~\ref{lemma Hausdorff measure estimates for riesz potentials}) to obtain that
    \begin{equation*}
        \mathcal{H}^{\gamma}_{\infty}\left(\set{Q\in\partial\Omega : M_{\partial\Omega}f(Q)>\lambda}\right)\lesssim (\lambda^{-1}\norm{f}_{\El{p}(\partial\Omega)})^{p} + (\lambda^{-1}\norm{f}_{\mathrm{W}^{s,p}(\partial\Omega)})^{\frac{\gamma p}{n-\delta p}}
    \end{equation*}
    for all $\lambda>0$. The conclusion follows from this estimate as before.
\end{proof}

We now have all the ingredients needed for the proof of the estimate~\eqref{upper bound hausdorff dimension of divergence set boundary lipschitz domain} on the Hausdorff dimension of the divergence set in  Theorem~\ref{second main theorem bounded Lipschitz domain}. We shall use the same notation that was introduced in the first part of the proof in Section~\ref{section first part of the proof of second main thm}. In particular, recall that $p_{0}\in (1,p)$ is such that $\left(\mathcal{D}\right)^{L}_{p_0}$ holds, and that the covering of $\partial\Omega$ given by~\eqref{smallest covering of boundary of Omega} satisfies~\eqref{local representation of Omega close to the boundary}.

\begin{proof}[Proof of the estimate \eqref{upper bound hausdorff dimension of divergence set boundary lipschitz domain} in Theorem~\ref{second main theorem bounded Lipschitz domain}]
As before, we only prove the case when $c=\tfrac{1}{2}$. Let $f\in\mathrm{W}^{s,p}(\partial\Omega)$. As in the proof of Lemma~\ref{lemma main estimate maximal function sobolev on bdry lipschitz} we may use the covering \eqref{smallest covering of boundary of Omega}, the linearity of the solution operator $f\mapsto u_{f}$ and the sub-additivity of tangential maximal operators to assume that $\supp{f}\subseteq \partial\Omega \cap T_{i}(R(r_{i}/2))$ for some $i\in\set{1,\ldots , N}$. 

If $\beta'\in(\beta,1)$, then there is $s'\in (0,s)$ such that $\beta'=1-\frac{s' p}{n}$. 
If $Q\in\partial\Omega \setminus T_{i}(R((1+M)r_{i}))$, then the estimates~\eqref{interior estimate for u(X) epsilon away from boundary} and \eqref{tangential max function estimate for points Q_0 away from support of f} in the first part of the proof of Theorem~\ref{second main theorem bounded Lipschitz domain} (with $\beta'$ in place of $\beta$ and $p_0$ in place of $p$), the boundedness of $\partial\Omega$ and the (crude) argument used to prove the estimate~\eqref{far away estimate for hl max on lipschitz boundary} show that
\begin{align*}
    N_{*,\beta', \tfrac{1}{2}}u(Q)\lesssim \norm{f}_{\El{p_{0}}(\partial\Omega)}\lesssim M_{\partial\Omega}(\abs{f}^{p_{0}})(Q)^{1/p_{0}}\lesssim (M_{\partial\Omega}^{(s-s')p_{0}}\abs{f}^{p_0})(Q)^{1/p_0}.
\end{align*}
It follows from Lemma~\ref{lemma hausdorff measure estimate on fractional max function on lipschitz boundary} that for all $\lambda>0$ and $\gamma>n-(s-s')p$ it holds that 
\begin{align*}
    \mathcal{H}^{\gamma}_{\infty}\left(\set{Q\in\partial\Omega \setminus T_{i}(R((1+M)r_{i})) : N_{*,\beta', \tfrac{1}{2}}u(Q) >\lambda }\right)\lesssim (\lambda^{-1}\norm{f}_{\El{p}(\partial\Omega)})^{p}.
\end{align*}

If $Q\in\partial\Omega \cap T_{i}(R((1+M)r_{i}))$, then there is $x_{0}\in\Delta(0,(1+M)r_{i})$ such that $Q=\Phi_{i}(x_0)$ (recall the notation introduced at the start of the proof of the first part of Theorem~\ref{second main theorem bounded Lipschitz domain}). 
Since $f\circ\Phi_{i}\in\mathrm{W}^{s,p}(\Rn)\subseteq\mathrm{C}^{p}_{s}(\Rn)$ (see Remark~\ref{remark main implication of main thm within proof}), it follows from Lemma~\ref{lemma lifting property bessel operator} that there is $g_{i}\in\mathrm{C}^{p}_{s'}(\Rn)$ such that $f\circ \Phi_{i}=G_{s-s'}\ast g_{i}$ and $\norm{g_{i}}_{\mathrm{C}^{p}_{s'}(\Rn)}\eqsim\norm{f\circ \Phi_{i}}_{\mathrm{C}^{p}_{s}
(\Rn)}\lesssim \norm{f\circ \Phi_{i}}_{\mathrm{W}^{s,p}(\Rn)}\lesssim \norm{f}_{\mathrm{W}^{s,p}(\partial\Omega)}$. The estimates~\eqref{interior estimate for u(X) epsilon away from boundary} and \eqref{main estimate localisation in bdd lipschitz tangential max} in the first part of the proof of Theorem~\ref{second main theorem bounded Lipschitz domain} (again, with $\beta'$ in place of $\beta$ and $p_0$ in place of $p$), the argument used in \eqref{local estimate for hl max on lipschitz boundary} and Remark~\ref{remark 2 - convolution commute  with general maximal functions approach} show that 
\begin{align}
\begin{split}\label{local estimate on the tangential maximal function in lipschitz domain}
    N_{*,\beta',\tfrac{1}{2}}u_{f}(Q)&\lesssim \norm{f}_{\El{p_{0}}(\partial\Omega)} + \sup_{(t,x)\in\Gamma^{\beta'}_{\frac{3}{2}}(x_0)} \left(\sum_{j=0}^{\infty} 2^{-\alpha_{L} j}\left(\dashint_{\Delta(x,2^{j+1}t)}\abs{(f\circ \Phi_{i})(y)}^{p_0}\dd y\right)^{1/p_0}\right)\\
    &\lesssim M_{\partial\Omega}(\abs{f}^{p_{0}})(Q)^{1/p_{0}} + \sup_{(t,x)\in\Gamma^{\beta'}_{\frac{3}{2}}(x_0)} \left(\sum_{j=0}^{\infty} 2^{-\alpha_{L} j}\left(\dashint_{\Delta(x,2^{j+1}t)}\abs{G_{s-s'}\ast g_{i}}^{p_0}\right)^{1/p_0}\right)\\
    &\lesssim M_{p_{0}}(f \circ \Phi_{i})(x_0) + (G_{s-s'}\ast N_{*,\beta'}^{3/2}w_{i})(x_0),
    \end{split}
\end{align}
where the function $w_{i}:\Hn\to [0,\infty]$ is defined by
\begin{equation*}
        w_{i}(t,x)=\sum_{j=0}^{\infty} 2^{-\alpha_{L}j} \left(\dashint_{\Delta(x,2^{j+1}t)}\abs{g_{i}(y)}^{p_0}\dd y\right)^{1/p_{0}} \quad \textup{ for all } (t,x)\in\Hn.
\end{equation*}
Since $G_{s-s'}(x)\leq I_{s-s'}(x)$ for all $x\in\Rn$, it follows from Lemma~\ref{lemma maximal function commutes with convolution} and Lemma~\ref{lemma Hausdorff measure estimates for riesz potentials} that for all $\lambda>0$ and $\gamma> n-(s-s')p$ it holds that 
\begin{align*}
    \mathcal{H}^{\gamma}_{\infty}\left(\set{x_{0}\in\Rn : M_{p_0}(f\circ \Phi_{i})(x_0) \gtrsim \lambda }\right)&\lesssim \mathcal{H}^{\gamma}_{\infty}\left(\set{x_{0}\in\Rn : (G_{s-s'}\ast  M_{p_0}g_{i})(x_0) \gtrsim \lambda }\right)\\
    &\lesssim (\lambda^{-1}\norm{M_{p_0}g_{i}}_{\Ell{p}})^{\frac{p\gamma}{n-(s-s')p}}\\
    &\lesssim (\lambda^{-1}\norm{g_{i}}_{\Ell{p}})^{\frac{p\gamma}{n-(s-s')p}}\\
    &\lesssim (\lambda^{-1}\norm{f}_{\mathrm{W}^{s,p}(\partial\Omega)})^{\frac{p\gamma}{n-(s-s')p}},
\end{align*}
and similarly, it follows from Lemma~\ref{lemma Hausdorff measure estimates for riesz potentials} and Remark~\ref{remark main implication of main thm within proof} that
\begin{align*}
    \mathcal{H}^{\gamma}_{\infty}\left(\set{x_{0}\in\Rn : (G_{s-s'}\ast N_{*,\beta'}^{3/2}w_{i})(x_0) \gtrsim \lambda }\right)&\lesssim (\lambda^{-1}\smallnorm{N_{*,\beta'}^{3/2}w_{i}}_{\Ell{p}})^{\frac{p\gamma}{n-(s-s')p}}\\
    &\lesssim (\lambda^{-1}\norm{g_{i}}_{\mathrm{C}^{p}_{s'}(\Rn)})^{\frac{p\gamma}{n-(s-s')p}}\\
    &\lesssim(\lambda^{-1}\norm{f}_{\mathrm{W}^{s,p}(\partial\Omega)})^{\frac{p\gamma}{n-(s-s')p}}.
\end{align*}
Consequently, since $\mathcal{H}^{\gamma}_{\infty}(\Phi_{i}(A))\lesssim \mathcal{H}^{\gamma}_{\infty}(A)$ for all $A\subseteq\Rn$, we obtain that for all $\lambda>0$ and $\gamma>n-(s-s')p$ it holds that 
\begin{align}
\begin{split}\label{global estimate for hausdorff content of tangential max in lipschitz}
    \mathcal{H}^{\gamma}_{\infty}&\left(\set{Q\in\partial\Omega : N_{*,\beta',\tfrac{1}{2}}u_{f}(Q) >\lambda}\right)\\
    &\quad\quad\quad\quad\quad\lesssim
     (\lambda^{-1}\norm{f}_{\El{p}(\partial\Omega)})^{p}\\ 
     &\quad\quad\quad\quad\quad\quad + \mathcal{H}^{\gamma}_{\infty}\left(\Phi_{i}\left(\set{x_{0}\in\Delta(0, (1+M)r_{i}) : M_{p_{0}}(f\circ \Phi_{i})(x_0)\gtrsim \lambda }\right)\right)\\
     &\quad\quad\quad\quad\quad\quad + \mathcal{H}^{\gamma}_{\infty}\left(\Phi_{i}\left(\set{x_{0}\in\Delta(0, (1+M)r_{i}) : (G_{s-s'}\ast N_{*,\beta'}^{3/2}w_{i})(x_0) \gtrsim \lambda }\right)\right)\\
     &\quad\quad\quad\quad\quad\lesssim (\lambda^{-1}\norm{f}_{\El{p}(\partial\Omega)})^{p} + (\lambda^{-1}\norm{f}_{\mathrm{W}^{s,p}(\partial\Omega)})^{\frac{p\gamma}{n-(s-s')p}},
     \end{split}
\end{align}
which is the estimate that we need.

If $\beta'=1$, then $s'=0$ and we treat two cases: First, if $s\in(0,\frac{n}{p})$, then we use the fact that $\abs{(f\circ \Phi_{i})(y)}\lesssim I_{s}\ast S_{s}(f\circ\Phi_{i})(y)$ for almost every $y\in\Rn$ (see the proof of Lemma~\ref{lemma estimate Cpalpha functions against fractional measures}) as well as Lemma~\ref{lemma maximal function commutes with convolution} to obtain that 
\begin{equation*}
    M_{p_0}(f\circ \Phi_{i})(x_0)\lesssim I_{s}\ast M_{p_0}(S_{s}(f\circ \Phi_{i}))(x_0) \quad \textup{ for all }x_{0}\in\Rn.
\end{equation*}
Remark~\ref{remark 2 - convolution commute  with general maximal functions approach} shows that a similar inequality also holds for the tangential maximal function in the right-hand side of the first line of \eqref{local estimate on the tangential maximal function in lipschitz domain}. Since $\norm{S_{s}(f\circ \Phi_{i})}_{\Ell{p}}\lesssim \norm{f\circ\Phi_{i}}_{\mathrm{W}^{s,p}(\Rn)}\lesssim \norm{f}_{\mathrm{W}^{s,p}(\partial\Omega)}$, the same argument as above yields the conclusion of \eqref{global estimate for hausdorff content of tangential max in lipschitz} with $s'=0$. 

Second, if $s=\frac{n}{p}$, then we have $\gamma>0$ and we can write $s=\delta +\eps$ with $\gamma>n-\delta p$. Again, by Remark~\ref{remark main implication of main thm within proof} and Lemma~\ref{lemma lifting property bessel operator} there is $h_{i}\in\mathrm{C}^{p}_{\delta}(\Rn)$ such that $f\circ \Phi_{i}= G_{\eps}\ast h_{i}$ and $\norm{h_{i}}_{\mathrm{C}^{p}_{\delta}(\Rn)}\eqsim \norm{f\circ\Phi_{i}}_{\mathrm{C}^{p}_{s}(\Rn)}\lesssim \norm{f\circ \Phi_{i}}_{\mathrm{W}^{s,p}(\Rn)}\lesssim \norm{f}_{\mathrm{W}^{s,p}(\partial\Omega)}$. As in the proof of Lemma~\ref{lemma estimate Cpalpha functions against fractional measures}, it holds that 
\begin{equation*}
    \abs{(f\circ \Phi_{i})(y)}\lesssim (I_{\delta}\ast G_{\eps}\ast S_{\delta}h_{i})(y)
\end{equation*}
for almost every $y\in\Rn$, and therefore $M_{p_0}(f\circ \Phi_{i})(x_0)\lesssim (I_{\delta}\ast G_{\eps}\ast M_{p_0}(S_{\delta}h_{i}))(x_0)$ for all $x_{0}\in\Rn$. As before, a similar estimate holds for the tangential maximal function in the first line of \eqref{local estimate on the tangential maximal function in lipschitz domain}. We can therefore use the same argument as above to obtain for all $\lambda>0$ that 
\begin{align}\label{second case global estimate for hausdorff content of tangential max in lipschitz}
    \mathcal{H}^{\gamma}_{\infty}\left(\set{Q\in\partial\Omega : N_{*,\beta',\tfrac{1}{2}}u_{f}(Q) >\lambda}\right)
     \lesssim (\lambda^{-1}\norm{f}_{\El{p}(\partial\Omega)})^{p} + (\lambda^{-1}\norm{f}_{\mathrm{W}^{s,p}(\partial\Omega)})^{\frac{p\gamma}{n-\delta p}}.
\end{align}

Finally, for all $\beta'\in(\beta,1]$ and $s'\in[0,s)$ such that $\beta'=1-\frac{s'p}{n}$, Lemma~\ref{lemma main estimate maximal function sobolev on bdry lipschitz} and (the proof of) Lemma~\ref{lemma important properties of the preferred representative on boundary lipschitz} show that for all $\gamma>n-(s-s')p\geq n-sp$ there is a constant $c(n,p,\gamma)>0$ such that
\begin{equation*}
    \mathcal{H}^{\gamma}_{\infty}\left(\set{Q\in\partial\Omega : \smallabs{\tilde{f}(Q)}>\lambda}\right)\lesssim  (\lambda^{-1}\norm{f}_{\El{p}(\partial\Omega)})^{p} +(\lambda^{-1}\norm{f}_{\mathrm{W}^{s,p}(\partial\Omega)})^{c(n,p,\gamma)}
\end{equation*}
for all $\lambda>0$. We may now use this estimate, the density result in Lemma~\ref{density lemma in boundary sobolev spaces}, the estimates \eqref{global estimate for hausdorff content of tangential max in lipschitz} and \eqref{second case global estimate for hausdorff content of tangential max in lipschitz}, and the argument of the proof of Proposition~\ref{proposition Hausdorff dimension div set for cones and Bessel} (replacing the measure $\mu$ with the Hausdorff content $\mathcal{H}^{\gamma}_{\infty}$) to show that 
\begin{equation*}
    \mathcal{H}^{\gamma}_{\infty}\left(\set{Q\in\partial\Omega : \limsup_{\Gamma^{\beta',1/2}_{\Omega}(Q)\ni X\to Q} \smallabs{u_{f}(X)-\tilde{f}(Q)}\neq 0}\right)=0.
\end{equation*}
for all $\gamma>n-(s-s')p$. The same holds for $\mathcal{H}^{\gamma}$ instead of $\mathcal{H}^{\gamma}_{\infty}$ (see, e.g., \cite[Lemma 4.6]{Mattila_1995}) and the estimate~\eqref{upper bound hausdorff dimension of divergence set boundary lipschitz domain} follows. This concludes the proof of Theorem~\ref{second main theorem bounded Lipschitz domain}.
\end{proof}

\section{Further contextual remarks}\label{Sect:context}
The tangential convergence problems considered in this paper are of course not confined to boundary value problems that are elliptic. In this section we connect with some similar statements concerning parabolic, dispersive and wave equations.

It was shown in the celebrated work of Aronson \cite{Aronson} that the fundamental solution $g(t,x)$ of the equation $\partial_tu=\dvg_x(A(t,x)\nabla_x u)$ enjoys Gaussian bounds on any strip $(0,T]\times\mathbb{R}^n$ provided the real matrix-valued coefficients $A$ are bounded and uniformly elliptic in the sense of \eqref{boundedness aanndd ellipticity conditions}, uniformly in $t\in (0,T]$. Specifically, the solution $u_f$ to the associated initial value problem satisfies $u_f(t,x)\lesssim (H_t*f)(x)$ where $H_t$ is a Euclidean heat kernel; moreover one has a similar lower bound, and all implicit constants depend only on $n$, $T$ and the bounds in \eqref{boundedness aanndd ellipticity conditions}. Consequently the convergence properties of $u_f$ to its initial data $f$ inherit those of the constant coefficient case, and these are easily seen to follow from the classical theory discussed in the introduction (e.g. pointwise domination of the heat kernel by the Poisson kernel and application of Theorem~\ref{theorem Nagel--Stein thm 5} shows that $u_f(t,x)$ tends to $f(x_0)$ in the parabolic tangential approach region $\{(t,x)\in\Hn : \abs{x-x_{0}}<t^{\beta/2} \textup{ if }0<t\leq 1, \textup{ and } \abs{x-x_0}<t^{1/2} \textup{ if } t\geq 1\}$). In certain circumstances at least, similar Gaussian bounds may also be obtained for equations with complex coefficients; see \cite{ATP98}. 

The parabolic setting is somewhat akin to that for elliptic equations with $t$-independent coefficients in the (block) form $-\partial_{t}^2u-\dvg_x(A(x)\nabla_x u)=0$ on $\Hn$. In that case, the unique solution $u_f$ to the Dirichlet problem satisfies $u_{f}(t,x)\lesssim (P_{t}\ast f)(x)$, where $P_{t}$ is the classical Poisson kernel. This follows from the Gaussian upper bound for the kernel of the heat semigroup $e^{-tL_{A}}$ of the elliptic operator $L_{A}=-\dvg_{x}(A\nabla_{x})$ (see, e.g., \cite[Theorem 6.10]{Ouhabaz_Heat_Equations_2005}), since  functional calculus then yields the required bounds for the kernel of the Poisson semigroup $e^{-tL_{A}^{1/2}}$.

Problems of the type that we address in this paper have also attracted interest in the setting of dispersive equations, most notably the free Schr\"odinger equation $i\partial_t u=\Delta_xu$ on $\mathbb{R}^{1+n}_+$. Here matters are very different in that 
some smoothness is \textit{required} of the initial datum $u_0$ in order for $u(t,x)\rightarrow u_0(x)$ to be true for almost every $x$ as $t\rightarrow 0$. Identifying the regularity threshold for $u_0$ in the scale of $\mathrm{L}^2$ Sobolev spaces $\mathrm{H}^s$ was a longstanding problem of Carleson, and one that has only recently been resolved for general dimensions \cite{Du_Zhang} -- the most basic point being that the Schr\"odinger kernel, unlike the Poisson and  heat kernels, is not a classical approximate identity. As a result there are significant phenomenological differences from the elliptic and parabolic settings. For example, it is shown in \cite{BBCR} that the dimension of the divergence set as $t\rightarrow 0$ has discontinuities as a function of $s$.
For tangential convergence results of the type $u(t, x+\gamma(t))\rightarrow u_0(x)$ we refer to \cite{Minguillon} and the references there, and for bounds on the Hausdorff dimensions of sets of divergence we refer to \cite{Du_Zhang, Mattila_Fourier} and the references there. Some of the methods employed in this setting have also proved effective in addressing similar questions for the wave equation $\partial_t^2u=\Delta_x u$ on $\mathbb{R}^{1+n}_+$ -- see for example \cite{BBCR}.

Finally we briefly point out that tangential approach maximal functions similar to $N_{*,\beta}$ arise naturally in the study of the H\"ormander symbol classes, along with other families of oscillatory Fourier multipliers. In that setting the relevant maximal functions are essentially the auxiliary maximal functions of Section \ref{Sect:tang}, taking the form
$$
\mathfrak{M}_{\alpha,\beta}u(x_0):=
\sup_{\substack{(t,x)\in \Gamma^\beta(x_0)\\ 0<t\leq 1}}t^\alpha |u(t,x)|,
$$
where $u(t,x)=P_t*f(x)$, or some other suitable average of a given function $f$.
For $\alpha> 0$ the operator $f\mapsto \mathfrak{M}_{\alpha,\beta}u$ may be interpreted as a fractional maximal averaging operator, effectively generalising that given by \eqref{general tangential maximal function on the tangential approach regions defined by Gamma beta}.
This connection bears some analogy with the fundamental relationship between Calder\'on--Zygmund singular integrals and the Hardy--Littlewood maximal function; see \cite{Beltran, Beltran_Bennett}.
\bibliographystyle{abbrv}
\bibliography{refs}
\end{document}